\newtheorem{teo}{Theorem}[section]
\theoremstyle{plain}
\newtheorem{prop}[teo]{Proposition}
\newtheorem{lema}[teo]{Lemma}
\newtheorem{coro}[teo]{Corollary}
\theoremstyle{definition}
\newtheorem{defi}[teo]{Definition}
\newtheorem{exem}[teo]{Example}
\newtheorem{exems}[teo]{Examples}
\theoremstyle{remark}
\newtheorem{obs}[teo]{Remark}
\numberwithin{equation}{section}
\newcommand{\im}{\mathrm{im}\,}
\newcommand{\id}{\mathrm{id}}
\newcommand{\h}{\mathrm{h}}
\DeclareMathOperator{\U}{U}
\DeclareMathOperator{\Ugr}{U^{gr}}
\DeclareMathOperator{\isogr}{\cong_{gr}}
\newcommand{\supp}{\mathrm{supp}}
\newcommand{\rann}{\mathrm{r.ann}}
\DeclareMathOperator{\socgr}{soc_{gr}}
\DeclareMathOperator{\soc}{soc}
\newcommand{\radgr}{\mathrm{rad}^{\mathrm{gr}}}
\newcommand{\rad}{\mathrm{rad}}
\DeclareMathOperator{\subgr}{\leq_{gr}}
\newcommand{\proj}{\mathrm{proj}}
\DeclareMathOperator{\Fun}{Fun}
\DeclareMathOperator{\Homgr}{Hom_{gr-\mathit{R}}}
\DeclareMathOperator{\Endgr}{End_{gr-\mathit{R}}}
\newcommand{\Hom}{\mathrm{Hom}}
\newcommand{\HOM}{\mathrm{HOM}}
\newcommand{\End}{\mathrm{End}}
\newcommand{\END}{\mathrm{END}}
\newcommand{\fgmod}{\mathrm{mod}}
\newcommand{\M}{\mathrm{M}}
\newcommand{\UT}{\mathrm{UT}}
\DeclareMathOperator{\ima}{im}
\newcommand{\maxi}{\mathfrak{m}}
\begin{document}

	\title[Groupoid graded chain conditions and Jacobson radical]{Graded chain conditions and graded Jacobson radical of groupoid graded modules}
	\author[Z. Cristiano]{Zaqueu Cristiano}
	\address[Zaqueu Cristiano]{Department of Mathematics - IME, University of S\~ao Paulo,
		Rua do Mat\~ao 1010, S\~ao Paulo, SP, 05508-090, Brazil}
	\email[Zaqueu Cristiano]{zaqueucristiano@usp.br}%
	
	\author[W. Marques de Souza]{Wellington Marques de Souza}
	\address[Wellington Marques de Souza]{Department of Mathematics - IME, University of S\~ao Paulo,
		Rua do Mat\~ao 1010, S\~ao Paulo, SP, 05508-090, Brazil}
	\email[Wellington Marques de Souza]{tomarquesouza@usp.br}%
	
	\author[J. S\'anchez]{Javier S\'anchez}
	\address[Javier S\'anchez]{Department of Mathematics - IME, University of S\~ao Paulo,
		Rua do Mat\~ao 1010, S\~ao Paulo, SP, 05508-090, Brazil}
	\email[Javier S\'anchez]{jsanchez@ime.usp.br}%
    \thanks{This study was financed in part by the Coordenação de Aperfeiçoamento de Pessoal de Nível Superior - Brasil (CAPES) – Finance Code  001}
	\thanks{The first author was supported in part by grant \texttt{\#}2021/14132-2, S\~ao Paulo Research Foundation (FAPESP), Brazil}
	\thanks{The third author was supported by grant \texttt{\#}2020/16594-0, S\~ao Paulo Research Foundation (FAPESP), Brazil}
    
	\date{December 12, 2025}
	\subjclass[2020]{Primary 16W50; 16P20; 16P40; 16N20; 16L30; 20L05. Secondary 18E05; 18A25; 16D60; 16D50; 16G60.} %
	\keywords{Groupoid graded module; gr-artinian module; gr-noetherian module; gr-length of a module; graded Jacobson radical; gr-socle; gr-semilocal ring; artinian category; noetherian category; semilocal category.
}
	\dedicatory{To César Polcino Milies, on the occasion of his 80th birthday}

\begin{abstract}
    In this work, we  continue to lay the groundwork for the theory of groupoid graded rings and modules.
    The main topics we address include graded chain conditions, the graded Jacobson radical, and the gr-socle for graded modules. We present several descending (ascending) chain conditions for graded modules and we refer to the most general one as $\Gamma_0$-artinian ($\Gamma_0$-noetherian).    
    We show that $\Gamma_0$-artinian (resp. $\Gamma_0$-noetherian) modules share many properties with artinian (noetherian) modules in the classical theory.
    However, we present an example of a right $\Gamma_0$-artinian ring that is not right $\Gamma_0$-noetherian. Following the pattern of the classical case, we examine the basic properties of the graded Jacobson radical and the gr-socle for groupoid graded modules.
    We also establish some fundamental properties of the graded Jacobson radical of groupoid graded rings.
    Finally, we introduce the notion of  gr-semilocal ring, which simultaneously generalizes the concepts of  semilocal ring and  (small) semilocal  category.
\end{abstract}

	\maketitle

 \tableofcontents

\section{Introduction}

Chain conditions and the Jacobson radical are among the main topics in classical ring and module theory. 
There are many important results concerning artinian and noetherian rings and modules, and semilocal rings (rings whose quotient by the Jacobson radical is semisimple) \cite{AndersonFuller}, \cite{Lam1}, \cite{Lam2}, \cite{Goodearl}, \cite{Facchini}. 
Some of these concepts and results have been generalized to the group graded context \cite{NastasescuVanOystaeyen}, \cite{Hazrat}.

A groupoid, a small category in which all morphisms are invertible, is a natural generalization of a group. To the best of the authors' knowledge, the first work that began the study of groupoid graded rings and modules as a separate class was \cite{Lund}. Usually, first results in the theory 
were obtained supposing that either the graded ring has an identity element and/or that the groupoid has a finite number of objects \cite{Liu_Li}, \cite{Lundstrom_strongly_cohomology}, \cite{Verhulst}. A systematic study of groupoid graded rings and modules that do not satisfy these conditions began in \cite{CLP} where the first results about groupoid graded semisimple modules were also given. We continued this line of research by establishing the basic theory of gr-semisimple groupoid graded rings in \cite{Artigoarxiv}. 
In these works,
graded rings are supposed to have a good set of local units. Such rings are referred to as object unital, a term that we adopt in this work. This concept was already used in \cite{Oinert_Lundstrom_Miyashita} under the name of locally unital ring. 
This generality  prevents some results or concepts from having natural generalizations.
On the other hand, this framework allows us to obtain interesting applications to small preadditive categories \cite{Artigoarxiv}.

Groupoid graded rings and modules have been studied in recent years.  Some papers laid the ground of the theory \cite{Lund,CLP,Artigoarxiv}, or generalized  previous classical results \cite{Oinert_Lundstrom_Miyashita}, \cite{CLP2} \cite{Lundstrom_strongly_cohomology}, and some others have  appeared in different contexts such as partial actions \cite{BFP}, \cite{BP}, \cite{MoreiraOinert} and Leavitt path algebras \cite{Goncalves_Yoneda}.

In this work, we continue to establish the foundation of the theory of groupoid graded rings and modules, with special focus on graded chain conditions and the graded Jacobson radical of graded modules. As far as the authors are aware, apart from \cite{georgijevic} (where groupoids are a more general concept than ours but finite)  and \cite{lundstrom2024chain} (where the chain conditions 
 do not refer to graded chain conditions), there are no works dealing with the main objects of our study.

Given a groupoid $\Gamma$, we consider standard chain conditions, $\Gamma_0$-chain conditions, and strong $\Gamma_0$-chain conditions, all of which generalize the graded chain conditions from the group graded setting.
Standard chain conditions arise as the most natural definition, but graded rings satisfying them are necessarily unital. 
This motivates the introduction of $\Gamma_0$-chain conditions, which yield results similar to those in the classical context \cite{Artigoarxiv} and are connected to artinian/noetherian categories \cite{Mit}. 
We present some examples of right $\Gamma_0$-artinian (resp. $\Gamma_0$-noetherian) rings that arise naturally from categories of modules.
In this context, we prove that a skeleton of the category of finitely generated modules over a representation-finite algebra is both artinian and noetherian.
We prove a graded version of the Bass–Papp Theorem, characterizing right $\Gamma_0$-noetherian rings in terms of gr-injective modules.
We also characterize a gr-semisimple ring as a right $\Gamma_0$-artinian ring with graded Jacobson radical zero. But the theory does not extend verbatim.
In contrast to the group graded case, we show that a right $\Gamma_0$-artinian ring does not need to be right $\Gamma_0$-noetherian.

The theory of the Jacobson radical of modules can be found in \cite{AndersonFuller}. 
This notion can be defined either via maximal submodules or via superfluous submodules. 
As shown in \cite{AndersonFuller}, the socle of a module (defined using either simple submodules or essential submodules) exhibits many properties that are dual to those of the Jacobson radical.
In this work, we define and prove groupoid graded versions of these properties.
To the knowledge of the authors, not all group graded versions of these results have  been established.

We also study some basic properties of the graded Jacobson radical of groupoid graded rings. 
Further and deeper results, especially those related to the nilpotency of the graded Jacobson radical, will appear in another paper currently in preparation. It is when approaching this problem that the strong $\Gamma_0$-chain conditions appeared. 
In the present work, we are particularly interested in the case where the quotient of the ring by its graded Jacobson radical is a gr-semisimple ring (or, equivalently, a right $\Gamma_0$-artinian ring).
We refer to such rings as gr-semilocal rings.
In one of our main results concerning these rings, we show that a $\Gamma$-graded ring is gr-semilocal if and only if the ring $R_e$ is semilocal for every idempotent $e$ in the groupoid $\Gamma$. 
In particular, we conclude that groupoid graded rings arising from semilocal categories are gr-semilocal, thus providing many examples of such rings.  We point out that (ungraded) semilocal rings form a very important class in Ring and Module Theory, see for example \cite{Camps_Dicks}, \cite{Facchini_Herbera}, \cite{HerberaShamsuddin1995}, \cite{Facchini}.

We note that, when the theory carries over to the groupoid graded context, the proofs typically follow the same lines as in the ungraded case, though they tend to be more technical.

\medskip

In the preliminaries section, we present the basic definitions of groupoids, graded rings, and graded modules that will be used throughout the paper.

In the \Cref{sec: chain conditions}, we study graded chain conditions. 
We define and establish properties of standard graded chain conditions, $\Gamma_0$-chain conditions, and strong $\Gamma_0$-chain conditions. 
In the last two parts of this section, we  present an example of a right $\Gamma_0$-artinian ring that is not right $\Gamma_0$-noetherian, and we prove a graded version of the Bass–Papp Theorem.

\Cref{sec: gr-Jacobson gr-socle} is dedicated to the study of the graded Jacobson radical and the gr-socle of graded modules.

In \Cref{sec: rad(R)}, we focus on the graded Jacobson radical of graded rings. 
We prove some basic results, including a graded version of Nakayama's Lemma and a characterization of gr-semisimple rings in terms of graded chain conditions and the graded Jacobson radical.

Finally, in \Cref{sec: gr-semilocal rings}, we study gr-semilocal rings.

%%%%%%%%%%%%%%%%%%%%%%%%%%%%%%%%%%%%%%%%%%%%%%%%%%%%%%%%%%%%%%%%%%%%%%%%%%%%%%%%%%%%%%%%%%%%%%%%%%%%%%%%%%%%%%%%%%%%%%%%%%%%%%%%%%%%%%%%%%%%%%%%%%%%%%%%%%%%%%%%%%%%%%%%%%%%%%%%%%%%%%%%%%%%%%%%%%%%%%%%%%%%%%%%%%%%%%%%%%%%%%%%%%%%%%%%%%%%%%%%%%%%%%%%%%%%%%%%%%%%%%%%%%%%%%%%%%%%%%%%%%%%%%%%%%%%%%%%%%%%%%%%%%%%%%%%%%%%%%%%%%%%%%%%%%%%%%%%%%%%%%%%%%%%%%%%%%%%%%%%%%%%%%%%%%%%%%%%%%%%%%%%%%%%%%%%%%%%%%%%%%%%%%%%%%%%%%%%%%%%%%%%%%%%%%%%%%%%%%%%%%%%%%%%%%%%%%%%%%%%%%%%%%%%%%%%%%%%%%%%%%%%%%%%%%%%%%%%%%%%%%%%%%%%%%%%%%%%%%%%%%%%%%%%%%%%%%%%%%%%%%%%%%%%%%%%%%%%%%%%%%%%%%%%%%%%%%%%%%%%%%%%%%%%%%%%%%%%%%%%%%%%%%%%%%%%%%%%%%%%%%%%%%%%%%%%%%%%%%%%%%%%%%%%%%%%%%%%%%%%%%%%%%%%%%%%%%%%%%%%%%%%%%%%%%%%%%%%%%%%%%%%%%%%%%%%%%%%%%%%%%%%%%%%%%%%%%%%%%%%%%%%%%%%%%%%%%%%%%%%%%%%%%%%%%%%%%%%%%%%%%%%%%%%%%%%%%%%%%%%%%%%%%%%%%%%%%%%
\section{Preliminaries}
\label{sec: preliminares}

In this section, we present some basic definitions and results that will be used throughout the paper.

\begin{defi}
A \emph{groupoid} $\Gamma$ is a small category in which every morphism is an isomorphism. 
We will denote the set of \emph{objects} of $\Gamma$ by $\Gamma_0$. 
The identity morphism of $e\in\Gamma_0$ will be denoted again by $e$. 
We will identify the groupoid with its set of morphisms and sometimes we will refer to $\Gamma_0$ as the set of idempotents of $\Gamma$. 
If $e,f\in\Gamma_0$ and $\gamma$ is a morphism from $e$ to $f$, we will write $d(\gamma)=e$, $r(\gamma)=f$, and $\gamma\in f\Gamma e$. 
Notice that $d(\gamma^{-1})=r(\gamma)=f$ and $r(\gamma^{-1})=d(\gamma)=e$.
For $e\in\Gamma_0$, we denote by $e\Gamma$ (resp. $\Gamma e$) the set of morphisms $\gamma\in\Gamma$ with $r(\gamma)=e$ (resp. $d(\gamma)=e$).
If $d(\delta)\neq r(\gamma)$, we will say that $\delta\gamma$ \emph{is not defined} in $\Gamma$.  
\end{defi}

\begin{exems}
\label{ex:groupoids}
    \begin{enumerate}[(1)]

      \item Any group can be regarded as a groupoid with only one object.

        \item Let $X$ be a nonempty set and let $\Gamma:=X\times X$.
        Then, $\Gamma$ is a groupoid where, for each $(y,x),(z,w)\in \Gamma$, the composition  $(z,w)(y,x)$ is defined if and only if $w=y$, and in this event, $(z,y)(y,x)=(z,x)$.
        Note that $\Gamma_0=\{(x,x):x\in X\}$.

        \item Generalizing the previous example, let $X$ be a nonempty set, $G$ be a group and let $\Gamma:=X\times G\times X$. 
        For each $(z,h,w),(y,g,x)\in \Gamma$, the composition  $(z,h,w)(y,g,x)$ is defined if and only if $w=y$, and in this event, 
        $(z,h,y)(y,g,x)=(z,hg,x)$.
        This endow $\Gamma$ with a structure of groupoid with $\Gamma_0=\{(x,e_G,x):x\in X\}$
         Notice  that we obtain the foregoing example when  $G$ is the trivial group.

    \end{enumerate}   
\end{exems}

For a detailed study of groupoids the interested reader is referred to \cite{Higgins} or \cite{IR}. 

\medskip

\emph{Throughout the text, let $\Gamma$ be a groupoid.}

\medskip

\begin{defi}
An additive group $X$ is \emph{$\Gamma$-graded} if there exists a family $\{X_\gamma : \gamma \in \Gamma\}$ of subgroups of $X$ such that $X = \bigoplus_{\gamma \in \Gamma} X_\gamma$.
\end{defi}
Graded rings and graded modules (which we define below) are examples of graded additive groups.
We now introduce some notation and terminology common to both graded rings and graded modules.
\begin{defi}
Let $X=\bigoplus_{\gamma\in\Gamma}X_\gamma$ be a $\Gamma$-graded additive group. 
For each $\gamma\in\Gamma$, $X_\gamma$ is called the \emph{homogeneous component of degree $\gamma$ of $X$}, its nonzero elements are said to be \emph{homogeneous of degree $\gamma$} and we write $\deg(x)=\gamma\in\Gamma$ when $0\neq x\in X_\gamma$. 
For each $\gamma\in\Gamma$ and $x\in X$, we write $x=\sum_{\gamma\in\Gamma}x_\gamma$ with $x_\gamma\in X_\gamma$ (with all but finitely many $x_\gamma$ nonzero) and we call $x_\gamma$ the \emph{homogeneous component of degree $\gamma$ of $x$}. 
The set of the \emph{homogeneous elements} of $X$ is $\h(X):=\bigcup_{\gamma\in\Gamma}X_\gamma$. 
The \emph{support} of $X$ is $\supp(X):=\{\gamma\in\Gamma:X_\gamma\neq0\}$ and the \emph{support} of $x\in X$ is $\supp(x):=\{\gamma\in\Gamma:x_\gamma\neq0\}$. 
When $\sigma,\tau\in\Gamma$ are such that $d(\sigma)\neq r(\tau)$, we will adopt the convention $X_{\sigma\tau}:=\{0\}$. 
If $Y$ is a $\Gamma$-graded additive group, then a homomorphism of groups $g:X\to Y$ is said to be a \emph{gr-homomorphism of groups} if $g(X_\sigma)\subseteq Y_\sigma$ for all $\sigma\in\Gamma$. 
If, moreover, $g$ is bijective, then we say that $g$ is a \emph{gr-isomorphism of groups}, that $X$ is \emph{gr-isomorphic} to $Y$ as groups, and we will denote it by $X\isogr Y$.
\end{defi}

\subsection{Groupoid graded rings}\label{sec:grupoid graded rings}
\emph{Throughout this work, rings are assumed to be associative but not necessarily unital.}
\begin{defi}
Let $R$ be a ring.
We say that $R$ is a \emph{$\Gamma$-graded ring} if there is a family $\{R_\gamma\}_{\gamma\in\Gamma}$ of additive subgroups of $R$ such that 
$R=\bigoplus_{\gamma\in\Gamma}R_\gamma$ and
$R_\gamma R_\delta\subseteq  R_{\gamma\delta}$, for each $\gamma,\delta\in\Gamma$. 
Following \cite{CLP}, we say that $R$ is \emph{object unital}
if, for all $e\in\Gamma_0$, the ring $R_e$ is unital  with identity element $1_e$, and for all $\gamma\in \Gamma$ and $a\in R_{\gamma}$, we have $1_{r(\gamma)}a=a1_{d(\gamma)}=a$.
\end{defi}
 
It follows from \cite[Proposition 2.1.1]{Lund} that the object unital $\Gamma$-graded ring $R=\bigoplus_{\gamma\in\Gamma}R_\gamma$ is unital if and only if the set
$$\Gamma_0'(R):=\{e\in\Gamma_0 \colon 1_e\neq 0\}$$ 
is finite. 
In this event, $1_R=\sum_{e\in\Gamma_0'}1_e$.

Every group graded (unital) ring is an (object unital) groupoid graded ring.
Next we present other examples of (object unital) groupoid graded rings.

\begin{exem}
\label{exem: M_I(A)}
		Let $A$ be a (unital) ring and let $I$ be a non-empty set. Consider the ring $\M_I(A)$ of $I \times I$ matrices with entries in $A$ where only finitely many entries are nonzero.
		Then $\M_I(A)$ has a natural structure of an (object unital) $I \times I$-graded ring via $\M_I(A)_{(i,j)} := A E_{ij}$, where $E_{ij}$ denotes the elementary matrix with 1 at the $(i,j)$-entry and 0 elsewhere.
        When $I$ is a partially ordered set, we can consider the $I \times I$-graded subring $\UT_I(A)$ of $\M_I(A)$ consisting of matrices $(a_{ij})_{ij}$ such that $a_{ij} = 0$ whenever $i \not\leq j$.
        \qed
	\end{exem}

        \begin{exem} \label{ex: anel de categoria}
        Let $\mathcal{C}$ be a \emph{small preadditive category}, i.e., the class of objects is a set and every class of morphisms is an additive group such that the composition of morphisms is $\mathbb{Z}$-bilinear.
        Denote the set of objects of $\mathcal{C}$ by $\mathcal{C}_0$ and the set of morphisms from $A\in\mathcal{C}_0$ to $B\in\mathcal{C}_0$ by $\mathcal{C}(A,B)$. 
        We have a ring
        \[R_\mathcal{C}:=\bigoplus_{A,B\in \mathcal{C}_0}{\mathcal{C}}(A,B)\] 
        where, given morphisms $f$ and $g$, the product $fg$ is defined by $f\circ g$ if the composition is possible and $0$ otherwise. 
        The ring $R_\mathcal{C}$ has a natural $\mathcal{C}_0\times \mathcal{C}_0$-grading via 
        \[(R_\mathcal{C})_{(A,B)}:={\mathcal{C}}(B,A)\]
        for each $A,B\in \mathcal{C}_0$. 
        Considering the identity morphisms $I_A\in \mathcal{C}(A,A)$, $A\in \mathcal{C}_0$, it is straightforward to show that $R_\mathcal{C}$  is object unital.\qed
\end{exem}

\emph{In this work, all groupoid graded rings are supposed to be object unital. }

\begin{defi}
Let $R$ be a $\Gamma$-graded ring. 
\begin{enumerate}[\rm (1)]
    \item A \emph{graded right ideal} $U$ of $R$ is a right ideal of $R$ such that $U=\bigoplus_{\gamma\in\Gamma}U_\gamma$, where $U_\gamma=U\cap R_\gamma$ for each $\gamma\in\Gamma$. 
\emph{Graded left ideals} are defined similarly.

\item We say that $U$ is a \emph{graded ideal} of $R$ if $U$ is a graded right ideal and a graded left ideal of $R$.
\end{enumerate}
\end{defi}

If $U$ is a graded ideal of the $\Gamma$-graded ring $R$, then $R/U$ is a $\Gamma$-graded ring via $(R/U)_\gamma:=\dfrac{R_\gamma+U}{U}$ for each $\gamma\in\Gamma$. 
Analogously to the ungraded case, there exists a bijective correspondence between graded right (resp. left) ideals of $R/U$ and graded right (resp. left) ideals of $R$ containing $U$.

\begin{defi}
Let $R$ be a $\Gamma$-graded ring, $\gamma\in\Gamma$ and $a\in R_\gamma$.
We say that $a$ is \emph{right} (resp. \emph{left}) \emph{gr-invertible} if there exists $b\in R_{\gamma^{-1}}$ such that $ab=1_{r(\gamma)}$ (resp. $ba=1_{d(\gamma)}$).
The element $a$ is said to be \emph{gr-invertible} if it is both right and left gr-invertible. 
Note that if $b,c\in R_{\gamma^{-1}}$ are such that $ab=1_{r(\gamma)}$ and $ca=1_{d(\gamma)}$, then $b=1_{d(\gamma)}b=cab=c1_{r(\gamma)}=c=:a^{-1}$.
We denote by $\Ugr(R)$ the set of all gr-invertible homogeneous elements of $R$.
If $R\neq0$ and $\Ugr(R)=\h(R)\setminus\{0\}$, then $R$ is called a \emph{gr-division ring}. 
This class of rings was studied in \cite[\S 4]{Artigoarxiv}.
\end{defi}

\begin{defi}
Given $\Gamma$-graded rings $R$ and $S$, a \emph{gr-homomorphism of rings} is a homomorphism of rings $\varphi:R\to S$ which is also a gr-homomorphism of $\Gamma$-graded abelian groups and satisfies $\varphi(1^R_e)=1^S_e$ for all $e\in\Gamma_0$, where $1^R_e$ (resp. $1^S_e$) denotes the unity of the ring $R_e$ (resp. $S_e$). A \emph{gr-isomorphism} of $\Gamma$-graded rings is a bijective gr-homomorphism of $\Gamma$-graded rings. When there exists a gr-isomorphism of $\Gamma$-graded rings $\varphi: R\to S$, we say that $R$ is \emph{gr-isomorphic} to $S$ as rings and we write $R\cong_{gr}S$.
\end{defi}

\subsection{Groupoid graded modules}

Throughout this subsection, let $\Gamma$ be a groupoid and $R=\bigoplus_{\gamma\in \Gamma}R_\gamma$ be a $\Gamma$-graded ring. 

\begin{defi}
If $M$ is a right $R$-module, we say that $M$ is \emph{$\Gamma$-graded} if there exists a family $\{M_\gamma:\gamma\in\Gamma\}$ of additive subgroups of $M$ such that $M=\bigoplus_{\gamma\in\Gamma}M_\gamma$ and, for each $\sigma,\tau\in\Gamma$, we have that $M_\sigma R_\tau \subseteq  M_{\sigma\tau}$ if $\sigma\tau$ is defined, and $M_\sigma R_\tau=0$ otherwise.
A submodule $N$ of $M$ is a \emph{graded submodule}, and we denote $N\subgr M$, if $N=\bigoplus_{\gamma\in\Gamma}N_\gamma$ where $N_\gamma:=N\cap M_\gamma$ for each $\gamma\in\Gamma$ (or, equivalently, $N$ is generated by homogeneous elements of $M$). 
In this case, the quotient module $M/N$ is $\Gamma$-graded via $(M/N)_\gamma:=\dfrac{M_\gamma+N}{N}$ for each $\gamma\in\Gamma$.
Similarly to the ungraded context, there exists a bijective correspondence between the graded submodules of $M/N$ and the graded submodules of $M$ containing $N$. We also have that any intersection and any sum of graded submodules of $M$ is a graded submodule of $M$.
Analogously, we define \emph{$\Gamma$-graded left $R$-modules} and its graded submodules.
\end{defi}

Remember that a right $R$-module $M$ is said to be \emph{unital} if $MR=M$. 
By \cite[Proposition 5]{CLP}, a $\Gamma$-graded right $R$-module $M$ is unital if and only if $m_\sigma1_{d(\sigma)}=m_\sigma$ 
for all $\sigma\in\Gamma$ and  $m_\sigma\in M_\sigma$.

\medskip

In this work, \emph{all groupoid graded modules are supposed to be unital.}

\medskip

Clearly $R_R$ is a (unital) $\Gamma$-graded right $R$-module and {graded right ideals} of $R$ are precisely the graded submodules of $R_R$.

\begin{defi}
Let $R$ be a $\Gamma$-graded ring, and let $\{M_j:j\in J\}$ be a family of $\Gamma$-graded right $R$-modules. 
We define the \emph{graded direct product} of this family, denoted by $\prod_{j\in J}^{\rm gr}M_j$, as the $\Gamma$-graded right $R$-module whose homogeneous component of degree $\gamma\in\Gamma$ is the additive group $\prod_{j\in J}(M_j)_\gamma$.
The direct sum $\bigoplus_{j\in J}M_j$ is also a $\Gamma$-graded right $R$-module via $(\bigoplus_{j\in J}M_j)_\gamma:=\bigoplus_{j\in J}(M_j)_\gamma$ for each $\gamma\in\Gamma$. 
If $M$ is a $\Gamma$-graded right $R$-module and $N\subgr M$, we say that $N$ is a \emph{graded direct summand} of $M$ if there exists a graded submodule $X$ of $M$ such that $M=N\oplus X$. 
\end{defi}

\begin{defi}
A $\Gamma$-graded right $R$-module $M$ is called \emph{gr-simple} if $M\neq\{0\}$ and its only graded submodules are $\{0\}$ and $M$. If $M$ is a sum of gr-simple graded submodules, then $M$ is said to be \emph{gr-semisimple}. 
By \cite[Proposition 52]{CLP}, every gr-semisimple graded module is a direct sum of gr-simple graded submodules.
If $R_R$ is gr-semisimple, then we say that $R$ is a \emph{gr-semisimple ring}. 
By \cite[Corollary 5.28]{Artigoarxiv}, $R$ is a gr-semisimple ring if and only if $_RR$ is gr-semisimple.
\end{defi}

\begin{defi}
Let  $M$ be a $\Gamma$-graded right  $R$-module and $\sigma\in\Gamma$. 
For each $\gamma\in\Gamma$, define $M(\sigma)_\gamma:=M_{\sigma\gamma}$. 
The \emph{shift  of $M$ by $\sigma$} is the $\Gamma$-graded right $R$-module $M(\sigma):=\bigoplus_{\gamma\in\Gamma}M(\sigma)_\gamma$. 
By \cite[Lemma 2.6]{Artigoarxiv}, $M(\sigma)$ equals to $M(r(\sigma))$ as $R$-modules.
If $\Delta_0\subseteq\Gamma_0$, we denote $M(\Delta_0):=\bigoplus_{e\in\Delta_0}M(e)$.
Analogously, if $M$ is a $\Gamma$-graded left $R$-module and  $\sigma\in\Gamma$, then the \emph{shift  of $M$ by $\sigma$} is the $\Gamma$-graded left $R$-module $(\sigma)M:=\bigoplus_{\gamma\in\Gamma}((\sigma)M)_\gamma$, where $((\sigma)M)_\gamma:=M_{\gamma\sigma}$ for each $\gamma\in\Gamma$.
One can verify that $(\sigma)M$ equals to $(d(\sigma))M$ as $R$-modules.
\end{defi}

Note that if $M$ is a $\Gamma$-graded right  
$R$-module, then $M=\bigoplus_{e\in\Gamma_0}M(e)$. This decomposition will be important, so we define $$\Gamma'_0(M):=\{e\in\Gamma_0:M(e)\neq0\}.$$
 By \cite[Lemma 2.7]{Artigoarxiv}, $\Gamma'_0(R)=\Gamma'_0(R_R)=\Gamma'_0(_RR)$. 

\begin{defi}
Let  $M$ and $N$ be $\Gamma$-graded right  $R$-modules.
Given $\gamma\in\Gamma$, we say that a homomorphism of right $R$-modules $g:M\to N$ is a \emph{homomorphism of degree $\gamma$} if $g(M_\sigma)\subseteq N_{\gamma\sigma}$  for each $\sigma\in\Gamma$, where we understand that $N_{\gamma\sigma}=\{0\}$ whenever $\gamma\sigma$ is not defined in $\Gamma$. 
By \cite[Lemma 2.9]{Artigoarxiv}, if $g:M\to N$ has degree $\gamma$, then $M(\Gamma_0\setminus\{d(\gamma)\})\subseteq\ker g$ and $\im g\subseteq N(r(\gamma))$.
For all $\gamma\in\Gamma$, $\HOM_R(M,N)_\gamma$ will denote the additive group of the homomorphisms $g:M\to N$ of degree $\gamma$. Hence, we can define the $\Gamma$-graded additive group $\HOM_R(M,N):=\bigoplus_{\gamma\in\Gamma}\HOM_R(M,N)_\gamma$. 
For left $R$-modules, we replace the condition $g(M_\sigma)\subseteq N_{\gamma\sigma}$ with $(M_\sigma)g\subseteq N_{\sigma\gamma}$.
 Considering the decomposition $M=\bigoplus_{e\in\Gamma_0}M(e)$, we denote by $\mathds{1}_e$ the canonical projection $M\to M(e)$, which is an element of $\END_R(M)_e$. By \cite[Proposition 2.10]{Artigoarxiv}, this is the unity of degree $e$ in the $\Gamma$-graded ring $\END_R(M):=\HOM_R(M,M)$. 
Furthermore, by \cite[Lemma 3.4]{Artigoarxiv}, $R\isogr\END(R_R)$ as $\Gamma$-graded rings.
\end{defi}

\begin{defi}
Let $M$, $N$ be $\Gamma$-graded right  $R$-modules. A homomorphism of modules $g:M\to N$  is said to be a \emph{gr-homomorphism of modules} if $g(M_\sigma)\subseteq N_\sigma$ for all $\sigma\in\Gamma$. 
We denote by $\Homgr(M,N)$ the abelian group consisting of the gr-homomorphisms of modules $g:M\to N$.  
We denote by $\Endgr(M)$ the ring of gr-homomorphisms $M\to M$. 
It is easy to show that if $g\in\Homgr(M,N)$, then $\ker g$ is a graded submodule of $M$ and $\im g$ is a graded submodule of $N$.
A bijective gr-homomorphism of modules $g:M\to N$ is called a \emph{gr-isomorphism of modules}. 
We say that $M$ is \emph{gr-isomorphic} to $N$ as modules, denoted by $M\cong_{gr}N$, if there exists a gr-isomorphism of modules $g:M\to N$.
\end{defi}

%%%%%%%%%%%%%%%%%%%%%%%%%%%%%%%%%%%%%%%%%%%%%%%%%%%%%%%%%%%%%%%%%%%%%%%%%%%%%%%%%%%%%%%%%%%%%%%%%%%%%%%%%%%%%%%%%%%%%%%%%%%%%%%%%%%%%%%%%%%%%%%%%%%%%%%%%%%%%%%%%%%%%%%%%%%%%%%%%%%%%%%%%%%%%%%%%%%%%%%%%%%%%%%%%%%%%%%%%%%%%%%%%%%%%%%%%%%%%%%%%%%%%%%%%%%%%%%%%%%%%%%%%%%%%%%%%%%%%%%%%%%%%%%%%%%%%%%%%%%%%%%%%%%%%%%%%%%%%%%%%%%%%%%%%%%%%%%%%%%%%%%%%%%%%%%%%%%%%%%%%%%%%%%%%%%%%%%%%%%%%%%%%%%%%%%%%%%%%%%%%%%%%%%%%%%%%%%%%%%%%%%%%%%%%%%%%%%%%%%%%%%%%%%%%%%%%%%%%%%%%%%%%%%%%%%%%%%%%%%%%%%%%%%%%%%%%%%%%%%%%%%%%%%%%%%%%%%%%%%%%%%%%%%%%%%%%%%%%%%%%%%%%%%%%%%%%%%%%%%%%%%%%%%%%%%%%%%%%%%%%%%%%%%%%%%%%%%%%%%%%%%%%%%%%%%%%%%%%%%%%%%%%%%%%%%%%%%%%%%%%%%%%%%%%%%%%%%%%%%%%%%%%%%%%%%%%%%%%%%%%%%%%%%%%%%%%%%%%%%%%%%%%%%%%%%%%%%%%%%%%%%%%%%%%%%%%%%%%%%%%%%%%%%%%%%%%%%%%%%%%%%%%%%%%%%%%%%%%%%%%%%%%%%%%%%%%%%%%%%%%%%%%%%%%%%%%%%%

%%%%%%%%%%%%%%%%%%%%%%%%%%%%%%%%%%%%%%%%%%%%%%%%%%%%%%%%%%%%%%%%%%%%%%%%%%%%%%%%%%%%%%%%%%%%%%%%%%%%%%%%%%%%%%%%%%%%%%%%%%%%%%%%%%%%%%%%%%%%%%%%%%%%%%%%%%%%%%%%%%%%%%%%%%%%%%%%%%%%%%%%%%%%%%%%%%%%%%%%%%%%%%%%%%%%%%%%%%%%%%%%%%%%%%%%%%%%%%%%%%%%%%%%%%%%%%%%%%%%%%%%%%%%%%%%%%%%%%%%%%%%%%%%%%%%%%%%%%%%%%%%%%%%%%%%%%%%%%%%%%%%%%%%%%%%%%%%%%%%%%%%%%%%%%%%%%%%%%%%%%%%%%%%%%%%%%%%%%%%%%%%%%%%%%%%%%%%%%%%%%%%%%%%%%%%%%%%%%%%%%%%%%%%%%%%%%%%%%%%%%%%%%%%%%%%%%%%%%%%%%%%%%%%%%%%%%%%%%%%%%%%%%%%%%%%%%%%%%%%%%%%%%%%%%%%%%%%%%%%%%%%%%%%%%%%%%%%%%%%%%%%%%%%%%%%%%%%%%%%%%%%%%%%%%%%%%%%%%%%%
\section{Chain conditions on graded modules}
\label{sec: chain conditions}

We present several chain conditions that can be defined for groupoid graded rings and modules. The first kind,  we refer to  them as standard chain conditions, are the straightforward generalization of chain conditions for group graded rings and modules. Although useful, they fail to address important cases because most groupoid graded   modules (rings) are naturally decomposed as an infinite direct sum of graded submodules (one-sided ideals).
The second kind of chain conditions  apply more broadly and are called $\Gamma_0$-chain conditions. They are the natural extension to groupoid graded rings of the concept  categorically artinian (categorically noetherian) given in  \cite[Definition~1.1]{Abrams_ArandaPino_Perera_SilesMolina} and \cite[Section~4.2]{Abrams_Ara_SilesMolina}. Strongly $\Gamma_0$-chain conditions still deal with infinite direct sums, but they refer to a certain kind of chains to be defined later which appear naturally as kernels and images of gr-homomorphims and as powers of graded ideals among others.

\subsection{Standard chain conditions}

\begin{defi}
	Let $R$ be a $\Gamma$-graded ring and $M$ be a $\Gamma$-graded right $R$-module.
	\begin{enumerate}[\rm (1)]
		\item We say that $M$ is \emph{gr-artinian} if $M$ satisfies the  descending chain condition (DCC) for graded submodules, that is, there does not exist a strictly decreasing infinite chain of graded submodules of $M$.

        \item We say that the ring $R$ is \emph{right gr-artinian} if the right $R$-module $R_R$ is gr-artinian.
        
        \item We say that $M$ is \emph{gr-noetherian} if $M$ satisfies the ascending chain condition (ACC) for graded submodules, that is, there does not exist a strictly increasing infinite chain of graded submodules of $M$.

        \item We say that the ring $R$ is \emph{right gr-noetherian} if the right $R$-module $R_R$ is gr-noetherian.

        \item We say that $M$ is \emph{finitely generated} if it is generated by finitely many elements. Equivalently, it is generated by finitely many homogeneous elements.

        \item We say that $M$ is \emph{finitely gr-cogenerated} if, for every family $\{M_i:i\in I\}$ of graded submodules of $M$ such that $\bigcap_{i\in I}M_i=0$, there exist $i_1,\dots,i_n\in I$ such that $\bigcap_{t=1}^nM_{i_t}=0$.
	\end{enumerate}
\end{defi}

\begin{obs}
\label{obs: G0-conditions}
Let $R$ be a $\Gamma$-graded ring and $M$ be a $\Gamma$-graded right  $R$-module. 

    (1) It is straightforward to check that $M$ is finitely generated if and only if, 
    for every family $\{M_i:i\in I\}$ of graded submodules of $M$ such that $\sum_{i\in I}M_i=M$, there exist $i_1,\dots,i_n\in I$ such that $\sum_{t=1}^nM_{i_t}=M$. In this case, $\Gamma'_0(M)$ is finite. 

    Notice this equivalence of the concept of finitely generated graded module is the dual statement of the concept of finitely gr-cogenerated module.

    (2) If $M$ is finitely gr-cogenerated, then $\Gamma'_0(M)$ is finite, since $\bigcap_{e\in\Gamma'_0(M)}M(\Gamma_0\setminus\{e\})=0$ and $\bigcap_{t=1}^nM(\Gamma_0\setminus\{e_t\})=M(\Gamma_0\setminus\{e_1,\dots e_n\})$ for each $e_1,\dots, e_n\in\Gamma'_0(M)$.

    (3) If $M$ is gr-artinian or gr-noetherian, then, from the direct sum decomposition $M=\bigoplus_{e\in\Gamma_0}M(e)$, we have that $M(e)=0$ for all but finitely many $e\in\Gamma_0$.
\end{obs}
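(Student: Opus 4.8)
The plan is to treat the three items in turn, each time reducing the stated finiteness property to the internal decomposition $M=\bigoplus_{e\in\Gamma_0}M(e)$ together with the fact that, for any $\Delta_0\subseteq\Gamma_0$, the submodule $M(\Delta_0)=\bigoplus_{e\in\Delta_0}M(e)$ is a graded submodule of $M$ (immediate from $M_\gamma R_\tau\subseteq M_{\gamma\tau}$ and $r(\gamma\tau)=r(\gamma)$ whenever $\gamma\tau$ is defined) and that the sum $\bigoplus_{e\in\Gamma_0}M(e)$ is direct.

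For the equivalence in (1) I would argue both implications directly. Assume $M$ is finitely generated; by the definition we may take homogeneous generators $m_1,\dots,m_k\in\h(M)$. If $\{M_i:i\in I\}$ is a family of graded submodules with $\sum_{i\in I}M_i=M$, then each $m_j$ is a sum of elements drawn from finitely many of the $M_i$; pooling the indices used for $j=1,\dots,k$ yields a finite $F\subseteq I$ with $m_j\in\sum_{i\in F}M_i$ for every $j$, and since the right-hand side is a submodule we get $M=\sum_{j}m_jR\subseteq\sum_{i\in F}M_i\subseteq M$. Conversely, apply the assumed property to the family $\{mR:m\in\h(M)\}$; two routine points need checking here: first, $mR$ is a graded submodule when $m\in\h(M)$, since it is generated by the homogeneous elements $mr_\tau$ with $r_\tau\in R_\tau$; second, $\sum_{m\in\h(M)}mR=M$, because $M$ is unital and hence $m=m\,1_{d(\deg m)}\in mR$ for each $m\in\h(M)$. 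The hypothesis then gives finitely many $m_1,\dots,m_n\in\h(M)$ with $m_1R+\dots+m_nR=M$, so $M$ is finitely generated. The finiteness of $\Gamma'_0(M)$ is now immediate: $M=\sum_{e\in\Gamma_0}M(e)$ is a sum of graded submodules, so finitely many $M(e_1),\dots,M(e_n)$ already sum to $M$, and directness of $\bigoplus_{e\in\Gamma_0}M(e)$ forces $M(e)=0$ for $e\notin\{e_1,\dots,e_n\}$.

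For (2) I would use the family $\{M(\Gamma_0\setminus\{e\}):e\in\Gamma_0\}$ of graded submodules, whose intersection is $0$; since $\bigcap_{t=1}^nM(\Gamma_0\setminus\{e_t\})=M(\Gamma_0\setminus\{e_1,\dots,e_n\})$, finite gr-cogeneration yields a finite set $\{e_1,\dots,e_n\}$ with $M(\Gamma_0\setminus\{e_1,\dots,e_n\})=0$, that is, $\Gamma'_0(M)\subseteq\{e_1,\dots,e_n\}$. For (3), if $\Gamma'_0(M)$ were infinite I would choose distinct $e_1,e_2,\dots\in\Gamma'_0(M)$ and use the strictly ascending chain $M(e_1)\subsetneq M(e_1)\oplus M(e_2)\subsetneq\cdots$ to contradict the ACC, or the strictly descending chain $M(\Gamma_0\setminus\{e_1\})\supsetneq M(\Gamma_0\setminus\{e_1,e_2\})\supsetneq\cdots$ to contradict the DCC; strictness in both cases follows from $M(e_i)\neq0$ and the directness of the decomposition.

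No step is a genuine obstacle; the only places deserving care are the reduction to homogeneous generators and the verifications that $mR$ is a graded submodule and that $\sum_{m\in\h(M)}mR=M$ in the converse of (1), both of which are straightforward consequences of the definitions and of the standing assumption that $M$ is unital.
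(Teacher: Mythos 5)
Your proof is correct and fills in precisely the routine verifications that the paper defers with the phrase ``straightforward to check''; since the paper gives no explicit argument in this Remark, there is nothing to diverge from, and your line of reasoning — reducing each finiteness claim to the direct decomposition $M=\bigoplus_{e\in\Gamma_0}M(e)$ and the compactness property of item (1), plus noting that $mR$ is graded and $\sum_{m\in\h(M)}mR=M$ via unitality — is exactly what is intended.
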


As we have just observed, being a finitely generated (resp. finitely gr-cogenerated, gr-artinian, gr-noetherian) module implies that $\Gamma'_0(M)$ is finite. This motivates the definition of the $\Gamma_0$-concepts in \cref{def: subsec:Gamma_0 conditions}.

\medskip 

We begin with the following characterization of gr-artinian/gr-noetherian modules.

\begin{prop}
\label{prop: gr-noeth <-> todo submod eh fg}
    Let $R$ be a $\Gamma$-graded ring and $M$ be a $\Gamma$-graded right $R$-module. The following assertions hold:
    \begin{enumerate}[\rm (1)]
        \item $M$ is gr-noetherian if and only if  $N$ is finitely generated for each $N\subgr M$.
        \item $M$ is gr-artinian if and only if $M/N$ is finitely gr-cogenerated for each $N\subgr M$.
    \end{enumerate}
\end{prop}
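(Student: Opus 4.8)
The plan is to prove the two equivalences by the same argument one uses in the classical (ungraded) case, translated into the graded setting, relying on the equivalent characterizations of ``finitely generated'' and ``finitely gr-cogenerated'' recorded in \Cref{obs: G0-conditions}(1) and in the definition itself. Note that parts (1) and (2) are dual to each other, with ascending chains replaced by descending chains, sums by intersections, submodules by quotient modules, and ``finitely generated'' by ``finitely gr-cogenerated''; so it suffices to carry out (1) carefully and then indicate the dualization for (2).

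For (1), the plan is as follows. Assume first that $M$ is gr-noetherian and let $N\subgr M$. If $N$ were not finitely generated, I would build a strictly increasing chain of finitely generated graded submodules $N_1\subsetneq N_2\subsetneq\cdots$ inside $N$: having chosen $N_k$ finitely generated and properly contained in $N$, pick a homogeneous element $x\in N\setminus N_k$ (possible since $N$ is generated by its homogeneous elements, so if all homogeneous elements of $N$ lay in $N_k$ then $N=N_k$) and set $N_{k+1}:=N_k+xR$, which is again finitely generated and graded. This contradicts the ACC. Conversely, suppose every graded submodule of $M$ is finitely generated and let $N_1\subseteq N_2\subseteq\cdots$ be an ascending chain of graded submodules. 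Then $N:=\bigcup_k N_k$ is a graded submodule of $M$ (any sum of graded submodules is graded, and a union of a chain is the sum), hence finitely generated, say by homogeneous elements $x_1,\dots,x_n$; each $x_i$ lies in some $N_{k_i}$, so all of them lie in $N_{k}$ with $k=\max_i k_i$, forcing $N=N_k$ and the chain to stabilize. This uses only that graded submodules are closed under arbitrary sums and that a finitely generated graded module is generated by finitely many homogeneous elements, both recalled in the preliminaries.

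For (2), the plan is to run the dual argument using the characterization in \Cref{obs: G0-conditions}(1) (phrased there as the dual of finite gr-cogeneration). Assume $M$ is gr-artinian and let $N\subgr M$; a descending chain of graded submodules of $M/N$ corresponds, via the order-preserving bijection between graded submodules of $M/N$ and graded submodules of $M$ containing $N$, to a descending chain of graded submodules of $M$ above $N$, which stabilizes, so $M/N$ is gr-artinian. It therefore suffices to show a gr-artinian module is finitely gr-cogenerated: given a family $\{M_i:i\in I\}$ of graded submodules with $\bigcap_{i\in I}M_i=0$, the set of finite subintersections, ordered by reverse inclusion, has a minimal element by DCC, say $\bigcap_{t=1}^n M_{i_t}$; for any $j\in I$ this minimal element is contained in $M_j$ (else $\bigcap_{t=1}^n M_{i_t}\cap M_j$ would be strictly smaller), hence equals $\bigcap_{i\in I}M_i=0$. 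Conversely, if $M/N$ is finitely gr-cogenerated for every $N\subgr M$, take a descending chain $M_1\supseteq M_2\supseteq\cdots$ and put $N:=\bigcap_k M_k$; in $M/N$ the images $\overline{M_k}$ have zero intersection, so finitely many of them already intersect to zero, and since the chain is descending a single $\overline{M_k}=0$, i.e.\ $M_k=N$, so the chain stabilizes.

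The argument is essentially routine; the only points requiring care are the translations of the classical proof into the groupoid graded language, namely (i) making sure that the ``pick an element outside the current submodule'' step is carried out with \emph{homogeneous} elements so that the resulting submodules are graded, using that a graded submodule is generated by its homogeneous part, and (ii) invoking the bijective correspondence between graded submodules of $M/N$ and graded submodules of $M$ containing $N$ for the passage to quotients in (2). I expect the main (mild) obstacle to be purely bookkeeping: keeping the duality between (1) and (2) transparent so that the proof of (2) can honestly be written as ``dualize (1)'' rather than reproduced in full, and double-checking that ``finitely gr-cogenerated'' is genuinely the precise dual of the characterization of ``finitely generated'' given in \Cref{obs: G0-conditions}(1), which it is by inspection.
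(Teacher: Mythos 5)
Your proof is correct and takes essentially the same route as the paper's: a maximal (resp.\ minimal) element argument for one implication and a finite-generation (resp.\ finite-cogeneration) argument for the other, with appropriate care about homogeneous generators and the lattice correspondence for quotients. One cosmetic slip: in part (2) you say the set of finite subintersections is ``ordered by reverse inclusion''; it should be ordered by ordinary inclusion, so that the DCC (which concerns descending chains of graded submodules of $M$) directly yields a minimal element, which your subsequent reasoning then correctly shows must equal $N$.
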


\begin{proof}
	(1) As in the ungraded context, if $M$ is gr-noetherian and $N\subgr M$, then the set $\{L\subgr N:L \textrm{~is finitely generated}\}$ has a maximal element, which must be $N$.
    Conversely, if $M_1\subseteq M_2\subseteq\cdots$ is an increasing infinite chain of graded submodules of $M$ and $N:=\bigcup_{i=1}^\infty M_i$ is finitely generated, then there exists $k\geq1$ such that $N=M_k$, and thus $M_k=M_{k+l}$ for each $l\geq 0$.

    (2) We adapt the proof from the ungraded case \cite[Proposition 10.10]{AndersonFuller}.
    Suppose that $M$ is gr-artinian, $N\subgr M$, and $\{P_i:i\in I\}$ is a family of graded submodules of $M$ containing $N$ such that $\bigcap_{i\in I}P_i/N=0$.
    Since $M$ is gr-artinian, the family $\{\bigcap_{i \in I'} P_i : I' \subseteq I \text{ finite} \}$ has a minimal element, which is easily seen to be $N$.
    Therefore, $\bigcap_{i\in I'}P_i/N=\frac{\bigcap_{i\in I'}P_i}{N}=0$ for some finite subset $I'\subseteq I$, and it follows that $M/N$ is finitely gr-cogenerated.
    Conversely, suppose that $M/N$ is finitely gr-cogenerated for every $N\subgr M$.
    If $M_1\supseteq M_2\supseteq\cdots$ is a descending infinite chain of graded submodules of $M$ and $N:=\bigcap_{i=1}^\infty M_i$, then since $\bigcap_{i=1}^\infty M_i/N=0$, there exists $k\in\mathbb{N}$ such that $\bigcap_{i=1}^k M_i/N=0$, and therefore $N=\bigcap_{i=1}^k M_i=M_k$, from which it follows that $M_k=M_{k+l}$ for every $l\geq 0$.
\end{proof}

Now we show that these chain conditions are inherited by submodules and extensions. 

\begin{prop}
\label{prop: M art ou noeth <--> N e M/N tbm}
    Let $R$ be a $\Gamma$-graded ring, $M$ be a $\Gamma$-graded right $R$-module and $N$ be a graded submodule of $M$. Then $M$ is gr-noetherian (resp. gr-artinian) if and only if both $N$ and $M/N$ are gr-noetherian (resp. gr-artinian).
\end{prop}

\begin{proof}
	The ``only if'' part is clear.
    Suppose that both $N$ and $M/N$ are gr-noetherian. Let $M_1\subseteq M_2\subseteq\cdots$ be an increasing infinite chain of graded submodules of $M$. 
    Then $M_1\cap N\subseteq M_2\cap N\subseteq\cdots$ is an increasing infinite chain of graded submodules of $N$ and $\frac{M_1+N}{N}\subseteq \frac{M_2+N}{N}\subseteq\cdots$ is an increasing infinite chain of graded submodules of $M/N$. 
    Thus, there exists $k\geq1$ such that $M_k\cap N=M_{k+l}\cap N$ and $\frac{M_k+N}{N}=\frac{M_{k+l}+N}{N}$ for each $l\geq 0$. 
    It easy to check that $M_k=M_{k+l}$ for each $l\geq 0$, proving that $M$ is gr-noetherian. 
    The gr-artinian case is similar.
\end{proof}

Note that \Cref{prop: M art ou noeth <--> N e M/N tbm} implies that if the $\Gamma$-graded ring $R$ is  right gr-noetherian (resp. artinian) and $I$ is a graded ideal of $R$, then the graded ring $R/I$ is right gr-noetherian (resp. artinian) because the graded right ideals of $R/I$ are the same as its right graded $R$-submodules.

\begin{prop}
\label{prop: soma finita de art/noeth}
    Let $R$ be a $\Gamma$-graded ring. 
    \begin{enumerate}[\rm (1)]
        \item Any finite direct sum of gr-noetherian (resp. gr-artinian) $\Gamma$-graded right $R$-modules is gr-noetherian (resp. gr-artinian).
        \item If $N_1,\dotsc,N_n$ are gr-noetherian (resp. gr-artinian) graded submodules of the $\Gamma$-graded module $M$, then $N_1+\dotsb+N_n$ is gr-noetherian (resp. gr-artinian).
    \end{enumerate}
\end{prop}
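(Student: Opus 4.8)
The plan is to reduce everything to the two-term case and then invoke \Cref{prop: M art ou noeth <--> N e M/N tbm} (closure under submodules, quotients and extensions). For part (1), I would argue by induction on $n$. The case $n=1$ is trivial, so suppose $M = M_1 \oplus \dots \oplus M_n$ with each $M_i$ gr-noetherian. Set $N := M_1 \oplus \dots \oplus M_{n-1}$, regarded as a graded submodule of $M$ in the obvious way (its homogeneous components are $\bigoplus_{i<n}(M_i)_\gamma$, by the definition of the graded direct sum). By the induction hypothesis $N$ is gr-noetherian, and the canonical gr-isomorphism $M/N \cong_{gr} M_n$ shows that $M/N$ is gr-noetherian. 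Then \Cref{prop: M art ou noeth <--> N e M/N tbm} gives that $M$ is gr-noetherian. The gr-artinian case is word-for-word the same.

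For part (2), I would realise $N_1 + \dots + N_n$ as a gr-homomorphic image of the external direct sum. Consider the sum map
\[
\varphi \colon N_1 \oplus \dots \oplus N_n \longrightarrow M, \qquad \varphi(x_1,\dots,x_n) = x_1 + \dots + x_n .
\]
This is a homomorphism of right $R$-modules, and since each coordinate inclusion $N_i \hookrightarrow M$ is a gr-homomorphism, $\varphi$ maps the degree-$\gamma$ component $\bigoplus_i (N_i)_\gamma$ into $M_\gamma$; hence $\varphi$ is a gr-homomorphism of modules. Therefore $\ker\varphi \subgr N_1 \oplus \dots \oplus N_n$ and $\im\varphi = N_1 + \dots + N_n \subgr M$. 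By part (1) the module $N_1 \oplus \dots \oplus N_n$ is gr-noetherian (resp. gr-artinian), and by \Cref{prop: M art ou noeth <--> N e M/N tbm} the quotient $(N_1 \oplus \dots \oplus N_n)/\ker\varphi \cong_{gr} N_1 + \dots + N_n$ is gr-noetherian (resp. gr-artinian).

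There is essentially no serious obstacle here: the only points that need a line of justification are that the graded direct sum decomposition makes $N = \bigoplus_{i<n} M_i$ a genuine graded submodule of $M$ with $M/N \cong_{gr} M_n$, and that $\varphi$ is indeed a gr-homomorphism so that its image is a graded submodule; both are immediate from the definitions of graded direct sum, graded submodule and gr-homomorphism given in \Cref{sec: preliminares}. Everything else is a formal consequence of \Cref{prop: M art ou noeth <--> N e M/N tbm}, exactly as in the ungraded theory.
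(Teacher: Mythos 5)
Your argument is correct and follows essentially the same route as the paper: part (1) is induction on $n$ via the two-term extension criterion of \Cref{prop: M art ou noeth <--> N e M/N tbm} (the paper quotients by the last summand, you by the first $n-1$, which is immaterial), and part (2) realises $N_1+\dotsb+N_n$ as the image of the sum gr-homomorphism from the external direct sum, exactly as in the paper. No gaps.
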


\begin{proof}
    (1) Let $M_1,\dots,M_n$ be gr-noetherian (resp. gr-artinian) $\Gamma$-graded right $R$-modules. Since ${(M_1\oplus M_2)}/{M_2}\,\isogr \,{M_1},$ 
    it follows from \Cref{prop: M art ou noeth <--> N e M/N tbm} that $M_1\oplus M_2$ is gr-noetherian (resp. gr-artinian). 
    For each $1< t\leq n$, we have $\frac{M_1\oplus\cdots \oplus M_t}{M_t}\isogr M_1\oplus\cdots \oplus M_{t-1}$. 
    Thus, the result follows by induction on $t$.

    (2) Consider the natural gr-homomorphism $g\colon N_1\oplus\dotsb\oplus N_n\rightarrow M$. Then $\frac{N_1\oplus\dotsb\oplus N_n}{\ker g}\,\isogr\, \im g=N_1+\dotsb+N_n$. The result follows from (1) and \Cref{prop: M art ou noeth <--> N e M/N tbm}. 
\end{proof}

\begin{coro}
\label{coro: R noeth --> todo fg eh noeth}
    Let $R$ be a $\Gamma$-graded ring which is right gr-noetherian (resp. right gr-artinian) ring
    and $M$ be a $\Gamma$-graded right $R$-module. If $M$ is finitely generated, then $M$ is gr-noetherian (resp. gr-artinian).
\end{coro}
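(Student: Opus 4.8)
The plan is to exhibit $M$ as a gr-homomorphic image of a finite direct sum of (shifted) graded right ideals of $R$, and then feed this into the two structural results already obtained: \Cref{prop: M art ou noeth <--> N e M/N tbm}, which says sub- and quotient modules inherit the chain condition, and \Cref{prop: soma finita de art/noeth}, which says finite direct sums preserve it. I will write out the gr-noetherian case; the gr-artinian case is word for word the same, with ascending chains replaced by descending ones.

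Since $M$ is finitely generated, I pick homogeneous generators $m_1,\dots,m_k$, say $m_i\in M_{\sigma_i}$, and set $e_i:=d(\sigma_i)$. Unitality of $M$ gives $m_i1_{e_i}=m_i$, so $m_iR=m_i(1_{e_i}R)$ and $M=\sum_{i=1}^{k}m_i(1_{e_i}R)$. The assignment $a\mapsto m_ia$ defines a surjective $R$-linear map $1_{e_i}R\to m_iR$, but it raises degrees by $\sigma_i$, so it is not a gr-homomorphism as it stands. The remedy is to twist the source: on the graded module $(1_{e_i}R)(\sigma_i^{-1})$ the homogeneous component of degree $\gamma$ is $R_{\sigma_i^{-1}\gamma}$, and multiplication by $m_i$ carries it into $M_{\sigma_i(\sigma_i^{-1}\gamma)}=M_\gamma$, where invertibility of $\sigma_i$ in $\Gamma$ is used to evaluate $\sigma_i\sigma_i^{-1}\gamma=\gamma$ (valid precisely for the $\gamma$ with $r(\gamma)=r(\sigma_i)$, which are the only ones for which the component is nonzero). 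Summing these maps produces a gr-epimorphism
\[
\Phi\colon\ \bigoplus_{i=1}^{k}\,(1_{e_i}R)(\sigma_i^{-1})\ \longrightarrow\ M,\qquad (a_i)_i\ \longmapsto\ \sum_{i=1}^{k}m_ia_i.
\]

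It then suffices to check that the domain of $\Phi$ is gr-noetherian, since then $M\isogr\bigl(\bigoplus_i(1_{e_i}R)(\sigma_i^{-1})\bigr)/\ker\Phi$ is gr-noetherian by \Cref{prop: M art ou noeth <--> N e M/N tbm}. Now $1_{e_i}R$ is a graded right ideal of $R$, i.e.\ a graded submodule of the gr-noetherian module $R_R$, hence gr-noetherian again by \Cref{prop: M art ou noeth <--> N e M/N tbm}. Passing to the shift does not affect this: by \cite[Lemma 2.6]{Artigoarxiv} the module $(1_{e_i}R)(\sigma_i^{-1})$ has the same underlying $R$-module as $1_{e_i}R$, and its set of homogeneous elements is literally the same as that of $1_{e_i}R$ (the degrees are merely relabeled along the bijection $\gamma\mapsto\sigma_i^{-1}\gamma$), so the two modules have exactly the same graded submodules and hence the same chain conditions. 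Therefore $\bigoplus_{i=1}^{k}(1_{e_i}R)(\sigma_i^{-1})$ is gr-noetherian by \Cref{prop: soma finita de art/noeth}(1), and the proof is complete.

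The only step requiring genuine care is the bookkeeping around the shift: one must verify both that ``multiplication by $m_i$'' becomes degree-preserving exactly after twisting the source by $\sigma_i^{-1}$, and that this twist changes neither the underlying module nor the set of homogeneous elements — both facts resting on $\sigma_i$ being an isomorphism in the groupoid. Everything else is a formal appeal to the chain-condition propositions already established.
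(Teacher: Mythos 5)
Your proof is correct and is essentially the paper's own argument: the paper also picks homogeneous generators $m_i$ of degrees $\gamma_i$ and builds the surjective gr-homomorphism $R(\gamma_1^{-1})\oplus\cdots\oplus R(\gamma_k^{-1})\to M$, $(a_i)_i\mapsto\sum m_ia_i$, then applies \Cref{prop: soma finita de art/noeth} and \Cref{prop: M art ou noeth <--> N e M/N tbm}. Note that your domain $(1_{e_i}R)(\sigma_i^{-1})$ is literally the same graded module as $R(\sigma_i^{-1})$, since $e_i\sigma_i^{-1}\gamma=\sigma_i^{-1}\gamma$; you have simply made explicit the (correct but unstated in the paper) observation that a shift has the same graded submodule lattice as the unshifted module, so that each summand inherits gr-noetherianity from the graded right ideal $1_{e_i}R\subgr R_R$.
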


\begin{proof}
	(1)     Let $m_1,\dots,m_k\in\h(M)$ be generators of $M$. 
    If we set $\gamma_i:=\deg(m_i)$ for each $1\leq i \leq k$, then
    \begin{align*}
        g: R(\gamma_1^{-1})\oplus\cdots\oplus R(\gamma_k^{-1})&\longrightarrow M\\
        (a_1,\dots,a_k)&\longmapsto m_1a_1+\cdots+m_ka_k 
    \end{align*}
    is a surjective gr-homomorphism. 
    Thus, $M\isogr\frac{R(\gamma_1^{-1})\oplus\cdots\oplus R(\gamma_k^{-1})}{\ker (g)}$ is gr-noetherian (resp. gr-artinian) by \Cref{prop: soma finita de art/noeth}(1) and \Cref{prop: M art ou noeth <--> N e M/N tbm}, since any shift of a gr-noetherian (resp. gr-artinian) module is also gr-noetherian (resp. gr-artinian).
\end{proof}

In order to state the following corollary, we need the following definition.

\begin{defi}
    Let $R$ be a $\Gamma$-graded ring. We say that $S\subseteq R$ is a \emph{full gr-subring} of $R$ if $S$ is closed under sums and products, $S=\bigoplus_{\gamma\in\Gamma}(S\cap R_\gamma)$ and $1_e$, the identity element of the ring $R_e$, belongs to $S_e$ for all $e\in \Gamma_0$.
\end{defi}

 Note that if $S$ is a full gr-subring of the $\Gamma$-graded ring $R$, then $S$ is an (object unital) $\Gamma$-graded ring and we can regard $R$ as (unital) $\Gamma$-graded right $S$-module.

\begin{coro}
    Let $S$ be a  full gr-subring of the $\Gamma$-graded ring $R$. If $S$ is right gr-noetherian (resp. artinian) and $R$ is finitely generated as a right $S$-module, then $R$ is a right gr-noetherian (resp. gr-artinian) ring.
\end{coro}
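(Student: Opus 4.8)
The plan is to reduce this corollary to \Cref{coro: R noeth --> todo fg eh noeth} by viewing $R$ as a graded right $S$-module. As observed in the remark preceding the statement, since $S$ is a full gr-subring of $R$, the ring $S$ is an object unital $\Gamma$-graded ring and $R$ carries the structure of a $\Gamma$-graded right $S$-module; moreover this module is unital, because $1_e\in S_e$ for every $e\in\Gamma_0$ and $R$ is object unital as a ring, so $m_\sigma 1_{d(\sigma)}=m_\sigma$ for all $\sigma\in\Gamma$ and $m_\sigma\in R_\sigma$. Crucially, the homogeneous component of degree $\gamma$ of $R_S$ is the subgroup $R_\gamma$, which is the very same homogeneous component used when $R$ is regarded as a graded right $R$-module; hence the notion of graded submodule is identical on both sides.

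First I would apply \Cref{coro: R noeth --> todo fg eh noeth} with $S$ in the role of the graded ring and $R_S$ in the role of the module $M$. Since $S$ is right gr-noetherian (resp.\ right gr-artinian) and $R$ is finitely generated as a right $S$-module (hence, being graded, finitely generated by homogeneous elements), that corollary yields that $R_S$ is a gr-noetherian (resp.\ gr-artinian) $\Gamma$-graded right $S$-module.

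Next I would note that every graded right ideal $U$ of $R$ is a graded right $S$-submodule of $R_S$: one has $US\subseteq UR\subseteq U$, and $U=\bigoplus_{\gamma\in\Gamma}(U\cap R_\gamma)$ is graded with respect to the family $\{R_\gamma\}_{\gamma\in\Gamma}$. Therefore any strictly ascending (resp.\ strictly descending) infinite chain of graded right ideals of $R$ would be a strictly ascending (resp.\ strictly descending) infinite chain of graded submodules of $R_S$, contradicting the previous step. Hence $R_R$ is gr-noetherian (resp.\ gr-artinian), i.e., $R$ is a right gr-noetherian (resp.\ right gr-artinian) ring.

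I do not expect any genuine obstacle here; the argument is a short transfer of chain conditions along the inclusion $S\subseteq R$. The only points that need attention are the bookkeeping ones indicated above: that the $\Gamma$-grading of $R$ as a right $S$-module coincides with its grading as a right $R$-module, that $R_S$ is unital, and that finite generation of $R$ over $S$ passes to finite generation by homogeneous elements as required to invoke \Cref{coro: R noeth --> todo fg eh noeth}.
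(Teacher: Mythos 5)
Your proposal is correct and follows the same route as the paper's own proof: apply \Cref{coro: R noeth --> todo fg eh noeth} to $R$ viewed as a graded right $S$-module, then transfer the chain condition to graded right ideals of $R$ via the observation that these are $S$-submodules. The extra bookkeeping you spell out (object-unitality of $S$, unitality of $R_S$, coincidence of the gradings) is sound and left implicit in the paper.
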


\begin{proof}
By  \Cref{coro: R noeth --> todo fg eh noeth}, $R$ is a right gr-noetherian (resp. artinian) $S$-module. Since all graded right ideals of $R$ are also graded right $S$-submodules, the result holds.   
\end{proof}

Now we present some results that will be needed later. They relate chain conditions of a ring or module with those of a certain subobject. 

\begin{lema}
\label{lem: M gr-art => M1_f gr-art}
     Let $R$ be a $\Gamma$-graded ring, $\Delta_0\subseteq\Gamma_0$, and  $R_\Delta=\bigoplus_{e,f\in\Delta_0}1_eR1_f$.
   The following statements hold:
    \begin{enumerate}[\rm (1)]
        \item  If $M$ is a $\Gamma$-graded gr-artinian (resp. gr-noetherian)  right $R$-module, then $\sum_{e\in\Delta_0}M1_e$ is a gr-artinian (resp. gr-noetherian) right $R_\Delta$-module.
        \item If $R(\Delta_0)=\bigoplus_{e\in\Delta_0}R(e)$ is a gr-artinian (resp. gr-noetherian) $R$-module, then $R_\Delta$ is a right gr-artinian (resp. right gr-noetherian) ring.
    \end{enumerate}
    \end{lema}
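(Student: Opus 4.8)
The plan is to deduce both parts from a single structural fact: writing $M':=\sum_{e\in\Delta_0}M1_e$, the assignment $N'\mapsto N'R$ is an order-preserving injection from the lattice of graded $R_\Delta$-submodules of $M'$ into the lattice of graded $R$-submodules of $M$, admitting the order-preserving retraction $L\mapsto L\cap M'$. Granting this, a strictly monotone chain of graded $R_\Delta$-submodules of $M'$ yields a strictly monotone chain of graded $R$-submodules of $M$, so the DCC (resp.\ ACC) on $M$ descends to $M'$; this is exactly part~(1), and part~(2) follows by specializing $M$ to $R(\Delta_0)$.

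First I would record the homogeneous pictures. Using unitality and $M_\sigma R_e\subseteq M_{\sigma e}$ one gets $M1_e=\bigoplus_{d(\sigma)=e}M_\sigma$, hence $M'=\bigoplus_{d(\sigma)\in\Delta_0}M_\sigma$ (a genuine direct sum). Similarly $(R_\Delta)_\tau=1_{r(\tau)}R_\tau 1_{d(\tau)}$ equals $R_\tau$ when $r(\tau),d(\tau)\in\Delta_0$ and is $0$ otherwise, so $R_\Delta$ is an object unital $\Gamma$-graded ring with $\Gamma'_0(R_\Delta)\subseteq\Delta_0$, and $M'$ is a (unital) $\Gamma$-graded right $R_\Delta$-module, the inclusion $M'_\sigma(R_\Delta)_\tau\subseteq M'_{\sigma\tau}$ being immediate from $M_\sigma R_\tau\subseteq M_{\sigma\tau}$ together with $d(\sigma\tau)=d(\tau)$.

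For (1), let $N'\subgr M'$ be a graded $R_\Delta$-submodule. Then $N'R$ is generated by the homogeneous elements of $N'$, so it is a graded $R$-submodule of $M$, and $N'\subseteq N'R\cap M'$ by unitality. The core claim is the reverse inclusion $N'R\cap M'\subseteq N'$: a homogeneous element of $N'R$ of degree $\rho$ with $d(\rho)\in\Delta_0$ is a sum of products $m_\sigma a_\tau$ with $m_\sigma\in N'_\sigma$, $a_\tau\in R_\tau$ and $\sigma\tau=\rho$; since $m_\sigma\in N'$ we have $d(\sigma)\in\Delta_0$, hence $r(\tau)=d(\sigma)\in\Delta_0$ and $d(\tau)=d(\rho)\in\Delta_0$, so $a_\tau\in(R_\Delta)_\tau$ and therefore $m_\sigma a_\tau\in N'$. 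Thus $N'R\cap M'=N'$, so $N'\mapsto N'R$ is injective and order-preserving; it carries a strictly ascending (resp.\ descending) chain of graded $R_\Delta$-submodules of $M'$ to such a chain of graded $R$-submodules of $M$, and since $M$ is gr-noetherian (resp.\ gr-artinian), so is $M'$.

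For (2), apply (1) to $M:=R(\Delta_0)=\bigoplus_{r(\sigma)\in\Delta_0}R_\sigma$, which is gr-artinian (resp.\ gr-noetherian) as an $R$-module by hypothesis. A direct computation gives $\sum_{f\in\Delta_0}M1_f=\bigoplus_{r(\sigma),d(\sigma)\in\Delta_0}R_\sigma=R_\Delta$, and the right $R_\Delta$-action inherited from $R_R$ is precisely the regular action of $R_\Delta$ on itself; hence $(R_\Delta)_{R_\Delta}$ is gr-artinian (resp.\ gr-noetherian), i.e.\ $R_\Delta$ is a right gr-artinian (resp.\ gr-noetherian) ring. The only non-bookkeeping step I anticipate is the identity $N'R\cap M'=N'$ — concretely, the observation that when a product of a homogeneous element of $N'$ with a homogeneous element of $R$ lands in a degree whose source lies in $\Delta_0$, the ring factor is automatically in $R_\Delta$, which is what allows us to invoke $R_\Delta$-stability of $N'$; everything else is routine manipulation with the $\Gamma$-grading and the local units $1_e$.
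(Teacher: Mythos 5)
Your argument is correct and follows essentially the same route as the paper: the identity $N'R\cap M'=N'$ you establish is precisely the paper's observation that $M_iR1_e=M_i1_e$ for $e\in\Delta_0$, and both proofs then transport a strict chain in $M'$ to a strict chain of the form $\{N'R\}$ in $M$, with part (2) obtained by specializing to $M=R(\Delta_0)$. The lattice-retraction framing is a tidy way to package the same computation.
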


\begin{proof}
We only present the gr-artinian case of the statements; the gr-noetherian case is proved similarly.

(1)    An infinite strict descending chain of graded submodules of $\sum_{e\in\Delta_0}M1_e$ 
$$M_1\supsetneq M_2\supsetneq \dotsb \supsetneq M_n\supsetneq \dotsb$$
implies the existence of the infinite descending chain of graded submodules of $M$
$$M_1R\supseteq M_2R\supseteq \dotsb \supseteq M_nR\supseteq \dotsb.$$  
Note that $M_iR1_e=M_i1_e$  for each $e\in\Delta_0$ because the homogeneous elements in $M_iR\setminus M_i$ are of degree $\gamma\in \Gamma$ with $d(\gamma)\notin \Delta_0$.
Hence, this last chain is strict since $M_nR=M_{n+1}R$ implies $$M_n=\sum_{e\in\Delta_0}M_nR1_e=\sum_{e\in\Delta_0}M_{n+1}R1_e=M_{n+1}$$ for each $n\in\mathbb{N}$.

Statement (2) follows from the fact that $\sum_{e\in\Delta_0}R(\Delta_0)1_e=R_\Delta$.
\end{proof}

\begin{lema}
\label{lem: M gr-art <= M1_f gr-art}
    Let $R$ be a $\Gamma$-graded ring such that $R=\sum_{f\in\Delta_0}R1_fR$ for some finite subset $\Delta_0\subseteq\Gamma_0$. The following statements hold:
    \begin{enumerate}[\rm (1)]
        \item Suppose that $M$ is a $\Gamma$-graded right $R$-module such that $M1_f$ is a gr-artinian (resp. gr-noetherian) graded right $1_fR1_f$-module for each $f\in\Delta_0$. 
    Then $M$ is a gr-artinian (resp. gr-noetherian) $R$-module.

    \item If $e\in\Gamma_0$ 
    and $1_eR1_f$ is a gr-artinian (resp. gr-noetherian)  right $1_fR1_f$-module for each $f\in\Delta_0$, then $R(e)$ is a gr-artinian (resp. gr-noetherian) $R$-module.
    \end{enumerate} 
    
   \end{lema}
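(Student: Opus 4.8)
The plan is to reduce the chain condition on $M$ as an $R$-module to the corresponding chain conditions on the modules $M1_f$ as $1_fR1_f$-modules, for the finitely many $f\in\Delta_0$, exploiting the hypothesis $R=\sum_{f\in\Delta_0}R1_fR$. The device that makes this work is the identity
\[
N=\sum_{f\in\Delta_0}(N1_f)R,\qquad\text{valid for every graded submodule }N\subgr M,
\]
together with the observation that each $N1_f$ is a graded right $1_fR1_f$-submodule of $M1_f$: indeed $N1_f=\bigoplus_{\gamma\in\Gamma f}N_\gamma$ is contained in $M1_f$ and is closed under the $1_fR1_f$-action since $(N1_f)(1_fR1_f)\subseteq NR1_f=N1_f$. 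I expect this reduction identity to be the main point of the proof; everything else is formal bookkeeping with the idempotents $1_f$.

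To establish the identity I would first use that, by the standing convention, $M$ is unital, hence so is $N$: for a homogeneous element $n\in N_\gamma$ we have $n=n1_{d(\gamma)}\in NR$, and therefore $N=NR$. Then $N=NR=N\bigl(\sum_{f\in\Delta_0}R1_fR\bigr)=\sum_{f\in\Delta_0}NR1_fR$, so it only remains to check that $NR1_f=N1_f$: the inclusion $\subseteq$ is clear from $NR\subseteq N$, and $\supseteq$ follows from $n1_f=n\cdot 1_f\cdot 1_f\in NR1_f$ for every $n\in N$. This is the only (very short) computation involved.

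Granting this, part (1) is quick. Suppose $M$ is not gr-artinian and take a strictly decreasing chain $N_1\supsetneq N_2\supsetneq\cdots$ of graded submodules of $M$. For each $f\in\Delta_0$ the chain $N_11_f\supseteq N_21_f\supseteq\cdots$ of graded submodules of $M1_f$ stabilizes, because $M1_f$ is gr-artinian by hypothesis. Since $\Delta_0$ is finite, there is a single index $k$ with $N_k1_f=N_{k+l}1_f$ for all $l\geq0$ and all $f\in\Delta_0$; the identity above then forces $N_k=\sum_{f\in\Delta_0}(N_k1_f)R=\sum_{f\in\Delta_0}(N_{k+l}1_f)R=N_{k+l}$ for all $l\geq0$, contradicting strictness. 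The gr-noetherian case is entirely analogous, with ascending chains in place of descending ones, so (following the style of the earlier proofs in the paper) I would write out only one of the two.

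Finally, part (2) follows by applying part (1) to the module $M=R(e)$. Recalling that $R(e)=1_eR$ as $\Gamma$-graded right $R$-modules, we have $R(e)1_f=1_eR1_f$ for each $f$, so the hypothesis that $1_eR1_f$ is gr-artinian (resp. gr-noetherian) as a right $1_fR1_f$-module for each $f\in\Delta_0$ is precisely what is needed to run part (1) on $R(e)$. Since $R=\sum_{f\in\Delta_0}R1_fR$, we conclude that $R(e)$ is gr-artinian (resp. gr-noetherian), as desired; no further obstacle appears here beyond recalling the identification $R(e)=1_eR$.
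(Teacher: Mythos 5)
Your proof is correct and follows essentially the same route as the paper's: both rely on the identity $N = \sum_{f\in\Delta_0} N1_fR$ for a graded submodule $N$ (which the paper writes as the chain of equalities $N = NR = \sum_f NR1_fR = \sum_f N1_fR$), use finiteness of $\Delta_0$ to find a common stabilization index for the $1_f$-truncated chains, and then recover stabilization of the original chain. Part (2) is handled identically, via $R(e)=1_eR$.
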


\begin{proof}
We only present the gr-artinian case of the statements; the gr-noetherian case is proved similarly.

 (1)   Let $M_1\supseteq M_2\supseteq \dotsb \supseteq M_n\supseteq \dotsb$ be a descending chain of graded submodules of $M$.
For each $f\in\Delta_0$, we have a descending chain of graded $1_fR1_f$-submodules of $M1_f$
\[M_11_f\supseteq M_21_f\supseteq \dotsb \supseteq M_n1_f\supseteq \dotsb.\]
Since $M1_f$ is a gr-artinian graded right $1_fR1_f$-module and $\Delta_0$ is finite, there exists $n\in\mathbb{N}$ such that $M_n1_f=M_{n+l}1_f$ for all $f\in\Delta_0$ and $l\geq0$.
Then 
\begin{align*}
  M_n=M_nR=\sum_{f\in\Delta_0}M_nR1_fR &=\sum_{f\in\Delta_0}M_n1_fR  \\
  &=\sum_{f\in\Delta_0}M_{n+l}1_fR =\sum_{f\in\Delta_0}M_{n+l}R1_fR=M_{n+l}  
\end{align*} for each $l\geq0$.
Therefore, $M$ is gr-artinian.

(2) This statement follows applying (1) to $M=R(e)=1_eR$.
\end{proof}

The following result is useful to prove versions of Fitting's Lemma for gr-endomorphisms.
 The proof is similar to that of the classical setting \cite[Lemma~11.6]{AndersonFuller}.

\begin{lema}
\label{lem: Fitting para gr-end}
    Let $R$ be a $\Gamma$-graded ring, $M$ a $\Gamma$-graded right $R$-module, and $g\in \Endgr(M)$. 
    The following assertions hold:
    \begin{enumerate}[\rm (1)]
        \item Suppose that $M$ is gr-noetherian. 
        Then there exists $n\in\mathbb{N}$ such that $g(\ker g^n)\subseteq\ker g^n$ and $\ker g^n\cap\im g^n=0$.
        Moreover, if $g$ is surjective, then $g$ is a gr-isomorphism.
        \item Suppose that $M$ is gr-artinian. 
        Then there exists $n\in\mathbb{N}$ such that $g(\im g^n)\supseteq \im g^n$ and $\ker g^n+\im g^n=M$.
        Moreover, if $g$ is injective, then $g$ is a gr-isomorphism.
    \end{enumerate}
\end{lema}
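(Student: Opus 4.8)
The plan is to mimic the classical Fitting Lemma argument \cite[Lemma~11.6]{AndersonFuller}, using the graded chain conditions together with the fact that kernels and images of gr-homomorphisms are graded submodules (noted after the definition of gr-homomorphism). Throughout I will use that $g^n\in\Endgr(M)$ for all $n$, so $\ker g^n\subgr M$ and $\im g^n\subgr M$, and that we have the ascending chain $\ker g\subseteq\ker g^2\subseteq\cdots$ and the descending chain $\im g\supseteq\im g^2\supseteq\cdots$ of graded submodules.

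For part (1), assume $M$ is gr-noetherian. The ascending chain of kernels stabilizes, so there is $n\in\mathbb N$ with $\ker g^n=\ker g^{2n}$ (taking the stabilizing index and doubling it is the cleanest choice). First, $g(\ker g^n)\subseteq\ker g^n$ is immediate since $g^n(g(x))=g(g^n(x))=0$ for $x\in\ker g^n$. Next, for the intersection: if $y\in\ker g^n\cap\im g^n$, write $y=g^n(x)$; then $g^{2n}(x)=g^n(y)=0$, so $x\in\ker g^{2n}=\ker g^n$, whence $y=g^n(x)=0$. For the ``moreover'' clause, if $g$ is surjective then $\im g^n=M$, so $\ker g^n=\ker g^n\cap\im g^n=0$, hence $\ker g=0$ and $g$ is injective; being a bijective gr-homomorphism, $g$ is a gr-isomorphism.

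For part (2), assume $M$ is gr-artinian; the argument is dual. The descending chain of images stabilizes, so there is $n$ with $\im g^n=\im g^{2n}$. The inclusion $g(\im g^n)=\im g^{n+1}\supseteq\im g^{2n}=\im g^n$ — wait, more directly: $g(\im g^n)=\im g^{n+1}$, and since $\im g^{n+1}\supseteq\im g^{2n}=\im g^n$ (using $n+1\le 2n$ for $n\ge 1$ and the chain), we get $g(\im g^n)\supseteq\im g^n$; in fact $g$ restricts to a surjection on $\im g^n$. For the sum: given $x\in M$, $g^n(x)\in\im g^n=\im g^{2n}$, so $g^n(x)=g^{2n}(z)$ for some $z\in M$; then $x-g^n(z)\in\ker g^n$ and $g^n(z)\in\im g^n$, so $M=\ker g^n+\im g^n$. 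For the ``moreover'' clause, if $g$ is injective then $\ker g^n=0$, so $M=\im g^n\subseteq\im g\subseteq M$, forcing $\im g=M$; thus $g$ is a bijective gr-homomorphism, hence a gr-isomorphism.

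I do not anticipate a genuine obstacle here — the only point requiring a little care (as the paper itself remarks, ``more technical'') is making sure that all the submodules involved are \emph{graded}, so that stabilization in the gr-noetherian/gr-artinian sense applies; this is handled by the general fact that $\ker$ and $\im$ of a gr-homomorphism are graded submodules, and that compositions and powers of gr-homomorphisms are again gr-homomorphisms. A minor bookkeeping choice is to pick the stabilizing index and then pass to its double so that both $\ker g^n=\ker g^{2n}$ (resp. $\im g^n=\im g^{2n}$) hold simultaneously with the index $n$ used in the statement; alternatively one can prove the two conclusions for possibly different indices and then take a common one, but the doubling trick is cleaner.
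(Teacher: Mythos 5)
Your proof is correct and follows essentially the same approach as the paper: stabilize the ascending chain of kernels (resp. descending chain of images), deduce the intersection/sum decomposition by the standard diagram chase, and use it to get the ``moreover'' clause. The only cosmetic difference is that you double the stabilizing index rather than use the paper's ``$\ker g^n = \ker g^{n+l}$ for all $l\ge 0$'' phrasing, which amounts to the same thing.
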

	
\begin{proof}
    (1) Since $M$ is gr-noetherian and we have the following chain
    \[0\subseteq \ker g\subseteq \ker g^2\subseteq \ker g^3\subseteq\cdots,\]
    it follows that there exists $n\in\mathbb{N}$ such that $\ker g^n = \ker g^{n+l}$ for each $l\geq0$.
    In particular, $g(\ker g^n)\subseteq \ker g^n$.
    If $y\in\ker g^n$ and $y=g^n(x)$ for some $x\in M$, then $g^{2n}(x)=g^n(y)=0$, and it follows that $x\in\ker g^{2n}=\ker g^n$, whence $y=0$.
    Therefore, $\ker g^n\cap\im g^n=0$.
    If $g$ is surjective, then $\ker g\subseteq \ker g^n\cap \im g = \ker g^n\cap \im g^n=0$, whence $g$ is a gr-isomorphism.

    (2) Since  $M$ is gr-artinian and we have the following chain
    \[M\supseteq\im g\supseteq\im g^2\supseteq\im g^3\supseteq\cdots,\]
    it follows that there exists $n\in\mathbb{N}$ such that $\im g^n = \im g^{n+l}$ for each $l\geq0$.
    In particular, $g(\im g^n)\supseteq \im g^n$.
    If $x\in M$, then since $g^n(x)\in\im g^{2n}$, we have $g^n(x)=g^{2n}(y)$ for some $y\in M$, and it follows that $x=(x-g^n(y))+g^n(y)\in\ker g^n+\im g^n$.
    Therefore, $\ker g^n+\im g^n=M$.
    If $g$ is injective, then $g^n$ also is injective, and it follows that $M=\ker g^n+\im g^n=\im g^n$, from which $g$ is surjective.
\end{proof}

Similarly, we have the following result for homomorphisms with degree.

\begin{lema}
\label{lem: Fitting}
    Let $R$ be a $\Gamma$-graded ring and $M$ a $\Gamma$-graded right $R$-module. 
    Take $e\in\Gamma_0$, $\gamma\in e\Gamma e$, and $g\in \END_R(M)_\gamma$. 
    The following assertions hold:
    \begin{enumerate}[\rm (1)]
        \item Suppose that $M(e)$ is gr-noetherian. 
        Then there exists $n\in\mathbb{N}$ such that $g(\ker g^n)\subseteq\ker g^n$ and $\ker g^n\cap\im g^n=0$.
        Moreover, if $\im g= M(e)$, then $g$ is gr-invertible in $\END_R(M)$.
        \item Suppose that $M(e)$ is gr-artinian. 
        Then there exists $n\in\mathbb{N}$ such that $g(\im g^n)\supseteq \im g^n$ and $\ker g^n+\im g^n=M$.
        Moreover, if $ M(e)\cap \ker g=0$, then $g$ is gr-invertible in $\END_R(M)$.
    \end{enumerate}
\end{lema}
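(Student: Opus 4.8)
The plan is to reduce both statements to the corresponding assertions in \Cref{lem: Fitting para gr-end} by passing to the submodule $M(e)$ and exhibiting an honest gr-endomorphism of it. The key observation is that, by \cite[Lemma 2.9]{Artigoarxiv}, a homomorphism $g$ of degree $\gamma \in e\Gamma e$ satisfies $M(\Gamma_0 \setminus \{e\}) \subseteq \ker g$ and $\im g \subseteq M(e)$; since $M(e)=M(e)$ as $R$-modules and $\gamma \in e\Gamma e$ fixes the object $e$, the restriction $\bar g := g|_{M(e)}$ is a map $M(e) \to M(e)$. Moreover $\bar g$ is a \emph{gr}-endomorphism of the $\Gamma$-graded module $M(e)$: one checks that $\bar g((M(e))_\sigma) = g(M_{e\sigma}) = g(M_{\gamma'}) \subseteq M_{\gamma\gamma'}$ where $\gamma' = e\sigma$, and since $\gamma, \gamma' \in e\Gamma$ one has $\gamma\gamma' \in e\Gamma$, i.e. the image lands in the appropriate homogeneous component of $M(e)$ after the shift identification; so $\bar g \in \Endgr(M(e))$. (One should be slightly careful here: the point is that $M(e)$ as a graded module has its grading re-indexed, and $\bar g$ has degree $e$ as an endomorphism of that re-indexed module; this is routine but the indices must be tracked honestly.)

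Granting that, for part (1): assume $M(e)$ is gr-noetherian. Apply \Cref{lem: Fitting para gr-end}(1) to $\bar g \in \Endgr(M(e))$ to obtain $n \in \mathbb{N}$ with $\bar g(\ker \bar g^n) \subseteq \ker \bar g^n$ and $\ker \bar g^n \cap \im \bar g^n = 0$. Since $M(\Gamma_0 \setminus \{e\}) \subseteq \ker g$, one has $\ker g^n = \ker \bar g^n \oplus M(\Gamma_0\setminus\{e\})$ and $\im g^n = \im \bar g^n$, so $g(\ker g^n) \subseteq \ker g^n$ and $\ker g^n \cap \im g^n = \ker \bar g^n \cap \im \bar g^n = 0$, giving the first assertion. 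For the "moreover": if $\im g = M(e)$, then $\bar g$ is a surjective gr-endomorphism of $M(e)$, hence by \Cref{lem: Fitting para gr-end}(1) it is a gr-isomorphism of $M(e)$. Let $h$ be its inverse, viewed as an element of $\END_R(M)$ of degree $\gamma^{-1} \in e\Gamma e$ by extending by zero on $M(\Gamma_0 \setminus \{e\})$. Then $hg = \mathds{1}_e = gh$ in $\END_R(M)$, and since $\mathds{1}_e = 1^{\END_R(M)}_e$ is the unity of degree $e$, this says precisely that $g$ is gr-invertible in $\END_R(M)$ with $g^{-1} = h$.

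Part (2) is entirely dual: assume $M(e)$ is gr-artinian, apply \Cref{lem: Fitting para gr-end}(2) to $\bar g$ to get $n$ with $\bar g(\im \bar g^n) \supseteq \im \bar g^n$ and $\ker \bar g^n + \im \bar g^n = M(e)$; then $\im g^n = \im \bar g^n$ and $\ker g^n + \im g^n = M(\Gamma_0\setminus\{e\}) + \ker\bar g^n + \im\bar g^n = M(\Gamma_0\setminus\{e\}) \oplus M(e) = M$, and $g(\im g^n) = \bar g(\im \bar g^n) \supseteq \im \bar g^n = \im g^n$. For the "moreover": if $M(e) \cap \ker g = 0$, then $\bar g$ is injective, hence a gr-isomorphism of $M(e)$ by \Cref{lem: Fitting para gr-end}(2), and exactly as above its inverse, extended by zero, is a gr-inverse of $g$ in $\END_R(M)$.

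The only genuinely delicate point — and the one I would write out carefully — is the reduction in the first paragraph: verifying that restriction to $M(e)$ turns a degree-$\gamma$ homomorphism (with $\gamma \in e\Gamma e$) into a bona fide gr-endomorphism of the shifted module $M(e)$, and that gr-invertibility of that restriction inside $\Endgr(M(e))$ lifts to gr-invertibility of $g$ inside $\END_R(M)$ (using $\mathds{1}_e$ as the relevant local identity). Everything after that is a direct transcription of \Cref{lem: Fitting para gr-end}, which the excerpt has already proved.
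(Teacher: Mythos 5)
There is a genuine gap in your reduction. You claim that the restriction $\bar g := g|_{M(e)}$ is a gr-endomorphism of the graded module $M(e)$, i.e.\ an element of $\Endgr(M(e))$, so that \Cref{lem: Fitting para gr-end} applies to it. This is false when $\gamma\neq e$. Indeed, for $\sigma$ with $r(\sigma)=e$ one has $M(e)_\sigma=M_\sigma$ and $\bar g(M(e)_\sigma)=g(M_\sigma)\subseteq M_{\gamma\sigma}=M(e)_{\gamma\sigma}$, so $\bar g$ still has degree $\gamma$ (not $e$) as an endomorphism of $M(e)$. Put differently, $g|_{M(e)}$ is a gr-homomorphism $M(e)\to M(\gamma)$; the fact that $M(\gamma)$ equals $M(e)$ as an $R$-module does not make the gradings agree, and there is no canonical gr-isomorphism $M(\gamma)\isogr M(e)$ to ``re-index'' away the degree shift. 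Your parenthetical remark that ``$\bar g$ has degree $e$ as an endomorphism of the re-indexed module'' is exactly the step that does not hold, so \Cref{lem: Fitting para gr-end} cannot be invoked on $\bar g$.

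The correct route, which is what the paper does, bypasses $\Endgr(M(e))$ entirely: observe that $\ker g^n$ and $\im g^n$ are graded submodules of $M$ (since $g^n$ has degree $\gamma^n$), so $M(e)\cap\ker g^n$ and $\im g^n\subseteq M(e)$ are graded submodules of $M(e)$; apply the chain condition on $M(e)$ directly to the tight chain $M(e)\cap\ker g\subseteq M(e)\cap\ker g^2\subseteq\cdots$ (resp.\ $\im g\supseteq\im g^2\supseteq\cdots$) to get stabilization; use $M(\Gamma_0\setminus\{e\})\subseteq\ker g^n$ to upgrade this to stabilization of $\ker g^n$ in $M$; and then repeat the elementary element-chase from the proof of \Cref{lem: Fitting para gr-end} to obtain $\ker g^n\cap\im g^n=0$ (resp.\ $\ker g^n+\im g^n=M$). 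The ``moreover'' step is likewise proved without appealing to \Cref{lem: Fitting para gr-end}: from $\im g=M(e)$ (resp.\ $M(e)\cap\ker g=0$) and the now-stabilized chain one deduces directly that $g|_{M(e)}:M(e)\to M(e)$ is bijective, and its inverse (extended by zero on $M(\Gamma_0\setminus\{e\})$) is a gr-inverse of $g$ in $\END_R(M)$ of degree $\gamma^{-1}$. Your computations of $\ker g^n=\ker\bar g^n\oplus M(\Gamma_0\setminus\{e\})$, $\im g^n=\im\bar g^n$, and the extension-by-zero idea for the inverse are correct and are exactly the ingredients needed; only the intermediate invocation of \Cref{lem: Fitting para gr-end} has to be replaced by the direct argument.
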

	
\begin{proof}
    (1) Since $M(e)$ is gr-noetherian, the following chain terminates:
    \[0\subseteq M(e)\cap\ker g\subseteq M(e)\cap\ker g^2\subseteq M(e)\cap\ker g^3\subseteq\cdots.\]
    Thus, there exists $n\in\mathbb{N}$ such that $M(e)\cap\ker g^n = M(e)\cap\ker g^{n+l}$ for each $l\geq0$. 
    Since $M(\Gamma_0\setminus\{e\})\subseteq\ker g^t$ for each $t\geq0$, it follows that $\ker g^n = \ker g^{n+l}$ for each $l\geq0$.
    In particular, $g(\ker g^n)\subseteq \ker g^n$.
    With the same argument as in \Cref{lem: Fitting para gr-end}(1), we show that $\ker g^n\cap\im g^n=0$.
    Now suppose that $\im g=M(e)$.
    Then $\im g^n=M(e)$, and it follows that $M(e)\cap \ker g\subseteq M(e)\cap \ker g^n=0$.
    Hence, the restriction $g|_{M(e)}:M(e)\to M(e)$ is bijective, and its inverse extends to a gr-inverse of $g$ in $\END_R(M)$.

    (2) Since  $M(e)$ is gr-artinian, the following chain terminates:
    \[M(e)\supseteq\im g\supseteq\im g^2\supseteq\im g^3\supseteq\cdots.\]
    Thus, there exists $n\geq0$ such that $\im g^n=\im g^{n+l}$ for each $l\geq0$. 
    In particular, $g(\im g^n)\supseteq \im g^n$.
    With the same argument as in \Cref{lem: Fitting para gr-end}(2), we prove that $\ker g^n+\im g^n=M$.
    Now suppose that $M(e)\cap\ker g=0$.
    Then $M(e)\cap\ker g^n=0$, and it follows that $\ker g^n=M(\Gamma_0\setminus\{e\})$, from which $\im g^n=M(e)$ and $\im g=M(e)$.
    Hence, the restriction $g|_{M(e)}:M(e)\to M(e)$ is bijective, and its inverse extends to a gr-inverse of $g$ in $\END_R(M)$.
\end{proof}

Now we deal with the concept of finite gr-length and its properties.

\begin{defi}
    Let $R$ be a $\Gamma$-graded ring and $M$ be a $\Gamma$-graded right $R$-module. A \emph{graded submodule series} for $M$ is a finite chain
    \begin{equation}
    \label{eq: serie de submod}
        M_0=\{0\}\subgr M_1\subgr \cdots \subgr M_n=M.
    \end{equation}
    A \emph{gr-refinement} of \eqref{eq: serie de submod} is a graded submodule series
    \[M'_0=\{0\}\subgr M'_1\subgr \cdots \subgr M'_{n'}=M\]
    such that $\{M_j:0\leq j\leq n\}\subseteq \{M'_j:0\leq j\leq n'\}$.
    A graded submodule series $N_0=\{0\}\subgr N_1\subgr \cdots \subgr N_t=M$ is \emph{gr-equivalent} to \eqref{eq: serie de submod} if $n=t$ and there exists a permutation $\pi$ of $\{1,2,\dots,n\}$ such that $M_j/M_{j-1}\isogr N_{\pi(j)}/N_{\pi(j)-1}$ for each $1\leq j\leq n$. 
    We say that \eqref{eq: serie de submod} is a \emph{gr-composition series} if $M_j/M_{j-1}$ is gr-simple for each $1\leq j \leq n$. 
    In this case, $n$ is called the \emph{gr-length} of the composition series, and each $M_j/M_{j-1}$ is called a \emph{gr-composition factor}. 
    When $M$ has a gr-composition series, we say that $M$ has \emph{finite gr-length}. 
\end{defi}

 Note that if $M=M(e_1)\oplus\cdots\oplus M(e_n)$ for some $e_1,\dots,e_n$, then $M$ has a graded submodule series as \eqref{eq: serie de submod} putting $M_j:=M(e_1)\oplus\cdots\oplus M(e_j)$ for each $j=1,\dots,n$.

We present some basic properties of graded submodule series, whose proofs are analogous to those in the ungraded case.

\begin{prop}
\label{prop: comp finito = art + noet}
    Let $R$ be a $\Gamma$-graded ring and $M$ be a $\Gamma$-graded right $R$-module. Then $M$ has {finite gr-length} if and only if $M$ is both gr-artinian and gr-noetherian. 
\end{prop}

\begin{proof}
    If $M_0=\{0\}\subgr M_1\subgr \cdots \subgr M_n=M$ is a gr-composition series, then since gr-simple modules are gr-artinian and gr-noetherian, it follows from \Cref{prop: M art ou noeth <--> N e M/N tbm} that $M_j$ is both gr-artinian and gr-noetherian for each $0\leq j\leq n$.

    Conversely, suppose $M$ is gr-artinian and gr-noetherian. Since $M$ is gr-artinian, we can build a chain $M_0=\{0\}\subgr M_1\subgr M_2\subgr \cdots$ such that, if $M/M_j\neq0$, then $M_{j+1}/M_j$ is a gr-simple graded submodule of $M/M_j$. Since $M$ is gr-noetherian, we have $M_n=M$ for some $n\in\mathbb{N}$.
\end{proof}

The following result is a graded version of Schreier and Jordan-Hölder Theorems.

\begin{teo}
\label{teo: Jordan-Holder}
    Let $R$ be a $\Gamma$-graded ring and $M$ be a $\Gamma$-graded right $R$-module. 
    The following assertions hold:
    \begin{enumerate}[\rm (1)]
        \item Any two graded submodules series of $M$ have isomorphic refinements.
        \item Suppose that $M$ has finite gr-length. 
        Then any two gr-composition series for $M$ are gr-equivalent.
    \end{enumerate}
\end{teo}

\begin{proof}
    This follows in an analogous way to \cite[Theorems 4.10 and 4.11]{Goodearl}.
\end{proof}

\begin{defi}
    Let $R$ be a $\Gamma$-graded ring and $M$ be a $\Gamma$-graded right $R$-module of finite gr-length. 
    By \Cref{teo: Jordan-Holder}(2), we can define the (\emph{gr-composition}) \emph{gr-length} of $M$ as the gr-length $c_{\rm gr}(M)\in\mathbb{N}$ of any gr-composition series for $M$.
\end{defi}

Similarly to the ungraded case, the only graded module with gr-composition gr-length 0 is the zero module, and the graded modules with gr-composition gr-length 1 are precisely the gr-simple modules.
A direct sum of $n$ gr-simple graded modules is a module with gr-composition gr-length $n$. 

\begin{prop}\label{prop: comprimento finito M <---> comp finito N  e M/N}
 Let $R$ be a $\Gamma$-graded ring, $M$ be a $\Gamma$-graded right $R$-module and $N\subgr M$. Then $M$   has finite  gr-length if and only if $N$ and $M/N$ have finite gr-length. In this event, $c_{\rm gr}(M)=c_{\rm gr}(N)+c_{\rm gr}(M/N)$.
\end{prop}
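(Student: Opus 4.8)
The plan is to derive the equivalence from the already-established characterization of finite gr-length, and then to produce an explicit gr-composition series of $M$ by splicing together ones for $N$ and $M/N$, which simultaneously yields the additivity formula.

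First I would prove the equivalence. By \Cref{prop: comp finito = art + noet}, a $\Gamma$-graded module has finite gr-length precisely when it is both gr-artinian and gr-noetherian. By \Cref{prop: M art ou noeth <--> N e M/N tbm}, $M$ is gr-artinian (resp. gr-noetherian) if and only if both $N$ and $M/N$ are gr-artinian (resp. gr-noetherian). Chaining these two statements gives at once that $M$ has finite gr-length if and only if both $N$ and $M/N$ do.

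Next, assuming all three modules have finite gr-length, set $s:=c_{\rm gr}(N)$ and $t:=c_{\rm gr}(M/N)$, and fix gr-composition series
\[\{0\}=N_0\subgr N_1\subgr\cdots\subgr N_s=N\]
for $N$ and
\[\{0\}=\overline{P}_0\subgr \overline{P}_1\subgr\cdots\subgr \overline{P}_t=M/N\]
for $M/N$. Using the bijective correspondence between graded submodules of $M/N$ and graded submodules of $M$ containing $N$ recorded in \Cref{sec: preliminares} (which preserves inclusions), I would write $\overline{P}_i=P_i/N$ with $N=P_0\subgr P_1\subgr\cdots\subgr P_t=M$, and invoke the graded third isomorphism theorem to get $P_i/P_{i-1}\isogr \overline{P}_i/\overline{P}_{i-1}$, which is gr-simple. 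Hence
\[\{0\}=N_0\subgr\cdots\subgr N_{s-1}\subgr N=P_0\subgr P_1\subgr\cdots\subgr P_t=M\]
is a gr-composition series of $M$ of gr-length $s+t$, so by \Cref{teo: Jordan-Holder}(2) we conclude $c_{\rm gr}(M)=s+t=c_{\rm gr}(N)+c_{\rm gr}(M/N)$.

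I do not expect a genuine obstacle here: the substantive content is carried by \Cref{prop: comp finito = art + noet}, \Cref{prop: M art ou noeth <--> N e M/N tbm} and \Cref{teo: Jordan-Holder}. The only points needing care are the graded versions of the correspondence theorem and of the third isomorphism theorem $(P_i/N)/(P_{i-1}/N)\isogr P_i/P_{i-1}$; both hold because the canonical maps involved are degree-preserving, and the former is already available from \Cref{sec: preliminares}. If one preferred to avoid Jordan--Hölder, the spliced series could instead be shown to admit no proper gr-refinement, but citing \Cref{teo: Jordan-Holder}(2) is the shortest route.
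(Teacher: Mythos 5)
Your proof is correct and follows essentially the same route as the paper: the equivalence is obtained by chaining \Cref{prop: comp finito = art + noet} with \Cref{prop: M art ou noeth <--> N e M/N tbm}, and the length formula by splicing gr-composition series of $N$ and $M/N$ through the correspondence theorem and invoking \Cref{teo: Jordan-Holder}(2), which is precisely the argument of \cite[Proposition 4.12]{Goodearl} that the paper cites.
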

\begin{proof}
 The first statement follows from \Cref{prop: M art ou noeth <--> N e M/N tbm} and \Cref{prop: comp finito = art + noet}. The second statement can be verified    in an analogous way to \cite[Proposition 4.12]{Goodearl}.
\end{proof}

Now we present a Fitting's Lemma for gr-homomorphisms and homomorphisms with degree between graded modules. The proof is very similar to the ungraded original result.
\begin{prop}
\label{prop: Fitting Lemma for graded modules of fin. gr-length}
    Let $R$ be a $\Gamma$-graded ring and $M$ a $\Gamma$-graded right $R$-module. 
    The following assertions hold:
    \begin{enumerate}[\rm (1)]
        \item Suppose that $g\in \Endgr(M)$ and $M$ has finite gr-length.
        Then there exists $n\in\mathbb{N}$ such that $M=\ker g^n\oplus\im g^n$, $g(\ker g^{n})\subseteq\ker g^{n}$, and the restriction $g|_{\im g^n}:\im g^n\to \im g^n$ is a gr-isomorphism.
        Moreover, if $g$ is injective or surjective, then $g$ is a gr-isomorphism.
        \item Suppose that $e\in\Gamma_0$, $\gamma\in e\Gamma e$, $g\in \END_R(M)_\gamma$, and $M(e)$ has finite gr-length.
        Then there exists $n\in\mathbb{N}$ such that $M=\ker g^n\oplus\im g^n$, $g(\ker g^{n})\subseteq\ker g^{n}$, and the restriction $g|_{\im g^n}:\im g^n\to \im g^n$ is bijective.
        Moreover, if $M(e)\cap \ker g=0$ or $\im g= M(e)$, then $g$ is gr-invertible in $\END_R(M)$.
    \end{enumerate}
\end{prop}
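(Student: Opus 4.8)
The plan is to derive the statement by combining \Cref{prop: comp finito = art + noet} with the two Fitting-type results \Cref{lem: Fitting para gr-end} and \Cref{lem: Fitting}, which already handle the gr-noetherian and gr-artinian cases separately; the only genuinely new point will be to stabilize the kernel chain and the image chain simultaneously.

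For (1): since $M$ has finite gr-length, \Cref{prop: comp finito = art + noet} says $M$ is both gr-noetherian and gr-artinian. Applying \Cref{lem: Fitting para gr-end}(1) gives $n_1$ with $\ker g^{n_1}=\ker g^{n_1+l}$ for all $l\geq 0$ and $\ker g^{n_1}\cap\im g^{n_1}=0$; applying \Cref{lem: Fitting para gr-end}(2) gives $n_2$ with $\im g^{n_2}=\im g^{n_2+l}$ and $\ker g^{n_2}+\im g^{n_2}=M$. Setting $n:=\max\{n_1,n_2\}$, both stabilizations hold at $n$, so $\ker g^n\cap\im g^n=0$ and $\ker g^n+\im g^n=M$, i.e. $M=\ker g^n\oplus\im g^n$; this is a decomposition into graded submodules because $g^n\in\Endgr(M)$. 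The inclusion $g(\ker g^n)\subseteq\ker g^n$ is immediate. The restriction $g|_{\im g^n}$ sends $\im g^n$ onto $\im g^{n+1}=\im g^n$, so it is surjective, and it is injective since $\ker(g|_{\im g^n})\subseteq\ker g\cap\im g^n\subseteq\ker g^n\cap\im g^n=0$; being a restriction of a gr-homomorphism with graded image, it is a gr-isomorphism. Finally, if $g$ is injective then $g^n$ is injective, so $\ker g^n=0$ and $M=\im g^n\subseteq\im g\subseteq M$ forces $g$ surjective; if $g$ is surjective then $\im g^n=M$ for all $n$, so $\ker g^n=0$ and $g$ is injective; either way $g$ is a gr-isomorphism.

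For (2): now $M(e)$ has finite gr-length, hence is both gr-noetherian and gr-artinian by \Cref{prop: comp finito = art + noet}, and $g\in\END_R(M)_\gamma$ with $\gamma\in e\Gamma e$. Applying \Cref{lem: Fitting}(1) and (2) and taking $n$ large enough as above yields $\ker g^n\cap\im g^n=0$ and $\ker g^n+\im g^n=M$, so $M=\ker g^n\oplus\im g^n$ with $g(\ker g^n)\subseteq\ker g^n$. By \cite[Lemma 2.9]{Artigoarxiv} we have $M(\Gamma_0\setminus\{e\})\subseteq\ker g$ and $\im g\subseteq M(e)$, hence $\im g^n\subseteq M(e)$, and the same computation as in (1) shows $g|_{\im g^n}:\im g^n\to\im g^n$ is bijective (now merely a homomorphism of degree $\gamma$, not a gr-homomorphism). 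For the last claim: if $M(e)\cap\ker g=0$, then $\ker g^n=M(\Gamma_0\setminus\{e\})$, so $\im g^n=M(e)$, whence $\im g=M(e)$, and \Cref{lem: Fitting}(1) gives that $g$ is gr-invertible in $\END_R(M)$; if $\im g=M(e)$, then \Cref{lem: Fitting}(1) applies directly. (Equivalently, bijectivity of $g|_{M(e)}$ produces a degree-$\gamma^{-1}$ inverse which, extended by $0$ on $M(\Gamma_0\setminus\{e\})$, is a gr-inverse of $g$ in $\END_R(M)$.)

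I expect the only real care needed to be bookkeeping with degrees: one must state correctly that in (1) the restriction $g|_{\im g^n}$ is a gr-isomorphism, whereas in (2) it is only a degree-$\gamma$ bijection, which nonetheless upgrades to gr-invertibility of $g$ in $\END_R(M)$; the mathematical content beyond the two earlier lemmas is just the simultaneous choice of $n$ turning their "near-direct-sum" conclusions into a genuine direct-sum decomposition.
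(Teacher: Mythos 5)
Your proposal is correct and follows essentially the same strategy as the paper's proof: invoke \Cref{prop: comp finito = art + noet}, apply \Cref{lem: Fitting para gr-end} (resp.\ \Cref{lem: Fitting}) to obtain $n_1,n_2$, and observe that $n=\max\{n_1,n_2\}$ makes both the kernel and image chains stationary, yielding the direct-sum decomposition. The only differences are cosmetic: you re-derive the ``injective or surjective $\Rightarrow$ gr-isomorphism'' claim from the direct-sum decomposition instead of quoting the lemmas, and in (2) you convert $M(e)\cap\ker g=0$ to $\im g=M(e)$ before citing \Cref{lem: Fitting}(1), where one could equally invoke \Cref{lem: Fitting}(2) directly.
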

	
\begin{proof}
    (1) $M$ is gr-artinian and gr-noetherian by \Cref{prop: comp finito = art + noet}.
    By \Cref{lem: Fitting para gr-end}, there exist $n_1,n_2\in\mathbb{N}$ such that $g(\ker g^{n_1})\subseteq\ker g^{n_1}$, $g(\im g^{n_2})\supseteq \im g^{n_2}$, $\ker g^{n_1}\cap\im g^{n_1}=0$ and $\ker g^{n_2}+\im g^{n_2}=M$.
    Moreover, if $g$ is injective or surjective, then $g$ is a gr-isomorphism.
    Note that $\ker g^{n_1}=\ker g^{n_1+l}$ and $\im g^{n_2}=\im g^{n_2+l}$ for all $l\geq0$.
    Therefore, $n:=\max\{n_1,n_2\}$ is such that $\ker g^{n}\oplus\im g^{n}=M$, $g(\ker g^{n})\subseteq\ker g^{n}$, and $g(\im g^{n})\supseteq \im g^{n}$.
    In particular, the restriction $g|_{\im g^n}:\im g^n\to \im g^n$ is surjective.
    The injectivity follows from $\ker(g|_{\im g^n})=\ker g\cap \im g^n \subseteq \ker g^n\cap \im g^n=\{0\}$. 

    (2) Follows in a similar way to (1), using \Cref{lem: Fitting}.
\end{proof}

%%%%%%%%%%%%%%%%%%%%%%%%%%%%%%%%%%%%%%%%%%%%%%%%%%%%%%%%%%%%%%%%%%%%%%%%%%%%%%%%%%%%%%%%%%%%%%%%%%%%%%%%%%%%%%%%%%%%%%%%%%%%%%%%%%%%%%%%%%%%%%%%%%%%%%%%%%%%%%%%%%%%%%%%%%%%%%%%%%%%%%%%%%%%%%%%%%%%%%%%%%%%%%%%%%%%%%%%%%%%%%%%%%%%%%%%%%%%%%%%%%%%%%%%%%%%%%%%%%%%%%%%%%%%%%%%%%%%%%%%%%%%%%%%%%%%%%%%%%%%%%%%%%%%%%%%%%%%%%%%%%%%%%%%%%%%%%%%%%%%%%%%%%%%%%%%%%%%%%%%%%%%%%%%%%%%%%%%%%%%%%%%%%%%%%%%%%%%%%

\subsection{$\Gamma_0$-chain conditions}\label{subsec:Gamma_0 conditions}

These are the most general chain conditions that we will consider.

\begin{defi}\label{def: subsec:Gamma_0 conditions}
	Let $R$ be a $\Gamma$-graded ring and  $M=\bigoplus_{\gamma\in\Gamma}M_\gamma$ be a $\Gamma$-graded right $R$-module.
    \begin{enumerate}[\rm(1)]
        \item We say that $M$ is \emph{$\Gamma_0$-artinian} if $M(e)$ is a gr-artinian $R$-module for each $e\in\Gamma_0$. 
        \item The ring $R$ is said to be \emph{right $\Gamma_0$-artinian} if $R_R$ is a $\Gamma_0$-artinian  $R$-module.
        \item We say that $M$ is \emph{$\Gamma_0$-noetherian} if $M(e)$ is a gr-noetherian $R$-module for each $e\in\Gamma_0$. 
        \item The ring $R$ is said to be \emph{right $\Gamma_0$-noetherian} if $R_R$ is a $\Gamma_0$-noetherian  $R$-module.
        \item We say that $M$ is \emph{$\Gamma_0$-finitely generated} if $M(e)$ is a finitely generated $R$-module for each $e\in\Gamma_0$. 
        \item We say that $M$ is \emph{$\Gamma_0$-finitely gr-cogenerated} if $M(e)$ is a finitely gr-cogenerated $R$-module for each $e\in\Gamma_0$.
    \end{enumerate}  
\end{defi}

It is straightforward to check that if $M$ is finitely generated (resp. finitely gr-cogenerated) and $N\subgr M$, then $M/N$ (resp. $N$)  is also finitely generated (resp. finitely gr-cogenerated). 
In particular, every finitely (gr-co)generated module is $\Gamma_0$-finitely (gr-co)generated.

It is easy to show that if $M$ is $\Gamma_0$-artinian (resp. $\Gamma_0$-noetherian), then $M(\sigma)$ is gr-artinian (resp. gr-noetherian) for each $\sigma\in\Gamma$, since $M(\sigma)$ and $M(r(\sigma))$ have the same homogeneous components.

The concepts of $\Gamma_0$-artinian (respectively, $\Gamma_0$-noetherian) are the natural extension to groupoid graded rings of the concept  categorically artinian (categorically noetherian) given in  \cite[Definition~1.1]{Abrams_ArandaPino_Perera_SilesMolina} and \cite[Section~4.2]{Abrams_Ara_SilesMolina}.

Now we characterize gr-noetherian (resp. gr-artinian) modules among the $\Gamma_0$-noetherian ($\Gamma_0$-artinian) modules.

\begin{lema}
\label{lem: finitos e => gr-art/noet = G0-art/noet}
    Let $R$ be a $\Gamma$-graded ring and $M$ be a $\Gamma$-graded right $R$-module. 
    The following assertions are equivalent:
    \begin{enumerate}[\rm (1)]
        \item $M$ is gr-noetherian (resp. gr-artinian).
        \item $M$ is $\Gamma_0$-noetherian (resp. $\Gamma_0$-artinian) and $\Gamma'_0(M)$ is finite.
    \end{enumerate}
\end{lema}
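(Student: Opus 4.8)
The plan is to prove the equivalence by relating the two chain conditions through the finite decomposition $M = \bigoplus_{e \in \Gamma_0} M(e)$, using the already-established \Cref{prop: soma finita de art/noeth} (finite direct sums of gr-artinian/gr-noetherian modules are gr-artinian/gr-noetherian) and \Cref{prop: M art ou noeth <--> N e M/N tbm} (the chain conditions pass to submodules and quotients), together with \Cref{obs: G0-conditions}(3) (a gr-artinian or gr-noetherian module has $M(e) = 0$ for all but finitely many $e$).

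First I would prove $(1) \Rightarrow (2)$. Suppose $M$ is gr-noetherian (the gr-artinian case is symmetric). For each $e \in \Gamma_0$, the module $M(e)$ is a graded submodule of $M$, so by \Cref{prop: M art ou noeth <--> N e M/N tbm} it is gr-noetherian; hence $M$ is $\Gamma_0$-noetherian. The finiteness of $\Gamma'_0(M)$ is exactly \Cref{obs: G0-conditions}(3): from $M = \bigoplus_{e \in \Gamma_0} M(e)$, an infinite set of nonzero summands $M(e)$ would produce a strictly increasing (resp. decreasing) infinite chain of graded submodules by taking partial sums, contradicting the chain condition.

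Next I would prove $(2) \Rightarrow (1)$. Assume $M$ is $\Gamma_0$-noetherian and $\Gamma'_0(M) = \{e_1, \dots, e_n\}$ is finite. Then $M = \bigoplus_{e \in \Gamma_0} M(e) = M(e_1) \oplus \cdots \oplus M(e_n)$, since $M(e) = 0$ for $e \notin \Gamma'_0(M)$. Each $M(e_i)$ is gr-noetherian by hypothesis, so by \Cref{prop: soma finita de art/noeth}(1) the finite direct sum $M$ is gr-noetherian. The gr-artinian case is identical with "noetherian" replaced by "artinian" throughout.

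This argument is entirely routine given the machinery already developed; there is no real obstacle. The only point requiring a small amount of care is making sure that $M(e)$ is genuinely a graded submodule of $M$ (so that the submodule-closure of the chain conditions applies) — but this is immediate from the decomposition $M = \bigoplus_{e \in \Gamma_0} M(e)$ being a direct sum of graded submodules, which was noted right after the definition of the shift. So I expect the proof to be just a couple of lines invoking the two propositions and the remark.
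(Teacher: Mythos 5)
Your proof is correct and follows essentially the same route as the paper's: $(1)\Rightarrow(2)$ via \Cref{prop: M art ou noeth <--> N e M/N tbm} applied to the graded submodules $M(e)$ plus the direct sum decomposition $M=\bigoplus_{e\in\Gamma_0}M(e)$ for finiteness of $\Gamma'_0(M)$, and $(2)\Rightarrow(1)$ via \Cref{prop: soma finita de art/noeth} applied to the finite direct sum $\bigoplus_{e\in\Gamma'_0(M)}M(e)$. No meaningful deviation from the paper's argument.
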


\begin{proof}
	$(1)\implies(2)$: If $M$ is gr-noetherian (resp. gr-artinian), then it follows from \Cref{prop: M art ou noeth <--> N e M/N tbm} that $M(e)$ is gr-noetherian (resp. gr-artinian) for each $e\in\Gamma_0$.
    The finiteness of $\Gamma'_0(M)$ follows from the direct sum decomposition $M=\bigoplus_{e\in\Gamma_0}M(e)$.

    $(2)\implies(1)$: (2) and \Cref{prop: soma finita de art/noeth}(1) imply that $M=\bigoplus_{e\in\Gamma'_0(M)}M(e)$ is gr-noetherian (resp. gr-artinian).
\end{proof}

We have the following characterization of $\Gamma_0$-artinian and $\Gamma_0$-noetherian modules.

\begin{prop}
\label{prop: G0-noeth <-> todo submod eh G0-fg}
    Let $R$ be a $\Gamma$-graded ring and $M$ be a $\Gamma$-graded right $R$-module. The following assertions hold:
    \begin{enumerate}[\rm (1)]
        \item $M$ is $\Gamma_0$-noetherian if and only if  $N$ is $\Gamma_0$-finitely generated for each $N\subgr M$.
        \item $M$ is $\Gamma_0$-artinian if and only if $M/N$ is $\Gamma_0$-finitely gr-cogenerated for each $N\subgr M$.
    \end{enumerate}
\end{prop}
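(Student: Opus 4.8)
The plan is to reduce both statements to \Cref{prop: gr-noeth <-> todo submod eh fg} by exploiting the decomposition $M=\bigoplus_{e\in\Gamma_0}M(e)$. The one preliminary observation I would record carefully is the following dictionary between graded submodules of $M$ and of its summands $M(e)$. Since each $M(e)$ is a graded direct summand of $M$, a graded submodule $N\subgr M$ inherits the decomposition $N=\bigoplus_{e\in\Gamma_0}N(e)$ with $N(e)=N\cap M(e)\subgr M(e)$ (comparing homogeneous components: $N(e)_\gamma=N_\gamma$ when $r(\gamma)=e$ and $N(e)_\gamma=0$ otherwise). Conversely, any graded submodule $L$ of $M(e)$ is again a graded submodule of $M$, and it satisfies $L=L(e)$, i.e. $L(f)=0$ for $f\neq e$, because $M(e)\cap M(f)=0$ for $e\neq f$. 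Hence the graded submodules of $M(e)$ are precisely the graded submodules of $M$ contained in $M(e)$. I would also note that the canonical map $M(e)\to M/N$ induces a gr-isomorphism $(M/N)(e)\isogr M(e)/N(e)$, again by comparing homogeneous components of degree $\gamma$ with $r(\gamma)=e$.

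With this dictionary, part (1) is immediate. If $M$ is $\Gamma_0$-noetherian and $N\subgr M$, then for each $e\in\Gamma_0$ the module $N(e)=N\cap M(e)$ is a graded submodule of the gr-noetherian module $M(e)$, hence finitely generated by \Cref{prop: gr-noeth <-> todo submod eh fg}(1); therefore $N$ is $\Gamma_0$-finitely generated. Conversely, assume every $N\subgr M$ is $\Gamma_0$-finitely generated and fix $e\in\Gamma_0$. Given $L\subgr M(e)$, regard $L$ as a graded submodule of $M$; then $L=L(e)$ is finitely generated by hypothesis. Thus every graded submodule of $M(e)$ is finitely generated, so $M(e)$ is gr-noetherian by \Cref{prop: gr-noeth <-> todo submod eh fg}(1). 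As this holds for all $e$, $M$ is $\Gamma_0$-noetherian.

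Part (2) runs in exact parallel, using the gr-isomorphism $(M/N)(e)\isogr M(e)/N(e)$ and \Cref{prop: gr-noeth <-> todo submod eh fg}(2) in place of (1). If $M$ is $\Gamma_0$-artinian and $N\subgr M$, then for each $e$ we have $(M/N)(e)\isogr M(e)/N(e)$, a quotient of the gr-artinian module $M(e)$ by the graded submodule $N(e)$, hence finitely gr-cogenerated; so $M/N$ is $\Gamma_0$-finitely gr-cogenerated. Conversely, if $M/L$ is $\Gamma_0$-finitely gr-cogenerated for every $L\subgr M$, fix $e$ and $L\subgr M(e)$; viewing $L\subgr M$ we get $M(e)/L\isogr(M/L)(e)$ finitely gr-cogenerated, so every such quotient of $M(e)$ is finitely gr-cogenerated and $M(e)$ is gr-artinian by \Cref{prop: gr-noeth <-> todo submod eh fg}(2).

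The only genuine work is the bookkeeping in the preliminary dictionary — verifying $N(e)=N\cap M(e)$, that a graded submodule of the summand $M(e)$ is a graded submodule $L$ of $M$ with $L=L(e)$, and the identification $(M/N)(e)\isogr M(e)/N(e)$ — all of which are direct unwindings of the shift conventions and of $M=\bigoplus_{e\in\Gamma_0}M(e)$, requiring only attention to when $e\gamma$ is defined. I do not anticipate any obstacle beyond this routine verification.
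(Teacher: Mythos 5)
Your proof is correct and follows exactly the paper's strategy: reduce to \Cref{prop: gr-noeth <-> todo submod eh fg} applied componentwise via the decomposition $M=\bigoplus_{e\in\Gamma_0}M(e)$. The paper states this reduction in one line; your version simply makes explicit the routine verifications (graded submodules of $M(e)$ are precisely the graded submodules $L$ of $M$ with $L=L(e)$, and $(M/N)(e)\isogr M(e)/N(e)$) that the paper leaves implicit.
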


\begin{proof}
	(1)  For each $e\in\Gamma_0$, $M(e)$ is gr-noetherian if and only if all graded submodules of $M(e)$ are finitely generated by \Cref{prop: gr-noeth <-> todo submod eh fg}(1). 
    
    (2) For each $e\in\Gamma_0$, $M(e)$ is gr-artinian if and only if all graded quotients of $M(e)$ are finitely gr-cogenerated by \Cref{prop: gr-noeth <-> todo submod eh fg}(2). 
\end{proof}

Now we show that the $\Gamma_0$-chain conditions are inherited by graded submodules,  quotients and extensions.

\begin{prop}
\label{prop: M G0-art ou G0-noeth <--> N e M/N tbm}
    Let $R$ be a $\Gamma$-graded ring, $M$ be a $\Gamma$-graded right $R$-module and $N$ be a graded submodule of $M$. Then $M$ is $\Gamma_0$-noetherian (resp. $\Gamma_0$-artinian) if and only if both $N$ and $M/N$ are $\Gamma_0$-noetherian (resp. $\Gamma_0$-artinian).
\end{prop}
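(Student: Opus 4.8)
The plan is to reduce the statement to its ``standard'' counterpart, \Cref{prop: M art ou noeth <--> N e M/N tbm}, applied separately at each idempotent $e\in\Gamma_0$. By definition $M$ is $\Gamma_0$-noetherian (resp. $\Gamma_0$-artinian) precisely when $M(e)$ is gr-noetherian (resp. gr-artinian) for every $e\in\Gamma_0$, so it suffices to understand how the operation $X\mapsto X(e)$ interacts with the graded submodule $N$ and with the quotient $M/N$.

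First I would record two compatibility facts. Since $M(e)_\gamma=M_{e\gamma}$ equals $M_\gamma$ when $r(\gamma)=e$ and is $\{0\}$ otherwise, $M(e)$ is just the graded direct summand $\bigoplus_{r(\gamma)=e}M_\gamma$ of $M$; hence, for $N\subgr M$, using that $N$ is graded, one gets $N(e)=N\cap M(e)\subgr M(e)$. Moreover, the canonical gr-homomorphism $\pi\colon M\to M/N$ is compatible with the decompositions $M=\bigoplus_{f\in\Gamma_0}M(f)$ and $M/N=\bigoplus_{f\in\Gamma_0}(M/N)(f)$, so its restriction $\pi|_{M(e)}\colon M(e)\to (M/N)(e)$ is a surjective gr-homomorphism with kernel $M(e)\cap N=N(e)$; therefore $(M/N)(e)\isogr M(e)/N(e)$.

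With these identifications the equivalence becomes a chain of bi-implications carried out componentwise: $M$ is $\Gamma_0$-noetherian iff $M(e)$ is gr-noetherian for all $e$, iff (by \Cref{prop: M art ou noeth <--> N e M/N tbm}, with $M(e)$ in place of $M$ and $N(e)$ in place of $N$) both $N(e)$ and $M(e)/N(e)\isogr(M/N)(e)$ are gr-noetherian for all $e$, iff $N$ and $M/N$ are $\Gamma_0$-noetherian; the gr-artinian case is identical. I do not expect any real obstacle: the one point needing care is the routine verification of $N(e)=N\cap M(e)$ and $(M/N)(e)\isogr M(e)/N(e)$ from the degree conventions. (Alternatively, one could obtain the submodule and quotient halves from \Cref{prop: G0-noeth <-> todo submod eh G0-fg} and prove only the ``extension'' half via the argument above, but the componentwise reduction is the cleanest route.)
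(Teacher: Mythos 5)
Your proposal is correct and is essentially the same as the paper's proof: reduce componentwise by observing that $N(e)\subgr M(e)$ and $(M/N)(e)\isogr M(e)/N(e)$, then apply \Cref{prop: M art ou noeth <--> N e M/N tbm} at each $e\in\Gamma_0$. The paper states this in one sentence, but the underlying argument is identical to yours.
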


\begin{proof}
	The result follows from \Cref{prop: M art ou noeth <--> N e M/N tbm} because, for each $e\in\Gamma_0$, $N(e)$ is a graded submodule of $M(e)$ and $M(e)/N(e)\isogr(M/N)(e)$.
\end{proof}

\begin{coro}\label{coro: R G_0 noeth/art => R/J G_0 noeth/art.}
   Let $R$ be a $\Gamma$-graded ring and $J$ be a graded ideal of $R$. If $R$ is a right $\Gamma_0$-noetherian (resp. $\Gamma_0$-artinian) ring, then $R/J$ is a right $\Gamma_0$-noetherian (resp. $\Gamma_0$-artinian) ring.
\end{coro}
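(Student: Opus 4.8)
The plan is to reduce everything to \Cref{prop: M G0-art ou G0-noeth <--> N e M/N tbm}, exactly as the remark following \Cref{prop: M art ou noeth <--> N e M/N tbm} does for the standard chain conditions, the only extra bookkeeping being the identification of the $R$-module and $R/J$-module structures on $R/J$. First I would observe that, since $J$ is a graded ideal of $R$, it is in particular a graded submodule of the $\Gamma$-graded right $R$-module $R_R$; hence, applying \Cref{prop: M G0-art ou G0-noeth <--> N e M/N tbm} with $M = R_R$ and $N = J$, if $R_R$ is $\Gamma_0$-noetherian (resp. $\Gamma_0$-artinian) then so is the quotient $R_R/J$, as a $\Gamma$-graded right $R$-module. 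Thus $(R_R/J)(e)$ is a gr-noetherian (resp. gr-artinian) right $R$-module for every $e\in\Gamma_0$.

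Next I would match this quotient with the ring $R/J$. By definition of the quotient grading one has $(R_R/J)_\sigma = (R_\sigma + J)/J = (R/J)_\sigma$ for each $\sigma\in\Gamma$, so $(R_R/J)(e)$ and $(R/J)(e)$ coincide as $\Gamma$-graded abelian groups for every $e\in\Gamma_0$. The point to check is that ``gr-noetherian (resp. gr-artinian) as a right $R$-module'' and ``gr-noetherian (resp. gr-artinian) as a right $R/J$-module'' say the same thing for these modules: since $J$ annihilates $R/J$, a subgroup of $R/J$ is a right $R$-submodule if and only if it is a right $R/J$-submodule, and it is graded over $R$ if and only if it is graded over $R/J$; hence the lattices of graded submodules are identical and the descending/ascending chain conditions transfer verbatim. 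Therefore $(R/J)(e)$ is gr-noetherian (resp. gr-artinian) as a right $R/J$-module for each $e\in\Gamma_0$.

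Finally, by \Cref{def: subsec:Gamma_0 conditions} this is precisely the statement that $(R/J)_{R/J}$ is $\Gamma_0$-noetherian (resp. $\Gamma_0$-artinian), i.e. that $R/J$ is a right $\Gamma_0$-noetherian (resp. $\Gamma_0$-artinian) ring, which is what we want. I do not expect any genuine obstacle here: all the real content is already in \Cref{prop: M G0-art ou G0-noeth <--> N e M/N tbm}, and what remains is the routine verification that the $R$- and $R/J$-module structures on $R/J$ have the same graded submodules and decompose in the same way along $\Gamma_0$.
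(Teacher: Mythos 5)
Your proof is correct and follows the same route as the paper: apply \Cref{prop: M G0-art ou G0-noeth <--> N e M/N tbm} to the graded submodule $J$ of $R_R$, then observe that the graded right $R$-submodules and graded right $R/J$-submodules of $R/J$ coincide, so the $\Gamma_0$-chain condition transfers. Your write-up just spells out the identification $(R_R/J)(e) = (R/J)(e)$ and the lattice isomorphism in more detail than the paper does.
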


\begin{proof}
    By \Cref{prop: M G0-art ou G0-noeth <--> N e M/N tbm}, $R/J$ is a right $\Gamma_0$-noetherian (resp. $\Gamma_0$-artinian) right $R$-module. The result follows because the right $R$-submodules  and the right $R/J$-submodules of $R/J$ coincide.
    \end{proof}

\begin{defi}
 Let $R$ be a $\Gamma$-graded ring. A family $\{M_i:i\in I\}$  of $\Gamma$-graded right $R$-modules is \emph{$\Gamma_0$-finite} if the set
 $I_e:=\{i\in I: M_i(e)\neq0\}$ is finite for all $e\in\Gamma_0$. 
 \end{defi}

\begin{prop}
\label{prop: soma finita de G0 art/noeth}
        Let $R$ be a $\Gamma$-graded ring and $\{M_i:i\in I\}$ be a $\Gamma_0$-finite family of $\Gamma$-graded right $R$-modules.
    \begin{enumerate}[\rm(1)]
        \item If $M_i$ is $\Gamma_0$-noetherian (resp. $\Gamma_0$-artinian) for each $i\in I$, then $\bigoplus\limits_{i\in I}M_i$ is $\Gamma_0$-noetherian (resp. $\Gamma_0$-artinian).
        \item If $M_i$ is a $\Gamma_0$-noetherian (resp. $\Gamma_0$-artinian) submodule of a $\Gamma$-graded module $M$ for each $i\in I$, then $\sum_{i\in I}M_i$ is $\Gamma_0$-noetherian (resp. $\Gamma_0$-artinian).
    \end{enumerate}
    In particular, any finite sum of $\Gamma_0$-noetherian (resp. $\Gamma_0$-artinian) $\Gamma$-graded right $R$-modules is $\Gamma_0$-noetherian (resp. $\Gamma_0$-artinian). 
    \end{prop}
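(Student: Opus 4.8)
The plan is to reduce everything to the already-proved standard-chain-condition statement, \Cref{prop: soma finita de art/noeth}, by evaluating at a fixed idempotent $e\in\Gamma_0$. The key observation is the compatibility of the shift functor with direct sums: for any family $\{M_j:j\in J\}$ of $\Gamma$-graded right $R$-modules one has
\[
\Bigl(\bigoplus_{j\in J}M_j\Bigr)(e)\;=\;\bigoplus_{j\in J}\bigl(M_j(e)\bigr),
\]
which is immediate from the definitions, since $\bigl(\bigoplus_j M_j\bigr)(e)_\gamma=\bigl(\bigoplus_j M_j\bigr)_{e\gamma}=\bigoplus_j (M_j)_{e\gamma}=\bigoplus_j M_j(e)_\gamma$. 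Now fix $e$. By the $\Gamma_0$-finiteness hypothesis the set $J_e=\{j\in J:M_j(e)\neq 0\}$ is finite, so the direct sum on the right is actually the \emph{finite} direct sum $\bigoplus_{j\in J_e}M_j(e)$. Each summand $M_j(e)$ is gr-noetherian (resp. gr-artinian): indeed, if $M_j$ is $\Gamma_0$-noetherian then $M_j(e)$ is gr-noetherian by definition (and we already noted in the text that $M_j(e)$ is in fact the zero module for $j\notin J_e$, which is trivially gr-noetherian). Applying \Cref{prop: soma finita de art/noeth}(1) to this finite family yields that $\bigl(\bigoplus_{j\in J}M_j\bigr)(e)$ is gr-noetherian (resp. gr-artinian). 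Since $e\in\Gamma_0$ was arbitrary, $\bigoplus_{j\in J}M_j$ is $\Gamma_0$-noetherian (resp. $\Gamma_0$-artinian) by \Cref{def: subsec:Gamma_0 conditions}, proving (1).

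For (2), I would run the analogous argument with the natural gr-homomorphism in place of a direct manipulation. Let $N=\sum_{j\in J}M_j\subgr M$ and consider $f\colon\bigoplus_{j\in J}M_j\to M$, $(m_j)_j\mapsto\sum_j m_j$, a gr-homomorphism with image $N$. By part (1), $\bigoplus_{j\in J}M_j$ is $\Gamma_0$-noetherian (resp. $\Gamma_0$-artinian) — note the family $\{M_j\}$ is still $\Gamma_0$-finite since each $M_j(e)$, being a graded submodule of the fixed module $M(e)$ for every $e$, vanishes exactly for $j\notin J_e$. Then $N\isogr\bigl(\bigoplus_{j\in J}M_j\bigr)/\ker f$ is a quotient of a $\Gamma_0$-noetherian (resp. $\Gamma_0$-artinian) module, hence $\Gamma_0$-noetherian (resp. $\Gamma_0$-artinian) by \Cref{prop: M G0-art ou G0-noeth <--> N e M/N tbm}. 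Alternatively, and perhaps cleaner, one may again evaluate at each $e$: $N(e)=\bigl(\sum_j M_j\bigr)(e)=\sum_j M_j(e)$ inside $M(e)$ (since the shift at $e$ just picks out homogeneous components, it commutes with sums of submodules), the sum ranging effectively over the finite set $J_e$, and then invoke \Cref{prop: soma finita de art/noeth}(2).

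The final ``in particular'' clause is the special case $J$ finite: a finite index set is automatically $\Gamma_0$-finite, so both (1) and (2) apply verbatim. I do not anticipate a genuine obstacle here; the only point requiring a moment's care is the bookkeeping that makes ``$\Gamma_0$-finite'' do exactly what is needed — namely that for each fixed $e$ the infinite direct sum (or infinite sum of submodules) collapses to a finite one at the level of the $e$-shift, which is precisely where \Cref{prop: soma finita de art/noeth} can be brought to bear. Everything else is a routine transcription of the standard-condition proofs one idempotent at a time.
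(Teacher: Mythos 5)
Your proof is correct and follows the same strategy as the paper: evaluate at each idempotent $e$, use $\Gamma_0$-finiteness to collapse $\bigl(\bigoplus_j M_j\bigr)(e)=\bigoplus_j M_j(e)$ to a finite direct sum, and invoke \Cref{prop: soma finita de art/noeth}, handling part (2) via the natural gr-homomorphism $\bigoplus_j M_j \to M$ together with \Cref{prop: M G0-art ou G0-noeth <--> N e M/N tbm}. The only difference is that you spell out the shift-commutes-with-direct-sums computation explicitly, which the paper leaves implicit.
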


\begin{proof}
    Set $X=\bigoplus\limits_{i \in I}M_i$.
    
(1)  Since $X(e)$ is a finite (direct) sum of gr-noetherian (resp. gr-artinian) $R$-modules  for each $e\in\Gamma_0$, the result follows from \Cref{prop: soma finita de art/noeth}(1).

(2) Consider the natural gr-homomorphism $g\colon X\rightarrow M$. Then $\frac{X}{\ker g}\,\isogr\, \im g=\sum_{i\in I}M_I$. The result follows from (1) and \Cref{prop: M G0-art ou G0-noeth <--> N e M/N tbm}.

The last statement holds because a finite family of $\Gamma$-graded right $R$-modules is $\Gamma_0$-finite. 
\end{proof}

\begin{prop}
\label{prop: R G0-noeth --> todo G0-fg eh G0-noeth}
    Let $R$ be a right $\Gamma_0$-noetherian (resp. right $\Gamma_0$-artinian) ring. 
    and $M$ be a $\Gamma$-graded right $R$-module. The following statements hold:
    \begin{enumerate}[\rm(1)]
        \item If $M$ is finitely generated, then $M$ is gr-noetherian (resp. gr-artinian).
        \item If $M$ is $\Gamma_0$-finitely generated, then $M$ is $\Gamma_0$-noetherian (resp. $\Gamma_0$-artinian). 
    \end{enumerate}
\end{prop}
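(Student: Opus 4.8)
The plan is to follow the proof of \Cref{coro: R noeth --> todo fg eh noeth} for part~(1), making only the minor adjustment that the hypothesis on $R$ now controls the shifts $R(e)$, $e\in\Gamma_0$, rather than $R_R$ itself as a graded module; and then to obtain part~(2) by applying part~(1) to each homogeneous slice $M(e)$.

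First, for part~(1), I would pick homogeneous generators $m_1,\dots,m_k$ of $M$ and set $\gamma_i:=\deg(m_i)$. Exactly as in \Cref{coro: R noeth --> todo fg eh noeth}, the assignment $(a_1,\dots,a_k)\mapsto m_1a_1+\dots+m_ka_k$ is a surjective gr-homomorphism $\varphi\colon R(\gamma_1^{-1})\oplus\dots\oplus R(\gamma_k^{-1})\to M$. Now, since $R$ is right $\Gamma_0$-noetherian (resp.\ right $\Gamma_0$-artinian), the module $R_R$ is $\Gamma_0$-noetherian (resp.\ $\Gamma_0$-artinian), so by the observation, in the discussion following \Cref{def: subsec:Gamma_0 conditions}, that a $\Gamma_0$-noetherian (resp.\ $\Gamma_0$-artinian) module has gr-noetherian (resp.\ gr-artinian) shifts, each $R(\gamma_i^{-1})$ is gr-noetherian (resp.\ gr-artinian). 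By \Cref{prop: soma finita de art/noeth}(1) the finite direct sum $\bigoplus_{i=1}^{k}R(\gamma_i^{-1})$ is then gr-noetherian (resp.\ gr-artinian), and hence so is its gr-homomorphic image $M$, by \Cref{prop: M art ou noeth <--> N e M/N tbm}.

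For part~(2), I would fix $e\in\Gamma_0$. By hypothesis $M(e)$ is a finitely generated $\Gamma$-graded right $R$-module, so part~(1), applied with $M$ replaced by $M(e)$, yields that $M(e)$ is gr-noetherian (resp.\ gr-artinian). Since $e$ was arbitrary, $M$ is $\Gamma_0$-noetherian (resp.\ $\Gamma_0$-artinian) by definition.

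I do not expect a genuine obstacle here: every ingredient is already available in this section, and the argument is a routine reassembly of \Cref{coro: R noeth --> todo fg eh noeth}, \Cref{prop: soma finita de art/noeth} and \Cref{prop: M art ou noeth <--> N e M/N tbm}. The only point requiring a moment's care is that $\varphi$ is a well-defined surjective gr-homomorphism whose $i$-th summand is precisely the shift $R(\gamma_i^{-1})$---but this is the same verification carried out in \Cref{coro: R noeth --> todo fg eh noeth} and may simply be cited. If one prefers not to invoke the unlabeled remark about shifts, an equivalent alternative is to use that $R(\gamma_i^{-1})$ coincides, as an $R$-module, with $R\bigl(r(\gamma_i^{-1})\bigr)$, where $r(\gamma_i^{-1})\in\Gamma_0$, together with the fact that $R(f)$ is gr-noetherian (resp.\ gr-artinian) for every $f\in\Gamma_0$ by the very definition of a right $\Gamma_0$-noetherian (resp.\ right $\Gamma_0$-artinian) ring.
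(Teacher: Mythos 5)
Your proof is correct and rests on the same underlying idea as the paper's (a surjection from a finite direct sum of shifts $R(\gamma_i^{-1})$), but the routing is slightly different and, in one respect, cleaner. The paper first records that $\Gamma'_0(M)$ is finite and that each $M(e)$ is finitely generated, then invokes \Cref{coro: R noeth --> todo fg eh noeth} to conclude that each $M(e)$ is gr-noetherian, and finally reassembles via \Cref{lem: finitos e => gr-art/noet = G0-art/noet}. That citation is slightly loose: \Cref{coro: R noeth --> todo fg eh noeth} is stated for a right \emph{gr}-noetherian ring, whereas here $R$ is only right $\Gamma_0$-noetherian, so strictly speaking one must redo its proof rather than cite it verbatim (the proof does go through because only the gr-noetherianity of the shifts $R(\gamma_i^{-1})$ is used). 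You avoid this issue altogether by building the surjection $\varphi\colon R(\gamma_1^{-1})\oplus\dotsb\oplus R(\gamma_k^{-1})\to M$ directly and using the observation, following \Cref{def: subsec:Gamma_0 conditions}, that every shift of a $\Gamma_0$-noetherian module is gr-noetherian; the conclusion then follows from \Cref{prop: soma finita de art/noeth} and \Cref{prop: M art ou noeth <--> N e M/N tbm} without any detour through $\Gamma'_0(M)$. Part (2) is handled identically in both versions: apply (1) to each $M(e)$.
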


\begin{proof}
	(1) We can proceed as in the proof of \Cref{coro: R noeth --> todo fg eh noeth}, since any shift of the $\Gamma_0$-noetherian (resp. $\Gamma_0$-artinian) module $R_R$ is gr-noetherian (resp. gr-artinian).
    
    (2) Since $M(e)$ is finitely generated for each $e\in\Gamma_0$, the result follows from (1).
\end{proof}

\begin{coro}
    Let $S$ be a full gr-subring of the $\Gamma$-graded ring $R$. Suppose that $S$ is a right $\Gamma_0$-noetherian (resp. $\Gamma_0$-artinian) ring.
    \begin{enumerate}[\rm (1)]
    \item If $R$ is finitely generated as a right $S$-module, then  $R$ is a right gr-noetherian (resp. gr-artinian) ring. 
        \item If  $R$ is $\Gamma_0$-finitely generated as a right $S$-module, then $R$ is a right $\Gamma_0$-noetherian (resp. $\Gamma_0$-artinian) ring.
    \end{enumerate}
\end{coro}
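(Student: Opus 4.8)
The plan is to reduce the corollary to the already-proved results about modules, exactly as was done for the analogous corollary in the standard (non-$\Gamma_0$) setting. Recall that if $S$ is a full gr-subring of $R$, then $R$ carries a natural structure of a unital $\Gamma$-graded right $S$-module, and every graded right ideal of $R$ is in particular a graded right $S$-submodule of $R$; moreover $(R_R)(e) = (R_S)(e) = 1_eR$ as graded abelian groups for each $e\in\Gamma_0$, so the decomposition of $R$ into the pieces $R(e)$ is the same whether $R$ is viewed as a right $R$-module or as a right $S$-module. This observation is what lets us transfer a chain condition on $R$ as an $S$-module to a chain condition on $R$ as an $R$-module.

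For part (1), I would argue as follows. Since $S$ is right $\Gamma_0$-noetherian (resp. $\Gamma_0$-artinian) and $R$ is finitely generated as a right $S$-module, \Cref{prop: R G0-noeth --> todo G0-fg eh G0-noeth}(1) applies with the base ring $S$ and the module $M = R_S$: we conclude that $R_S$ is gr-noetherian (resp. gr-artinian) as a right $S$-module. Now a strictly increasing (resp. decreasing) infinite chain of graded right ideals of $R$ would, via the inclusion of graded right ideals of $R$ into graded right $S$-submodules of $R$, give a strictly increasing (resp. decreasing) infinite chain of graded right $S$-submodules of $R_S$, contradicting the fact that $R_S$ is gr-noetherian (resp. gr-artinian). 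Hence $R$ has no such chain of graded right ideals, i.e.\ $R$ is right gr-noetherian (resp. right gr-artinian).

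For part (2), the argument is the same with ``finitely generated'' replaced by ``$\Gamma_0$-finitely generated'' and ``gr-noetherian/gr-artinian'' replaced by ``$\Gamma_0$-noetherian/$\Gamma_0$-artinian''. Applying \Cref{prop: R G0-noeth --> todo G0-fg eh G0-noeth}(2) to the base ring $S$ and the module $M = R_S$ shows that $R_S$ is $\Gamma_0$-noetherian (resp. $\Gamma_0$-artinian), i.e.\ $R(e) = 1_eR$ is gr-noetherian (resp. gr-artinian) as a right $S$-module for each $e\in\Gamma_0$. Since any chain of graded $R$-submodules of $R(e)$ is also a chain of graded $S$-submodules of $R(e)$, we get that $R(e)$ is gr-noetherian (resp. gr-artinian) as a right $R$-module for each $e\in\Gamma_0$; by \Cref{def: subsec:Gamma_0 conditions} this says precisely that $R$ is right $\Gamma_0$-noetherian (resp. right $\Gamma_0$-artinian).

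There is essentially no obstacle here: the corollary is a formal consequence of \Cref{prop: R G0-noeth --> todo G0-fg eh G0-noeth} together with the fact that graded $R$-submodules of $R$ (or of $R(e)$) are automatically graded $S$-submodules. The only point requiring a word of care is the bookkeeping that the ``homogeneous piece'' decomposition $R = \bigoplus_{e\in\Gamma_0} R(e)$ is insensitive to whether we use the $R$-action or the $S$-action — but this is immediate from $R_\gamma$ being the same subgroup in both cases and the shift $M(e)$ being defined purely in terms of the grading. So the proof is just a two-line invocation of the preceding proposition for each part.
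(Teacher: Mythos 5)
Your argument is correct and follows precisely the same route as the paper: apply \Cref{prop: R G0-noeth --> todo G0-fg eh G0-noeth}(1) (resp.\ (2)) to the right $S$-module $R_S$, then observe that graded right ideals of $R$ (and their restrictions to each $R(e)$) are automatically graded right $S$-submodules, so the chain condition transfers. The extra remark that the decomposition $R=\bigoplus_{e\in\Gamma_0}R(e)$ is the same whether computed over $R$ or over $S$ is a harmless expansion of what the paper leaves implicit.
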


\begin{proof}
 (1) By \Cref{prop: R G0-noeth --> todo G0-fg eh G0-noeth}(1), $R$ is a right gr-noetherian (resp. gr-artinian) $S$-module. Since all graded right ideals of $R$ are  graded right $S$-modules as well, the result follows.

 (2) By \Cref{prop: R G0-noeth --> todo G0-fg eh G0-noeth}(2), $R$ is a right $\Gamma_0$-noetherian (resp. $\Gamma_0$-artinian) $S$-module. Since all graded right ideals of $R$ are graded right $S$-modules as well, the result follows.
\end{proof}

It is shown in \cite[p.~53]{Artigoarxiv} that being right $\Gamma_0$-artinian implies being locally right gr-artinian in the following sense: if $R=\bigoplus_{\gamma\in\Gamma}R_\gamma$ is a right $\Gamma_0$-artinian ring, then, for any finite subset $\Delta_0\subseteq \Gamma'_0(R)$, the $\Delta$-graded ring $R_\Delta=\bigoplus_{e,f\in\Delta_0}1_eR1_f$ is right gr-artinian.
We prove an analogous version for modules.

\begin{lema}
\label{lem: M G_0-art => 1_eM1_f gr-art}
    Let $R$ be a $\Gamma$-graded ring.
    Let $\Delta_0\subseteq\Gamma_0$ and denote $R_\Delta=\bigoplus\limits_{e,f\in\Delta_0}1_eR1_f$. The following statements hold:
    \begin{enumerate}[\rm (1)]
        \item If $M$ is a  $\Gamma$-graded $\Gamma_0$-artinian (resp. $\Gamma_0$-noetherian) right $R$-module, then $\sum_{e\in\Delta_0}M1_e$ is a $\Gamma_0$-artinian (resp. $\Gamma_0$-noetherian) right $R_\Delta$-module.
        \item  If $R$ is a right $\Gamma_0$-artinian (resp. right $\Gamma_0$-noetherian) ring, then $R_\Delta$ is a right $\Delta_0$-artinian (resp. right $\Delta_0$-noetherian) ring.
    \end{enumerate}
    \end{lema}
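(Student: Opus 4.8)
The plan is to deduce both parts from the non-$\Gamma_0$ statement \Cref{lem: M gr-art => M1_f gr-art} by treating the idempotents of $\Gamma$ one at a time; I will only discuss the gr-artinian case, since the gr-noetherian one is proved in exactly the same way. Throughout, set $N:=\sum_{e\in\Delta_0}M1_e$, which is a $\Gamma$-graded right $R_\Delta$-module. A point worth keeping in mind is that $N$ need not be $\Delta$-graded: it may have nonzero homogeneous components $M_\sigma$ with $r(\sigma)\notin\Delta_0$, and this is precisely why the conclusion of (1) is stated in terms of $\Gamma_0$ and not of $\Delta_0$.

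For part (1), the first step is to verify the bookkeeping identity $N(e)=\sum_{f\in\Delta_0}M(e)1_f$ for every $e\in\Gamma_0$. This is routine: passing to $N(e)$ keeps exactly the homogeneous components of degree $\sigma$ with $r(\sigma)=e$, an operation that commutes with sums, while $(M1_f)(e)$ and $M(e)1_f$ both equal the span of the $M_\sigma$ with $r(\sigma)=e$ and $d(\sigma)=f$. The second step is the crux: fixing $e\in\Gamma_0$, the hypothesis that $M$ is $\Gamma_0$-artinian says precisely that $M(e)$ is a gr-artinian right $R$-module, so \Cref{lem: M gr-art => M1_f gr-art}(1) applied with $M(e)$ in place of $M$ (and the same $\Delta_0$) gives that $\sum_{f\in\Delta_0}M(e)1_f=N(e)$ is a gr-artinian right $R_\Delta$-module. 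Letting $e$ vary over $\Gamma_0$, this is exactly the assertion that $N$ is $\Gamma_0$-artinian.

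For part (2), I would mimic how part (2) of \Cref{lem: M gr-art => M1_f gr-art} is derived from its part (1). Since $R$ is right $\Gamma_0$-artinian, $R_R$ is a $\Gamma_0$-artinian $R$-module, hence so is its graded submodule $R(\Delta_0)=\bigoplus_{e\in\Delta_0}R(e)$ by \Cref{prop: M G0-art ou G0-noeth <--> N e M/N tbm}. Applying part (1) with $M:=R(\Delta_0)$ shows that $\sum_{f\in\Delta_0}R(\Delta_0)1_f$ is a $\Gamma_0$-artinian right $R_\Delta$-module, and a direct computation identifies this module with $\bigoplus_{\sigma:\,d(\sigma),r(\sigma)\in\Delta_0}R_\sigma=R_\Delta$. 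Finally, since $(R_\Delta)(f)=0$ for every $f\in\Gamma_0\setminus\Delta_0$, being $\Gamma_0$-artinian as a right $R_\Delta$-module is the same as being right $\Delta_0$-artinian, which finishes the proof. I do not expect a real obstacle: all the genuine content lies in \Cref{lem: M gr-art => M1_f gr-art}, and the only places calling for a little attention are the identity $N(e)=\sum_{f\in\Delta_0}M(e)1_f$ and the bookkeeping of which idempotents can appear among the degrees of elements of $N$.
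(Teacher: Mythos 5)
Your proof is correct and follows essentially the same route as the paper: identify $N(e)=\sum_{f\in\Delta_0}M(e)1_f$ and apply \Cref{lem: M gr-art => M1_f gr-art}(1) idempotent by idempotent, then specialize to $M=R(\Delta_0)$ for part (2). The paper states this more tersely (in particular it does not spell out that $R(\Delta_0)$ inherits $\Gamma_0$-artinianity from $R_R$, nor the final reduction from $\Gamma_0$-artinian to $\Delta_0$-artinian), but your added verifications are exactly the missing bookkeeping and introduce nothing new.
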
 

\begin{proof}
    (1) Denote $M':=\sum_{f\in\Delta_0}M1_f$.
    By \Cref{lem: M gr-art => M1_f gr-art}(1), $M'(e)=\sum_{f\in\Delta_0}M(e)1_f$ is a gr-artinian (resp. gr-noetherian) right $R_\Delta$-module for each $e\in\Gamma_0$.
    
    (2) It suffices to note that if $M=R(\Delta_0)=\bigoplus_{e\in\Delta_0}R(e)$, then $M'=R_\Delta$.
\end{proof}

On the other hand, it is not true that if $R$ is locally right gr-artinian in the foregoing sense, then $R$ is right $\Gamma_0$-artinian \cite[p.~53]{Artigoarxiv}. We proceed to show  that being locally right gr-artinian (resp. right gr-noetherian) implies being right $\Gamma_0$-artinian (resp. $\Gamma_0$-noetherian) under some natural conditions.

\begin{lema}
\label{lem: M G_0-art <= M1_f gr-art}
    Let $R$ be a $\Gamma$-graded ring such that $R=\sum_{f\in\Delta_0}R1_fR$ for some finite subset $\Delta_0\subseteq\Gamma_0$. The following statements hold:
    \begin{enumerate}[\rm (1)]
        \item Suppose that $M$ is a $\Gamma$-graded right $R$-module such that $M1_f$ is a $\Gamma_0$-artinian (resp. $\Gamma_0$-noetherian) graded right $1_fR1_f$-module for each $f\in\Delta_0$. 
    Then $M$ is a $\Gamma_0$-artinian (resp. $\Gamma_0$-noetherian) $R$-module.
    \item If $1_eR1_f$ is a gr-artinian (resp. gr-noetherian) graded right $1_fR1_f$-module for each $e\in\Gamma_0$ and $f\in\Delta_0$, then $R$ is a right $\Gamma_0$-artinian (resp. right $\Gamma_0$-noetherian) ring.
    \end{enumerate}
   \end{lema}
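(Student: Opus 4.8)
The plan is to reduce both parts, one object of $\Gamma_0$ at a time, to the standard-chain-condition statement already proved in \Cref{lem: M gr-art <= M1_f gr-art}. The basic observation I would establish first is that shifting by an idempotent is compatible with localizing at $1_f$: for every $e\in\Gamma_0$ and every $f\in\Gamma_0$ one has an equality
\[
M(e)1_f = (M1_f)(e) = \bigoplus_{\sigma\in e\Gamma f} M_\sigma
\]
inside $M$, and the common set is the same graded right $1_fR1_f$-module under either description. I would check this directly from the definitions: $M(e)=\mathds{1}_e M$ is the ``range $e$'' part $\bigoplus_{r(\sigma)=e}M_\sigma$, while $M1_f$ is the ``domain $f$'' part $\bigoplus_{d(\sigma)=f}M_\sigma$ (using that $M$ is unital, so a homogeneous $m_\sigma$ is fixed by $1_{d(\sigma)}$ and killed by $1_f$ when $d(\sigma)\neq f$); intersecting these two descriptions and matching gradings gives the display. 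I would also note that $M(e)$, being a graded submodule of the unital module $M$, is itself unital, so that \Cref{lem: M gr-art <= M1_f gr-art} may legitimately be applied to it.

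For (1): fix $e\in\Gamma_0$; I want to show that $M(e)$ is a gr-artinian (resp. gr-noetherian) $R$-module. Since $M1_f$ is $\Gamma_0$-artinian (resp. $\Gamma_0$-noetherian) over $1_fR1_f$ by hypothesis, $(M1_f)(e)$ is gr-artinian (resp. gr-noetherian) over $1_fR1_f$; by the display, so is $M(e)1_f$, for every $f\in\Delta_0$. As $R=\sum_{f\in\Delta_0}R1_fR$ with $\Delta_0$ finite, \Cref{lem: M gr-art <= M1_f gr-art}(1) applied with $M(e)$ in place of $M$ then gives that $M(e)$ is gr-artinian (resp. gr-noetherian). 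Since $e\in\Gamma_0$ was arbitrary, $M$ is $\Gamma_0$-artinian (resp. $\Gamma_0$-noetherian), which is (1).

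For (2): again fix $e\in\Gamma_0$. The hypothesis says exactly that $1_eR1_f$ is a gr-artinian (resp. gr-noetherian) right $1_fR1_f$-module for each $f\in\Delta_0$, so \Cref{lem: M gr-art <= M1_f gr-art}(2) applies and yields that $R(e)$ is a gr-artinian (resp. gr-noetherian) $R$-module; letting $e$ vary shows that $R_R$ is $\Gamma_0$-artinian (resp. $\Gamma_0$-noetherian), i.e.\ $R$ is a right $\Gamma_0$-artinian (resp. right $\Gamma_0$-noetherian) ring. (Alternatively, (2) is just (1) applied to $M=R_R$, since $(R1_f)(e)=1_eR1_f$ makes $R1_f$ into a $\Gamma_0$-artinian, resp. $\Gamma_0$-noetherian, $1_fR1_f$-module.)

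There is no genuinely hard step here; the only thing that needs a little care is pinning down the identity $M(e)1_f=(M1_f)(e)$ together with the fact that the two presentations give literally the same graded $1_fR1_f$-module, after which both statements drop out of \Cref{lem: M gr-art <= M1_f gr-art}, with the noetherian case identical to the artinian one.
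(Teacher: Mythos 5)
Your proof is correct and follows essentially the same route as the paper's: both reduce to \Cref{lem: M gr-art <= M1_f gr-art} via the identity $M(e)1_f=(M1_f)(e)$, applying part (1) of that lemma to $M(e)$ for each $e\in\Gamma_0$ and part (2) similarly. The extra care you take in spelling out the identity $M(e)1_f=(M1_f)(e)=\bigoplus_{\sigma\in e\Gamma f}M_\sigma$ and noting that $M(e)$ is unital is sound but not a change of method; the alternative derivation of (2) from (1) via $M=R_R$ is a cosmetic variant.
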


\begin{proof}
   (1) Since $M(e)1_f=(M1_f)(e)$ is a gr-artinian (resp. gr-noetherian) graded right $1_fR1_f$-module for each  $e\in\Gamma_0$ and $f\in\Delta_0$, it follows from \Cref{lem: M gr-art <= M1_f gr-art}(1) that $M(e)$ is a gr-artinian (resp. gr-noetherian) $R$-module for each $e\in\Gamma_0$.

(2) This statement follows from \Cref{lem: M gr-art <= M1_f gr-art}(2).
\end{proof}

\begin{defi}
    Following \cite[p. 18-19]{Mit},  a small preadditive category $\mathcal{C}$ is a \emph{right artinian category} (resp. \emph{right noetherian category}) if the functor $\mathcal{C}(-,X)$ is an artinian (resp. noetherian) object in the abelian category $\Fun(\mathcal{C}^{op},\mathcal{A}b)$ of additive contravariant functors for each $X\in\mathcal{C}_0$. 
    In other words, the subobjects of $\mathcal{C}(-,X)$ satisfy the descending (resp. ascending) chain condition.   
\end{defi}

Consider the graded ring $R_\mathcal{C}$ defined in \Cref{ex: anel de categoria}. By \cite[Theorem 8.1(2)]{Artigoarxiv}, $\mathcal{C}$ is a right artinian (resp. noetherian) category if and only if $R_\mathcal{C}$ is a right $\Gamma_0$-artinian (resp. $\Gamma_0$-noetherian) ring, where $\Gamma=\mathcal{C}_0\times\mathcal{C}_0$.

Examples of right artinian (resp. noetherian) categories can be obtained from the following result. Other examples can be found in \Cref{prop: mod-A art e noet}.

\begin{prop}
\label{prop: proj-A art e noet}
    Let $A$ be a unital ring.
    Denote by $\proj$-$A$ the category of finitely generated projective right $A$-modules, let $\mathcal{C}$ be a small full subcategory of $\proj$-$A$ with $A_A\in\mathcal{C}_0$, and set $\Gamma := \mathcal{C}_0 \times \mathcal{C}_0$.
    The following assertions hold:
    \begin{enumerate}[\rm (1)]
        \item $R_\mathcal{C}$ is a right $\Gamma_0$-artinian (resp.  $\Gamma_0$-noetherian) ring if and only if $A$ is a right artinian (resp.  noetherian) ring.
        \item $R_\mathcal{C}$ is a left $\Gamma_0$-artinian (resp.  $\Gamma_0$-noetherian) ring if and only if $A$ is a left artinian (resp.  noetherian) ring.
    \end{enumerate}
\end{prop}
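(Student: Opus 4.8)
The plan is to analyse, for each $P\in\mathcal{C}_0$, the lattice of graded submodules of the graded right $R_\mathcal{C}$-module $R_\mathcal{C}(e)$, where $e=(P,P)\in\Gamma_0$, and to show that it is isomorphic to the lattice of right $A$-submodules of $P$; the whole statement then follows. First I would note that $R_\mathcal{C}(e)=1_{(P,P)}R_\mathcal{C}$ is, as a graded right $R_\mathcal{C}$-module, the abelian group $\bigoplus_{Q\in\mathcal{C}_0}\Hom_A(Q,P)$, with $\Hom_A(Q,P)$ sitting in degree $(P,Q)$ and $R_\mathcal{C}$ acting by composition; consequently a graded submodule of $R_\mathcal{C}(e)$ is precisely a family $S=(S(Q))_{Q\in\mathcal{C}_0}$ with $S(Q)\subseteq\Hom_A(Q,P)$ and $S(Q)\circ\Hom_A(Q',Q)\subseteq S(Q')$ for all $Q,Q'\in\mathcal{C}_0$ (equivalently, a subfunctor of the restriction of $\Hom_A(-,P)$ to $\mathcal{C}$). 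I would also fix the isomorphism of right $A$-modules $\Hom_A(A_A,P)\cong P$, $\phi\mapsto\phi(1)$, under which composition with $\End_A(A_A)\cong A$ corresponds to the $A$-action on $P$.

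The key point is the claim that $S\mapsto S(A_A)$ is a lattice isomorphism between the graded submodules of $R_\mathcal{C}(e)$ and the right $A$-submodules of $P$. It is clearly order preserving, and $S(A_A)$ is a submodule of $P$ since $S$ is stable under composition with $\End_A(A_A)$. Surjectivity has an explicit inverse: a submodule $T\subseteq P$ is sent to $\overline T$, defined by $\overline T(Q):=\{f\in\Hom_A(Q,P):\im f\subseteq T\}$, which is readily seen to be a graded submodule with $\overline T(A_A)$ corresponding to $T$. Injectivity reduces to proving $S(Q)=\overline{S(A_A)}(Q)$ for each $Q\in\mathcal{C}_0$. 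The inclusion $\subseteq$ is easy: for $f\in S(Q)$ and $q\in Q$, the map $\widehat q\colon A\to Q$, $a\mapsto qa$, lies in $\mathcal{C}$ (it is a full subcategory and $A_A\in\mathcal{C}_0$), so $f\circ\widehat q\in S(A_A)$, whence $f(q)\in S(A_A)$ and $\im f\subseteq S(A_A)$. The inclusion $\supseteq$ is the one genuine obstacle, and is where finite projectivity of the objects of $\mathcal{C}$ is used: given $f\colon Q\to P$ with $\im f\subseteq S(A_A)$, write $\im f=p_1A+\dots+p_kA$ with $p_i\in S(A_A)$; since $Q$ is projective, $f$ lifts along the surjection $A^k\twoheadrightarrow\im f$, $(b_i)_i\mapsto\sum_i p_ib_i$, to a map $(f_1,\dots,f_k)\colon Q\to A^k$, and then, inside $R_\mathcal{C}(e)$, $f=\sum_i\phi_{p_i}\cdot f_i$, where $\phi_{p_i}\colon A\to P$, $a\mapsto p_ia$, is (the preimage of) $p_i\in S(A_A)$ and $f_i\in\Hom_A(Q,A)=(R_\mathcal{C})_{(A,Q)}$; hence $f\in S(Q)$. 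The care needed here is in keeping track of the grading, the module action, and the two uses of ``full subcategory'' and ``finitely generated projective''.

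Granting the claim, $R_\mathcal{C}(e)$ is gr-artinian (resp. gr-noetherian) if and only if the right $A$-module $P$ has the descending (resp. ascending) chain condition on submodules, that is, is artinian (resp. noetherian). Therefore $R_\mathcal{C}$ is right $\Gamma_0$-artinian (resp. right $\Gamma_0$-noetherian) if and only if every $P\in\mathcal{C}_0$ is an artinian (resp. noetherian) right $A$-module. Now if $A$ is right artinian (resp. noetherian), the free module $A^n$ is artinian (resp. noetherian) for every $n$, and any $P\in\mathcal{C}_0$, being finitely generated and projective, is a direct summand of some $A^n$, hence artinian (resp. noetherian); conversely $A_A\in\mathcal{C}_0$, so taking $P=A_A$ gives that $A$ is right artinian (resp. noetherian). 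This proves (1).

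For (2) I would run the same argument on the left: the relevant module is the left shift $(e)R_\mathcal{C}=R_\mathcal{C}1_e=\bigoplus_{X\in\mathcal{C}_0}\Hom_A(P,X)$, with $R_\mathcal{C}$ acting by post-composition, and the same reasoning---composing with morphisms of $\mathcal{C}$ on the appropriate side, and lifting through a surjection onto the image using projectivity of the objects of $\mathcal{C}$---shows that its graded submodules form a lattice isomorphic to the lattice of left $A$-submodules of $P^{*}:=\Hom_A(P,A)$. Since $\{P^{*}:P\in\mathcal{C}_0\}$ is a set of finitely generated projective left $A$-modules that contains ${}_AA\cong A^{*}$, the conclusion follows exactly as in (1). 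Alternatively, (2) is (1) applied to $A^{\mathrm{op}}$ through the duality $P\mapsto P^{*}$ between finitely generated projective right and left $A$-modules, using that $R_\mathcal{C}$ is left $\Gamma_0$-artinian (resp. left $\Gamma_0$-noetherian) exactly when $R_{\mathcal{C}^{\mathrm{op}}}\cong_{gr}R_\mathcal{C}^{\mathrm{op}}$ is right $\Gamma_0$-artinian (resp. right $\Gamma_0$-noetherian).
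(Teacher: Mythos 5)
Your proof is correct, and it takes a genuinely different route from the paper's.

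The paper factors everything through the two transfer lemmas \Cref{lem: M G_0-art => 1_eM1_f gr-art} and \Cref{lem: M G_0-art <= M1_f gr-art}: it shows $R_\mathcal{C}=R_\mathcal{C}1_fR_\mathcal{C}$ for $f=(A,A)$ and that $1_eR_\mathcal{C}1_f\cong P_A$ and $1_fR_\mathcal{C}1_e\cong\Hom_A(P,A)$ are finitely generated modules over $1_fR_\mathcal{C}1_f\cong A$, then invokes those lemmas to move the chain conditions back and forth between $R_\mathcal{C}$ and $A$. You instead prove a concrete, Morita-style statement: for $e=(P,P)$ the map $S\mapsto S(A_A)$ is a lattice isomorphism from graded right $R_\mathcal{C}$-submodules of $R_\mathcal{C}(e)$ onto right $A$-submodules of $P$, with explicit inverse $T\mapsto\overline{T}$, where $\overline{T}(Q)=\{f\in\Hom_A(Q,P):\mathrm{im}\,f\subseteq T\}$; the $\supseteq$ half of $S(Q)=\overline{S(A_A)}(Q)$ is the place where you really use that the objects of $\mathcal{C}$ are finitely generated projective (lift $f$ through $A^k\twoheadrightarrow\mathrm{im}\,f$), and the $\subseteq$ half uses fullness and $A_A\in\mathcal{C}_0$. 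Both approaches rely at bottom on the same dual-basis decomposition of identities in $\proj$-$A$, but your argument is more self-contained and actually proves something stronger — an isomorphism of submodule lattices, not merely a transfer of chain conditions — while the paper's is more modular, since its two lemmas are reused in other results such as \Cref{prop: mod-A art e noet}. Your left-hand version and the alternative via $A^{\mathrm{op}}$-duality are both sound; for the latter one just has to note that the contravariant equivalence $P\mapsto P^*$ between f.g. projective right and left $A$-modules carries $\mathcal{C}$ to a full subcategory of $\proj$-$A^{\mathrm{op}}$ containing $(A^{\mathrm{op}})_{A^{\mathrm{op}}}$, so that the hypotheses of (1) are met there.
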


\begin{proof}
    The ``only if'' part is a consequence of \Cref{lem: M G_0-art => 1_eM1_f gr-art}(2) for $\Delta_0=\{(A,A)\}$, since $A\cong\mathcal{C}(A,A)=1_{(A,A)}R_\mathcal{C}1_{(A,A)}$.

    For the ``if part'', we will use \Cref{lem: M G_0-art <= M1_f gr-art} and its left version to prove that if $A$ is right/left artinian (resp. noetherian), then $R_\mathcal{C}$ is a right/left $\Gamma_0$-artinian (resp. $\Gamma_0$-noetherian) ring.

    Set $f:=(A,A)\in\Gamma_0$.   
    Let $(Q,P)\in\Gamma$ and $g\in (R_\mathcal{C})_{(Q,P)}=\mathcal{C}(P,Q)$.
    Since $P_A$ is projective, we can choose $m \in \mathbb{N}$, projections  $\pi_P \in \Hom_A(A^m, P)$ and $\pi_i \in \Hom_A(A^m, A)$, and inclusions $\iota_P \in \Hom_A(P,A^m)$ and $\iota_i \in \Hom_A(A, A^m)$ such that $\id_P=\pi_P\iota_P$ and $\id_{A^m}=\sum_{i=1}^m\iota_i\pi_i$.
    Then, 
    \begin{multline*}
        g=g\id_P=g\pi_P\iota_P=g\pi_P\id_{A^m}\iota_P
    =\sum_{i=1}^mg\pi_P\iota_i\pi_i\iota_P=
    \\=\sum_{i=1}^m(g\pi_P\iota_i)\id_A(\pi_i\iota_P)
    \in R_\mathcal{C}1_f R_\mathcal{C}.
    \end{multline*} 
    Therefore, $R_\mathcal{C}=R_\mathcal{C}1_fR_\mathcal{C}$.

    Now let $P\in\mathcal{C}_0$ and $e:=(P,P)\in\Gamma_0$.
    Then $1_eR_\mathcal{C}1_f=\mathcal{C}(A,P)\cong P_A$ is a finitely generated right $\mathcal{C}(A,A)$-module. 
    Thus, if $A\cong\mathcal{C}(A,A)=1_fR_\mathcal{C}1_f$ is right artinian (resp. noetherian), then $1_eR_\mathcal{C}1_f$ is an artinian (resp. noetherian) right $1_fR_\mathcal{C}1_f$-module.
    On the other hand, $1_fR_\mathcal{C}1_e=\mathcal{C}(P,A)$ is a finitely generated left $A$-module by \cite[Remark 2.11]{Lam2}. Thus, if $A\cong\mathcal{C}(A,A)=1_fR_\mathcal{C}1_f$ is left artinian (resp. noetherian), then $1_fR_\mathcal{C}1_e$ is an artinian (resp. noetherian) left $1_fR_\mathcal{C}1_f$-module. 
\end{proof}

We proceed to state an interesting property of modules satisfying a $\Gamma_0$-chain condition.

\begin{prop}
\label{prop: pre-Fitting para G0}
    Let $R$ be a $\Gamma$-graded ring, $M$ be a $\Gamma$-graded right $R$-module and $g\colon M\rightarrow M$ be a gr-homomorphism. The following statements hold:
    \begin{enumerate}[\rm (1)]
        \item Suppose that $M$ is $\Gamma_0$-noetherian. 
        Then, for each $e\in\Gamma_0$, there exists $n_e\in\mathbb{N}$ such that $g(M(e)\cap\ker g^{n_e})\subseteq\ker g^{n_e}$ and $M(e)\cap\ker g^{n_e}\cap\im g^{n_e}=0$.
        Moreover, if $g$ is surjective, then $g$ is a gr-isomorphism.
        \item Suppose that $M$ is $\Gamma_0$-artinian. 
        Then, for each $e\in\Gamma_0$, there exists $n_e\in\mathbb{N}$ such that $g(\im g^{n_e})\supseteq M(e)\cap\im g^{n_e}$ and $(M(e)\cap\ker g^{n_e})+(M(e)\cap\im g^{n_e})=M(e)$.
        Moreover, if $g$ is injective, then $g$ is a gr-isomorphism.
    \end{enumerate}
\end{prop}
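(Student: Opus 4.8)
The plan is to reduce the statement to the gr-artinian/gr-noetherian Fitting's Lemma \Cref{lem: Fitting para gr-end}, applied blockwise along the decomposition $M=\bigoplus_{e\in\Gamma_0}M(e)$. First I would note that, since $g$ is a gr-homomorphism, $g(M_\gamma)\subseteq M_\gamma$ for every $\gamma$, and because $M(e)=\bigoplus_{\gamma\colon r(\gamma)=e}M_\gamma$, this forces $g(M(e))\subseteq M(e)$ for each $e\in\Gamma_0$. Hence $g$ restricts to a gr-endomorphism $g_e:=g|_{M(e)}\in\Endgr(M(e))$ with $g_e^{\,n}=g^n|_{M(e)}$. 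Using $g(M(e))\subseteq M(e)$ together with the directness of $M=\bigoplus_{f\in\Gamma_0}M(f)$, one checks the elementary identities $\ker g_e^{\,n}=\ker g^n\cap M(e)$ and $\im g_e^{\,n}=g^n(M(e))=\im g^n\cap M(e)$, as well as $\ker g^n=\bigoplus_f\ker g_f^{\,n}$ and $\im g^n=\bigoplus_f\im g_f^{\,n}$.

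For (1), assuming $M$ is $\Gamma_0$-noetherian, the module $M(e)$ is gr-noetherian by definition, so \Cref{lem: Fitting para gr-end}(1) applied to $g_e$ gives $n_e\in\mathbb{N}$ with $g_e(\ker g_e^{\,n_e})\subseteq\ker g_e^{\,n_e}$ and $\ker g_e^{\,n_e}\cap\im g_e^{\,n_e}=0$; translating through the identities above yields exactly $g(M(e)\cap\ker g^{n_e})\subseteq M(e)\cap\ker g^{n_e}\subseteq\ker g^{n_e}$ and $M(e)\cap\ker g^{n_e}\cap\im g^{n_e}=0$. If moreover $g$ is surjective, then comparing $M=\bigoplus_f M(f)$ with $g(M)=\bigoplus_f g(M(f))$ and using $g(M(f))\subseteq M(f)$ forces $g(M(f))=M(f)$, so each $g_f$ is surjective, hence a gr-isomorphism by \Cref{lem: Fitting para gr-end}(1); therefore $\ker g=\bigoplus_f\ker g_f=0$ and $g$ is a gr-isomorphism.

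Part (2) is the symmetric argument: when $M$ is $\Gamma_0$-artinian, $M(e)$ is gr-artinian, and \Cref{lem: Fitting para gr-end}(2) applied to $g_e$ produces $n_e$ with $g_e(\im g_e^{\,n_e})\supseteq\im g_e^{\,n_e}$ and $\ker g_e^{\,n_e}+\im g_e^{\,n_e}=M(e)$; since $M(e)\cap\im g^{n_e}=\im g_e^{\,n_e}\subseteq\im g^{n_e}$, this gives $g(\im g^{n_e})\supseteq g(M(e)\cap\im g^{n_e})\supseteq M(e)\cap\im g^{n_e}$ and $(M(e)\cap\ker g^{n_e})+(M(e)\cap\im g^{n_e})=M(e)$. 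If $g$ is injective, then each $g_f$ is injective, hence a gr-isomorphism by \Cref{lem: Fitting para gr-end}(2), so $g(M)=\bigoplus_f g(M(f))=\bigoplus_f M(f)=M$ and $g$ is a gr-isomorphism.

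I do not expect a genuine obstacle here; the only point needing care is the bookkeeping in the first paragraph — in particular that $\im g_e^{\,n}$ equals $\im g^n\cap M(e)$ and not merely sits inside it, which is where both the invariance $g(M(e))\subseteq M(e)$ and the directness of $M=\bigoplus_e M(e)$ are used. After that, every assertion is a direct transcription of the corresponding part of \Cref{lem: Fitting para gr-end} for the restrictions $g_e$, exactly as in the classical componentwise Fitting argument.
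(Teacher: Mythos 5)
Your proof is correct and takes exactly the route the paper does: the paper's one-line proof is precisely "apply \Cref{lem: Fitting para gr-end} to $g|_{M(e)}\in\Endgr(M(e))$ for each $e\in\Gamma_0$," and your argument is that same reduction with the bookkeeping (invariance of $M(e)$ under $g$, the identities $\ker g_e^n=M(e)\cap\ker g^n$ and $\im g_e^n=M(e)\cap\im g^n$, and the componentwise treatment of injectivity/surjectivity) made explicit.
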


\begin{proof}
    It suffices to apply \Cref{lem: Fitting para gr-end} to $g|_{M(e)}\in \Endgr(M(e))$ for each $e\in\Gamma_0$.
\end{proof}

Now we deal with the concept of $\Gamma_0$-finite gr-length and its properties.

\begin{defi}
 Let $R$ be a $\Gamma$-graded ring and $M$ be a $\Gamma$-graded right $R$-module.   If $M(e)$ has finite gr-length for each $e\in\Gamma_0$, then we say that $M$ has \emph{$\Gamma_0$-finite gr-length}.
\end{defi}

Note  that if $R$ is a gr-semisimple ring, then for each $e\in\Gamma'_0(R)$, $R(e)$ is a finite direct sum of gr-simple modules \cite[Lemma 5.12(2)]{Artigoarxiv}, and thus $R(e)$ has an obvious gr-composition series, so that $R_R$ has $\Gamma_0$-finite gr-length.
More generally, any $\Gamma_0$-finitely generated gr-semisimple module has $\Gamma_0$-finite gr-length.

\begin{prop}
\label{prop: G0 comp finito = G0 art + G0 noet}
    Let $R$ be a $\Gamma$-graded ring and $M$ be a $\Gamma$-graded right $R$-module. Then $M$ has {$\Gamma_0$-finite gr-length} if and only if $M$ is both $\Gamma_0$-artinian and $\Gamma_0$-noetherian. 
\end{prop}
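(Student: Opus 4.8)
The plan is to reduce the $\Gamma_0$-statement to the corresponding statement for each homogeneous shift $M(e)$, where we already have \Cref{prop: comp finito = art + noet} at our disposal. Recall that $M$ has $\Gamma_0$-finite gr-length precisely when $M(e)$ has finite gr-length for every $e\in\Gamma_0$, that $M$ is $\Gamma_0$-artinian precisely when each $M(e)$ is gr-artinian, and that $M$ is $\Gamma_0$-noetherian precisely when each $M(e)$ is gr-noetherian. So all three conditions in the statement are conjunctions, indexed by $e\in\Gamma_0$, of conditions on the single graded module $M(e)$.

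First I would fix $e\in\Gamma_0$ and apply \Cref{prop: comp finito = art + noet} to the $\Gamma$-graded right $R$-module $M(e)$: this gives that $M(e)$ has finite gr-length if and only if $M(e)$ is both gr-artinian and gr-noetherian. Then I would simply quantify over all $e\in\Gamma_0$: $M$ has $\Gamma_0$-finite gr-length iff $M(e)$ has finite gr-length for all $e$, iff $M(e)$ is gr-artinian and gr-noetherian for all $e$, iff ($M(e)$ is gr-artinian for all $e$) and ($M(e)$ is gr-noetherian for all $e$), iff $M$ is $\Gamma_0$-artinian and $\Gamma_0$-noetherian. The only mildly non-formal manipulation is commuting the universal quantifier over $e$ with the conjunction, which is immediate.

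There is essentially no obstacle here: the proposition is a direct "pointwise" consequence of \Cref{prop: comp finito = art + noet}, exactly as $\Gamma_0$-chain conditions are by definition the pointwise (over $\Gamma_0$) versions of the standard chain conditions. The proof is one or two lines. For completeness I would write:

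\begin{proof}
    By definition, $M$ has $\Gamma_0$-finite gr-length (resp.\ is $\Gamma_0$-artinian, resp.\ is $\Gamma_0$-noetherian) if and only if $M(e)$ has finite gr-length (resp.\ is gr-artinian, resp.\ is gr-noetherian) for each $e\in\Gamma_0$. Fixing $e\in\Gamma_0$, \Cref{prop: comp finito = art + noet} applied to the $\Gamma$-graded right $R$-module $M(e)$ shows that $M(e)$ has finite gr-length if and only if $M(e)$ is both gr-artinian and gr-noetherian. Quantifying over all $e\in\Gamma_0$ yields that $M$ has $\Gamma_0$-finite gr-length if and only if $M$ is both $\Gamma_0$-artinian and $\Gamma_0$-noetherian.
\end{proof}
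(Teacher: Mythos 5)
Your proof is correct and follows exactly the same route as the paper: both reduce the statement pointwise to $M(e)$ for each $e\in\Gamma_0$ and invoke \Cref{prop: comp finito = art + noet}. Nothing to add.
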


\begin{proof}
   By \Cref{prop: comp finito = art + noet}, $M(e)$ has finite gr-length if and only if $M(e)$ is gr-artinian and gr-noetherian.
\end{proof}

\begin{defi}
Let $R$ be a $\Gamma$-graded ring and $M$ be a $\Gamma$-graded right $R$-module of {$\Gamma_0$-finite gr-length}. We define  $c_{\rm \Gamma_0-gr}(M)$, the   \emph{$\Gamma_0$-gr-length} of $M$,  as the sequence $c_{\rm \Gamma_0-gr}(M)=\big(c_{\rm gr}(M(e))\big)_{e\in\Gamma_0}\in \mathbb{N}^{\Gamma_0}$. 
\end{defi}

\begin{prop}
 Let $R$ be a $\Gamma$-graded ring, $M$ be a $\Gamma$-graded right $R$-module and $N\subgr M$. Then $M$   has $\Gamma_0$-finite gr-length if and only if $N$ and $M/N$ have $\Gamma_0$-finite gr-length. In this event, $c_{\rm \Gamma_0-gr}(M)=c_{\rm \Gamma_0-gr}(N)+c_{\rm \Gamma_0-gr}(M/N)$.
\end{prop}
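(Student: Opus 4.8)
The plan is to reduce the statement to the corresponding fact for ordinary gr-length, namely \Cref{prop: comprimento finito M <---> comp finito N  e M/N}, by working separately in each degree-object component $M(e)$. Recall that, by definition, $M$ has $\Gamma_0$-finite gr-length precisely when $M(e)$ has finite gr-length for every $e\in\Gamma_0$; and recall that for each such $e$ one has $N(e)\subgr M(e)$ together with the identification $(M/N)(e)=M(e)/N(e)$ of $\Gamma$-graded modules, both of which were already recorded in the proof of \Cref{prop: M G0-art ou G0-noeth <--> N e M/N tbm}.

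First I would prove the equivalence. Fixing $e\in\Gamma_0$ and applying \Cref{prop: comprimento finito M <---> comp finito N  e M/N} to $N(e)\subgr M(e)$, one obtains that $M(e)$ has finite gr-length if and only if both $N(e)$ and $M(e)/N(e)=(M/N)(e)$ have finite gr-length. Since this holds for every $e\in\Gamma_0$, quantifying over $\Gamma_0$ yields immediately that $M$ has $\Gamma_0$-finite gr-length if and only if both $N$ and $M/N$ have $\Gamma_0$-finite gr-length.

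For the additivity, assume $M$ has $\Gamma_0$-finite gr-length. For each $e\in\Gamma_0$, the second assertion of \Cref{prop: comprimento finito M <---> comp finito N  e M/N} gives
\[
c_{\rm gr}(M(e))=c_{\rm gr}(N(e))+c_{\rm gr}(M(e)/N(e))=c_{\rm gr}(N(e))+c_{\rm gr}((M/N)(e)).
\]
Since $c_{\rm \Gamma_0-gr}(-)$ is by definition the tuple $\big(c_{\rm gr}((-)(e))\big)_{e\in\Gamma_0}\in\mathbb{N}^{\Gamma_0}$ and addition in $\mathbb{N}^{\Gamma_0}$ is componentwise, collecting these identities over all $e\in\Gamma_0$ gives $c_{\rm \Gamma_0-gr}(M)=c_{\rm \Gamma_0-gr}(N)+c_{\rm \Gamma_0-gr}(M/N)$.

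I do not expect any real obstacle here: the whole argument is a routine passage to the $M(e)$-components combined with the already-established gr-length version. The one point deserving (minor) care is the identification $(M/N)(e)=M(e)/N(e)$ as $\Gamma$-graded modules, which is what lets the gr-length of the quotient be computed in the correct component; but since this identification has already been used earlier in the paper, nothing new is needed.
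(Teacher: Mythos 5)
Your proof is correct and follows essentially the same component-wise reduction as the paper: both the equivalence and the additivity are obtained by applying the gr-length lemma to $N(e)\subgr M(e)$ and $(M/N)(e)=M(e)/N(e)$ for each $e\in\Gamma_0$. The only cosmetic difference is that for the equivalence the paper routes through Proposition~\ref{prop: G0 comp finito = G0 art + G0 noet} (the $\Gamma_0$-artinian/noetherian characterization), whereas you invoke Proposition~\ref{prop: comprimento finito M <---> comp finito N  e M/N} directly at each $e$; both are routine and interchangeable here.
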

\begin{proof}
 The first statement follows from \Cref{prop: G0 comp finito = G0 art + G0 noet}  and \Cref{prop: M G0-art ou G0-noeth <--> N e M/N tbm}.
 The second statement follows using \Cref{prop: comprimento finito M <---> comp finito N  e M/N} applied to $M(e)$, $N(e)$ and $(M/N)(e)$ for each $e\in\Gamma_0$:
 \begin{align*}
    c_{\rm \Gamma_0-gr}(M)&=\big(c_{\rm gr}(M(e))\big)_{e\in\Gamma_0}\\ 
   &=\big(c_{\rm gr}(N(e)+c_{\rm gr}(M(e)/N(e))\big)_{e\in\Gamma_0} \\ 
   &=\big(c_{\rm gr}(N(e))\big)_{e\in\Gamma_0}+\big(c_{\rm gr}(M(e)/N(e))\big)_{e\in\Gamma_0}\\ 
   &=c_{\rm \Gamma_0-gr}(N)+c_{\rm \Gamma_0-gr}(M/N).\qedhere
\end{align*}
\end{proof}

In the following result, we show that  any skeleton $\mathcal{C}$ of the category of finitely generated modules over a representation-finite $k$-algebra is artinian and noetherian (on both sides). Equivalently, that the ring $R_\mathcal{C}$ has $\Gamma_0$-finite gr-length as a left and as a right graded  $R_\mathcal{C}$-module 

\begin{prop}
\label{prop: mod-A art e noet}
    Let $k$ be a field, $A$ be a finite dimensional $k$-algebra, and \mbox{$\fgmod$-$A$} be the category of all finitely generated right $A$-modules.
    \begin{enumerate}[\rm (1)]
        \item Suppose that $\mathcal{C}$ is a small full subcategory of $\fgmod$-$A$ and there exists a finite subset 
    $\mathcal{X}\subseteq \mathcal{C}_0$ such that each object of $\mathcal{C}$ is isomorphic to a finite direct sum of elements of $\mathcal{X}$.
    Then $R_\mathcal{C}$ is both a left and a right $\Gamma_0$-artinian ring, and both a left and a right $\Gamma_0$-noetherian ring, where $\Gamma := \mathcal{C}_0 \times \mathcal{C}_0$.

    \item If $\mathcal{C}$ is a skeleton of $\fgmod$-$A$ and $A$ is representation-finite, that is, there exist finite isoclasses of indecomposable $A$-modules, then $R_\mathcal{C}$ is both a left and a right $\Gamma_0$-artinian ring, and both a left and a right $\Gamma_0$-noetherian ring.
    \end{enumerate}
\end{prop}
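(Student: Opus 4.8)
The plan is to deduce (2) from (1), and to prove (1) by applying \Cref{lem: M G_0-art <= M1_f gr-art}(2) (together with its left analogue, as was done in \Cref{prop: proj-A art e noet}) to a well-chosen finite set of ``diagonal'' idempotents of $\Gamma_0$. For the reduction, I would argue that if $A$ is representation-finite then, since $A$ is finite dimensional over $k$, finitely generated right $A$-modules coincide with finite dimensional ones, so by Krull--Schmidt every object of $\fgmod$-$A$ is a finite direct sum of indecomposables, of which there are finitely many up to isomorphism. Hence, if $\mathcal{C}$ is a skeleton of $\fgmod$-$A$, the set $\mathcal{X}$ of indecomposable objects of $\mathcal{C}$ is finite, is contained in $\mathcal{C}_0$, and every object of $\mathcal{C}$ is isomorphic to a finite direct sum of elements of $\mathcal{X}$ --- exactly the hypothesis of (1).

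For (1), write $\mathcal{X}=\{X_1,\dots,X_m\}$, $\Gamma=\mathcal{C}_0\times\mathcal{C}_0$, and set $\Delta_0:=\{(X_1,X_1),\dots,(X_m,X_m)\}$, a finite subset of $\Gamma_0$. I would then check the two hypotheses of \Cref{lem: M G_0-art <= M1_f gr-art}(2). The first is $R_\mathcal{C}=\sum_{f\in\Delta_0}R_\mathcal{C}1_fR_\mathcal{C}$. Since $R_\mathcal{C}$ (see \Cref{ex: anel de categoria}) is spanned by its homogeneous elements, it is enough to show that every $g\in\mathcal{C}(Q,P)=(R_\mathcal{C})_{(P,Q)}$ lies in $\sum_{f\in\Delta_0}R_\mathcal{C}1_fR_\mathcal{C}$. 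For this I would fix decompositions $P\cong X_{i_1}\oplus\cdots\oplus X_{i_r}$ and $Q\cong X_{j_1}\oplus\cdots\oplus X_{j_s}$ in $\fgmod$-$A$, transport the canonical inclusions and projections of the biproducts through these isomorphisms to obtain morphisms $\iota_t\colon X_{i_t}\to P$, $\pi_t\colon P\to X_{i_t}$ and $\iota'_u\colon X_{j_u}\to Q$, $\pi'_u\colon Q\to X_{j_u}$ of $\mathcal{C}$ --- which \emph{are} morphisms of $\mathcal{C}$ because $\mathcal{C}$ is full and all the objects involved lie in $\mathcal{C}_0$ --- with $\sum_t\iota_t\pi_t=\id_P$ and $\sum_u\iota'_u\pi'_u=\id_Q$. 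Then, computing in $R_\mathcal{C}$,
\[
g=\id_P\, g\,\id_Q=\sum_{t,u}\iota_t\,(\pi_t g\iota'_u)\,\pi'_u,
\]
and since $\pi_t g\iota'_u\in\mathcal{C}(X_{j_u},X_{i_t})$ equals $\id_{X_{i_t}}\circ(\pi_t g\iota'_u)=1_{(X_{i_t},X_{i_t})}(\pi_t g\iota'_u)$, each summand lies in $R_\mathcal{C}1_{(X_{i_t},X_{i_t})}R_\mathcal{C}$, as desired.

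The second hypothesis is that, for each $e=(P,P)\in\Gamma_0$ and each $f=(X_i,X_i)\in\Delta_0$, the corner $1_eR_\mathcal{C}1_f$ is a gr-artinian and gr-noetherian right $1_fR_\mathcal{C}1_f$-module. Here $1_fR_\mathcal{C}1_f=\mathcal{C}(X_i,X_i)=\End_A(X_i)$ is concentrated in the single degree $(X_i,X_i)$ and $1_eR_\mathcal{C}1_f=\mathcal{C}(X_i,P)=\Hom_A(X_i,P)$ is concentrated in the single degree $(P,X_i)$, so ``graded $1_fR_\mathcal{C}1_f$-submodule'' here means simply ``$\End_A(X_i)$-submodule''. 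Since $A$ is a finite dimensional $k$-algebra and $X_i,P$ are finitely generated, $\Hom_A(X_i,P)$ is finite dimensional over $k$, and the right $\End_A(X_i)$-action being $k$-bilinear, every submodule is a $k$-subspace; finite dimensionality then forbids infinite strictly monotone chains, so $\Hom_A(X_i,P)$ has finite length, hence is artinian and noetherian, over $\End_A(X_i)$. Then \Cref{lem: M G_0-art <= M1_f gr-art}(2) yields that $R_\mathcal{C}$ is right $\Gamma_0$-artinian and right $\Gamma_0$-noetherian, and the left statements follow by the same argument, using the left analogue of that lemma and the corner $1_fR_\mathcal{C}1_e=\Hom_A(P,X_i)$ (again finite dimensional over $k$, hence of finite length as a left $\End_A(X_i)$-module) in place of $\Hom_A(X_i,P)$; equivalently, via \Cref{prop: G0 comp finito = G0 art + G0 noet}, this shows $R_\mathcal{C}$ has $\Gamma_0$-finite gr-length on both sides.

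I expect the only genuinely non-formal point to be the factorization identity $R_\mathcal{C}=\sum_{f\in\Delta_0}R_\mathcal{C}1_fR_\mathcal{C}$: the delicate issue is that the biproduct $X_{i_1}\oplus\cdots\oplus X_{i_r}$ need not itself be an object of $\mathcal{C}$, so one must route the argument only through the component maps $\iota_t,\pi_t$ (whose source and target lie in $\mathcal{C}$) and never through the biproduct object --- and this is precisely where fullness of $\mathcal{C}$ enters. Keeping the left/right symmetry honest is the remaining care needed; everything else is bookkeeping already isolated in the lemmas of this section.
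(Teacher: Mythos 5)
Your proof is correct and follows essentially the same route as the paper: both identify $\Delta_0=\{(X,X):X\in\mathcal{X}\}$, verify $R_\mathcal{C}=\sum_{f\in\Delta_0}R_\mathcal{C}1_fR_\mathcal{C}$ via the component maps of a biproduct decomposition (the paper decomposes only one of $P,Q$, which suffices, whereas you decompose both — harmless redundancy), and then invoke \Cref{lem: M G_0-art <= M1_f gr-art}(2) together with its left analogue after checking that the corners $\mathcal{C}(X_i,P)$ and $\mathcal{C}(P,X_i)$ have finite length over $\End_A(X_i)$. The only cosmetic differences are that you deduce finite length directly from $k$-finite-dimensionality (the paper instead notes these corners are finitely generated over the finite-dimensional, hence artinian and noetherian, ring $\End_A(X_i)$), and you make explicit the Krull--Schmidt reduction of (2) to (1), which the paper leaves implicit.
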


\begin{proof}
    Set  $\Delta_0:=\{(X,X):X\in\mathcal{X}\}\subseteq\Gamma_0$.

    Let $M,N\in\mathcal{C}_0$ and $g\in(R_\mathcal{C})_{(N,M)}=\mathcal{C}(M,N)$.
    Take $X_1,X_2,\dots,X_n\in\mathcal{X}$ such that $M\cong X_1\oplus X_2\oplus\dots\oplus X_n$.
    Then, there exist $p_i\in\mathcal{C}(M,X_i)$ and $q_i\in\mathcal{C}(X_i,M)$ for each $1\leq i \leq n$ such that $\sum_{i=1}^nq_ip_i=\id_M$.
    It follows that
    \[g=g\cdot\id_M=\sum_{i=1}^ngq_ip_i=\sum_{i=1}^ngq_i(\id_{X_i})p_i\in\sum_{f\in\Delta_0}R_\mathcal{C}1_fR_\mathcal{C}.\]
    Therefore, $R_\mathcal{C}=\sum_{f\in\Delta_0}R_\mathcal{C}1_fR_\mathcal{C}$.

    Let $X\in\mathcal{X}$ and $f=(X,X)\in\Delta_0$. 
    Since $X$ is a finite dimensional $k$-vector space, it follows that $1_fR_\mathcal{C}1_f=\mathcal{C}(X,X)$ is a finite dimensional $k$-algebra, and hence a left and right artinian (and noetherian) ring. 
    For each $M\in\mathcal{C}_0$, setting $e=(M,M)\in\Gamma_0$, we have that $1_eR_\mathcal{C}1_f=\mathcal{C}(X,M)$ and $1_fR_\mathcal{C}1_e=\mathcal{C}(M,X)$ are finite dimensional $k$-vector spaces, and thus they are finitely generated $1_fR_\mathcal{C}1_f$-modules. 
    Therefore, $1_eR_\mathcal{C}1_f$ and $1_fR_\mathcal{C}1_e$ are artinian and noetherian $1_fR_\mathcal{C}1_f$-modules for all $f\in\Delta_0$ and $e\in\Gamma_0$.

    It follows from \Cref{lem: M G_0-art <= M1_f gr-art}(2) and its left version that $R_\mathcal{C}$ is both left and right $\Gamma_0$-artinian, and both left and right $\Gamma_0$-noetherian.
\end{proof}

We end this subsection with a version of Fitting's Lemma for gr-endomorphisms of graded modules of $\Gamma_0$-finite length.

\begin{prop}
    Let $R$ be a $\Gamma$-graded ring and $M$ a $\Gamma$-graded right $R$-module with $\Gamma_0$-finite gr-length. 
    For any gr-endomorphism $g\in\Endgr(M)$, there exists a sequence of positive integers $(n_e)_{e\in\Gamma_0}$ such that the graded submodules \linebreak $K=\bigoplus_{e\in\Gamma_0}\ker(g^{n_e}|_{M(e)})$ and $V=\bigoplus_{e\in\Gamma_0}\im (g^{n_e}|_{M(e)})$ satisfy $M=K\oplus V$, the restriction $g|_V\colon V\rightarrow V$ is a gr-isomorphism, and  $g(K)\subseteq K$.
    Moreover, if $g$ is injective or surjective, then $g$ is a gr-isomorphism.
\end{prop}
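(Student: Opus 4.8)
The plan is to apply the already-established Fitting's Lemma for graded modules of finite gr-length (\Cref{prop: Fitting Lemma for graded modules of fin. gr-length}(1)) to each summand $M(e)$ separately, and then to glue the resulting decompositions along the canonical decomposition $M=\bigoplus_{e\in\Gamma_0}M(e)$.

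First I would observe that $g$ restricts to a gr-endomorphism of $M(e)$ for every $e\in\Gamma_0$: since $g\in\Endgr(M)$ satisfies $g(M_\gamma)\subseteq M_\gamma$ for all $\gamma\in\Gamma$, and $M(e)$ is the graded submodule of $M$ spanned by those homogeneous components $M_\gamma$ with $r(\gamma)=e$, we get $g(M(e))\subseteq M(e)$; hence $g|_{M(e)}\in\Endgr(M(e))$, and likewise all its powers $g^{m}|_{M(e)}=(g|_{M(e)})^{m}$. Because $M$ has $\Gamma_0$-finite gr-length, each $M(e)$ has finite gr-length, so \Cref{prop: Fitting Lemma for graded modules of fin. gr-length}(1) applied to $g|_{M(e)}$ yields some $n_e\in\mathbb{N}$ — which we may choose to be $\geq 1$, since the relevant kernel and image chains stay stationary once stabilized, and we may take $n_e=1$ when $M(e)=0$ — such that, writing $K(e):=\ker(g^{n_e}|_{M(e)})$ and $V(e):=\im(g^{n_e}|_{M(e)})$, one has $M(e)=K(e)\oplus V(e)$, $g(K(e))\subseteq K(e)$, and $g|_{V(e)}\colon V(e)\to V(e)$ is a gr-isomorphism. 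Each $K(e)$ and $V(e)$ is a graded submodule of $M$ (kernels and images of gr-homomorphisms are graded submodules), so with $K:=\bigoplus_{e\in\Gamma_0}K(e)$ and $V:=\bigoplus_{e\in\Gamma_0}V(e)$ one obtains graded submodules of $M$ with
\[M=\bigoplus_{e\in\Gamma_0}M(e)=\bigoplus_{e\in\Gamma_0}\bigl(K(e)\oplus V(e)\bigr)=K\oplus V.\]

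It then remains to read off the remaining assertions. Since $g(V(e))=V(e)$ for every $e$ and the sum $V=\bigoplus_{e\in\Gamma_0}V(e)$ is direct, $g$ restricts to a bijection of $V$; being the restriction of a gr-homomorphism to a graded submodule, $g|_V\colon V\to V$ is a bijective gr-homomorphism, hence a gr-isomorphism. Moreover $g^{n_e}(K(e))=g^{n_e}\bigl(\ker(g^{n_e}|_{M(e)})\bigr)=0$ by construction, and $g(K)\subseteq K$ because $g(K(e))\subseteq K(e)$ for each $e$. Finally, if $g$ is injective or surjective on $M$, then — using that $g$ respects the decomposition $M=\bigoplus_{e\in\Gamma_0}M(e)$, so that an element of $M(e)$ lying in the image of $g$ already lies in $g(M(e))$ — the restriction $g|_{M(e)}$ is injective, resp.\ surjective, for every $e$; by the ``moreover'' part of \Cref{prop: Fitting Lemma for graded modules of fin. gr-length}(1), each $g|_{M(e)}$ is a gr-isomorphism, and therefore so is $g$.

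I do not expect a real obstacle here: the entire substance is contained in \Cref{prop: Fitting Lemma for graded modules of fin. gr-length}, and the only points requiring a little attention are the bookkeeping with a possibly infinite $\Gamma_0$ — harmless, since all the displayed direct sums are internal direct sums inside $M$ — and the verification that $g$ genuinely respects the $M(e)$-decomposition, which is immediate from $g$ being a gr-homomorphism.
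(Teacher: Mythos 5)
Your proposal is correct and takes essentially the same approach as the paper: both apply \Cref{prop: Fitting Lemma for graded modules of fin. gr-length}(1) to the restriction $g|_{M(e)}$ for each $e\in\Gamma_0$ and assemble the resulting decompositions along $M=\bigoplus_{e\in\Gamma_0}M(e)$. The extra details you spell out (why $g$ preserves each $M(e)$, why surjectivity of $g$ passes to each $g|_{M(e)}$) are correct and make the short proof in the paper fully explicit.
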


\begin{proof}
 For each $e\in\Gamma_0$, $M(e)$ is a graded module of finite gr-length. 
 Therefore, we can apply \Cref{prop: Fitting Lemma for graded modules of fin. gr-length}(1) to the gr-endomorphism $g|_{M(e)}\colon M(e)\rightarrow M(e)$ for each $e\in\Gamma_0$.
\end{proof}

%%%%%%%%%%%%%%%%%%%%%%%%%%%%%%%%%%%%%%%%%%%%%%%%%%%%%%%%%%%%%%%%%%%%%%%%%%%%%%%%%%%%%%%%%%%%%%%%%%%%%%%%%%%%%%%%%%%%%%%%%%%%%%%%%%%%%%%%%%%%%%%%%%%%%%%%%%%%%%%%%%%%%%%%%%%%%%%%%%%%%%%%%%%%%%%%%%%%%%%%%%%%%%%%%%%%%%%%%%%%%%%%%%%%%%%%%%%%%%%%%%%%%%%%%%%%%%%%%%%%%%%%%%%%%%%%%%%%%%%%%%%%%%%%%%%%%%%%%%%%%%%%%%%%%%%%%%%%%%%%%%%%%%%%%%%%%%%%%%%%%%%%%%%%%%%%%%%%%%%%%%%%%%%%%%%%%%%%%%%%%%%%%%%%%%%%%%%%%%%%%%%%%%%%%%%%%%%%%%%%%%%%%%%%%%%%%%%%%%%%%%%%%%%%%%%%%%%%%%%%%%%%%%%%%%%%%%%%%%%%%%%%%%%%%%%%%%%%%%%%%%%%%%%%%%%%%%%%%%%%%%%%%%%%%%%%%%%%%%%%%%%%%%%%%%%%%%%%%%%%%%%%%%%%%%%%%%%%%%%%%%%%%%%%%%%%%%

\subsection{Strong $\Gamma_0$-chain conditions}

In this section, we consider the last type of chain conditions we will be dealing with in our work.
For that, we begin defining the kind of chains that these chain conditions have to do with.

\begin{defi}
Let $R$ be a $\Gamma$-graded ring and $M$ be a $\Gamma$-graded right $R$-module.

\begin{enumerate}[\rm (1)]

\item A descending chain of graded submodules of $M$
$$ M_1 \supseteq M_2 \supseteq \cdots \supseteq M_n \supseteq \cdots $$
is said to be \emph{tight} if
$$ \Gamma'_0 \left( \dfrac{M_i}{M_{i+1}} \right) \supseteq \Gamma'_0 \left( \dfrac{M_{i+1}}{M_{i+2}} \right), \quad \text{for all } i \ge 1. $$

\item An ascending chain of graded submodules of $M$
$$ M_1 \subseteq M_2 \subseteq \cdots \subseteq M_n \subseteq \cdots $$
is said to be \emph{tight} if
$$ \Gamma'_0 \left( \dfrac{M_{i+1}}{M_i} \right) \supseteq \Gamma'_0 \left( \dfrac{M_{i+2}}{M_{i+1}} \right), \quad \text{for all } i \ge 1. $$

\end{enumerate}

\end{defi}

 Note that if $\{M_i\}_{i \geq 1}$ is a chain of graded submodules of $M$, then this chain is tight if and only if, for each $e \in \Gamma_0$ and $i \geq 1$, the following implication holds:
$$
M_i(e) = M_{i+1}(e) \implies M_i(e) = M_{i+l}(e) \text{ for all } l \ge 1.
$$

Some important examples of tight chains are presented next.

\begin{exems}
Let $R$ be a $\Gamma$-graded ring, $J$ a  graded right ideal of $R$, $M$ a $\Gamma$-graded right $R$-module and $g\colon M\rightarrow M$ be a gr-homomorphism. 
\begin{enumerate}[(1)]
    \item $R\supseteq J\supseteq J^2\supseteq \dotsb \supseteq J^n\supseteq \dotsb$ is a tight descending chain of $R.$
    \item More generaly, $M\supseteq MJ\supseteq MJ^2\supseteq\dotsb \supseteq MJ^n\supseteq\dotsb $ is a tight descending chain of $M.$
    \item $\im g\supseteq\im g^2\supseteq\dotsb \supseteq\im g^n\supseteq \dotsb $ is a tight descending chain of $M$.
    \item $\ker g\subseteq \ker g^2\subseteq \dotsb\subseteq\ker g^n\subseteq \dotsb$ is a tight ascending chain of $M$.  
\end{enumerate}
\end{exems}

We are ready to define the strong $\Gamma_0$-chain conditions.

\begin{defi} 
Let $R$ be a $\Gamma$-graded ring.
We say that a $\Gamma$-graded right $R$-module $M$ is
\begin{enumerate}[\rm (1)]

\item  \emph{strongly $\Gamma_0$-artinian} if every tight descending chain of graded submodules of $M$ stabilizes.

\item \emph{strongly $\Gamma_0$-noetherian} if every tight ascending chain of graded submodules of $M$ stabilizes.

\end{enumerate}

\end{defi}

In the next lemma, we relate all the chain conditions we have defined.

\begin{lema}
\label{lem: fort G0-art/noet => G0-art/noet}
Let $R$ be a $\Gamma$-graded ring and $M$ be a $\Gamma$-graded right $R$-module.
The following implications hold true:
\begin{gather*}
    M \text{ is gr-artinian }\phantom{x}\implies \phantom{x} M \text{ is strongly $\Gamma_0$-artinian }\phantom{x}\implies\phantom{x} M \text{ is $\Gamma_0$-artinian };\\
    M \text{ is gr-noetherian }\implies M \text{ is strongly $\Gamma_0$-noetherian }\implies M \text{ is $\Gamma_0$-noetherian }.
\end{gather*}

\end{lema}

\begin{proof} 
Clearly, if $M$ is gr-artinian (resp. gr-noetherian), then $M$ is strongly $\Gamma_0$-artinian (resp. strongly $\Gamma_0$-noetherian).

To see the other implication, it suffices to note that, for each $e\in\Gamma_0$, every infinite strictly descending (resp. ascending) chain $\{M_i\}_{i\geq 1}$ of graded submodules of $M(e)$ is tight, with $\Gamma'_0 \left( {M_i}/{M_{i+1}} \right)=\{e\}$ (resp. $\Gamma'_0 \left( {M_{i+1}}/{M_i} \right)=\{e\}$) for all $i\geq 1$.
\end{proof}

The following useful result characterizes the strong $\Gamma_0$-conditions.

\begin{prop}  
\label{prop: carac fort G0-art/noet} 

Let $R$ be a $\Gamma$-graded ring and $M$ be a $\Gamma$-graded right $R$-module.
The following assertions are equivalent:

\begin{enumerate}[\rm (1)] 

\item $M$ is strongly $\Gamma_0$-artinian (resp. strongly $\Gamma_0$-noetherian).
\item $M$ is $\Gamma_0$-artinian (resp. $\Gamma_0$-noetherian) and there exist $\Delta_0 \subseteq \Gamma_0$ and $n_0 \in \mathbb{N}$ such that $\Gamma_0 \setminus \Delta_0$ is finite and  $M(e)$ is of finite gr-length not greater than $n_0$ for each $e \in \Delta_0$.

\end{enumerate}
\end{prop}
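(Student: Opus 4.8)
The plan is to prove the equivalence $(1)\Leftrightarrow(2)$ by handling the artinian and noetherian cases in parallel, since the arguments are formally dual. I will treat the strongly $\Gamma_0$-artinian case in detail; the strongly $\Gamma_0$-noetherian case is entirely analogous, replacing descending chains by ascending ones.

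For the implication $(2)\Rightarrow(1)$, suppose $M$ is $\Gamma_0$-artinian and fix $\Delta_0\subseteq\Gamma_0$ and $n_0\in\mathbb N$ with $\Gamma_0\setminus\Delta_0$ finite and $c_{\rm gr}(M(e))\le n_0$ for each $e\in\Delta_0$. Let $M_1\supseteq M_2\supseteq\cdots$ be a tight descending chain of graded submodules of $M$. Using the characterization recalled right after the definition of tight chains — namely that tightness means $M_i(e)=M_{i+1}(e)$ implies $M_i(e)=M_{i+l}(e)$ for all $l\ge 1$ — it suffices to show that for each $e\in\Gamma_0$ the chain $\{M_i(e)\}_{i\ge 1}$ stabilizes, and moreover to produce a \emph{uniform} bound on the index at which stabilization occurs. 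For $e\in\Gamma_0\setminus\Delta_0$, which is a finite set, each $M(e)$ is gr-artinian (as $M$ is $\Gamma_0$-artinian), so $\{M_i(e)\}_i$ stabilizes at some index $N_e$; let $N'$ be the maximum of these finitely many $N_e$. For $e\in\Delta_0$, I claim the chain $\{M_i(e)\}_i$ must stabilize by index $n_0+1$: indeed each $M_i(e)$ is a graded submodule of $M(e)$, which has gr-length $\le n_0$, so by \Cref{prop: comprimento finito M <---> comp finito N e M/N} a strictly descending chain of graded submodules of $M(e)$ has length at most $n_0$; by tightness, once two consecutive terms agree at $e$ all further terms agree at $e$, so $M_{n_0+1}(e)=M_{n_0+1+l}(e)$ for all $l\ge 0$. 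Taking $N:=\max\{N',n_0+1\}$, we get $M_N(e)=M_{N+l}(e)$ for every $e\in\Gamma_0$ and every $l\ge 0$, hence $M_N=M_{N+l}$ since $M_i=\bigoplus_{e}M_i(e)$. Thus $M$ is strongly $\Gamma_0$-artinian.

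For the implication $(1)\Rightarrow(2)$, assume $M$ is strongly $\Gamma_0$-artinian. That $M$ is $\Gamma_0$-artinian is immediate from \Cref{lem: fort G0-art/noet => G0-art/noet}. The substantive part is to produce the cofinite $\Delta_0$ and the uniform bound $n_0$. I will argue by contraposition: suppose no such pair $(\Delta_0,n_0)$ exists. Negating the statement, for \emph{every} $n\in\mathbb N$ and every cofinite $\Delta_0$ there is some $e\in\Delta_0$ with $M(e)$ not of finite gr-length $\le n$; since $M$ is $\Gamma_0$-artinian each $M(e)$ is gr-artinian, but it need not be gr-noetherian, so ``$M(e)$ not of finite gr-length $\le n$'' means either $M(e)$ has infinite gr-length or its gr-length exceeds $n$. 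In either case $M(e)$ admits a strictly descending chain of graded submodules of length $n$, i.e. $M(e)=L_0^{(e)}\supsetneq L_1^{(e)}\supsetneq\cdots\supsetneq L_n^{(e)}$. The idea is to choose an infinite sequence of \emph{distinct} objects $e_1,e_2,e_3,\dots\in\Gamma_0$ with $c_{\rm gr}(M(e_k))\ge k$ (or $=\infty$) for each $k$ — such a sequence exists because for each $k$ the set of objects $e$ with $c_{\rm gr}(M(e))<k$, if cofinite, would give us the forbidden pair $(\Delta_0,n_0)=(\{e: c_{\rm gr}(M(e))<k\}, k)$; so its complement is infinite and we may pick a new $e_k$ not among $e_1,\dots,e_{k-1}$. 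For each $k$, fix a strictly descending chain $M(e_k)=L_0^{(k)}\supsetneq L_1^{(k)}\supsetneq\cdots\supsetneq L_k^{(k)}$ of graded submodules of $M(e_k)$. Now assemble the descending chain of graded submodules of $M$
\begin{gather*}
M_1:=M,\qquad M_{j+1}:=M_j\cap\Big(\bigoplus_{k\ge j} L_{\min(j,k)}^{(k)}\ \oplus\ \bigoplus_{k<j}M(e_k)\ \oplus\ \bigoplus_{e\notin\{e_1,e_2,\dots\}}M(e)\Big)
\end{gather*}
— more cleanly: define $M_j(e)=M(e)$ unless $e=e_k$ for some $k\ge j$, in which case $M_j(e_k):=L_{j}^{(k)}$ (note $j\le k$ so $L_j^{(k)}$ is defined), and set $M_j:=\bigoplus_e M_j(e)$. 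Each $M_j$ is a graded submodule of $M$, the chain is descending, and at every object $e$ the induced chain $\{M_j(e)\}_j$ is: eventually constant $=M(e)$ if $e$ is not one of the $e_k$; and for $e=e_k$ it runs $M(e_k)=L_0^{(k)}\supsetneq L_1^{(k)}\supsetneq\cdots\supsetneq L_k^{(k)}=L_{k+1}^{(k)}=\cdots$, strictly descending through step $k$. By the objectwise characterization of tightness, this chain is tight: once $M_j(e_k)=M_{j+1}(e_k)$ (which happens exactly when $j\ge k$) all later terms at $e_k$ agree. But the chain does not stabilize: $M_k\ne M_{k+1}$ because they differ at $e_k$ (we have $M_k(e_k)=L_{k-1}^{(k)}\supsetneq L_k^{(k)}=M_{k+1}(e_k)$). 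This contradicts strong $\Gamma_0$-artinianness, completing the contrapositive and hence $(1)\Rightarrow(2)$.

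The main obstacle is the construction in $(1)\Rightarrow(2)$: one must carefully arrange the ``staircase'' chain so that it is genuinely tight (the strict drops at different objects $e_k$ occur at different steps and are never revisited) while still being non-stationary (a strict drop occurs at arbitrarily late steps). The bookkeeping with the indices $\min(j,k)$ and the need to extract an infinite sequence of distinct objects with growing gr-length from the negation of $(2)$ are the delicate points; everything else reduces to the decomposition $M=\bigoplus_{e\in\Gamma_0}M(e)$, the objectwise characterization of tight chains, and \Cref{prop: comprimento finito M <---> comp finito N e M/N}.
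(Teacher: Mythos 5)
Your proof is correct and follows essentially the same route as the paper: for $(2)\Rightarrow(1)$ you bound the stabilization index objectwise using the gr-length bound on $\Delta_0$ and $\Gamma_0$-artinianness on the finite complement, and for $(1)\Rightarrow(2)$ you build a tight non-stabilizing ``staircase'' chain from an infinite family of objects $e_k$ carrying arbitrarily long strict chains, exactly as in the paper (the paper's $M_k:=\bigoplus_{n\ge k}M_{nk}$ differs from yours only in putting $0$ rather than $M(e)$ at the objects where no drop will ever occur, which is cosmetic). One small slip worth flagging: your ``cleaner'' restatement $M_j(e_k):=L_j^{(k)}$ for $k\ge j$, $M_j(e_k):=M(e_k)$ for $k<j$ is not actually a descending chain (e.g.\ $M_1(e_1)=L_1^{(1)}\subsetneq M(e_1)=M_2(e_1)$); the correct objectwise formula produced by your recursive intersection definition is $M_j(e_k)=L^{(k)}_{\min(j-1,k)}$, which does match the behavior you describe ($L_0^{(k)}\supsetneq L_1^{(k)}\supsetneq\cdots\supsetneq L_k^{(k)}=L_k^{(k)}=\cdots$), so the argument is unaffected.
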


\begin{proof} 
We will prove the artinian case. The noetherian case is handled in a completely analogous way.

$ (1) \Rightarrow (2) $: 
By \Cref{lem: fort G0-art/noet => G0-art/noet}, if $M$ is strongly $\Gamma_0$-artinian, then $M$ is $\Gamma_0$-artinian.
Suppose that $M$ is strongly $\Gamma_0$-artinian but (2) does not hold. Then we can inductively find pairwise distinct idempotents $e_1, e_2, \hdots, e_n, \hdots \in \Gamma_0$ such that, for each $n > 1$, $e_n \in \Gamma_0 \setminus \{e_1, \dots, e_{n-1}\}$ is such that $M(e_n)$ contains a strictly descending chain $M_{n1} \supsetneq M_{n2} \supsetneq \cdots \supsetneq M_{nn}$.
For each $k \in \mathbb{N}$, set $M_k := \bigoplus\limits_{n = k}^{\infty} M_{nk}$. 
Note that
$$ M_1 \supsetneq M_2 \supsetneq \cdots \supsetneq M_n \supsetneq \cdots $$
and that $\{ e_j : j \geq n \} \supseteq \Gamma'_0 \left( \frac{M_n}{M_{n+1}} \right) \supseteq \{ e_j : j > n \}$ for all $n$, which shows that the chain $ M_1 \supsetneq M_2 \supsetneq \cdots \supsetneq M_n \supsetneq \cdots $ is tight but does not stabilize, a contradiction.

$(2) \Rightarrow (1)$: Suppose that $\Delta_0$ and $n_0$ are as in (2). 
Let $M_1 \supseteq M_2 \supseteq \cdots \supseteq M_n \supseteq \cdots$ be a tight descending chain of graded submodules of $M$.
For each $e \in \Delta_0$, $M(e)$ does not contain a strictly descending chain of graded submodules of length greater than $n_0$ and, therefore, since $M_1 \supseteq M_2 \supseteq \cdots \supseteq M_n \supseteq \cdots$ is tight, it follows that $M_{n_0}(e) = M_{n_0+ l}(e)$ for all $l \geq 0$.
Consider $\Gamma_0 \setminus \Delta_0 = \{e_1, \dots, e_k\}$. 
Since $M$ is $\Gamma_0$-artinian, we have that, for each $i = 1, \dots, k$, the chain $M_1(e_i) \supseteq M_2(e_i) \supseteq \hdots \supseteq M_n (e_i) \supseteq \hdots$ stabilizes at some $n_i \in \mathbb{N}$. 
Then $n := \max \{ n_0, n_1, \dots, n_r \}$ is such that $M_n = M_{n+l}$ for all $l \geq 0$, which shows that $M$ is strongly $\Gamma_0$-artinian.
\end{proof}

Now we characterize gr-noetherian (resp. gr-artinian) modules among the strongly $\Gamma_0$-noetherian ($\Gamma_0$-artinian) modules.

\begin{lema}
\label{lem: finitos e => gr-art/noet = strongly G0-art/noet}
    Let $R$ be a $\Gamma$-graded ring and $M$ be a $\Gamma$-graded right $R$-module. 
    The following assertions are equivalent:
    \begin{enumerate}[\rm (1)]
        \item $M$ is gr-noetherian (resp. gr-artinian).
        \item $M$ is strongly $\Gamma_0$-noetherian (resp. strongly $\Gamma_0$-artinian) and $\Gamma'_0(M)$ is finite.
    \end{enumerate}
\end{lema}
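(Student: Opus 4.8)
The plan is to obtain this equivalence formally from the two preceding results, treating the noetherian case (the artinian case being verbatim). The key observation is that \Cref{lem: fort G0-art/noet => G0-art/noet} already sandwiches the strong $\Gamma_0$-conditions strictly between the gr-conditions and the $\Gamma_0$-conditions, so the statement to be proved is essentially \Cref{lem: finitos e => gr-art/noet = G0-art/noet} with the middle condition ``$\Gamma_0$-noetherian'' replaced by ``strongly $\Gamma_0$-noetherian''. No new combinatorial input should be needed.

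First I would prove $(1)\Rightarrow(2)$. If $M$ is gr-noetherian, then the first implication of \Cref{lem: fort G0-art/noet => G0-art/noet} gives that $M$ is strongly $\Gamma_0$-noetherian. For the finiteness of $\Gamma'_0(M)$, one uses the decomposition $M=\bigoplus_{e\in\Gamma_0}M(e)$: if infinitely many $M(e)$ were nonzero, the partial sums would form a strictly increasing infinite chain of graded submodules, contradicting gr-noetherianity; this is precisely \Cref{obs: G0-conditions}(3).

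For $(2)\Rightarrow(1)$, assume $M$ is strongly $\Gamma_0$-noetherian. The second implication of \Cref{lem: fort G0-art/noet => G0-art/noet} gives that $M$ is $\Gamma_0$-noetherian. Together with the hypothesis that $\Gamma'_0(M)$ is finite, the implication $(2)\Rightarrow(1)$ of \Cref{lem: finitos e => gr-art/noet = G0-art/noet} then yields that $M$ is gr-noetherian. The gr-artinian case follows by the same argument, replacing ``noetherian'' by ``artinian'' and ``increasing'' by ``decreasing'' throughout.

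I do not expect a genuine obstacle: all the real content has already been packaged into \Cref{lem: fort G0-art/noet => G0-art/noet} and \Cref{lem: finitos e => gr-art/noet = G0-art/noet}. The only point requiring care is that the two one-directional implications of \Cref{lem: fort G0-art/noet => G0-art/noet} are invoked in \emph{opposite} directions of the ``if and only if'', so one must avoid the (false in general) converses ``strongly $\Gamma_0$-noetherian $\Rightarrow$ gr-noetherian'' and ``$\Gamma_0$-noetherian $\Rightarrow$ strongly $\Gamma_0$-noetherian'', using instead the finiteness of $\Gamma'_0(M)$ to bridge the gap exactly as in \Cref{lem: finitos e => gr-art/noet = G0-art/noet}.
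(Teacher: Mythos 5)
Your proof is correct and takes essentially the same approach as the paper: both directions are obtained by stringing together the previously established implications between the gr-, strongly $\Gamma_0$-, and $\Gamma_0$- chain conditions and then invoking \Cref{lem: finitos e => gr-art/noet = G0-art/noet}. The only cosmetic difference is that the paper routes through \Cref{prop: carac fort G0-art/noet} where you cite \Cref{lem: fort G0-art/noet => G0-art/noet} directly, but the content is identical.
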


\begin{proof}
	$(1)\implies(2)$: By \Cref{lem: finitos e => gr-art/noet =  G0-art/noet}, $M$ is $\Gamma_0$-noetherian (resp. $\Gamma_0$-artinian) and $\Gamma'_0(M)$ is finite. By \Cref{prop: carac fort G0-art/noet}, $M$ is strongly $\Gamma_0$-noetherian (resp. strongly $\Gamma_0$-artinian).

    $(2)\implies(1)$: By \Cref{prop: carac fort G0-art/noet}, $M$ is $\Gamma_0$-noetherian (resp. $\Gamma_0$-artinian). The result then follows from \Cref{lem: finitos e => gr-art/noet = G0-art/noet}.
\end{proof}

In \Cref{exs: Chain conditions},  we show that the implications in \Cref{lem: fort G0-art/noet => G0-art/noet} are not reversible in general. On the other hand, observe that if $\Gamma'_0(M)$ is finite, then all implications in \Cref{lem: fort G0-art/noet => G0-art/noet} are reversible by \Cref{lem: finitos e => gr-art/noet = G0-art/noet}.

\begin{exems}\label{exs: Chain conditions}
 (1) $M$ being strongly $\Gamma_0$-artinian (resp. strongly $\Gamma_0$-noetherian) does not imply that $M$ is gr-artinian (resp. gr-noetherian).
    For example, let $D$ be a division ring, $\Gamma:=\mathbb{N}\times\mathbb{N}$, and let $R:=D^{(\mathbb{N})}$ with the natural $\Gamma$-grading where $\supp(R)=\Gamma_0$.
    Note that $R_R$ is $\Gamma_0$-artinian and, for each $n\geq1$, $e_n=(n,n)\in\Gamma_0$ is such that $R(e_n)=D$ is gr-simple. 
    Therefore, $R_R$ is both strongly $\Gamma_0$-artinian and strongly $\Gamma_0$-noetherian by \Cref{prop: carac fort G0-art/noet} (taking $\Delta_0=\Gamma_0$ and $n_0=1$).
    However, $R_R$ is clearly neither gr-artinian nor gr-noetherian.

    (2) $M$ being $\Gamma_0$-artinian (resp. $\Gamma_0$-noetherian) does not imply that $M$ is strongly $\Gamma_0$-artinian (resp. strongly $\Gamma_0$-noetherian). 
    In fact, let $\{D_n:n\in\mathbb{N}\}$ be a family of division rings, $\Gamma:=\mathbb{N}\times\mathbb{N}$, and let $R:=\bigoplus_{n\in\mathbb{N}}D_n^n$ with $\Gamma$-grading given by $R_{(n,n)}=D_n^n$ for each $n\in\mathbb{N}$. 
    Then, for each $e=(n,n)$ where $n\in\mathbb{N}$, $R(e)=D_n^n$ is both gr-artinian and gr-noetherian. 
    However, $R_R$ does not satisfy \Cref{prop: carac fort G0-art/noet}(2).  
\end{exems}

Now we show that the strong $\Gamma_0$-chain conditions are inherited by graded submodules,  quotients and extensions.

\begin{prop}
\label{prop: M fortemente G0-art ou G0-noeth <--> N e M/N tbm}
    Let $R$ be a $\Gamma$-graded ring, $M$ be a $\Gamma$-graded right $R$-module and $N$ be a graded submodule of $M$. Then $M$ is strongly $\Gamma_0$-noetherian (resp. strongly $\Gamma_0$-artinian) if and only if both $N$ and $M/N$ are strongly $\Gamma_0$-noetherian (resp. strongly $\Gamma_0$-artinian).
\end{prop}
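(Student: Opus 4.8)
The plan is to reduce the statement to the characterization of the strong $\Gamma_0$-chain conditions in \Cref{prop: carac fort G0-art/noet}, combined with the already established behaviour of $\Gamma_0$-chain conditions (\Cref{prop: M G0-art ou G0-noeth <--> N e M/N tbm}) and of gr-length (\Cref{prop: comprimento finito M <---> comp finito N  e M/N}) under submodules and quotients. I would spell out the strongly $\Gamma_0$-artinian case; the noetherian case follows by the same argument, using the noetherian halves of the three results just cited.

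First I would record the two ingredients. By \Cref{prop: M G0-art ou G0-noeth <--> N e M/N tbm}, $M$ is $\Gamma_0$-artinian if and only if $N$ and $M/N$ are, so the ``$\Gamma_0$-artinian'' clause of \Cref{prop: carac fort G0-art/noet}(2) is automatically transferred in both directions, and the whole question concerns the gr-length clause. For that, fix $e\in\Gamma_0$ and apply \Cref{prop: comprimento finito M <---> comp finito N  e M/N} to $M(e)$, $N(e)=N\cap M(e)$ and $M(e)/N(e)=(M/N)(e)$: the module $M(e)$ has finite gr-length if and only if both $N(e)$ and $(M/N)(e)$ do, and in that case $c_{\rm gr}(M(e))=c_{\rm gr}(N(e))+c_{\rm gr}((M/N)(e))$, whence in particular $c_{\rm gr}(N(e))\le c_{\rm gr}(M(e))$ and $c_{\rm gr}((M/N)(e))\le c_{\rm gr}(M(e))$.

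With these in hand the argument is a short bookkeeping. For the ``only if'' direction, assume $M$ is strongly $\Gamma_0$-artinian and take $\Delta_0\subseteq\Gamma_0$ with $\Gamma_0\setminus\Delta_0$ finite and $n_0\in\mathbb{N}$ with $c_{\rm gr}(M(e))\le n_0$ for all $e\in\Delta_0$, as in \Cref{prop: carac fort G0-art/noet}(2); then for each $e\in\Delta_0$ both $N(e)$ and $(M/N)(e)$ have finite gr-length $\le n_0$, so the same pair $(\Delta_0,n_0)$ witnesses condition (2) for $N$ and for $M/N$ (which are $\Gamma_0$-artinian by \Cref{prop: M G0-art ou G0-noeth <--> N e M/N tbm}), hence both are strongly $\Gamma_0$-artinian. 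For the ``if'' direction, assume $N$ and $M/N$ are strongly $\Gamma_0$-artinian; choose witnesses $(\Delta_0',n_0')$ for $N$ and $(\Delta_0'',n_0'')$ for $M/N$, put $\Delta_0:=\Delta_0'\cap\Delta_0''$ (so $\Gamma_0\setminus\Delta_0=(\Gamma_0\setminus\Delta_0')\cup(\Gamma_0\setminus\Delta_0'')$ is still finite) and $n_0:=n_0'+n_0''$; then for $e\in\Delta_0$ we get $c_{\rm gr}(M(e))=c_{\rm gr}(N(e))+c_{\rm gr}((M/N)(e))\le n_0$, and since $M$ is $\Gamma_0$-artinian by \Cref{prop: M G0-art ou G0-noeth <--> N e M/N tbm}, \Cref{prop: carac fort G0-art/noet} yields that $M$ is strongly $\Gamma_0$-artinian.

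The only subtlety worth flagging is why I would not attempt a direct chain-theoretic proof in the style of \Cref{prop: M art ou noeth <--> N e M/N tbm} or \Cref{prop: M G0-art ou G0-noeth <--> N e M/N tbm}: if $\{M_i\}$ is a \emph{tight} descending chain of graded submodules of $M$, the induced chains $\{M_i\cap N\}$ in $N$ and $\{(M_i+N)/N\}$ in $M/N$ need not be tight, since on a homogeneous component of a fixed degree $e$ they may remain constant for several steps and then drop again. Thus tightness is not directly inherited, and routing through the ``finite gr-length off a cofinite set of idempotents'' description of \Cref{prop: carac fort G0-art/noet} is what makes the argument clean; I expect no genuine obstacle beyond this observation.
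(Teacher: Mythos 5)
Your proof is correct and follows exactly the same route as the paper: reduce via \Cref{prop: carac fort G0-art/noet} to the $\Gamma_0$-chain condition (handled by \Cref{prop: M G0-art ou G0-noeth <--> N e M/N tbm}) together with the ``finite gr-length $\le n_0$ off a cofinite subset of $\Gamma_0$'' clause, which you transfer using the additivity of gr-length from \Cref{prop: comprimento finito M <---> comp finito N  e M/N}, taking $\Delta_0'\cap\Delta_0''$ and $n'+n''$ for the converse. Your closing remark about tightness not being inherited by the induced chains $\{M_i\cap N\}$ and $\{(M_i+N)/N\}$ is a correct and useful observation that the paper leaves implicit.
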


\begin{proof}
	Suppose that $M$ is strongly $\Gamma_0$-noetherian (resp. strongly $\Gamma_0$-artinian).
    By \Cref{prop: carac fort G0-art/noet}, $M$ is $\Gamma_0$-noetherian (resp. $\Gamma_0$-artinian) and there exist $\Delta_0\subseteq \Gamma_0$ and $n_0\in \mathbb{N}$ such that $\Gamma_0\setminus\Delta_0$ is finite and $M(e)$ is of gr-length at most $n_0$ for all $e\in\Delta_0$. By \Cref{prop: M G0-art ou G0-noeth <--> N e M/N tbm}, $N$ and $M/N$ are $\Gamma_0$-noetherian (resp. $\Gamma_0$-artinian).  Moreover,  by \Cref{prop: comprimento finito M <---> comp finito N  e M/N}, $c_{\rm gr}(M(e))=c_{\rm gr}(N(e))+c_{\rm gr}(M(e)/N(e))$ for each $e\in \Delta_0$. Thus $N(e)$ and $(M/N)(e)$ are of finite length at most $n_0$. 

    Conversely, suppose that both $N$ and $M/N$ are strongly $\Gamma_0$-noetherian (resp. strongly $\Gamma_0$-artinian). By \Cref{prop: carac fort G0-art/noet}, $N$ and $M/N$ are $\Gamma_0$-noetherian (resp. $\Gamma_0$-artinian) and there exist $\Delta_0',\Delta_0''\subseteq \Gamma_0$ and $n',n''\in \mathbb{N}$ such that $\Gamma_0\setminus\Delta_0'$ and $\Gamma_0\setminus\Delta_0''$ are finite, $N(e')$ is of gr-length at most $n'$ and $M(e'')/N(e'')$ is of gr-length at most $n''$ for all $e'\in\Delta_0'$ and $e''\in\Delta_0''$. By \Cref{prop: M G0-art ou G0-noeth <--> N e M/N tbm}, $M$ is $\Gamma_0$-noetherian (resp. $\Gamma_0$-artinian). Set $\Delta_0=\Delta_0'\cap \Delta_0''$ and $n_0=n'+n''$. Then $\Gamma_0\setminus\Delta_0$ is finite and $c_{\rm gr}(M(e))=c_{\rm gr}(N(e))+c_{\rm gr}(M(e)/N(e))\leq n_0$ for all $e\in\Delta_0$. The result now follows from  \Cref{prop: carac fort G0-art/noet}.
\end{proof}

\begin{coro}\label{coro: R strongly G_0 noeth/art => R/J strongly G_0 noeth/art.}
   Let $R$ be a $\Gamma$-graded ring and $J$ be a graded ideal of $R$. If $R$ is a right strongly $\Gamma_0$-noetherian (resp. $\Gamma_0$-artinian) ring, then $R/J$ is a right strongly $\Gamma_0$-noetherian (resp. $\Gamma_0$-artinian) ring.
\end{coro}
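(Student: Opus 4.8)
The plan is to mirror the proof of \Cref{coro: R G_0 noeth/art => R/J G_0 noeth/art.}, simply replacing \Cref{prop: M G0-art ou G0-noeth <--> N e M/N tbm} by its strong analogue \Cref{prop: M fortemente G0-art ou G0-noeth <--> N e M/N tbm}. First I would note that, since $J$ is a graded ideal of $R$, it is in particular a graded submodule of the $\Gamma$-graded right $R$-module $R_R$. By hypothesis $R_R$ is strongly $\Gamma_0$-noetherian (resp.\ strongly $\Gamma_0$-artinian), so applying \Cref{prop: M fortemente G0-art ou G0-noeth <--> N e M/N tbm} to the inclusion $J\subgr R_R$ yields that the quotient $R/J$ is a strongly $\Gamma_0$-noetherian (resp.\ strongly $\Gamma_0$-artinian) right $R$-module.

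It then remains to pass from the $R$-module $R/J$ to the graded ring $R/J$. For this I would use that the $\Gamma$-grading of $R/J$ as a ring, namely $(R/J)_\gamma=(R_\gamma+J)/J$, coincides with its $\Gamma$-grading as a right $R$-module; in particular $(R/J)(e)$ is the same object in both settings and $\Gamma'_0$ of any subquotient of $R/J$ is computed identically whether $R/J$ is viewed as a ring or as an $R$-module. Combined with the fact (recorded just after the definition of graded ideals) that the graded right ideals of $R/J$ are exactly the graded right $R$-submodules of $R/J$, this shows that a descending (resp.\ ascending) chain of graded right ideals of $R/J$ is tight as a chain of right ideals if and only if it is tight as a chain of right $R$-submodules, and that it stabilizes in one sense precisely when it stabilizes in the other. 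Hence every tight descending (resp.\ ascending) chain of graded right ideals of $R/J$ stabilizes, i.e.\ $R/J$ is a right strongly $\Gamma_0$-artinian (resp.\ strongly $\Gamma_0$-noetherian) ring.

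There is essentially no serious obstacle; the argument is routine once \Cref{prop: M fortemente G0-art ou G0-noeth <--> N e M/N tbm} is in hand. The only point demanding (trivial) attention is the bookkeeping identification of the ring-theoretic and module-theoretic notions of tight chain on $R/J$, which the paper already effectively handles in the proof of \Cref{coro: R G_0 noeth/art => R/J G_0 noeth/art.}. Alternatively, one can avoid chains entirely by combining \Cref{coro: R G_0 noeth/art => R/J G_0 noeth/art.} with the characterization in \Cref{prop: carac fort G0-art/noet}: by \Cref{lem: fort G0-art/noet => G0-art/noet} and \Cref{coro: R G_0 noeth/art => R/J G_0 noeth/art.} the ring $R/J$ is right $\Gamma_0$-artinian (resp.\ $\Gamma_0$-noetherian), and if $R_R$ satisfies condition (2) of \Cref{prop: carac fort G0-art/noet} with data $\Delta_0$ and $n_0$, then so does $(R/J)_{R/J}$, because $(R/J)(e)=R(e)/J(e)$ and $c_{\rm gr}\big(R(e)/J(e)\big)\le c_{\rm gr}(R(e))\le n_0$ for every $e\in\Delta_0$ by additivity of gr-length along $0\to J(e)\to R(e)\to R(e)/J(e)\to 0$.
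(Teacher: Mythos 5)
Your argument is correct and matches the paper's own proof: both apply \Cref{prop: M fortemente G0-art ou G0-noeth <--> N e M/N tbm} to conclude that $R/J$ is a strongly $\Gamma_0$-noetherian (resp.\ strongly $\Gamma_0$-artinian) right $R$-module, and then pass to the ring $R/J$ because its graded right ideals coincide with its graded right $R$-submodules. The tightness bookkeeping you spell out is indeed routine, and the chain-free alternative you sketch via \Cref{prop: carac fort G0-art/noet} is a valid (if not the paper's) route.
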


\begin{proof}
    By \Cref{prop: M fortemente G0-art ou G0-noeth <--> N e M/N tbm}, $R/J$ is a right strongly $\Gamma_0$-noetherian (resp.  strongly $\Gamma_0$-artinian) right $R$-module. The result follows because the graded right $R$-submodules  and the graded right $R/J$-submodules of $R/J$ coincide.
\end{proof}

\begin{defi}
 Let $R$ be a $\Gamma$-graded ring. A family $\{M_i:i\in I\}$  of $\Gamma$-graded right $R$-modules is \emph{strongly $\Gamma_0$-finite} if there exists a positive integer $r$ such that the cardinality of the set \[I_e=\{i\in I\colon M_i(e)\neq\{0\}\,\}\] is $\leq r$ for all $e\in\Gamma_0.$ 
\end{defi}
Notice that a strongly $\Gamma_0$-finite family of $\Gamma$-graded right $R$-modules is $\Gamma_0$-finite. We also observe that a finite family of $\Gamma$-graded right $R$-modules is strongly $\Gamma_0$-finite.

\begin{prop}
\label{prop: soma finita de fortemente G0 art/noeth}
        Let $R$ be a $\Gamma$-graded ring and $\{M_i:i\in I\}$ be a strongly $\Gamma_0$-finite family of $\Gamma$-graded right $R$-modules such that there exists a positive integer $s$ for which       
            the set \[\bigcup\limits_{i\in I}\{e\in\Gamma_0\colon M_i(e) \textrm{ is not of finite gr-length}\leq s \,\}\] is finite.
        
    \begin{enumerate}[\rm(1)]
        \item If $M_i$ is strongly $\Gamma_0$-noetherian (resp. strongly $\Gamma_0$-artinian) for each $i\in I$, then $\bigoplus\limits_{i \in I}M_i$ is  strongly $\Gamma_0$-noetherian (resp. strongly $\Gamma_0$-artinian).
        \item If $M_i$ is a strongly $\Gamma_0$-noetherian (resp. strongly $\Gamma_0$-artinian) submodule of a $\Gamma$-graded module $M$ for each $i\in I$, then $\sum_{i\in I}M_i$ is  strongly $\Gamma_0$-noetherian (resp. strongly $\Gamma_0$-artinian).
    \end{enumerate}
    In particular, any finite sum of strongly $\Gamma_0$-noetherian (resp. strongly $\Gamma_0$-artinian) $\Gamma$-graded right $R$-modules is strongly $\Gamma_0$-noetherian (resp. strongly $\Gamma_0$-artinian). 
\end{prop}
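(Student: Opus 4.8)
The plan is to reduce everything to the characterization of the strong $\Gamma_0$-conditions given in \Cref{prop: carac fort G0-art/noet}, together with the already established behaviour of the plain $\Gamma_0$-conditions under direct sums (\Cref{prop: soma finita de G0 art/noeth}) and quotients (\Cref{prop: M fortemente G0-art ou G0-noeth <--> N e M/N tbm}). As usual it suffices to treat the artinian case; the noetherian one is entirely analogous. Fix a positive integer $r$ with $|J_e|\le r$ for all $e\in\Gamma_0$ (strong $\Gamma_0$-finiteness), and let $F$ denote the finite set $\bigcup_{j\in J}\{e\in\Gamma_0\colon M_j(e)\text{ is not of finite gr-length}\le s\}$ from the hypothesis.

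For (1), put $X:=\bigoplus_{j\in J}M_j$, so that $X(e)=\bigoplus_{j\in J_e}M_j(e)$ is a \emph{finite} direct sum for each $e\in\Gamma_0$. First I would note that $X$ is $\Gamma_0$-artinian: each $M_j$ is $\Gamma_0$-artinian by \Cref{lem: fort G0-art/noet => G0-art/noet}, the family $\{M_j\}_{j\in J}$ is $\Gamma_0$-finite (being strongly $\Gamma_0$-finite), so \Cref{prop: soma finita de G0 art/noeth}(1) applies. Then I would exhibit the uniform length bound demanded by \Cref{prop: carac fort G0-art/noet}(2): set $\Delta_0:=\Gamma_0\setminus F$, which is cofinite in $\Gamma_0$. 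For $e\in\Delta_0$, each $M_j(e)$ with $j\in J_e$ has finite gr-length $\le s$, so by additivity of gr-length over the finite direct sum $X(e)=\bigoplus_{j\in J_e}M_j(e)$ (applying \Cref{prop: comprimento finito M <---> comp finito N  e M/N} repeatedly) one gets $c_{\rm gr}(X(e))\le s\,|J_e|\le sr$. Taking $n_0:=sr$, \Cref{prop: carac fort G0-art/noet} yields that $X$ is strongly $\Gamma_0$-artinian.

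For (2), consider the canonical gr-homomorphism $f\colon\bigoplus_{j\in J}M_j\to M$ defined by $(m_j)_{j\in J}\mapsto\sum_{j\in J}m_j$; its image is $\sum_{j\in J}M_j$ and its kernel is a graded submodule, so $\big(\bigoplus_{j\in J}M_j\big)\big/\ker f\isogr\sum_{j\in J}M_j$. By part (1) the left-hand side is strongly $\Gamma_0$-artinian, hence so is the quotient by \Cref{prop: M fortemente G0-art ou G0-noeth <--> N e M/N tbm}; in particular $\sum_{j\in J}M_j$ is $\Gamma_0$-artinian. For the final ``in particular'' clause I would check that a finite family satisfies the hypotheses: it is strongly $\Gamma_0$-finite with $r=|J|$, and, choosing for each $j\in J$ a cofinite $\Delta_0^{(j)}\subseteq\Gamma_0$ and $n_j\in\mathbb{N}$ as in \Cref{prop: carac fort G0-art/noet}(2) and setting $s:=\max_{j\in J}n_j$, the set appearing in the hypothesis is contained in the finite union $\bigcup_{j\in J}(\Gamma_0\setminus\Delta_0^{(j)})$ and is therefore finite; so (1) (and hence (2)) applies.

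The only genuine point to watch — the main obstacle — is the bookkeeping of the two constants: one must observe that ``strongly $\Gamma_0$-finite'' supplies a single bound $r$ on the fiber sizes $|J_e|$ and that the $s$-hypothesis supplies, outside the finite set $F$, a single bound $s$ on the gr-lengths $c_{\rm gr}(M_j(e))$, so that the product $sr$ can serve as the uniform length bound for $X(e)$ needed to invoke \Cref{prop: carac fort G0-art/noet}. Everything else is a routine transcription of results already established in this section.
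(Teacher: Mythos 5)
Your proof is correct and follows essentially the same route as the paper's: reduce $X=\bigoplus_{j\in J}M_j$ to the $\Gamma_0$-condition via \Cref{prop: soma finita de G0 art/noeth}, establish the uniform gr-length bound $sr$ on $X(e)$ outside the finite exceptional set, and invoke \Cref{prop: carac fort G0-art/noet}, with part (2) following by the quotient argument through \Cref{prop: M fortemente G0-art ou G0-noeth <--> N e M/N tbm}. The only cosmetic difference is that you cite \Cref{lem: fort G0-art/noet => G0-art/noet} where the paper cites \Cref{prop: carac fort G0-art/noet} to pass to the plain $\Gamma_0$-conditions, which is equivalent.
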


\begin{proof}
 Set $X=\bigoplus\limits_{i\in I}M_i$. 

(1) 	By \Cref{prop: carac fort G0-art/noet}, each $M_i$ is $\Gamma_0$-noetherian (resp. $\Gamma_0$-artinian). Thus, by \Cref{prop: soma finita de G0 art/noeth}(1), $X$ is $\Gamma_0$-noetherian (resp. $\Gamma_0$-artinian). 

Since $\{M_i:i\in I\}$ is a strongly $\Gamma_0$-finite family,  there exists a positive integer $r$ such that $X(e)$ is a finite direct sum of at most $r$ nonzero $M_i(e)$'s for all $e\in\Gamma_0$. Moreover, since $$\bigcup\limits_{i \in I}\{e\in\Gamma_0\colon M_i(e) \textrm{ is not of finite gr-length}\leq s\,\}$$ is finite, $X(e)$ is of finite gr-length $\leq rs$ for all but a finite number of $e\in \Gamma_0$. The result now follows from \Cref{prop: carac fort G0-art/noet}.

(2) Consider the natural gr-homomorphism $g\colon X\rightarrow M$. Then $\frac{X}{\ker g}\,\isogr\, \im g=\sum_{i\in I}M_i$. The result follows from (1) and \Cref{prop: M fortemente G0-art ou G0-noeth <--> N e M/N tbm}.

The last statement follows from (1) and (2) because any finite family of strongly $\Gamma_0$-noetherian (resp. strongly $\Gamma_0$-artinian) modules satisfies the hypothesis. 
\end{proof}

\begin{prop}
\label{prop: R strongly G0-noeth --> todo G0-fg eh G0-noeth}
    Let $R$ be a right strongly $\Gamma_0$-noetherian (resp. strongly $\Gamma_0$-artinian) ring  and $M$ be a $\Gamma$-graded right $R$-module. The following statements hold:
    \begin{enumerate}[\rm(1)]
        \item If $M$ is finitely generated, then $M$ is gr-noetherian (resp. gr-artinian).
        \item If $M$ is $\Gamma_0$-finitely generated, then $M$ is $\Gamma_0$-noetherian (resp. $\Gamma_0$-artinian). 
        \item Suppose that $M$ is generated by a set $(x_i)_{i\in I}\in\prod\limits_{i\in I}M_{\gamma_i}$ for some $(\gamma_i)_{i\in I}\in\Gamma^I$. If there exists a positive integer $r$ such that the set $I_e=\{i\in I\colon r(\gamma_i)=e\}$ is of cardinality $\leq r$ for each $e\in\Gamma_0$, and the set $\{i\in I\colon R(d(\gamma_i)) \textrm{ is not of finite gr-length\,}\}$ is finite, 
        then $M$ is strongly $\Gamma_0$-noetherian (resp. strongly $\Gamma_0$-artinian).
        
    \end{enumerate}
\end{prop}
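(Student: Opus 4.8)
Parts (1) and (2) need no new work: by \Cref{lem: fort G0-art/noet => G0-art/noet} a right strongly $\Gamma_0$-noetherian (resp.\ right strongly $\Gamma_0$-artinian) ring is right $\Gamma_0$-noetherian (resp.\ right $\Gamma_0$-artinian), so both follow at once from \Cref{prop: R G0-noeth --> todo G0-fg eh G0-noeth}.

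For (3) the plan is to adapt the cyclic-generators construction from the proof of \Cref{coro: R noeth --> todo fg eh noeth}. After discarding the indices $j$ with $x_j=0$ (this only shrinks each $J_e$ and the exceptional index set, and the remaining $x_j$ still generate $M$), we may assume every $x_j$ is a nonzero homogeneous element, so $d(\gamma_j)\in\Gamma'_0(R)$; since $M$ is unital, $M=\sum_{j\in J}x_jR$, and hence
\[\varphi\colon \bigoplus_{j\in J}R(\gamma_j^{-1})\longrightarrow M,\qquad (a_j)_{j\in J}\longmapsto \sum_{j\in J}x_ja_j\]
is a surjective gr-homomorphism with $M\isogr\big(\bigoplus_{j\in J}R(\gamma_j^{-1})\big)/\ker\varphi$. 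By \Cref{prop: M fortemente G0-art ou G0-noeth <--> N e M/N tbm} it then suffices to prove that $\bigoplus_{j\in J}R(\gamma_j^{-1})$ is strongly $\Gamma_0$-noetherian (resp.\ strongly $\Gamma_0$-artinian), which I would deduce from \Cref{prop: soma finita de fortemente G0 art/noeth}(1).

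First I would record properties of a single summand. Since $R_R$ is $\Gamma_0$-noetherian (resp.\ $\Gamma_0$-artinian) by \Cref{lem: fort G0-art/noet => G0-art/noet}, every shift $R(\gamma_j^{-1})$ is gr-noetherian (resp.\ gr-artinian), hence strongly $\Gamma_0$-noetherian (resp.\ strongly $\Gamma_0$-artinian). Moreover the grading of $R(\gamma_j^{-1})$ is only a relabeling of that of the graded direct summand $R(d(\gamma_j))$ of $R_R$ --- the two coincide as $R$-modules and have the same nonzero homogeneous components, hence the same lattice of graded submodules --- so $R(\gamma_j^{-1})(e)\neq 0$ exactly when $e=r(\gamma_j)$, and $R(\gamma_j^{-1})$ has finite gr-length precisely when $R(d(\gamma_j))$ does, with the same gr-length. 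This already gives strong $\Gamma_0$-finiteness of $\{R(\gamma_j^{-1})\}_{j\in J}$: $\{j\in J:R(\gamma_j^{-1})(e)\neq 0\}=J_e$ has at most $r$ elements for every $e\in\Gamma_0$. For the remaining hypothesis of \Cref{prop: soma finita de fortemente G0 art/noeth}(1), apply \Cref{prop: carac fort G0-art/noet} to $R_R$ to obtain $\Delta_0\subseteq\Gamma_0$ and $n_0\in\mathbb{N}$ with $\Gamma_0\setminus\Delta_0$ finite and $c_{\rm gr}(R(e))\leq n_0$ for all $e\in\Delta_0$; taking $s$ to be the maximum of $n_0$ and the finitely many finite numbers $c_{\rm gr}(R(e'))$ with $e'\in\Gamma_0\setminus\Delta_0$, one gets that $R(d(\gamma_j))$ has gr-length $\leq s$ whenever it has finite gr-length. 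Consequently $\{j\in J:R(\gamma_j^{-1})\text{ is not of finite gr-length}\leq s\}$ equals $\{j\in J:R(d(\gamma_j))\text{ is not of finite gr-length}\}$, which is finite by hypothesis, and so $\bigcup_{j\in J}\{e\in\Gamma_0:R(\gamma_j^{-1})(e)\text{ is not of finite gr-length}\leq s\}$ is contained in the finite set of those $r(\gamma_j)$ with $R(d(\gamma_j))$ of infinite gr-length. Then \Cref{prop: soma finita de fortemente G0 art/noeth}(1) yields that $\bigoplus_{j\in J}R(\gamma_j^{-1})$ is strongly $\Gamma_0$-noetherian (resp.\ strongly $\Gamma_0$-artinian), completing the argument; the artinian case runs word for word with ``strongly $\Gamma_0$-artinian'' and ``gr-artinian'' in place of their noetherian counterparts.

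The one genuinely delicate point is extracting the single length bound $s$: one must separate off the finitely many ``exceptional'' idempotents of $\Gamma_0\setminus\Delta_0$ furnished by \Cref{prop: carac fort G0-art/noet} before $c_{\rm gr}(R(d(\gamma_j)))$ can be controlled uniformly, and one must be careful that $R(\gamma_j^{-1})$ and $R(d(\gamma_j))$ carry the same gr-length. Everything else is a routine transcription of the classical cyclic-generators argument into the groupoid-graded setting.
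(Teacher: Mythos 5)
Your proof is correct and follows essentially the same route as the paper: (1) and (2) are immediate from \Cref{lem: fort G0-art/noet => G0-art/noet} together with \Cref{prop: R G0-noeth --> todo G0-fg eh G0-noeth}, while (3) reduces to \Cref{prop: soma finita de fortemente G0 art/noeth}, with strong $\Gamma_0$-finiteness read off from the $J_e$-bound and a uniform gr-length bound $s$ extracted from the strong chain condition on $R_R$, just as you argue. The only difference is cosmetic: you apply item~(1) of that proposition to the external family $\{R(\gamma_j^{-1})\}_{j\in J}$ and then quotient via \Cref{prop: M fortemente G0-art ou G0-noeth <--> N e M/N tbm}, whereas the paper applies item~(2) directly to the cyclic submodules $\{x_jR\}_{j\in J}$ of $M$ and transfers the gr-length bound along the surjections $R(\gamma_j^{-1})\twoheadrightarrow x_jR$; since item~(2) is itself derived from item~(1) plus the same extension result, the two arguments are substantively identical.
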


\begin{proof}
	(1) and (2) follow from \Cref{prop: R G0-noeth --> todo G0-fg eh G0-noeth} because $R$ is  right $\Gamma_0$-noetherian (resp. $\Gamma_0$-artinian) by \Cref{prop: carac fort G0-art/noet}.

    (3) We show that the family $\{x_iR\colon i\in I\}$ satisfies the conditions of \Cref{prop: soma finita de fortemente G0 art/noeth}.  
The family is strongly $\Gamma_0$-finite because the set $I_e=\{i\in I\colon r(\gamma_i)=e\}$ is of cardinality $\leq r$ for each $e\in\Gamma_0$. 
    
 Since $R$ is strongly $\Gamma_0$-noetherian (resp. strongly $\Gamma_0$-artinian) there exist $\Delta_0\subseteq\Gamma_0$ and a positive integer $n_0$ such that $\Gamma_0\setminus\Delta_0$ is finite and $R(e)$ is of finite gr-length  $\leq n_0$ for all $e\in\Delta_0$. 
The fact that the sets $\Gamma_0\setminus\Delta_0$ and $I':=\{i\in I\colon R(d(\gamma_i)) \textrm{ is not of finite gr-length\,}\}$ are finite, ensures the existence of a positive integer $s\geq n_0$ such that $R(d(\gamma_i))$ is of finite gr-length $\leq s$ for each $i\in I\setminus I'$.
Moreover, there exists a  surjective gr-homomorphism $R(\gamma_i^{-1})\rightarrow x_iR$ determined by  $1_{d(\gamma_i)}\mapsto x_i$. 
Hence, if $i \in I\setminus I'$, then $x_iR$ is of finite gr-length $\leq c_{\rm gr}(R(\gamma_i^{-1}))=c_{\rm gr}(R(d(\gamma_i)))\leq s$. 
Therefore, the set \[\bigcup\limits_{i\in I}\{e\in\Gamma_0\colon (x_iR)(e) \textrm{ is not of finite gr-length}\leq s \,\}\]
is finite, since it is contained in $I'$. 
\end{proof}

\begin{coro}
    Let $S$ be a full gr-subring of the $\Gamma$-graded ring $R$. Suppose that $S$ is a right strongly $\Gamma_0$-noetherian (resp. strongly $\Gamma_0$-artinian) ring.
    \begin{enumerate}[\rm (1)]
    \item If $R$ is finitely generated as a right $S$-module, then  $R$ is a right gr-noetherian (resp. gr-artinian) ring. 
        \item If  $R$ is $\Gamma_0$-finitely generated as a right $S$-module, then $R$ is a right $\Gamma_0$-noetherian (resp. $\Gamma_0$-artinian) ring.

        \item Suppose that $R$ is generated as an $S$-module by a set $(x_i)_{i\in I}\in\prod\limits_{i\in I}R_{\gamma_i}$ for some $(\gamma_i)_{i\in I}\in\Gamma^I$.  If there exists a positive integer $r$ such that the set $I_e=\{i\in I\colon r(\gamma_i)=e\}$ is of cardinality $\leq r$ for each $e\in\Gamma_0$, and the set $\{i\in I\colon S(d(\gamma_i)) \textrm{ is not of finite gr-length\,}\}$ is finite, then $R$ is a right strongly $\Gamma_0$-noetherian (resp. strongly $\Gamma_0$-artinian) ring. 
           \end{enumerate}
\end{coro}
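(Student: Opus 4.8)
The plan is to transfer the three module-theoretic statements of \Cref{prop: R strongly G0-noeth --> todo G0-fg eh G0-noeth} from the right $S$-module $R$ to the ring $R$, exactly as was done for the analogous corollary under the $\Gamma_0$-chain conditions. First I would recall that, since $S$ is a full gr-subring of $R$, it is an object unital $\Gamma$-graded ring and $R$ is a unital $\Gamma$-graded right $S$-module; moreover every graded right ideal of $R$ is in particular a graded right $S$-submodule of $R$. Also, for graded submodules $N\subgr N'$ of $R$, the graded abelian group $N'/N$ is the same whether $R$ is regarded as a right $R$-module or as a right $S$-module, so $\Gamma'_0(N'/N)$ does not depend on this choice; hence a chain of graded right ideals of $R$ is tight as a chain of $R$-submodules if and only if it is tight as a chain of $S$-submodules.

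For (1): since $R$ is finitely generated as a right $S$-module and $S$ is right strongly $\Gamma_0$-noetherian (resp. strongly $\Gamma_0$-artinian), \Cref{prop: R strongly G0-noeth --> todo G0-fg eh G0-noeth}(1) gives that $R$ is a gr-noetherian (resp. gr-artinian) right $S$-module, and then every strictly ascending (resp. descending) chain of graded right ideals of $R$, being a chain of graded right $S$-submodules, stabilizes; thus $R$ is a right gr-noetherian (resp. gr-artinian) ring. Item (2) is identical, using \Cref{prop: R strongly G0-noeth --> todo G0-fg eh G0-noeth}(2) in place of part (1) to conclude that $R_R$ is $\Gamma_0$-noetherian (resp. $\Gamma_0$-artinian).

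For (3): I would apply \Cref{prop: R strongly G0-noeth --> todo G0-fg eh G0-noeth}(3) to the right $S$-module $M=R$ with the generating family $(x_j)_{j\in J}$, $x_j\in R_{\gamma_j}$. The cardinality hypothesis on the sets $J_e=\{j\in J\colon r(\gamma_j)=e\}$ is assumed, and the remaining hypothesis of that proposition --- finiteness of the set of those $j$ for which the shift $R(d(\gamma_j))$ of the base ring is not of finite gr-length --- becomes, since here the base ring is $S$, finiteness of $\{j\in J\colon S(d(\gamma_j))\text{ is not of finite gr-length}\}$, which is exactly what is assumed. The proposition then yields that $R$ is a strongly $\Gamma_0$-noetherian (resp. strongly $\Gamma_0$-artinian) right $S$-module, and since tight chains of graded right ideals of $R$ are tight chains of graded right $S$-submodules, $R$ is a right strongly $\Gamma_0$-noetherian (resp. strongly $\Gamma_0$-artinian) ring.

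The only delicate point is bookkeeping: one must keep track of which ring plays the role of the base ring in \Cref{prop: R strongly G0-noeth --> todo G0-fg eh G0-noeth}(3), so that its internal finite-gr-length hypothesis --- stated there over the base ring --- matches the hypothesis over $S$ in the present statement, and one must note that tightness of a chain of $\Gamma$-graded subgroups of $R$ is intrinsic to the $\Gamma$-grading and insensitive to whether $R$ is viewed over $R$ or over $S$. Both checks are immediate once made explicit, so I expect no genuine obstacle beyond this routine verification.
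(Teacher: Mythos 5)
Your proposal is correct and follows essentially the same route as the paper: apply \Cref{prop: R strongly G0-noeth --> todo G0-fg eh G0-noeth}(1)--(3) to $R$ as a right $S$-module and then pass to graded right ideals of $R$ using the fact that these are also graded right $S$-submodules. Your explicit remark that tightness of a chain of graded subgroups depends only on the $\Gamma_0$-supports of the quotients, hence is insensitive to viewing $R$ over $R$ or over $S$, is a correct and useful elaboration of a point the paper leaves implicit in part (3).
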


\begin{proof}
 (1) By \Cref{prop: R strongly G0-noeth --> todo G0-fg eh G0-noeth}(1), $R$ is a right gr-noetherian (resp. gr-artinian) $S$-module. Since all graded right ideals of $R$ are  graded right $S$-modules as well, the result follows.

 (2) By \Cref{prop: R strongly G0-noeth --> todo G0-fg eh G0-noeth}(2), $R$ is a right $\Gamma_0$-noetherian (resp. $\Gamma_0$-artinian) $S$-module. Since all graded right ideals of $R$ are graded right $S$-modules as well, the result follows.

 (3) By \Cref{prop: R strongly G0-noeth --> todo G0-fg eh G0-noeth}(3), $R$ is a right strongly $\Gamma_0$-noetherian (resp. strongly $\Gamma_0$-artinian) $S$-module. Since all graded right ideals of $R$ are graded right $S$-modules as well, the result follows.
\end{proof} 

We now state an interesting property of modules satisfying a strong $\Gamma_0$-chain condition. Compare with \Cref{lem: Fitting para gr-end} and \Cref{prop: pre-Fitting para G0}.

\begin{lema}
\label{lem: Fitting para gr-end caso fort}
    Let $R$ be a $\Gamma$-graded ring, $M$ a $\Gamma$-graded right $R$-module, and $g\in \Endgr(M)$. 
    The following assertions hold:
    \begin{enumerate}[\rm (1)]
        \item Suppose that $M$ is strongly $\Gamma_0$-noetherian. 
        Then there exists $n\in\mathbb{N}$ such that $g(\ker g^n)\subseteq\ker g^n$ and $\ker g^n\cap\im g^n=0$.
        Moreover, if $g$ is surjective, then $g$ is a gr-isomorphism.
        \item Suppose that $M$ is strongly $\Gamma_0$-artinian. 
        Then there exists $n\in\mathbb{N}$ such that $g(\im g^n)\supseteq \im g^n$ and $\ker g^n+\im g^n=M$.
        Moreover, if $g$ is injective, then $g$ is a gr-isomorphism.
    \end{enumerate}
\end{lema}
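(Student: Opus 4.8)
The plan is to run the proof of \Cref{lem: Fitting para gr-end} essentially unchanged, the only new input being a careful choice of which chains to feed to the hypothesis. The key remark is that the chains of powers of $g$ involved are \emph{tight}: the ascending chain $\ker g\subseteq\ker g^2\subseteq\cdots$ and the descending chain $\im g\supseteq\im g^2\supseteq\cdots$ are both tight, as recorded among the examples of tight chains given just before the definition of the strong $\Gamma_0$-conditions (the verification amounts to the usual one-step propagation of stabilization carried out inside each $M(e)$, using that $g$ restricts to a gr-endomorphism $g|_{M(e)}$ of $M(e)$ with $\ker g^i\cap M(e)=\ker\big((g|_{M(e)})^i\big)$ and $\im g^i\cap M(e)=\im\big((g|_{M(e)})^i\big)$). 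Consequently, the strong $\Gamma_0$-noetherian hypothesis makes the kernel chain stabilize and the strong $\Gamma_0$-artinian hypothesis makes the image chain stabilize --- and, crucially, each does so with a single $n$ valid for all of $M$ at once, in contrast to \Cref{prop: pre-Fitting para G0}, where only the $\Gamma_0$-condition was available and one had to allow a different $n_e$ for each idempotent $e$.

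For (1): since $M$ is strongly $\Gamma_0$-noetherian and $\ker g\subseteq\ker g^2\subseteq\cdots$ is tight, there is $n\in\mathbb{N}$ with $\ker g^n=\ker g^{n+l}$ for all $l\geq 0$. Then $g(\ker g^n)\subseteq\ker g^n$ is immediate; if $y=g^n(x)\in\ker g^n\cap\im g^n$, then $g^{2n}(x)=g^n(y)=0$, so $x\in\ker g^{2n}=\ker g^n$ and hence $y=0$, giving $\ker g^n\cap\im g^n=0$. If moreover $g$ is surjective, then $\im g^n=M$, so $\ker g\subseteq\ker g^n=\ker g^n\cap\im g^n=0$; thus $g$ is injective as well as surjective, and a bijective gr-homomorphism is a gr-isomorphism. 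This is word-for-word the argument of \Cref{lem: Fitting para gr-end}(1).

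For (2): since $M$ is strongly $\Gamma_0$-artinian and $\im g\supseteq\im g^2\supseteq\cdots$ is tight, there is $n\in\mathbb{N}$ with $\im g^n=\im g^{n+l}$ for all $l\geq 0$. Then $g(\im g^n)=\im g^{n+1}=\im g^n$, and for any $x\in M$ we have $g^n(x)\in\im g^n=\im g^{2n}$, say $g^n(x)=g^{2n}(y)$, whence $x=(x-g^n(y))+g^n(y)\in\ker g^n+\im g^n$; thus $\ker g^n+\im g^n=M$. If $g$ is injective, then so is $g^n$, so $M=\ker g^n+\im g^n=\im g^n$ and $g^n$ --- hence $g$ --- is surjective, making $g$ a gr-isomorphism. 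I expect no real obstacle: once one observes that the kernel and image chains of the powers of a gr-endomorphism are tight, the proof of \Cref{lem: Fitting para gr-end} applies verbatim, and the only point requiring a little care in the write-up is the clean justification of this tightness via the decomposition $M=\bigoplus_{e\in\Gamma_0}M(e)$.
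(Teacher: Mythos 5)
Your proposal is correct and follows essentially the same route as the paper: observe that the kernel and image chains of the powers of $g$ are tight (the paper points to exactly these two chains as its examples of tight chains), apply the strong $\Gamma_0$-condition to get stabilization at a single $n$, and then repeat the argument of Lemma~\ref{lem: Fitting para gr-end} verbatim.
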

	
\begin{proof}
    The following two chains of graded submodules of $M$ are tight:
\[0\subseteq \ker g\subseteq \ker g^2\subseteq\dotsb \quad  \textrm{ and } \quad M\supseteq \im g\supseteq \im g^2\supseteq \dotsb\]
Therefore, we can proceed as in the proof of \Cref{lem: Fitting para gr-end}.
\end{proof}

Now we turn our attention to the strong $\Gamma_0$-condition concerning finite length.

\begin{defi}
    We will say that a $\Gamma$-graded right $R$-module $M$ has \emph{strongly $\Gamma_0$-finite gr-length} if there exists $n\in\mathbb{N}$ such that $M(e)$ has gr-length less than $n$ for each $e\in\Gamma_0$.
\end{defi}

We have the following result similar to \Cref{prop: comp finito = art + noet}  and \Cref{prop: G0 comp finito = G0 art + G0 noet}.

\begin{prop} 
\label{prop: comp fort finito <=> fort art e for noet}
Let $R$ be a $\Gamma$-graded ring and $M$ be a $\Gamma$-graded right $R$-module.
The following assertions are equivalent:

\begin{enumerate}[\rm (1)]

\item $M$ is strongly $\Gamma_0$-artinian and strongly $\Gamma_0$-noetherian.

\item $M$ has strongly $\Gamma_0$-finite gr-length.

\end{enumerate}
\end{prop}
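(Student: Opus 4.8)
The plan is to deduce both implications from the characterization of the strong $\Gamma_0$-conditions in \Cref{prop: carac fort G0-art/noet}, combined with the basic fact (\Cref{prop: comp finito = art + noet}) that a $\Gamma$-graded module has finite gr-length precisely when it is both gr-artinian and gr-noetherian. In spirit this result is the ``strong'' analogue of \Cref{prop: comp finito = art + noet} itself, and the argument will be correspondingly short.

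First I would handle $(2)\Rightarrow(1)$. Assuming $M$ has strongly $\Gamma_0$-finite gr-length, fix $n\in\mathbb{N}$ with $c_{\rm gr}(M(e))<n$ for all $e\in\Gamma_0$. In particular each $M(e)$ has finite gr-length, hence is both gr-artinian and gr-noetherian by \Cref{prop: comp finito = art + noet}, so $M$ is $\Gamma_0$-artinian and $\Gamma_0$-noetherian. Now I invoke \Cref{prop: carac fort G0-art/noet} with the choice $\Delta_0=\Gamma_0$ and $n_0=n$: the set $\Gamma_0\setminus\Delta_0=\emptyset$ is finite and $M(e)$ has gr-length $\leq n_0$ for every $e\in\Delta_0$, so condition (2) of that proposition holds on both the artinian and the noetherian side, giving that $M$ is strongly $\Gamma_0$-artinian and strongly $\Gamma_0$-noetherian.

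Next I would do $(1)\Rightarrow(2)$. Assuming $M$ is strongly $\Gamma_0$-artinian and strongly $\Gamma_0$-noetherian, \Cref{lem: fort G0-art/noet => G0-art/noet} gives that $M$ is $\Gamma_0$-artinian and $\Gamma_0$-noetherian, so by \Cref{prop: comp finito = art + noet} every $M(e)$ has finite gr-length. The artinian part of \Cref{prop: carac fort G0-art/noet} supplies $\Delta_0\subseteq\Gamma_0$ and $n_0\in\mathbb{N}$ with $\Gamma_0\setminus\Delta_0$ finite and $c_{\rm gr}(M(e))\leq n_0$ for all $e\in\Delta_0$. Setting $N:=\max\{c_{\rm gr}(M(e)):e\in\Gamma_0\setminus\Delta_0\}$ (with $N:=0$ when this set is empty), which is a finite maximum of well-defined gr-lengths, I obtain $c_{\rm gr}(M(e))<\max\{n_0,N\}+1$ for every $e\in\Gamma_0$; this is exactly the assertion that $M$ has strongly $\Gamma_0$-finite gr-length.

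Both directions are short and present no serious obstacle. The only point requiring attention is in $(1)\Rightarrow(2)$: the uniform bound from \Cref{prop: carac fort G0-art/noet} only controls $M(e)$ on a cofinite set $\Delta_0$, so one must separately know that $M(e)$ has \emph{finite} gr-length (not merely that it is gr-artinian) for the finitely many exceptional idempotents $e\in\Gamma_0\setminus\Delta_0$; this is precisely why the strongly $\Gamma_0$-noetherian hypothesis is also needed, and it explains why the statement is symmetric in ``artinian'' and ``noetherian''.
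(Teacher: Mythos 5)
Your proposal is correct and takes essentially the same route as the paper: both directions are deduced from \Cref{prop: carac fort G0-art/noet} and \Cref{prop: comp finito = art + noet}. The only cosmetic difference is in $(1)\Rightarrow(2)$: the paper extracts two cofinite sets $\Delta_0,\Lambda_0$ (one from each of the artinian and noetherian characterizations) and bounds on $\Delta_0\cap\Lambda_0$, whereas you observe that the artinian side of \Cref{prop: carac fort G0-art/noet} alone already supplies a cofinite $\Delta_0$ with a uniform gr-length bound, and use the noetherian hypothesis only to ensure the finitely many exceptional $M(e)$ with $e\notin\Delta_0$ actually have finite gr-length — which is a slightly tighter bookkeeping of the same argument.
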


\begin{proof}

$(2) \Rightarrow (1)$: Follows from  \Cref{prop: G0 comp finito = G0 art + G0 noet} and \Cref{prop: carac fort G0-art/noet}.

$(1) \Rightarrow (2)$: By \Cref{prop: carac fort G0-art/noet}, $M$ is both $\Gamma_0$-artinian and $\Gamma_0$-noetherian, and there exist $n_0 \in \mathbb{N}$ and subsets $\Delta_0, \Lambda_0 \subseteq \Gamma_0$ such that $\Gamma_0 \setminus \Delta_0$ and $\Gamma_0 \setminus \Lambda_0$ are finite and, for each $e \in \Delta_0 \cap \Lambda_0$, the module $M(e)$ does not contain strict chains of graded submodules of length greater than $n_0$.
By \Cref{prop: G0 comp finito = G0 art + G0 noet}, $M$ has $\Gamma_0$-finite gr-length.
Therefore,
\[
c_{\rm gr}(M(e)) \leq \max\left\{n_0,\, c_{\rm gr}(M(f)) : f \in \Gamma_0 \setminus (\Delta_0 \cap \Lambda_0)\right\}
\]
for each $e \in \Gamma_0$, and it follows that $M$ has strongly $\Gamma_0$-finite gr-length.
\end{proof}

Now we show a version of Fitting's Lemma for graded modules of strongly $\Gamma_0$-finite gr-length. 

\begin{prop}
    Let $R$ be a  $\Gamma$-graded ring and $M$ be a $\Gamma$-graded right $R$-module of strongly $\Gamma_0$-finite gr-length. For any gr-endomorphism $g\in \Endgr(M)$, there exists $n\in\mathbb{N}$ such that $M=\ker g^n\oplus\im g^n$, $g(\ker g^{n})\subseteq\ker g^{n}$, and the restriction $g|_{\im g^n}:\im g^n\to \im g^n$ is a gr-isomorphism.
        Moreover, if $g$ is injective or surjective, then $g$ is a gr-isomorphism.
\end{prop}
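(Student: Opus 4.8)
The plan is to reduce the statement to the strong $\Gamma_0$-version of Fitting's Lemma for gr-endomorphisms that has already been established, following exactly the pattern of the proof of \Cref{prop: Fitting Lemma for graded modules of fin. gr-length}(1) but with the standard chain conditions replaced by the strong $\Gamma_0$-ones.

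First I would invoke \Cref{prop: comp fort finito <=> fort art e for noet} to observe that, since $M$ has strongly $\Gamma_0$-finite gr-length, $M$ is simultaneously strongly $\Gamma_0$-artinian and strongly $\Gamma_0$-noetherian. Applying \Cref{lem: Fitting para gr-end caso fort}(1) and (2) to $g$ then yields integers $n_1,n_2\in\mathbb{N}$ with $g(\ker g^{n_1})\subseteq\ker g^{n_1}$, $\ker g^{n_1}\cap\im g^{n_1}=0$, $g(\im g^{n_2})\supseteq\im g^{n_2}$, and $\ker g^{n_2}+\im g^{n_2}=M$. Next I would recall that $0\subseteq\ker g\subseteq\ker g^2\subseteq\cdots$ and $M\supseteq\im g\supseteq\im g^2\supseteq\cdots$ are among the tight chains exhibited earlier, hence they stabilize; so putting $n:=\max\{n_1,n_2\}$ one gets $\ker g^n=\ker g^{n+l}$ and $\im g^n=\im g^{n+l}$ for all $l\geq 0$. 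The usual computations (if $y=g^n(x)\in\ker g^n$ then $g^{2n}(x)=0$, so $x\in\ker g^{2n}=\ker g^n$ and $y=0$; and for $x\in M$ one has $g^n(x)=g^{2n}(y)$ for some $y$, whence $x-g^n(y)\in\ker g^n$) then give $M=\ker g^n\oplus\im g^n$, together with $g(\ker g^n)\subseteq\ker g^n$ and $g(\im g^n)=\im g^{n+1}=\im g^n$. Consequently $g|_{\im g^n}$ is surjective, and it is injective since $\ker(g|_{\im g^n})=\ker g\cap\im g^n\subseteq\ker g^n\cap\im g^n=0$; being a bijective gr-homomorphism it is a gr-isomorphism.

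For the ``moreover'' part: if $g$ is injective then $\ker g^n=0$, so $M=\im g^n$ and $g(M)=g(\im g^n)=\im g^n=M$, i.e.\ $g$ is also surjective; if $g$ is surjective then $\im g^n=M$, which forces $\ker g^n=0$ by the direct sum decomposition, so $g^n$, and hence $g$, is injective. In either case $g$ is a bijective gr-homomorphism, thus a gr-isomorphism. I do not anticipate a genuine obstacle: the argument is formally the same as for \Cref{prop: Fitting Lemma for graded modules of fin. gr-length}(1), and the only step that deserves to be made explicit is that the relevant kernel and image chains are tight, so that the strong $\Gamma_0$-artinian/noetherian hypotheses can be applied to them.
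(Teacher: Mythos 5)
Your proposal is correct and follows essentially the same route as the paper: the paper's proof simply says to rerun the argument of \Cref{prop: Fitting Lemma for graded modules of fin. gr-length}(1), replacing \Cref{prop: comp finito = art + noet} and \Cref{lem: Fitting para gr-end} by \Cref{prop: comp fort finito <=> fort art e for noet} and \Cref{lem: Fitting para gr-end caso fort}, which is exactly what you do. The only cosmetic difference is that you re-derive the ``moreover'' clause from the direct sum decomposition, while it also comes for free from \Cref{lem: Fitting para gr-end caso fort}(1) and (2); both are fine.
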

	
\begin{proof}
    The argument in the proof of \Cref{prop: Fitting Lemma for graded modules of fin. gr-length}(1) applies here, using \Cref{prop: comp fort finito <=> fort art e for noet} and \Cref{lem: Fitting para gr-end caso fort}.
\end{proof}

We end this subsection with an example of a preadditive category $\mathcal{C}$ such that the $\mathcal{C}_0\times \mathcal{C}_0$ graded ring $R_\mathcal{C}$ defined in \Cref{ex: anel de categoria} has strongly $\Gamma_0$-finite gr-length on the right (left).

\begin{prop}
    Let $A$ be a unital ring.
    Denote by $\proj$-$A$ the category of finitely generated projective right $A$-modules, and let $\mathcal{C}$ be a small full subcategory of $\proj$-$A$.
    Suppose that $A_A\in\mathcal{C}_0$ and there exists $n\in\mathbb{N}$ such that each object of $\mathcal{C}$ is generated by at most $n$ elements.
    Set $\Gamma:=\mathcal{C}_0\times\mathcal{C}_0$.
    \begin{enumerate}[\rm (1)]
        \item If $A$ is a right artinian ring, then $R_\mathcal{C}$ is a right strongly $\Gamma_0$-artinian and a right strongly $\Gamma_0$-noetherian ring.
        \item If $A$ is a left artinian ring, then $R_\mathcal{C}$ is a left strongly $\Gamma_0$-artinian and a left strongly $\Gamma_0$-noetherian ring.
    \end{enumerate}
\end{prop}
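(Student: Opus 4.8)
The plan is to deduce this from the structural results already available: namely \Cref{lem: M G_0-art <= M1_f gr-art} (and its left version), \Cref{prop: R strongly G0-noeth --> todo G0-fg eh G0-noeth}(3), and \Cref{prop: comp fort finito <=> fort art e for noet}, together with the projectivity argument already used in the proof of \Cref{prop: proj-A art e noet}. The key point is that the bound $n$ on the number of generators of each object of $\mathcal{C}$ will give us the "strongly" (uniform) control we need, whereas in \Cref{prop: proj-A art e noet} only $\Gamma_0$-chain conditions were obtained because no such uniform bound was available.

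First I would set $f:=(A,A)\in\Gamma_0$ and $\Delta_0:=\{f\}$, and verify that $R_\mathcal{C}=R_\mathcal{C}1_fR_\mathcal{C}$. This is exactly the computation in the proof of \Cref{prop: proj-A art e noet}: for $(Q,P)\in\Gamma$ and $g\in(R_\mathcal{C})_{(Q,P)}=\mathcal{C}(P,Q)$, projectivity of $P_A$ lets us factor $\id_P$ through some $A^m$, and writing $\id_{A^m}=\sum_{i=1}^m\iota_i\pi_i$ yields $g\in R_\mathcal{C}1_fR_\mathcal{C}$. With this, the hypotheses of \Cref{lem: M G_0-art <= M1_f gr-art} are in force with the finite set $\Delta_0=\{f\}$.

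Next I would analyse the one-sided modules $1_eR_\mathcal{C}1_f$ and $1_fR_\mathcal{C}1_e$ for $e=(P,P)$, $P\in\mathcal{C}_0$. We have $1_fR_\mathcal{C}1_f=\mathcal{C}(A,A)\cong A$, which is right (resp. left) artinian, and $1_eR_\mathcal{C}1_f=\mathcal{C}(A,P)\cong P_A$. The crucial extra input is that $P$ is generated by at most $n$ elements as an $A$-module, so there is a surjection $A^n\twoheadrightarrow P$ of right $A$-modules, hence a surjection of right $A$-modules $A^n\twoheadrightarrow\mathcal{C}(A,P)$; since $A$ is right artinian, $A^n$ has finite composition length, say $\ell$, bounded independently of $P$, so $\mathcal{C}(A,P)=1_eR_\mathcal{C}1_f$ is a right $1_fR_\mathcal{C}1_f$-module of finite gr-length at most $\ell$, \emph{uniformly in $e$}. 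Applying \Cref{lem: M G_0-art <= M1_f gr-art}(2) then gives that $R_\mathcal{C}$ is right $\Gamma_0$-artinian and right $\Gamma_0$-noetherian, and the uniform bound $\ell$ on $c_{\rm gr}\big(R_\mathcal{C}(e)\big)$ for all $e\in\Gamma_0$ is precisely the definition of strongly $\Gamma_0$-finite gr-length; by \Cref{prop: comp fort finito <=> fort art e for noet}, $R_\mathcal{C}$ is right strongly $\Gamma_0$-artinian and right strongly $\Gamma_0$-noetherian. For (2), the left version of \Cref{lem: M G_0-art <= M1_f gr-art} applies after noting that $1_fR_\mathcal{C}1_e=\mathcal{C}(P,A)$ is a finitely generated left $A$-module by \cite[Remark 2.11]{Lam2}, but here one must check the generation number of $\mathcal{C}(P,A)$ as a left $A$-module is bounded uniformly in $P$; this follows because $P$ is a summand of $A^n$ so $\mathcal{C}(P,A)$ is a summand of $\mathcal{C}(A^n,A)\cong {}_AA^n$, giving $c_{\rm gr}\big({}_{R_\mathcal{C}}R_\mathcal{C}(e)\big)\le$ the composition length of ${}_AA^n$, again uniform in $e$, and left $A$-artinian makes this finite.

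The main obstacle I anticipate is making the uniform bound argument precise on the \emph{left} side: the passage "$P$ is $n$-generated as a right $A$-module $\Rightarrow$ $\mathcal{C}(P,A)$ is uniformly generated as a left $A$-module" is not literally dual, and the cleanest route is the summand observation ($P\mid A^n$ as right $A$-modules, obtained from a projective cover-type splitting of $A^n\twoheadrightarrow P$, using projectivity of $P$) rather than a counting-of-generators argument, after which $\mathcal{C}(-,A)$ turns the summand relation into $\mathcal{C}(P,A)\mid {}_AA^n$. Once that is in place, everything else is a direct citation of the earlier lemmas, so no further genuine difficulty remains.
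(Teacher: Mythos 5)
Your proposal follows the same overall strategy as the paper: exploit that every object of $\mathcal{C}$ embeds as a direct summand of $A^n$ to get a \emph{uniform} length bound, use $R_\mathcal{C} = R_\mathcal{C}1_fR_\mathcal{C}$ with $f=(A,A)$, and conclude via \Cref{prop: comp fort finito <=> fort art e for noet}. The identification of the bound $\ell$ as the composition length of $A^n$, and the summand observation $\mathcal{C}(P,A)\mid {}_AA^n$ on the left side (rather than a count of generators), both match what the paper does.

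There is, however, an unargued step. You establish that the \emph{ungraded} length of $1_eR_\mathcal{C}1_f\cong\mathcal{C}(A,Q)\cong Q_A$ as a right $1_fR_\mathcal{C}1_f\cong A$-module is at most $\ell$, uniformly in $e=(Q,Q)$, and then assert that this is ``precisely'' a uniform bound on $c_{\rm gr}\big(R_\mathcal{C}(e)\big)$ --- the gr-length of $R_\mathcal{C}(e)$ as a graded right \emph{$R_\mathcal{C}$-module}. These are a priori different quantities, and \Cref{lem: M G_0-art <= M1_f gr-art}(2), which you invoke, gives only the $\Gamma_0$-chain conditions, not any bound on lengths. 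The missing observation, which the paper proves explicitly, is that restriction $X\mapsto X1_f=(X)_{(Q,A)}$ is an injective (indeed length-preserving) map from graded $R_\mathcal{C}$-submodules of $R_\mathcal{C}(e)$ to $A$-submodules of $\mathcal{C}(A,Q)$: if $(X_i)_{(Q,A)}=(X_{i+1})_{(Q,A)}$ then $X_i = X_iR_\mathcal{C}1_fR_\mathcal{C} = (X_i)_{(Q,A)}R_\mathcal{C} = (X_{i+1})_{(Q,A)}R_\mathcal{C} = X_{i+1}$, using $R_\mathcal{C}=R_\mathcal{C}1_fR_\mathcal{C}$. Hence a strict chain of graded submodules of $R_\mathcal{C}(e)$ restricts to a strict chain in $Q_A$, giving $c_{\rm gr}\big(R_\mathcal{C}(e)\big)\le\ell$. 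Once this is supplied, your argument closes; without it, the jump from the bound on $1_eR_\mathcal{C}1_f$ to the bound on $c_{\rm gr}\big(R_\mathcal{C}(e)\big)$ is a gap. (The reliance on \Cref{lem: M G_0-art <= M1_f gr-art}(2) then becomes superfluous, since the uniform length bound already yields all the chain conditions via \Cref{prop: comp fort finito <=> fort art e for noet}.) Finally, you mention \Cref{prop: R strongly G0-noeth --> todo G0-fg eh G0-noeth}(3) in your plan but never use it; it plays no role here.
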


\begin{proof}
    We saw in the proof of \Cref{prop: proj-A art e noet} that  $R_\mathcal{C}=R_\mathcal{C}1_fR_\mathcal{C}$ where $f=(A,A)\in\Gamma_0$.
    
    (1) Set $n_0\in\mathbb{N}$ to be the length of $A^n$ as a right $A$-module. 
    Let $e=(Q,Q)\in\Gamma_0$ and suppose that 
    $X_1\supsetneq X_2\supsetneq\cdots\supsetneq X_t$
    is a strict descending chain of graded submodules of $R_\mathcal{C}(e)$.
    Then 
    \[(X_1)_{(Q,A)}\supseteq (X_2)_{(Q,A)}\supseteq\cdots\supseteq (X_t)_{(Q,A)}\]
    is a descending chain of right $A$-submodules of $(R_\mathcal{C})_{(Q,A)}=\mathcal{C}(A,Q)\cong Q_A$.
    This chain is strict, because $(X_i)_{(Q,A)}= (X_{i+1})_{(Q,A)}$ implies that
    \begin{align*}
        X_i=X_iR_\mathcal{C}=X_iR_\mathcal{C}1_fR_\mathcal{C}
        &=(X_i)_{(Q,A)}R_\mathcal{C}\\
        &=(X_{i+1})_{(Q,A)}R_\mathcal{C} =X_{i+1}R_\mathcal{C}1_fR_\mathcal{C}=X_{i+1}R_\mathcal{C}=X_{i+1}.
    \end{align*}
    Since $Q_A$ is isomorphic to a direct summand of $A^n$, it follows that $t\leq n_0$. 
    Therefore, $R_\mathcal{C}(e)$ is of finite gr-length not greater than $n_0$.

    It follows from \Cref{prop: comp fort finito <=> fort art e for noet} that $R_\mathcal{C}$ is right strongly $\Gamma_0$-artinian and right strongly $\Gamma_0$-noetherian.

    (2) Set $n_1\in\mathbb{N}$ to be the length of $A^n$ as a left $A$-module. 
    If $e'=(P,P)\in\Gamma_0$ and 
    $Y_1\supsetneq Y_2\supsetneq\cdots\supsetneq Y_t$
    is a strict descending chain of graded submodules of $(e')R_\mathcal{C}$, then 
    \[(Y_1)_{(A,P)}\supsetneq (Y_2)_{(A,P)}\supsetneq\cdots\supsetneq (Y_t)_{(A,P)}\]
    is a strict descending chain of left submodules of $(R_\mathcal{C})_{(A,P)}=\mathcal{C}(P,A)$.
    Since $\Hom_A(P,A)$ is generated by at most $n$ elements by \cite[Remark~2.11]{Lam2}, it follows that $\Hom_A(P,A)$ is isomorphic to a direct summand of $_AA^n$ and $(e')R_\mathcal{C}$ is of finite gr-length not greater than $n_1$.

    It follows from \Cref{prop: comp fort finito <=> fort art e for noet} that $R_\mathcal{C}$ is left strongly $\Gamma_0$-artinian and left strongly $\Gamma_0$-noetherian.
\end{proof}

%%%%%%%%%%%%%%%%%%%%%%%%%%%%%%%%%%%%%%%%%%%%%%%%%%%%%%%%%%%%%%%%%%%%%%%%%%%%%%%%%%%%%%%%%%%%%%%%%%%%%%%%%%%%%%%%%%%%%%%%%%%%%%%%%%%%%%%%%%%%%%%%%%%%%%%%%%%%%%%%%%%%%%%%%%%%%%%%%%%%%%%%%%%%%%%%%%%%%%%%%%%%%%%%%%%%%%%%%%%%%%%%%%%%%%%%%%%%%%%%%%%%%%%%%%%%%%%%%%%%%%%%%%%%%%%%%%%%%%%%%%%%%%%%%%%%%%%%%%%%%%%%%%%%%%%%%%%%%%%%%%%%%%%%%%%%%%%%%%%%%%%%%%%%%%%%%%%%%%%%%%%%%%%%%%%%%%%%%%%%%%%%%%%%%%%%%%%%%%%%%%%%%%%%%%%%%%%%%%%%%%%%%%%%%%%%%%%%%%%%%%%%%%%%%%%%%%%%%%%%%%%%%%%%%%%%%%%%%%%%%%%%%%%%%%%%%%%%%%%%%%%%%%%%%%%%%%%%%%%%%%%%%%%%%%%%%%%%%%%%%%%%%%%%%%%%%%%%%%%%%%%%%%%%%%%%%%%%%%%%%%%%%%%%%%%%%%%%%%%%%%%%%%%%%%%%%%%%%%%%%%%%%%%%%%%%%%%%%%%%%%%%%%%%%%%%%%%%%%%%%%%%%%%%%%%%%%%%%%%%%%%%%%%%%%%%%%%%%%%%%%%%%%%%%%%%%%%%%%%%%%%%%%%%%%%%%%%%%%%%%%%%%%%%%%%%%%%%%%%%%%%%%%%%%%%%%%%%%%%%%%%%%%%%%%%%%%%%%%%%%%%%%%%%%%%%%%%%%%%%%%%%%%%%%%%%%%%%%%%%%%%%%%%%%%%%%%%%%%%%%%%%%%%%%%%%%%%%%%%%%%%%%%%%%%%%%%%%%%%%%%%%%%%%%%%%%%%%%%%%%%%%%%%%%%%%%%%%%%%%%%%%%%%%%%%%%%%%%%%%%%%%%%%%%%%%%%%%%%%%%%%%%%%%%

\subsection{Right $\Gamma_0$-artinian ring \rlap{$\,\,\,\,\not$}$\implies$ right $\Gamma_0$-noetherian ring}
In classical ring theory, it is well known that if a ring is left artinian, then the ring is left noetherian, see for example \cite[Theorem 4.15]{Lam1}. There exists a version of this result in the group graded context \cite[Corollary 2.9.7]{NastasescuVanOystaeyen}. On the other hand,
the $\Gamma$-graded ring presented in \Cref{coro: contra-exemplos art noeth}(1)  is left $\Gamma_0$-artinian but not left $\Gamma_0$-noetherian.
In another paper currently in preparation, we will show that this behavior is related to issues with the nilpotency of the graded Jacobson radical and we will provide graded versions of the Hopkins-Levitzki Theorem.

The ring $\UT_I(A)$ presented in \Cref{exem: M_I(A)}  is the main object of study of this section. It is an adaptation of \cite[p.~19]{Mit} to the language of groupoid graded rings.

\begin{lema}
\label{lem: ideais unilaterais em UT_I(A)}
    Let $I$ be a partially ordered set, $A$ be a unital ring, and consider $R := \UT_I(A)$. 
    The following assertions hold true:
    \begin{enumerate}[\rm (1)]
        \item Let $X$ be a graded right ideal in $R$ of the form $X = X(e)$, where $e = (i_0, i_0)$ for some $i_0\in I$. 
        Then there exists a family $\{U_j : i_0 \leq j\}$ of right ideals of $A$ such that, for each $j,j'\in \{i\in I:i_0\leq i\}$,
        \[
        X_{(i_0,j)} = U_j E_{i_0 j}
        \qquad\text{and}\qquad
        j \leq j' \implies U_j \subseteq U_{j'}.
        \]
        \item Let $Y$ be a graded left ideal in $R$ of the form $Y = (f)Y$, where $f = (j_0, j_0)$ for some $j_0\in I$.
        Then there exists a family $\{V_i : i \leq j_0\}$ of left ideals of $A$ such that, for each $i,i'\in \{j\in I:j\leq j_0\}$,
        \[
        Y_{(i,j_0)} = V_i E_{i j_0}
        \qquad\text{and}\qquad
        i' \leq i \implies V_i \subseteq V_{i'}.
        \]
    \end{enumerate}
\end{lema}

\begin{proof}
    It suffices to consider $U_j:=\{a\in A: aE_{i_0 j}\in X\}$ for each $j\geq i_0$ in (1), and $V_i:=\{a\in A: aE_{i j_0}\in Y\}$ for each $i\leq j_0$ in (2).
\end{proof}

    Thanks to  \Cref{lem: ideais unilaterais em UT_I(A)}, we can study chains of graded submodules of $R(e)$ (or $(e)R$), analyzing chains of elements of $I$ and chains of right (left) ideals of $A$, as we see in the following result.

\begin{prop}
\label{prop: quando UT(A) eh art ou noeth}
Let $I$ be a totally ordered set, $A$ be a unital ring, and set $R := \UT_I(A)$. 
The following assertions hold true for $i_0,j_0\in I$, $e=(i_0,i_0)$, and $f=(j_0,j_0)$:
\begin{enumerate}[\rm (1)]
    \item  $R(e)$ is gr-artinian if and only if $A$ is right artinian and there does not exist an infinite chain in $I$ of the form
    \[
    i_0 < j_1 < j_2 < \cdots < j_n < \cdots.
    \]
    \item $R(e)$ is gr-noetherian if and only if $A$ is right noetherian and there does not exist an infinite chain in $I$ of the form
    \[
    i_0 < \cdots < j_n< \cdots < j_2 < j_1.
    \]
    \item $(f)R$ is gr-artinian if and only if $A$ is left artinian and there does not exist an infinite chain in $I$ of the form
    \[
    \cdots < i_n <\cdots < i_2 < i_1 < j_0.
    \]
    \item  $(f)R$ is gr-noetherian if and only if $A$ is left noetherian and there does not exist an infinite chain in $I$ of the form
    \[
    i_1 < i_2 < \cdots < i_n < \cdots < j_0.
    \]
\end{enumerate}
\end{prop}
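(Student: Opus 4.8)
The plan is to deduce the left-handed statements (3) and (4) from the right-handed ones (1) and (2) by transposition, and to prove (1) and (2) by reformulating them as chain conditions on non-decreasing families of right ideals of $A$.

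For the reduction: matrix transposition, combined with the passage $A\rightsquigarrow A^{\mathrm{op}}$, is an anti-multiplicative bijection $\UT_I(A)\to\UT_{I^{\mathrm{op}}}(A^{\mathrm{op}})$ taking homogeneous components to homogeneous components, where $I^{\mathrm{op}}$ denotes $I$ with the reversed order; it carries the left $\UT_I(A)$-module $(f)\UT_I(A)$ (the ``$j_0$-th column'') to the right $\UT_{I^{\mathrm{op}}}(A^{\mathrm{op}})$-module $\UT_{I^{\mathrm{op}}}(A^{\mathrm{op}})(f)$ (the ``$j_0$-th row''), preserving the lattices of graded submodules, hence gr-artinianness and gr-noetherianness. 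Since ``$A$ left artinian (resp. noetherian)'' is ``$A^{\mathrm{op}}$ right artinian (resp. noetherian)'', and the chain forbidden in (3) (resp. (4)) becomes, inside $I^{\mathrm{op}}$, the chain forbidden in (1) (resp. (2)), statements (3) and (4) are exactly (1) and (2) for $\UT_{I^{\mathrm{op}}}(A^{\mathrm{op}})$. So it suffices to prove (1) and (2).

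Fix $e=(i_0,i_0)$ and set $S:=\{j\in I\colon j\geq i_0\}$, a totally ordered set with least element $i_0$. By \Cref{lem: ideais unilaterais em UT_I(A)}(1) the graded submodules of $R(e)$ are exactly the modules $X_U:=\bigoplus_{j\in S}U_jE_{i_0j}$, one for each family $U=(U_j)_{j\in S}$ of right ideals of $A$ that is non-decreasing along $S$ (and every such $X_U$ really is a graded submodule, since $U_jE_{i_0j}\cdot aE_{j\ell}=U_jaE_{i_0\ell}\subseteq U_\ell E_{i_0\ell}$ when $j\leq\ell$), with $X_U\subseteq X_{U'}$ iff $U_j\subseteq U'_j$ for all $j$. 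Hence $R(e)$ is gr-artinian (resp. gr-noetherian) iff the poset $\mathcal P$ of such families, ordered componentwise, has DCC (resp. ACC). The ``only if'' direction is then immediate: a strictly monotone chain of right ideals of $A$ gives, through the constant families, a strictly monotone chain in $\mathcal P$ (forcing $A$ right artinian, resp. noetherian); an infinite chain $i_0<j_1<j_2<\cdots$ (resp. $i_0<\cdots<j_2<j_1$) in $I$ gives the families $U^{(k)}$ with $U^{(k)}_j=A$ for $j\geq j_k$ and $0$ otherwise, which are non-decreasing along $S$ and form a strictly descending (resp. ascending) chain in $\mathcal P$.

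The substantial direction is the converse, and the two cases are order-dual, so I take the artinian one: assume $\mathcal L:=\{\text{right ideals of }A\}$ has DCC and $S$ has no infinite ascending chain, so (being totally ordered) every nonempty subset of $S$ has a maximum. Given a descending chain $X_{U^{(1)}}\supseteq X_{U^{(2)}}\supseteq\cdots$, put $U^{\infty}_j:=\bigcap_k U^{(k)}_j$, attained componentwise by DCC of $\mathcal L$; then $U^{\infty}\in\mathcal P$. The crux is that the image of $U^{\infty}$ (a chain of $\mathcal L$) is \emph{finite}: it has ACC, for an infinite ascending chain in it would, on picking preimages in $S$, yield an infinite ascending chain in $S$; and it has DCC since $\mathcal L$ does; having both, it is finite, say $v_0\supsetneq\cdots\supsetneq v_p$ with $v_0=U^{\infty}_{\max S}$, and $S=\bigsqcup_{t}\{s\colon U^{\infty}_s=v_t\}$. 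Put $\tau_t:=\max\{s\colon U^{\infty}_s\subseteq v_t\}$; once $k$ is large enough that $U^{(k)}_{\tau_t}=v_t$, any $s$ with $U^{\infty}_s=v_t$ satisfies $s\leq\tau_t$, so $U^{(k)}_s\subseteq U^{(k)}_{\tau_t}=v_t$ while $U^{(k)}_s\supseteq U^{\infty}_s=v_t$, i.e. $U^{(k)}_s=U^{\infty}_s$; taking the largest of these finitely many thresholds over $t=0,\dots,p$ gives $U^{(k)}=U^{\infty}$ for all large $k$, so the chain stabilizes. The noetherian case is the literal dual: use ACC of $\mathcal L$ and that $S$ is well-ordered, take $U^{\infty}_j:=\bigcup_k U^{(k)}_j$, note that its image is again finite, and pinch at $\min\{s\colon U^{\infty}_s\supseteq v_t\}$ using $U^{(k)}_s\subseteq U^{\infty}_s$ and $U^{(k)}_s\supseteq U^{(k)}_{\min S}$. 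I expect the main obstacle to be precisely this converse, and within it the two points just highlighted: that the value-set of $U^{\infty}$ is finite (which genuinely uses both chain hypotheses together), and that the pinching has to be carried out at the current extreme element of $S$, not at an arbitrary index, so that monotonicity of $U^{(k)}$ supplies the needed one-sided bound.
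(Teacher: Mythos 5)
Your proof is correct, and it takes a genuinely different route in the substantial (``if'') direction. Both arguments reduce to chain conditions on nondecreasing families $(U_j)_{j\ge i_0}$ of right ideals of $A$ via \Cref{lem: ideais unilaterais em UT_I(A)}, and the ``only if'' parts coincide. For ``if'', the paper argues contrapositively: from a strictly descending chain $X_1\supsetneq X_2\supsetneq\cdots$ and the per-column stabilization indices $n_j$, it inductively produces indices $j_1>j_2>\cdots$ with $n_{j_1}<n_{j_2}<\cdots$ and from these an infinite strictly descending chain $U_{n_{j_1},j_1}\supsetneq U_{n_{j_2},j_2}\supsetneq\cdots$ of right ideals of $A$, contradicting artinianity. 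You instead build the limit $U^\infty$ of a descending chain and show the chain reaches it in finitely many steps; the crux is your observation that the value set $\{U^\infty_j:j\ge i_0\}$ is a chain with DCC (from $A$) and ACC (from the order hypothesis on $I$), hence finite, so only finitely many pinching thresholds are needed. This is arguably cleaner and avoids the somewhat delicate inductive choice of $j_{k+1}<j_k$ with a larger $n$-value that the paper relies on. You also deduce (3) and (4) from (1) and (2) via transposition together with $A\rightsquigarrow A^{\mathrm{op}}$, $I\rightsquigarrow I^{\mathrm{op}}$, whereas the paper rewrites the right-handed argument in left-handed form; your reduction halves the casework at no cost. One small slip in your noetherian sketch: the lower bound should be $U^{(k)}_s\supseteq U^{(k)}_{\sigma_t}$ with $\sigma_t=\min\{s:U^\infty_s\supseteq v_t\}$, not $U^{(k)}_{\min S}$, but your surrounding comments make clear this is what you intended.
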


\begin{proof}
    Every strict chain $\{U_n\}_{n\in\mathbb{N}}$ of right (resp. left) ideals of $A$ induces a strict chain $\{(U_nE_{i_0i_0})R\}_{n\in\mathbb{N}}$ (resp. $\{R(U_nE_{j_0j_0})\}_{n\in\mathbb{N}}$) of graded submodules of $R(e)$ (resp. $(f)R$).
    If $\{j_n\}_{n\in\mathbb{N}}$ is a chain in $I$ consisting of elements greater than $i_0$, then $\{E_{i_0j_n}R\}_{n\in\mathbb{N}}$ is a strict chain of graded submodules of $R(e)$.
    Similarly, if $\{i_n\}_{n\in\mathbb{N}}$ is a chain in $I$ consisting of elements less than $j_0$, then $\{RE_{i_nj_0}\}_{n\in\mathbb{N}}$ is a strict chain of graded submodules of $(f)R$.
    Hence, we have proved the ``only if'' part of statements (1)–(4).

    Now we prove the ``if'' part of statements (1)–(4). 
    
    (1) Suppose that $X_1\supsetneq X_2\supsetneq \cdots$ is an infinite chain of graded submodules of $R(e)$. 
    By \Cref{lem: ideais unilaterais em UT_I(A)}(1), for each $n\in\mathbb{N}$ and $j\geq i_0$ there exists a  right ideal $U_{n,j}$ of $A$ such that $(X_n)_{(i_0,j)}=U_{n,j}E_{i_0j}$, and $U_{n,j}\subseteq U_{n,j'}$ if $j\leq j'$.
    For each $j\in [i_0,\infty):=\{j\in I \colon i_0\leq j\}$, consider the descending chain of right ideals of $A$ 
    \[U_{1,j}\supseteq U_{2,j}\supseteq U_{3,j}\supseteq\dotsb \supseteq U_{n,j}\supseteq \dotsb.\]
    Since $A$ is artinian, this chain stabilizes. Set \[n_j:= \textrm{ least $n$ such that } U_{n,j}=U_{n+l,j} \textrm{ for all } l\geq 0.\]
    We claim that the set $N=\{n_j\colon j\in [i_0,\infty)\}$ is unbounded. We prove the claim by way of contradiction. Suppose the claim is not true and fix $n_0>n_j$ for all $j\in [i_0,\infty)$. Then $U_{n_0,j}=U_{n_0+l,j}$ for all $l\geq 0$ and $j\in [i_0,\infty)$. Hence $X_{n_0}=X_{n_0+l}$ for all $l\geq 0$, a contradiction.

    Choose $j_1\in [i_0,\infty)$. Define $W_1=U_{n_{j_1},j_1}$. Since $N$ is unbounded and there are not infinite chains in $I$ of the form
    \[
    i_0 < j_1 < j_2 < \cdots < j_n < \cdots,
    \]
    there exists $j_2<j_1$ with $n_{j_2}>n_{j_1}$. Set $W_2=U_{n_{j_2},j_2}$. Then 
    \[
    W_1=U_{n_{j_1},j_1}\supseteq U_{n_{j_1},j_2}\supsetneq U_{n_{j_2},j_2}=W_2. 
    \]
    In the same way, it can be shown that there exists $j_3<j_2$ with $n_{j_3}>n_{j_2}$. Set $W_3=U_{n_{j_3},j_3}$. Then 
    \[
    W_2=U_{n_{j_2},j_2}\supseteq U_{n_{j_2},j_3}\supsetneq U_{n_{j_3},j_3}=W_3. 
    \]
    In this way, we construct a sequence of right ideals of $A$ 
    \[
    W_1\supsetneq W_2\supsetneq W_3\supsetneq\dotsb \supsetneq W_n\supsetneq \dotsb
    \]
    which contradicts the fact that $A$ is right artinian. Therefore, the infinite chain $X_1\supsetneq X_2\supsetneq \cdots$ of graded submodules of $R(e)$  does not exist.

    (2) It can be shown in a similar way to (1).

    (3) Suppose that $Y_1\supsetneq Y_2\supsetneq \cdots$ is an infinite chain of graded submodules of $(f)R$. 
    By \Cref{lem: ideais unilaterais em UT_I(A)}(2), for each $n\in\mathbb{N}$ and $i\leq j_0$ there exists a left ideal $V_{n,i}$ of $A$ such that $(Y_n)_{(i,j_0)}=V_{n,i}E_{ij_0}$, and $V_{n,i}\subseteq V_{n,i'}$ if $i'\leq i$.
    For each $i\in (-\infty, j_0]:=\{i\in I\colon i\leq j_0\}$, consider the descending chain of left ideals of $A$ 
    \[V_{1,i}\supseteq V_{2,i}\supseteq V_{3,i}\supseteq\dotsb \supseteq V_{n,i}\supseteq \dotsb.\]
    Since $A$ is artinian, this chain stabilizes. Set \[n_i:= \textrm{ least $n$ such that } V_{n,i}=V_{n+l,i} \textrm{ for all } l\geq 0.\]
    We claim that the set $N=\{n_i\colon i\in (-\infty, j_0] \}$ is unbounded. We prove the claim by way of contradiction. Suppose the claim is not true and fix $n_0>n_i$ for all $i\in  (-\infty, j_0]$. Then $V_{n_0,i}=V_{n_0+l,i}$ for all $l\geq 0$ and $i\in  (-\infty, j_0]$. Hence $Y_{n_0}=Y_{n_0+l}$ for all $l\geq 0$, a contradiction. 

    Choose $i_1\in  (-\infty, j_0]$. Define  $W_1=V_{n_{i_1},i_1}$. Since $N$ is unbounded and there are not infinite chains in $I$ of the form
\[
    \cdots < i_n <\cdots < i_2 < i_1 < j_0,
    \]    
     there exists $i_2>i_1$ with $n_{i_2}>n_{i_1}$. Set $W_2=V_{n_{i_2},i_2}$. Then 
    \[
    W_1=V_{n_{i_1},i_1}\supseteq V_{n_{i_1},i_2}\supsetneq V_{n_{i_2},i_2}=W_2. 
    \]
    In the same way, it can be shown that there exists $i_3>i_2$ with $n_{i_3}>n_{i_2}$. Set  $W_3=V_{n_{i_3},i_3}$. Then 
    \[
    W_2=V_{n_{i_2},i_2}\supseteq V_{n_{i_2},i_3}\supsetneq V_{n_{i_3},i_3}=W_3. 
    \]
    In this way, we construct a sequence of left ideals of $A$ 
    \[
    W_1\supsetneq W_2\supsetneq W_3\supsetneq\dotsb \supsetneq W_n\supsetneq \dotsb
    \]
    which contradicts the fact that $A$ is left artinian. Therefore, the infinite chain $Y_1\supsetneq Y_2\supsetneq \cdots$ of graded submodules of $(f)R$  does not exist.

    (4) It can be shown in a similar way to (3).
\end{proof}

Observe that the ``only if'' implications of \Cref{prop: quando UT(A) eh art ou noeth} do not use the fact that the set $I$ is totally ordered. Hence, these implications hold for any partially ordered set $I$.

If $I$ is a partially ordered set with order relation $<$, we will denote by $I^{op}$ the set $I$ endowed with the partial order relation $<^{op}$ defined as follows
\[ i<^{op} j \quad \Longleftrightarrow \quad j<i,\]
for all $i,j\in I$.

Now we are ready to show  examples of one-sided $\Gamma_0$-artinian rings that are not $\Gamma_0$-noetherian on the same side.

\begin{coro}\label{coro: contra-exemplos art noeth}
 Let $\lambda$ be an ordinal greater than the first infinite ordinal $\omega$, and let $I$ be the set of ordinals less than $\lambda$. Set $\Gamma=I\times I$.
 \begin{enumerate}[\rm(1)]
     \item If $A$ is a left artinian unital ring, then $\UT_I(A)$ is a left $\Gamma_0$-artinian ring that is not left $\Gamma_0$-noetherian.
     \item If $A$ is a right artinian unital ring, then $\UT_{I^{op}}(A)$ is a right $\Gamma_0$-artinian ring that is not right $\Gamma_0$-noetherian.
 \end{enumerate}
\end{coro}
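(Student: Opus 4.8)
The plan is to deduce both statements directly from \Cref{prop: quando UT(A) eh art ou noeth}, using only two elementary facts about the index set: the set $I$ of ordinals below $\lambda$ is well-ordered, so neither $I$ nor any subset of it contains an infinite strictly descending chain; and $\omega\in I$, because $\lambda>\omega$. The opposite order $I^{op}$ is still a total order, so \Cref{prop: quando UT(A) eh art ou noeth} applies to $\UT_{I^{op}}(A)$ as well. Throughout, $\Gamma_0=\{(j,j):j\in I\}$, and for $e=(j_0,j_0)$ the shifted left module $(e)(_RR)$ is precisely the object written $(f)R$ in \Cref{prop: quando UT(A) eh art ou noeth} with $f=(j_0,j_0)$, while for $e=(i_0,i_0)$ the shifted right module $(R_R)(e)$ is the object written $R(e)$ there.

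For part (1), let $R=\UT_I(A)$ with $A$ left artinian. First I would check left $\Gamma_0$-artinianity: fix $f=(j_0,j_0)\in\Gamma_0$; since $I$ is well-ordered there is no infinite chain $\cdots<i_n<\cdots<i_1<j_0$ in $I$, so \Cref{prop: quando UT(A) eh art ou noeth}(3) gives that $(f)R$ is gr-artinian. As $f$ was arbitrary, $_RR$ is $\Gamma_0$-artinian, i.e. $R$ is left $\Gamma_0$-artinian. To defeat left $\Gamma_0$-noetherianity it suffices to produce a single $f\in\Gamma_0$ with $(f)R$ not gr-noetherian: take $j_0=\omega$, which lies in $I$ because $\lambda>\omega$, and observe that $0<1<2<\cdots<n<\cdots<\omega$ is an infinite chain in $I$ of the form $i_1<i_2<\cdots<i_n<\cdots<j_0$ with $j_0=\omega$. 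By \Cref{prop: quando UT(A) eh art ou noeth}(4), $(f)R$ with $f=(\omega,\omega)$ is not gr-noetherian, so $R$ is not left $\Gamma_0$-noetherian.

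For part (2), let $R=\UT_{I^{op}}(A)$ with $A$ right artinian, and apply \Cref{prop: quando UT(A) eh art ou noeth} to the totally ordered set $I^{op}$. An infinite chain $i_0<j_1<j_2<\cdots$ in $I^{op}$ would be an infinite strictly descending chain in $I$, which is impossible; hence, by \Cref{prop: quando UT(A) eh art ou noeth}(1) and the right artinianity of $A$, $R(e)$ is gr-artinian for every $e=(i_0,i_0)\in\Gamma_0$, so $R$ is right $\Gamma_0$-artinian. For the failure of right $\Gamma_0$-noetherianity, take $i_0=\omega$; the chain $0<1<2<\cdots<\omega$ in $I$ becomes, read in $I^{op}$, an infinite chain of the shape $i_0<^{op}\cdots<^{op}j_n<^{op}\cdots<^{op}j_2<^{op}j_1$ (concretely $j_n=n-1$), which is exactly the forbidden pattern of \Cref{prop: quando UT(A) eh art ou noeth}(2); therefore $R((\omega,\omega))$ is not gr-noetherian and $R$ is not right $\Gamma_0$-noetherian.

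I do not expect a genuine obstacle here: all the substance is already contained in \Cref{lem: ideais unilaterais em UT_I(A)} and \Cref{prop: quando UT(A) eh art ou noeth}, and the corollary is essentially bookkeeping. The only point requiring care is keeping the directions of the chain conditions straight under the passage to the opposite order, and remembering that for a well-order the asymmetry is exactly that descending chains never exist while strictly ascending chains bounded above by a limit ordinal such as $\omega$ always do; getting this right is what makes the ``artinian but not noetherian'' phenomenon possible in the first place.
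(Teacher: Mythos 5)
Your proof is correct and follows exactly the same route as the paper's proof: deduce \(\Gamma_0\)-artinianity from the fact that a well-ordered set (or its opposite) contains no chains of the relevant type, and defeat \(\Gamma_0\)-noetherianity by exhibiting the infinite chain below \(\omega\) (or above \(\omega\) in \(I^{op}\)) that item (4) resp.\ (2) of \Cref{prop: quando UT(A) eh art ou noeth} forbids. The only difference is that the paper states these chain observations in one line each (``there are bounded above infinite ascending chains in \(I\)'' etc.) while you make the witness \(\omega\) explicit and spell out the translation of the chain into \(I^{op}\); this is a presentational, not mathematical, difference.
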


\begin{proof}
(1) Since $I$ is a well ordered set and $A$ is left artinian, \Cref{prop: quando UT(A) eh art ou noeth}(3) implies that $\UT_I(A)$ is left $\Gamma_0$-artinian. On the other hand,   the infinite chain $0 < 1 < 2 < \cdots < \omega$ in $I$ shows that $\UT_I(A)$ is not left $\Gamma_0$-noetherian, by \Cref{prop: quando UT(A) eh art ou noeth}(4).

(2) Since in $I^{op}$ there are no infinite ascending chains and $A$ is right artinian, \Cref{prop: quando UT(A) eh art ou noeth}(1) implies that $\UT_{I^{op}}(A)$ is right $\Gamma_0$-artinian. On the other hand, there are bounded below infinite descending chains in $I^{op}$. Thus, \Cref{prop: quando UT(A) eh art ou noeth}(2) implies that  $\UT_{I^{op}}(A)$ is not right $\Gamma_0$-noetherian.
\end{proof}

Some other interesting consequences of \Cref{prop: quando UT(A) eh art ou noeth} are as follows.

\begin{coro}
\label{coro: UT(D) is left artinian}
    Let $I$ be a well-ordered set, $A$ be a unital ring, $\Gamma := I \times I$, and $R := \UT_I(A)$. The following assertions hold:
    \begin{enumerate}[\rm (1)]
        \item If $A$ is right noetherian, then $R$ is a right $\Gamma_0$-noetherian ring.
        \item If $A$ is left artinian, then $R$ is a left $\Gamma_0$-artinian ring.
        \item If $I = \mathbb{N}$ and $A$ is left noetherian, then $R$ is a left $\Gamma_0$-noetherian ring.
        \item If $I$ is infinite, then $R$ is not right $\Gamma_0$-artinian.
    \end{enumerate}
\end{coro}

\begin{proof}
    (1) Since $I$ is well-ordered, there are no chains in $I$ as in \Cref{prop: quando UT(A) eh art ou noeth}(2).
    
    (2) Since $I$ is well-ordered, there are no chains in $I$ as in \Cref{prop: quando UT(A) eh art ou noeth}(3).

    (3) There is no chain in $\mathbb{N}$ as in \Cref{prop: quando UT(A) eh art ou noeth}(4).
    
    (4) In any infinite well-ordered set, it is always possible to construct a chain as in 
    \Cref{prop: quando UT(A) eh art ou noeth}(1).
\end{proof}

%%%%%%%%%%%%%%%%%%%%%%%%%%%%%%%%%%%%%%%%%%%%%%%%%%%%%%%%%%%%%%%%%%%%%%%%%%%%%%%%%%%%%%%%%%%%%%%%%%%%%%%%%%%%%%%%%%%%%%%%%%%%%%%%%%%%%%%%%%%%%%%%%%%%%%%%%%%%%%%%%%%%%%%%%%%%%%%%%%%%%%%%%%%%%%%%%%%%%%%%%%%%%%%%%%%%%%%%%%%%%%%%%%%%%%%%%%%%%%%%%%%%%%%%%%%%%%%%%%%%%%%%%%%%%%%%%%%%%%%%%%%%%%%%%%%%%%%%%%%%%%%%%%%%%%%%%%%%%%%%%%%%%%%%%%%%%%%%%%%%%%%%%%%%%%%%%%%%%%%%%%%%%%%%%%%%%%%%%%%%%%%%%%%%%%%%%%%%%%%%%%%%%%%%%%%%%%%%%%%%%%%%%%%%%%%%%%%%%%%%%%%%%%%%%%%%%%%%%%%%%%%%%%%%%%%%%%%%%%%%%%%%%%%%%%%%%%%%%%%%%%%%%%%%%%%%%%%%%%%%%%%%%%%%%%%%%%%%%%%%%%%%%%%%%%%%%%%%%%%%%%%%%%%%%%%%%%%%%%%%%%%%%%%%%%%%%%%%%%%%%%%%%%%%%%%%%%%%%%%%%%%%%%%%%%%%%%%%%%%%%%%%%%%%%%%%%%%%%%%%%%%%%%%%%%%

%%%%%%%%%%%%%%%%%%%%%%%%%%%%%%%%%%%%%%%%%%%%%%%%%%%%%%%%%%%%%%%%%%%%%%%%%%%%%%%%%%%%%%%%%%%%%%%%%%%%%%%%%%%%%%%%%%%%%%%%%%%%%%%%%%%%%%%%%%%%%%%%%%%%%%%%%%%%%%%%%%%%%%%%%%%%%%%%%%%%%%%%%%%%%%%%%%%%%%%%%%%%%%%%%%%%%%%%%%%%%%%%%%%%%%%%%%%%%%%%%%%%%%%%%%%%%%%%%%%%%%%%%%%%%%%%%%%%%%%%%%%%%%%%%%%%%%%%%%%%%%%%%%%%%%%%%%%%%%%%%%%%%%%%%%%%%%%%%%%%%%%%%%%%%%%%%%%%%%%%%%%%%%%%%%%%%%%%%%%%%%%%%%%%%%%%%%%%%%%%%%%%%%%%%%%%%%%%%%%%%%%%%%%%%%%%%%%%%%%%%%%%%%%%%%%%%%%%%%%%%%%%%%%%%%%%%%%%%%%%%%%%%%%%%%%%%%%%%%%%%%%%%%%%%%%%%%%%%%%%%%%%%%%%%%%%%%%%%%%%%%%%%%%%%%%%%%%%%%%%%%%%%%%%%%%%%%%%%%%%%%%%%%%%%%%%%%%%%%%%%%%%%%%%%%%%%%%%%%%%%%%%%%%%%%%%%%%%%%%%%%%%%%%%%%%%%%%%%%%%%%%%%%%%%%%
\subsection{Gr-injetive modules and the graded Bass-Papp Theorem}

In this subsection, we aim to characterize $\Gamma_0$-noetherian rings via gr-injective modules. In other words, we will obtain a groupoid graded version of the Bass–Papp Theorem, see for example \cite[(3.46)]{Lam2} or \cite[Theorem~5.23]{Goodearl}. Parts of \Cref{prop: defs equiv de gr-inj} and \Cref{teo: baer}, together with \Cref{prop: prod de gr-inj eh gr-inj}, appear in \cite{Lund} for unital groupoid graded rings. 

\begin{defi}\label{def:gr-injective}
	Let $R$ be a $\Gamma$-graded ring and let $E$ be a $\Gamma$-graded right $R$-module. We say that $E$ is \emph{gr-injective} if, for every pair of $\Gamma$-graded right $R$-modules $M, N$ and all $g \in \Homgr(N, M)$, $h \in \Homgr(N, E)$ with $g$ being injective, there exists $h' \in \Homgr(M, E)$ such that $h' \circ g = h$.
\[
\begin{tikzcd}
0 \arrow{r} & N \arrow{r}{g} \arrow[swap]{d}{h} & M \arrow[dashed]{dl}{h'}  & \\
& E &
\end{tikzcd}
\]
\end{defi}

We denote by $\Gamma\text{-gr-}R$ the category whose objects are the $\Gamma$-graded right $R$-modules and morphisms are gr-homomorphisms.  By definition, gr-injective $\Gamma$-graded right $R$-modules are precisely the injective objects of the category $\Gamma\text{-gr-}R$. Now we provide equivalent conditions for a graded module to be gr-injective.

\begin{prop}
\label{prop: defs equiv de gr-inj}
	Let $R$ be a $\Gamma$-graded ring and $E$ a $\Gamma$-graded right $R$-module. 
    The following assertions are equivalent:
	\begin{enumerate}[\rm (1)]
		\item $E$ is gr-injective.
		\item For all $\Gamma$-graded right $R$-modules $M,N$ and $g\in\Homgr(N,M)$, $h\in\HOM_R(N,E)$ with $g$ being injective, there exists $h'\in\HOM_R(M,E)$ such that $h'\circ g = h$.
		\item For all $\Gamma$-graded right $R$-modules $M,N$, for every $\gamma\in\Gamma$, and $g\in\Homgr(N,M)$, $h_\gamma\in\HOM_R(N,E)_\gamma$ with $g$ being injective, there exists $h'_\gamma\in\HOM_R(M,E)_\gamma$ such that $h'_\gamma\circ g = h_\gamma$.
		\item For all $\Gamma$-graded right $R$-modules $M,N$, and all $\gamma,\delta\in\Gamma$ with $d(\gamma)=d(\delta)$, and $g_\delta\in\HOM_R(N,M)_\delta$, $h_\gamma\in\HOM_R(N,E)_\gamma$ with $g_\delta|_{N(d(\delta))}$ being injective, there exists $h'\in\HOM_R(M,E)_{\gamma\delta^{-1}}$ such that $h'\circ g_\delta = h_\gamma$.
	\end{enumerate}
\end{prop}

\begin{proof}
	$(1)\Rightarrow(3)$: Suppose (1) holds, $g\in\Homgr(N,M)$ is injective and $h_\gamma\in\HOM_R(N,E)_\gamma$. 
    Since $g|_{N(\gamma^{-1})}\in\Homgr(N(\gamma^{-1}),M(\gamma^{-1}))$ and $h_\gamma|_{N(\gamma^{-1})}\in\Homgr(N(\gamma^{-1}),E)$, there exists $h'\in\Homgr(M(\gamma^{-1}),E)$ such that $h'\circ g|_{N(\gamma^{-1})} = h_\gamma|_{N(\gamma^{-1})}$. 
    Then, using the same reasoning as in the proof of \cite[Proposition 3.5(1)]{Artigoarxiv}, we can extend $h'$ to $h'_\gamma\in\HOM_R(M,E)_\gamma$ by defining $h'_\gamma=0$ on $M(e)$ for every $e\in\Gamma_0\setminus\{d(\gamma)\}$. 
    Hence, $h'_\gamma\circ g=h_\gamma$.

	$(3)\Rightarrow(2)$: To obtain (2), simply apply (3) to each nonzero homogeneous component of $h=\sum_{\gamma\in\Gamma}h_\gamma$ and take $h':=\sum_{\gamma\in\Gamma}h'_\gamma$.

	$(2)\Rightarrow(3)$: Suppose (2) holds and let $\gamma\in\Gamma$, $g\in\Homgr(N,M)$, and $h_\gamma\in\HOM_R(N,E)_\gamma$ with $g$ being injective. 
    Then, there exists $h'\in\HOM_R(M,E)$ such that $h'\circ g = h_\gamma$. 
    Write $h' = \sum_{\sigma\in\Gamma} h'_\sigma$ with $h'_\sigma\in\HOM_R(M,E)_\sigma$ for each $\sigma\in\Gamma$. 
    Given $\alpha\in\Gamma$ and $n_\alpha\in N_\alpha$, we have
	\[E_{\gamma\alpha} \ni h_\gamma(n_\alpha) = \sum_{\sigma\in\Gamma}(h'_\sigma g)(n_\alpha),\]
	and hence $h_\gamma(n_\alpha) = (h'_\gamma g)(n_\alpha)$, so $h'_\gamma\circ g = h_\gamma$.

	$(3)\Rightarrow(4)$: Suppose (3) holds, $\gamma,\delta\in\Gamma$ with $d(\gamma)=d(\delta)$, $g_\delta\in\HOM_R(N,M)_\delta$ is injective when restricted to $N(d(\delta))$, and $h_\gamma\in\HOM_R(N,E)_\gamma$. 
    Considering $g_\delta|_{N(d(\delta))}$ as an element of $\Homgr(N(d(\delta)),M(\delta))$, there exists $h'_\gamma\in\HOM_R(M(\delta),E)_\gamma$ such that $h'_\gamma \circ g_\delta|_{N(d(\delta))} = h_\gamma|_{N(d(\delta))}$. 
    Using the same reasoning as in the proof of \cite[Proposition 3.5(1)]{Artigoarxiv}, we can extend $h'_\gamma$ to $h'\in\HOM_R(M,E)_{\gamma\delta^{-1}}$ by defining $h'=0$ on $M(e)$ for all $e\in\Gamma_0\setminus\{r(\delta)\}$.
    Since $g_\delta=h_\gamma=0$ on $N(f)$ for each $e\in\Gamma_0\setminus\{d(\delta)\}$, it follows that $h'\circ g_\delta=h_\gamma$.

	$(4)\Rightarrow(1)$: Suppose (4) holds, and let $g\in\Homgr(N,M)$ and $h\in\Homgr(N,E)$ with $g$ being injective. 
    Fix $e\in\Gamma_0$. 
    Since $g|_{N(e)}\in\HOM_R(N(e),M(e))_e$ is injective and $h|_{N(e)}\in\HOM_R(N(e),E)_e$, there exists $h'_e\in\HOM_R(M(e),E)_e = \Homgr(M(e),E(e))$ such that $h'_e\circ g|_{N(e)} = h|_{N(e)}$. 
    Thus, $h' := \bigoplus_{e\in\Gamma_0} h'_e \in \Homgr(M,E)$ is such that $h'\circ g = h$.
\end{proof}

We continue proving some important properties of gr-injective modules.

\begin{lema}
\label{lem: somand dir de gr-inj e gr-inj}
Let $R$ be a $\Gamma$-graded ring and let $E$ be a gr-injective $\Gamma$-graded right $R$-module. 
Then every graded direct summand of $E$ is also gr-injective.
\end{lema}

\begin{proof}
Let $X$ be a graded direct summand of $E$. 
Take $\Gamma$-graded right $R$-modules $M,N$ and $g\in\Homgr(N,M)$, $h\in\Homgr(N,X)$ with $g$ being injective. 
Since $h\in\Homgr(N,E)$, there exists $h'\in\Homgr(M,E)$ such that $h'\circ g = h$. 
Consider $\pi\in\Homgr(E,X)$, the canonical projection from $E$ onto $X$. 
Then $\pi \circ h' \in \Homgr(M,X)$ satisfies $(\pi\circ h')\circ g = \pi \circ h = h$. 
Therefore, $X$ is gr-injective.
\end{proof}

\begin{prop}
\label{prop: prod de gr-inj eh gr-inj}
Let $R$ be a $\Gamma$-graded ring, let $\{E_i : i\in I\}$ be a family of $\Gamma$-graded right $R$-modules, and consider $E := \prod_{i\in I}^{\rm gr} E_i$.  
Then $E$ is gr-injective if and only if  $E_i$ is gr-injective for all $i\in I$.
\end{prop}

\begin{proof}
 Suppose $E$ is gr-injective. For each $i_0\in I$, $E=\left(\prod_{i \in I\setminus \{i_0\}}^{\rm gr} E_i\right) \oplus E_{i_0}$. By \Cref{lem: somand dir de gr-inj e gr-inj}, $E_{i_0}$ is gr-injective for each $i_0\in I$.    

 Conversely, suppose that  $E_i$ is gr-injective for all $i\in I$. Take  $\Gamma$-graded right $R$-modules $M, N$ and $g \in \Homgr(N, M)$, $h \in \Homgr(N, E)$ with $g$ being injective. Consider the canonical projection $\pi_i\in \Homgr(E,E_i)$ for each $i\in I$. Since $\pi_i\circ h\in \Homgr(N,E_i)$, there exists $h_i\in \Homgr(M,E_i)$ such that $h_ig=\pi_ih$. Then $h':=(h_i)_{i\in I}\in \Homgr(M,E)$ is such that $h'g=(h_ig)_{i\in I}=(\pi_ih)_{i\in I}=h$. Therefore, $E$ is gr-injective.
\end{proof}

Gr-injectivity behaves well with respect to shifts, as follows:

\begin{prop}
	\label{prop: I gr-inj sse I(e) gr-inj}
	Let $R$ be a $\Gamma$-graded ring and $E$ a $\Gamma$-graded right $R$-module. The following assertions are equivalent:
	\begin{enumerate}[\rm (1)]
		\item $E$ is gr-injective.
		\item $E(\sigma)$ is gr-injective for every $\sigma\in\Gamma$.
		\item $E(e)$ is gr-injective for every $e\in\Gamma_0$.
	\end{enumerate}
\end{prop}

\begin{proof}
$(1)\Rightarrow(2)$: Since $\Homgr(X,E(\gamma))=\HOM_R(X,E)_\gamma$ for each $\Gamma$-graded right $R$-module $X$ and $\gamma\in\Gamma$, the condition (3) in \Cref{prop: defs equiv de gr-inj} is the definition of gr-injectivity of $E(\gamma)$ for every $\gamma\in\Gamma$.

$(2)\Rightarrow(3)$: This is clear.

$(3)\Rightarrow(1)$: This implication is a consequence of \Cref{prop: prod de gr-inj eh gr-inj} because $E=\bigoplus_{e\in\Gamma_0}E(e)\isogr\prod_{e\in\Gamma_0}^{gr}E(e)$.
\end{proof} 

We now state our graded version of Baer's Criterion. We point out that \Cref{teo: baer} is stated in terms of homomorphisms with degree although \Cref{def:gr-injective} is given in terms of gr-homomorphisms. 

\begin{teo}
\label{teo: baer}
Let $R$ be a $\Gamma$-graded ring and $E$ a $\Gamma$-graded right $R$-module.
The following assertions are equivalent:
\begin{enumerate}[\rm (1)]
	\item $E$ is gr-injective.
	\item For every graded right ideal $U$ of $R$ and every $h\in\HOM_R(U,E)$, there exists $h'\in\HOM_R(R,E)$ such that $h'|_U=h$.
	\item For every graded right ideal $U$ of $R$, every $\gamma\in\Gamma$, and every $h\in\HOM_R(U,E)_\gamma$, there exists $h'\in\HOM_R(R,E)_\gamma$ such that $h'|_U=h$.
\end{enumerate}    
\end{teo}

\begin{proof}
$(1)\Rightarrow(2)$: Follows directly from the characterization of gr-injectivity given in item (2) of \Cref{prop: defs equiv de gr-inj}.

$(2)\Rightarrow(3)$: Follows the same reasoning as the proof of $(2)\Rightarrow(3)$ in \Cref{prop: defs equiv de gr-inj}.

$(3)\Rightarrow(1)$: The proof is very similar to that of (iii)$\Rightarrow$(i) in \cite[Proposition 3.5.2]{Lund} and to the one in \cite[Proposition 5.1]{Goodearl}. 
Assume (3) holds and let $M$ and $N$ be $\Gamma$-graded right $R$-modules, with $g\in\Homgr(N,M)$ being injective and $h\in\Homgr(N,E)$. 
Consider the set
\[
\chi := \{(N',h') : \im g \subseteq N' \subgr M, \, h' \in \Homgr(N',E) \text{ and } h'\circ g = h\}.
\] 
Note that $(\im g, \, h \circ g^{-1}) \in \chi$. 
We partially order $\chi$ by
\[
(N',h') \leqslant (N'',h'') \quad\Longleftrightarrow\quad N' \subseteq N'' \text{ and } h''|_{N'} = h'.
\]
Let $\{(N_i,h_i)\}_{i\in I}$ be a chain in $\chi$. 
Then $N' := \bigcup_{i\in I} N_i$ is a graded submodule of $M$ containing $\im g$ and
\begin{align*}
    h': N' &\longrightarrow E \\
    N_i \ni n &\longmapsto h_i(n)
\end{align*}
is a gr-homomorphism satisfying $h'\circ g = h$. 
Therefore, $(N',h') \in \chi$ is an upper bound of $\{(N_i,h_i)\}_{i\in I}$. 
By Zorn's Lemma, there exists a maximal element $(N_0,h_0) \in \chi$. 
To prove that $E$ is gr-injective, it suffices to show that $N_0 = M$. 
Suppose, by contradiction, that $N_0 \subsetneq M$ and take $x \in \h(M) \setminus N_0$. 
Consider the graded right ideal $U :=  \{r \in R : x r \in N_0\}$ and define
\begin{align*}
    \varphi : U &\longrightarrow E \\
    a &\longmapsto h_0(x a).
\end{align*}
Note that $\varphi \in \HOM_R(U,E)_\gamma$, where $\gamma := \deg x$. By (3), there exists $\varphi' \in \HOM_R(R,E)_\gamma$ extending $\varphi$. Now consider the $\Gamma$-graded right $R$-module $N_1 := N_0 + xR$ and the map
\begin{align*}
    h_1 : N_1 &\longrightarrow E\\
    y + x a &\longmapsto h_0(y) + \varphi'(a).
\end{align*}
This is well defined: if $y + x a = \tilde{y} + x \tilde{a}$, with $y, \tilde{y} \in N_0$ and $a, \tilde{a} \in R$, then $x(a - \tilde{a}) = \tilde{y} - y \in N_0$, and hence:
\[
h_0(\tilde{y}) - h_0(y) =  h_0(x(a - \tilde{a})) = \varphi(a - \tilde{a}) = \varphi'(a - \tilde{a}) = \varphi'(a) - \varphi'(\tilde{a}).
\]
Clearly, $h_1$ is a homomorphism of right $R$-modules. 
It is also a gr-homomorphism: for each $\alpha \in \Gamma$ we have
\[
h_1((N_1)_\alpha) = h_1((N_0)_\alpha + x R_{\gamma^{-1}\alpha}) \subseteq h_0((N_0)_\alpha) + \varphi'(R_{\gamma^{-1}\alpha}) \subseteq E_\alpha + E_{\gamma \gamma^{-1} \alpha} = E_\alpha.
\]
Since $\im g \subseteq N_0$ and $h_1|_{N_0} = h_0$, it follows that $h_1\circ g = h_0\circ g = h$. 
Thus, $(N_1,h_1) \in \chi$, contradicting the maximality of $(N_0,h_0)$ since $N_0 \subsetneq N_1$.
\end{proof}

Out next step is to show that every groupoid graded module is a graded submodule of a gr-injective module. We will show this via a graded version of \cite[Chapter~5]{Goodearl}. This fact could also be proved via a groupoid graded version of \cite[Subsection 4.1]{Balaba}, as it is done in \cite[Section~2.5]{dissertacao}. One could also show that the category $\Gamma\text{-gr-}R$  is a Grothendieck category  with $P=\bigoplus_{\sigma\in\Gamma}R(\sigma)$ as a projective generator, and then apply \cite[Proposition IV.4.4]{Stenstrom}.

Consider the $\Gamma$-graded ring
 \[\mathcal{Z}:=\bigoplus_{\gamma\in\Gamma}{\mathcal{Z}_\gamma},\quad  \textrm{ where } \quad
\mathcal{Z}_\gamma=\begin{cases}
 	\mathbb{Z},& \textrm{if $\gamma\in\Gamma_0$}\\
 	0,& \textrm{if $\gamma\notin\Gamma_0$}
 \end{cases}.\]
 Of course, every $\Gamma$-graded right $\mathcal{Z}$-module is a $\Gamma$-graded abelian group. On the other hand.
every $\Gamma$-graded abelian group is a $\Gamma$-graded right $\mathcal{Z}$-module. Indeed, let $X$ be a $\Gamma$-graded abelian group. We endow $X$ with  a $\Gamma$-graded right $\mathcal{Z}$-module structure via \[x_\gamma\cdot (n_e)_{e\in\Gamma_0}= x_\gamma n_{d(\gamma)}\]
for all $\gamma\in\Gamma$, $x_\gamma\in X_\gamma$ and $(n_e)_{e\in\Gamma_0}\in\mathcal{Z}$.

\begin{defi}
A $\Gamma$-graded right $\mathcal{Z}$-module $X$ is \emph{gr-divisible} provided that $X_\gamma n=X_\gamma$ for all $\gamma\in\Gamma$ and $n\in \mathcal{Z}_{d(\gamma)}\setminus\{0\}$.    
\end{defi}

\begin{exems}\label{exs: divisible}
    \begin{enumerate}[\rm(1)]
        \item The $\Gamma$-graded right $\mathcal{Z}$-module
         \[\mathcal{Q}:=\bigoplus_{\gamma\in\Gamma}\mathcal{Q_\gamma},\quad \textrm{where} \quad  
 \mathcal{Q}_\gamma=\begin{cases}
 	\mathbb{Q},& \textrm{if $\gamma\in\Gamma_0$}\\
 	0,& \textrm{if $\gamma\notin\Gamma_0$}
 \end{cases}\]
 is  gr-divisible.
 \item If $X$ is a gr-divisible right $\mathcal{Z}$-module, then the shift $X(\sigma)$ is gr-divisible for all $\sigma\in\Gamma$.
 \item Direct sums and graded direct products of gr-divisible right $\mathcal{Z}$-modules are gr-divisible.

 \item If $X$ is a gr-divisible right $\mathcal{Z}$-module and $Y$ is a graded submodule, then $X/Y$ is gr-divisible.
    \end{enumerate}
\end{exems}

\begin{prop}\label{prop: divisible <--> gr-injective}
    \begin{enumerate}[\rm (1)]
        \item A $\Gamma$-graded right $\mathcal{Z}$-module $X$ is gr-injective if and only if it is gr-divisible.
        \item Every $\Gamma$-graded right $\mathcal{Z}$-module $X$ is a graded submodule of a gr-divisible right $\mathcal{Z}$-module.
    \end{enumerate}
\end{prop}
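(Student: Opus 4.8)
The plan is to transplant to the graded setting over $\mathcal{Z}$ the classical facts that, over $\mathbb{Z}$, injective modules coincide with divisible ones and every module embeds into a divisible one (see for example \cite[Chapter~5]{Goodearl}). The key simplification is that a graded right ideal $U$ of $\mathcal{Z}$ is just $U=\bigoplus_{e\in\Gamma_0}U_e$ with each $U_e=U\cap\mathcal{Z}_e$ an ideal of $\mathcal{Z}_e=\mathbb{Z}$, hence of the form $m_e\mathbb{Z}$; this makes the graded Baer criterion (\Cref{teo: baer}) completely concrete. Throughout I would use the permanence properties of divisible modules collected in \Cref{exs: divisible} and the fact that a homomorphism of degree $\gamma$ annihilates $M(\Gamma_0\setminus\{d(\gamma)\})$ \cite[Lemma~2.9]{Artigoarxiv}; I also read the defining condition of \emph{divisible} as ranging over nonzero $n$ (which is what \Cref{exs: divisible} demands).

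For part (1), the easy implication is that a gr-injective $X$ is divisible: fixing $\gamma\in\Gamma$, $e:=d(\gamma)$, $0\neq n\in\mathcal{Z}_e$ and $x\in X_\gamma$, the rule $nk\cdot1_e\mapsto xk$ ($k\in\mathbb{Z}$) defines a $\mathcal{Z}$-homomorphism of degree $\gamma$ from the graded right ideal $n\mathcal{Z}_e$ to $X$, which by \Cref{teo: baer} extends to some $h'\in\HOM_{\mathcal{Z}}(\mathcal{Z},X)_\gamma$, and then $y:=h'(1_e)\in X_\gamma$ satisfies $yn=h'(n\cdot1_e)=x$, so $X_\gamma n=X_\gamma$. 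For the converse I would verify condition (3) of \Cref{teo: baer}: given a graded right ideal $U\subgr\mathcal{Z}$, $\gamma\in\Gamma$ and $h\in\HOM_{\mathcal{Z}}(U,X)_\gamma$, set $e:=d(\gamma)$; since $h$ vanishes on $U(\Gamma_0\setminus\{e\})$, it is determined by its restriction to $U\cap\mathcal{Z}_e=m\mathbb{Z}$, and if $m=0$ we take $h'=0$, while if $m\geq1$ we put $x:=h(m\cdot1_e)\in X_\gamma$, pick $y\in X_\gamma$ with $ym=x$ by divisibility, and set $h'(z):=y\,z_e$, where $z_e$ denotes the $\mathcal{Z}_e$-component of $z$; one checks $h'\in\HOM_{\mathcal{Z}}(\mathcal{Z},X)_\gamma$ and $h'|_U=h$, so $X$ is gr-injective.

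For part (2), I would realize $X$ as a quotient of a graded-free $\mathcal{Z}$-module. Choosing homogeneous generators $\{x_j:j\in J\}$ of $X$ with $\gamma_j:=\deg x_j$, the construction in the proof of \Cref{coro: R noeth --> todo fg eh noeth} gives a surjective gr-homomorphism $\pi\colon F:=\bigoplus_{j\in J}\mathcal{Z}(\gamma_j^{-1})\to X$ carrying the $j$-th canonical generator to $x_j$, so $X\isogr F/K$ with $K:=\ker\pi\subgr F$. Each $\mathcal{Z}(\gamma_j^{-1})$ is concentrated in a single degree as a copy of $\mathbb{Z}$, hence is a graded submodule of $\mathcal{Q}(\gamma_j^{-1})$ (a copy of $\mathbb{Q}$ in that degree); thus $F\subgr\widetilde F:=\bigoplus_{j\in J}\mathcal{Q}(\gamma_j^{-1})$, which is divisible by \Cref{exs: divisible}(2) and (3). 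As $K\subgr F\subgr\widetilde F$, the induced gr-homomorphism $X\isogr F/K\to\widetilde F/K$ is injective and $\widetilde F/K$ is divisible by \Cref{exs: divisible}(4), which proves (2).

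The routine verifications I am deferring are all grading bookkeeping: that the maps built are $\mathcal{Z}$-linear (not merely additive) and homogeneous of the stated degree, that $U\cap\mathcal{Z}_{d(\gamma)}$ really carries all the information of a degree-$\gamma$ map on $U$, and that the shifts $\mathcal{Z}(\sigma)$ and $\mathcal{Q}(\sigma)$ are concentrated in degree $\sigma^{-1}$. This degree-tracking is the only real obstacle; once it is settled, every step is exactly the classical $\mathbb{Z}$-module argument.
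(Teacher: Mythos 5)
Your proof is correct and follows essentially the same route as the paper: deduce divisibility from gr-injectivity by testing against the principal graded ideal $n\mathcal{Z}_{d(\gamma)}$, deduce gr-injectivity from divisibility via the graded Baer criterion (\Cref{teo: baer}), and for part (2) present $X$ as a quotient of $\bigoplus_j\mathcal{Z}(\gamma_j^{-1})$, embed that free module into $\bigoplus_j\mathcal{Q}(\gamma_j^{-1})$, and pass to the quotient, invoking \Cref{exs: divisible}. The only difference is cosmetic: in the converse half of (1) you keep $\gamma\in\Gamma$ general and work with $U\cap\mathcal{Z}_{d(\gamma)}$, which is slightly cleaner than the paper's reduction to $\gamma\in\Gamma_0$, and you spell out the free presentation that the paper merely asserts.
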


\begin{proof}
    (1) Suppose that $X$ is gr-injective. Let $\gamma\in\Gamma$, $n\in \mathcal{Z}_{d(\gamma)}\setminus\{0\}$ and $x\in X_\gamma$.  Let $U=\bigoplus_{\sigma\in\Gamma}U_\sigma$ be the graded right ideal of $\mathcal{Z}$ given by $U_{d(\gamma)}=n\mathbb{Z}$ and  $U_\sigma=\{0\}$ for all $\sigma\in\Gamma\setminus \{d(\gamma)\}$. Define $\varphi\in\HOM_{\mathcal{Z}}(U,X)_\gamma$ by $\varphi(n)=x$. Since $X$ is gr-injective, there exists an extension $\psi\in\HOM_{\mathcal{Z}}(\mathcal{Z},X)_\gamma$ of $\varphi$. Then $\psi(1)n=\psi(n)=\varphi(n)=x$.
    
    Suppose now that $X$ is gr-divisible. We will use \Cref{teo: baer} to show that $X$ is gr-injective. Let $U$ be a graded right ideal of $\mathcal{Z}$, $\gamma\in\Gamma$ and $\varphi\in\HOM_{\mathcal{Z}}(U,X)_\gamma$. Note that $U=\bigoplus_{e\in\Gamma_0}U_e$ and $\varphi(U_e)\subseteq X_{\gamma e}$ for each $e\in\Gamma_0$. 
  We can suppose that $U_{d(\gamma)}\neq0$, otherwise $\varphi=0$. 
	Since $U_e$ is an ideal of $\mathbb{Z}$ for each $e\in\Gamma_0$, take $n\in\mathbb{Z}\setminus\{0\}$ such that $U_{d(\gamma)}=n\mathbb{Z}$. Let  $q=\varphi(n)\in X_\gamma$. Choose $d\in X_\gamma$ such that $dn=q$. Such $d$ exists because $X$ is gr-divisible. Then $\varphi|_{U_{d(\gamma)}}$ extends to
 	\[\psi_{d(\gamma)}:\mathcal{Z}_{d(\gamma)} \to X_\gamma,\quad  1\mapsto d.\]
 	Since $\varphi(U_e)=0$ for all $e\neq d(\gamma)$, we can extend $\psi_{d(\gamma)}$ to $\psi\in\HOM_{\mathcal{Z}}(\mathcal{Z},D)_\gamma$ by making $\psi=0$ in all other homogeneous components of $\mathcal{Z}$.

    (2) We have $X\cong_{gr}\frac{\bigoplus_{i\in I}\mathcal{Z}(\sigma_i)}{N}$ for some set $I$, $\sigma_i\in\Gamma$ for each $i\in I$ and graded subgroup $N$ of $\bigoplus_{i\in I}\mathcal{Z}(\sigma_i)$. Note that $\frac{\bigoplus\limits_{i\in I}\mathcal{Z}(\sigma_i)}{N}$ is a graded submodule of $\frac{\bigoplus\limits_{i\in I}\mathcal{Q}(\sigma_i)}{N}$, where $\mathcal{Q}$ is the gr-divisible right $\mathbb{Z}$-module given in \Cref{exs: divisible}(1). By \Cref{exs: divisible}(2)--(4), $\frac{\bigoplus\limits_{i\in I}\mathcal{Q}(\sigma_i)}{N}$ is gr-divisible.
\end{proof}

Let $R$ be a $\Gamma$-graded ring and $X$ be a $\Gamma$-graded right $\mathcal{Z}$-module. We can endow $\HOM_\mathcal{Z}(R_\mathcal{Z},X_\mathcal{Z})$ with a structure of  $\Gamma$-graded right $R$-module via 
 \[ 
 \begin{array}{rl} 
 	ga:R&\to X\\
 	b&\mapsto g(ab)
 \end{array}\]
 for all $a,b\in R$, $g\in \HOM_\mathcal{Z}(R_\mathcal{Z},X_\mathcal{Z})$, see \cite[Proposition 17(b)]{CLP}.

 \begin{lema}\label{lem: gr-injective from divisible}
    Let $R$ be a $\Gamma$-graded ring and $X$ be a gr-divisible $\Gamma$-graded right $\mathcal{Z}$-module. Then the $\Gamma$-graded right $R$-module $H=\HOM_\mathcal{Z}(R_\mathcal{Z},X_\mathcal{Z})$ is gr-injective.
 \end{lema}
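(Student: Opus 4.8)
The plan is to use the graded Baer criterion (\Cref{teo: baer}), so it suffices to show that for every graded right ideal $U$ of $R$, every $\gamma \in \Gamma$, and every $h \in \HOM_R(U, H)_\gamma$, there exists an extension $h' \in \HOM_R(R, H)_\gamma$ with $h'|_U = h$. The key observation is the standard adjunction: a graded $R$-module homomorphism into $H = \HOM_\mathcal{Z}(R_\mathcal{Z}, X_\mathcal{Z})$ corresponds naturally to a graded $\mathcal{Z}$-module homomorphism into $X$. Concretely, given $h \in \HOM_R(U, H)_\gamma$, I would define $\widehat{h} \colon U \to X$ by $\widehat{h}(u) = h(u)(1_{r(\deg u)})$ on homogeneous $u$ (more precisely, $\widehat h(u_\sigma) = h(u_\sigma)(1_{r(\sigma)})$ for $u_\sigma \in U_\sigma$, extended additively), and check that $\widehat{h}$ is a $\mathcal{Z}$-module homomorphism of degree $\gamma$ from $U$, viewed as a graded right $\mathcal{Z}$-module, into $X$.

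Next, since $X$ is divisible, it is gr-injective as a $\Gamma$-graded right $\mathcal{Z}$-module by \Cref{prop: divisible <--> gr-injective}(1). Hence there is a $\mathcal{Z}$-linear extension $\widetilde{h} \in \HOM_\mathcal{Z}(R_\mathcal{Z}, X_\mathcal{Z})_\gamma$ of $\widehat{h}$, where here $U$ is a graded $\mathcal{Z}$-submodule of $R_\mathcal{Z}$ — one should note that the $\mathcal{Z}$-submodules of $R_\mathcal{Z}$ are exactly the graded subgroups, and $U$ is such. Now I would reverse the adjunction: define $h' \colon R \to H$ by $h'(a)(b) := \widetilde{h}(ab)$ for $a, b \in R$. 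One checks that $h'(a) \in H = \HOM_\mathcal{Z}(R_\mathcal{Z}, X_\mathcal{Z})$ for each $a$, that $h'$ is a homomorphism of right $R$-modules (using the module structure on $H$ recalled just before the lemma, namely $(ga)(b) = g(ab)$), and that $h'$ has degree $\gamma$, i.e. $h'(R_\sigma) \subseteq H_{\gamma\sigma}$. The degree bookkeeping uses that $H_\tau$ consists of those $g \in H$ with $g(R_\sigma) \subseteq X_{\tau\sigma}$ for all $\sigma$.

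Finally I would verify $h'|_U = h$. For $u \in U$ homogeneous and $b \in R$ homogeneous, $h'(u)(b) = \widetilde{h}(ub) = \widehat{h}(ub)$ since $ub \in U$ and $\widetilde h$ extends $\widehat h$; and $\widehat{h}(ub) = h(ub)(1_{r(\deg(ub))}) = (h(u)b)(1_{r(\deg(ub))}) = h(u)(b \cdot 1_{r(\deg(ub))}) = h(u)(b)$, using $R$-linearity of $h$ and the definition of the $R$-action on $H$, together with the object-unital identity $b 1_{d(\deg b)} = b$. This gives $h'(u) = h(u)$ for all $u$, completing the verification. The main obstacle I anticipate is purely bookkeeping: making the degree shifts in the two adjunction maps consistent (that $\widehat h$ really has degree $\gamma$ as a $\mathcal{Z}$-map, and that passing back produces an $R$-map of degree $\gamma$ rather than some shifted degree), and confirming that a degree-$\gamma$ homomorphism out of $U$ vanishes on the components $U(e)$ with $e \neq d(\gamma)$ so that the extension behaves correctly on those components — none of this is deep, but it requires care with the groupoid conventions $X_{\sigma\tau} = 0$ when $\sigma\tau$ is undefined.
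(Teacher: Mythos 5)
Your overall plan---reduce via the graded Baer criterion (\Cref{teo: baer}(3)) to extending a degree-$\gamma$ map $U\to H$, transport it through the hom--tensor adjunction to a degree-$\gamma$ map $U\to X$ of graded $\mathcal{Z}$-modules, extend using that a divisible $\mathcal{Z}$-module is gr-injective (\Cref{prop: divisible <--> gr-injective}), and pass back via $h'(a)(b)=\widetilde h(ab)$---is exactly the proof in the paper. But the explicit adjoint map you write down is wrong: you define $\widehat h(u_\sigma)=h(u_\sigma)(1_{r(\sigma)})$ for $u_\sigma\in U_\sigma$, and this vanishes on every homogeneous component where $\sigma$ is not a loop. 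Indeed $h(u_\sigma)\in H_{\gamma\sigma}$ sends $R_\alpha$ into $X_{\gamma\sigma\alpha}$, and with $\alpha=r(\sigma)$ the product $\gamma\sigma\cdot r(\sigma)$ is only defined when $d(\gamma\sigma)=r(\sigma)$, i.e.\ when $d(\sigma)=r(\sigma)$; otherwise $X_{\gamma\sigma\cdot r(\sigma)}=\{0\}$ by convention and $h(u_\sigma)(1_{r(\sigma)})=0$. The correct idempotent is $1_{d(\sigma)}$, which matches the object-unital identity $u_\sigma=u_\sigma 1_{d(\sigma)}$ and gives $\widehat h(u_\sigma)=h(u_\sigma)(1_{d(\sigma)})\in X_{\gamma\sigma}$, so that $\widehat h$ really has degree $\gamma$; this is what the paper does.

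The same swap breaks your closing computation, where you claim $b\cdot 1_{r(\deg(ub))}=b$ by citing $b\, 1_{d(\deg b)}=b$: for $u\in U_\sigma$ and $b\in R_\tau$ with $ub\neq 0$ one has $d(\sigma)=r(\tau)$, hence $r(\deg(ub))=r(\sigma)$ while the unital identity needs $d(\tau)$, and these two objects differ in general, so $b\,1_{r(\sigma)}=0$. After replacing $r$ by $d$ throughout, the chain becomes $\widehat h(ub)=h(ub)(1_{d(\sigma\tau)})=(h(u)b)(1_{d(\tau)})=h(u)(b\,1_{d(\tau)})=h(u)(b)$, which is correct, and your argument then coincides with the paper's proof.
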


\begin{proof}
   We will use \Cref{teo: baer} in order to show that $H$ is gr-injective. Let $U$ be a graded right ideal of $R$, $\gamma\in\Gamma$ and $\varphi\in \HOM_R(U,H)_\gamma$.  Consider $\varphi'\in \HOM_\mathcal{Z}(U,X)_\gamma$ determined by $r\rightarrow (\varphi(r))(1_{d(\delta)})$ for each $r\in U_\delta$ and $\delta\in\Gamma$. Since $X$ is a gr-injective right  $\mathcal{Z}$-module by \Cref{prop: divisible <--> gr-injective}(1), there exists an extension $\psi\colon \HOM_\mathcal{Z}(R,X)_\gamma$ of $\varphi'$. Thus, $\psi\in H$. 
   Now, there exists a unique  homomorphism of right $R$-modules of degree  $\gamma$, $\Phi\colon R(d(\gamma))\rightarrow H$, such that $\Phi(1_{d(\gamma)})=\psi$. Extend  $\Phi$ to $R$ defining $\Phi (x)=0$ for all $x\in R(e)$, $e\in\Gamma_0\setminus \{d(\gamma)\}$. Then $\Phi\in  \HOM_R(R,H)_\gamma$ and, for all $a\in U(d(\gamma))$ and $b\in R$,
   \[(\Phi(a))(b)=(\psi a)(b)=\psi(ab)=\varphi'(ab)=(\varphi(a))(b). \]
   Therefore $\Phi\colon R\rightarrow H$ is the desired extension of $\varphi$.
\end{proof}

\begin{teo}
\label{teo: todo modulo eh subgr de um gr-inj}
    Let $R$ be a $\Gamma$-graded ring and let $M$ be a $\Gamma$-graded right $R$-module. Then, there exists a gr-injective $\Gamma$-graded right $R$-module $E$ such that $M\subgr E$.
\end{teo}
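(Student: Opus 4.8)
The plan is to carry over the classical construction of an injective embedding, following \cite[Chapter~5]{Goodearl}, using the ground ring $\mathcal{Z}$ and the functor $\HOM_{\mathcal{Z}}(R_{\mathcal{Z}},-)$ introduced above. The two substantive inputs---that divisible graded $\mathcal{Z}$-modules are exactly the gr-injective ones, and that $\HOM_{\mathcal{Z}}(R_{\mathcal{Z}},X_{\mathcal{Z}})$ is gr-injective whenever $X$ is divisible---have already been established in \Cref{prop: divisible <--> gr-injective} and \Cref{lem: gr-injective from divisible}, so the proof reduces to assembling them together with one explicit embedding.

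First, regard $M$ as a $\Gamma$-graded right $\mathcal{Z}$-module via the action described before \Cref{exs: divisible}. By \Cref{prop: divisible <--> gr-injective}(2), there exist a divisible $\Gamma$-graded right $\mathcal{Z}$-module $X$ and an injective gr-homomorphism of $\mathcal{Z}$-modules $\iota\colon M\hookrightarrow X$. Put $H:=\HOM_{\mathcal{Z}}(R_{\mathcal{Z}},X_{\mathcal{Z}})$, which carries the $\Gamma$-graded right $R$-module structure given by $(ga)(b)=g(ab)$ and is gr-injective by \Cref{lem: gr-injective from divisible}.

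Now define $\Phi\colon M\to H$ by $\Phi(m)(r):=\iota(mr)$ for all $m\in M$ and $r\in R$. One checks routinely that $\Phi(m)$ is a homomorphism of right $\mathcal{Z}$-modules $R\to X$ and that it is homogeneous: if $m\in M_\gamma$, then $\Phi(m)(R_\delta)=\iota(mR_\delta)\subseteq\iota(M_{\gamma\delta})\subseteq X_{\gamma\delta}$, so $\Phi(m)\in H_\gamma$; the general case follows by additivity, whence $\Phi$ lands in $H$ and is a gr-homomorphism. It is moreover a homomorphism of right $R$-modules, since for $a\in R$ one has $\Phi(ma)(b)=\iota(m(ab))=\Phi(m)(ab)=(\Phi(m)a)(b)$ for all $b\in R$. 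Finally, $\Phi$ is injective: if $\Phi(m)=0$, then $\iota(mr)=0$, hence $mr=0$, for every $r\in R$; writing $m=\sum_{\sigma}m_\sigma$ with $m_\sigma\in M_\sigma$ and using that $M$ is unital, $m_\sigma=m_\sigma 1_{d(\sigma)}=0$ for each $\sigma$, so $m=0$.

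Thus $\Phi$ is an injective gr-homomorphism and $M\isogr\Phi(M)\subgr H$. To obtain a gr-injective module literally containing $M$, one replaces $H$ by the $\Gamma$-graded right $R$-module $E$ obtained from $H$ by substituting the graded submodule $\Phi(M)$ with $M$ and transporting the grading and the $R$-action along $\Phi$; then $M\subgr E$ and $E\isogr H$ is gr-injective. The only points requiring care are the verification that $\Phi$ respects the right $R$-module structures---a direct computation with the structure $(ga)(b)=g(ab)$ on $H$---and the appeal to unitality of $M$ for injectivity; no genuine obstacle remains, since the difficulty has already been absorbed into the two preceding results.
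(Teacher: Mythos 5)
Your proof is correct and follows essentially the same route as the paper's: view $M$ as a graded $\mathcal{Z}$-module, embed it in a divisible (hence gr-injective) $\mathcal{Z}$-module $X$ using \Cref{prop: divisible <--> gr-injective}, and then embed $M$ in the gr-injective $R$-module $\HOM_\mathcal{Z}(R_\mathcal{Z},X_\mathcal{Z})$ via \Cref{lem: gr-injective from divisible}. The only cosmetic difference is that you write out the embedding $\Phi(m)(r)=\iota(mr)$ explicitly and verify its properties by hand, whereas the paper expresses the same map as the composition of the canonical graded embeddings $M\cong_{gr}\HOM_R(R,M)\leq_{gr}\HOM_\mathcal{Z}(R,M)\leq_{gr}\HOM_\mathcal{Z}(R_\mathcal{Z},X_\mathcal{Z})$.
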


\begin{proof}
    Viewed as a $\Gamma$-graded right $\mathcal{Z}$-module, $M$ is a graded submodule of a gr-divisible right $\mathcal{Z}$-module $X$ by \Cref{prop: divisible <--> gr-injective}(2). By \Cref{lem: gr-injective from divisible}, $E=\HOM_\mathcal{Z}(R_\mathcal{Z},X_\mathcal{Z})$ is a gr-injective right $R$-module. The result now follows from the graded embeddings of right $R$-modules \[M\cong_{gr} \HOM_R(R,M)\leq_{gr} \HOM_\mathcal{Z}(R,M)\leq_{gr} \HOM_\mathcal{Z}(R_\mathcal{Z},X_\mathcal{Z})=E.\qedhere \]
\end{proof}

We are ready to prove the following graded version of the Bass-Papp Theorem.

\begin{teo}
\label{teo: Bass-Papp}
    A $\Gamma$-graded ring $R$ is right $\Gamma_0$-noetherian if and only if every direct sum of gr-injective right $R$-modules is also gr-injective.
\end{teo}

\begin{proof}
    Suppose that $R$ is a right $\Gamma_0$-noetherian ring. 
    Let $\{E_\lambda:\lambda\in\Lambda\}$ be a family of gr-injective right $R$-modules and set $E:=\bigoplus_{\lambda\in\Lambda}E_\lambda$. 
    We will show that $E$ is gr-injective by proving that \Cref{teo: baer}(3) holds. 
    Let $\gamma\in\Gamma$, $U$ be a graded right ideal of $R$ and $h\in\HOM_R(U,E)_\gamma$. 
    By \Cref{prop: gr-noeth <-> todo submod eh fg}(1), $U(d(\gamma))$ is finitely generated. 
    Therefore, there exists a finite subset $\Lambda'\subseteq\Lambda$ such that $h(U)=h(U(d(\gamma)))\subseteq\bigoplus_{\lambda\in\Lambda'}E_\lambda$. 
    Since $\bigoplus_{\lambda\in\Lambda'}E_\lambda$ is gr-injective by \Cref{prop: prod de gr-inj eh gr-inj}, it follows from \Cref{teo: baer} that $h$ extends to a $h'\in\HOM_R(R,E)_\gamma$.

    Conversely, suppose that every direct sum of gr-injective right $R$-modules is also gr-injective.
    Fix $e\in\Gamma_0$ and let $U_1\subseteq U_2\subseteq U_3\subseteq\cdots$ be an ascending chain of graded submodules of $R(e)$. 
    For each $n\geq 1$, let $E_n$ be a gr-injective $\Gamma$-graded right $R$-module such that $\frac{R(e)}{U_n}\subgr E_n$. Such $E_n$ exists by \Cref{teo: todo modulo eh subgr de um gr-inj}.
    Set $U:=\bigcup_{n=1}^\infty U_n\subgr R(e)$ and $E:=\bigoplus_{n=1}^\infty E_n$.
    We define $h\in\Homgr(U,E)$ by $h(u)=(u+U_n)_{n\geq1}$ for each $u\in U$.
    Since $E$ is gr-injective, there exists $h'\in\Homgr(R(e),E)$ extending $h$. 
    Choose $m\in\mathbb{N}$ such that the $m$-th coordinate of $h'(1_e)$ is zero. 
    Then
    \[u\in U\implies u+U_m=h(u)_m=h'(u)_m=h'(1_e)_mu=0\implies u\in U_m.\]
    Thus, $U=U_m$, proving that $R(e)$ is gr-noetherian.
\end{proof}

%%%%%%%%%%%%%%%%%%%%%%%%%%%%%%%%%%%%%%%%%%%%%%%%%%%%%%%%%%%%%%%%%%%%%%%%%%%%%%%%%%%%%%%%%%%%%%%%%%%%%%%%%%%%%%%%%%%%%%%%%%%%%%%%%%%%%%%%%%%%%%%%%%%%%%%%%%%%%%%%%%%%%%%%%%%%%%%%%%%%%%%%%%%%%%%%%%%%%%%%%%%%%%%%%%%%%%%%%%%%%%%%%%%%%%%%%%%%%%%%%%%%%%%%%%%%%%%%%%%%%%%%%%%%%%%%%%%%%%%%%%%%%%%%%%%%%%%%%%%%%%%%%%%%%%%%%%%%%%%%%%%%%%%%%%%%%%%%%%%%%%%%%%%%%%%%%%%%%%%%%%%%%%%%%%%%%%%%%%%%%%%%%%%%%%%%%%%%%%%%%%%%%%%%%%%%%%%%%%%%%%%%%%%%%%%%%%%%%%%%%%%%%%%%%%%%%%%%%%%%%%%%%%%%%%%%%%%%%%%%%%%%%%%%%%%%%%%%%%%%%%%%%%%%%%%%%%%%%%%%%%%%%%%%%%%%%%%%%%%%%%%%%%%%%%%%%%%%%%%%%%%%%%%%%%%%%%%%%%%%%%%%%%%%%%%%%%%%%%%%%%%%%%%%%%%%%%%%%%%%%%%%%%%%%%%%%%%%%%%%%%%%%%%%%%%%%%%%%%%%%%%%%%%%%%%

%%%%%%%%%%%%%%%%%%%%%%%%%%%%%%%%%%%%%%%%%%%%%%%%%%%%%%%%%%%%%%%%%%%%%%%%%%%%%%%%%%%%%%%%%%%%%%%%%%%%%%%%%%%%%%%%%%%%%%%%%%%%%%%%%%%%%%%%%%%%%%%%%%%%%%%%%%%%%%%%%%%%%%%%%%%%%%%%%%%%%%%%%%%%%%%%%%%%%%%%%%%%%%%%%%%%%%%%%%%%%%%%%%%%%%%%%%%%%%%%%%%%%%%%%%%%%%%%%%%%%%%%%%%%%%%%%%%%%%%%%%%%%%%%%%%%%%%%%%%%%%%%%%%%%%%%%%%%%%%%%%%%%%%%%%%%%%%%%%%%%%%%%%%%%%%%%%%%%%%%%%%%%%%%%%%%%%%%%%%%%%%%%%%%%%%%%%%%%%%%%%%%%%%%%%%%%%%%%%%%%%%%%%%%%%%%%%%%%%%%%%%%%%%%%%%%%%%%%%%%%%%%%%%%%%%%%%%%%%%%%%%%%%%%%%%%%%%%%%%%%%%%%%%%%%%%%%%%%%%%%%%%%%%%%%%%%%%%%%%%%%%%%%%%%%%%%%%%%%%%%%%%%%%%%%%%%%%%%%%%%%%%%%%%%%%%%%%%%%%%%%%%%%%%%%%%%%%%%%%%%%%%%%%%%%%%%%%%%%%%%%%%%%%%%%%%%%%%%%%%%%%%%%%%%%%

\section{Graded Jacobson radical and gr-socle of graded modules}
\label{sec: gr-Jacobson gr-socle}

In this section, we present a general analysis of the graded Jacobson radical and of the gr-socle
of groupoid graded modules, which satisfy certain dual properties.  Our exposition is divided into two parts. In the first one, we present  basic properties of these concepts. In the second part, we characterize the graded Jacobson radical in terms of gr-superfluous submodules and the gr-socle in terms of the gr-essential submodules and show some consequences of that.

%%%%%%%%%%%%%%%%%%%%%%%%%%%%%%%%%%%%%%%%%%%%%%%%%%%%%%%%%%%%%%%%%%%%%%%%%%%%%%%%%%%%%%%%%%%%%%%%%%%%%%%%%%%%%%%%%%%%%%%%%%%%%%%%%%%%%%%%%%%%%%%%%%%%%%%%%%%%%%%%%%%%%%%%%%%%%%%%%%%%%%%%%%%%%%%%%%%%%%%%%%%%%%%%%%%%%%%%%%%%%%%%%%%%%%%%%%%%%%%%%%%%%%%%%%%%%%%%%%%%%%%%%%%%%%%%%%%%%%%%%%%%%%%%%%%%%%%%%%%%%%%%%%%%%%%%%%%%%%%%%%%%%%%%%%%%%%%%%%%%%%%%%%%%%%%%%%%%%%%%%%%%%%%%%%%%%%%%%%%%%%%%%%%%%%%%%%%%%%%%%%%%%%%%%%%%%%%%%%%%%%%%%%%%%%%%%%%%%%%%%%%%%%%%%%%%%%%%%%%%%%%%%%%%%%%%%%%%%%%%%%%%%%%%%%%%%%%%%%%%%%%%%%%%%%%%%%%%%%%%%%%%%%%%%%%%%%%%%%%%%%%%%%%%%%%%%%%%%%%%%%%%%%%%%%%%%%%%%%%%%%%%%%%%%%%%%%

\subsection{General properties}

Our exposition of the results about $\radgr(M)$ and $\socgr(M)$ is strongly inspired by \cite[\S8--\S10]{AndersonFuller}.

We begin by defining the main objects of the section.

\begin{defi}\label{def:Jacobsonradicalwith1}
	Let $R$ be a $\Gamma$-graded ring and $M$ a $\Gamma$-graded right  $R$-module. 
    \begin{enumerate}[\rm(1)]
        \item We say that a graded submodule $N$ of $M$ is \emph{gr-maximal} if $N$ is maximal in the set $\{X\subgr M\colon X\neq M\}$. 

\item Recall that a graded submodule $N$ of $M$ is said to be \emph{gr-simple} if $N$ is minimal in the set $\{X\subgr M\colon X\neq 0\}$.
        
        \item If $M$ has a gr-maximal graded submodule, we define the \emph{graded Jacobson radical of $M$}, denoted by $\radgr(M)$, as the intersection of all gr-maximal graded submodules of $M$. If $M$ has no gr-maximal graded submodules, we set $\radgr(M) := M$.
        
        \item Dually, we define the \emph{gr-socle}  of $M$, denoted by $\soc_{\rm gr}(M)$, as the sum of all gr-simple graded submodules of $M$. If $M$ has no gr-simple graded submodules, we set $\soc_{\rm gr}(M):=0$.
    \end{enumerate}
\end{defi}

Easy but important observations about gr-maximal graded submodules and the gr-socle are the following.

\begin{obs}\label{obs:maximal and soc}
Let $R$ be a $\Gamma$-graded ring and $M$ a $\Gamma$-graded right $R$-module.
\begin{enumerate}[(1)]
    \item Suppose that $N\subgr M$. Then $N$ is gr-maximal in $M$ if and only if there exists $e \in \Gamma_0$ such that $N(e)$ is gr-maximal in $M(e)$ and $N(f) = M(f)$ for all $f \in \Gamma_0 \setminus \{e\}$. 
    \item Dually, $N$ is gr-simple if and only if there exists $e \in \Gamma_0$ such that $N(e)$ is gr-simple and $N(f) = 0$ for all $f \in \Gamma_0 \setminus \{e\}$.
    \item By definition, $\soc_{\rm gr}(M)$ is the largest gr-semisimple submodule of $M$. 
    \item  $\socgr(M)$ is a direct sum of gr-simple submodules of $M$ by \cite[Proposition 56]{CLP}.
\end{enumerate}
\end{obs}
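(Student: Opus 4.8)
The plan is to reduce both (1) and (2) to the decomposition $M=\bigoplus_{e\in\Gamma_0}M(e)$ together with a purely lattice-theoretic observation, and then to dispatch (3) and (4) directly.

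First I would record the structural fact underlying everything. Each $M(e)$ is a graded submodule of $M$ (indeed a graded direct summand), it satisfies $M(e)R_\tau\subseteq M(e)$ for all $\tau$, and every nonzero homogeneous element $x\in M_\gamma$ lies in the single summand $M(r(\gamma))$, since $M(r(\gamma))_\gamma=M_{r(\gamma)\gamma}=M_\gamma$. Consequently, for a graded submodule $N$ of $M$ one has $N(e)=N\cap M(e)\subgr M(e)$ and $N=\bigoplus_{e\in\Gamma_0}N(e)$; conversely, for any choice of graded submodules $N_e\subgr M(e)$ the direct sum $\bigoplus_{e\in\Gamma_0}N_e$ is a graded submodule of $M$ (it is a graded subgroup, and it is $R$-stable because $N_eR\subseteq N_e\subseteq M(e)$ for each $e$). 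Hence $N\mapsto(N(e))_{e\in\Gamma_0}$ is an order isomorphism from the lattice of graded submodules of $M$ onto the product $\prod_{e\in\Gamma_0}\mathcal{L}(M(e))$ of the lattices of graded submodules of the $M(e)$, with inverse $(N_e)_e\mapsto\bigoplus_e N_e$. In particular $N=M$ iff $N(e)=M(e)$ for every $e$, and $N=0$ iff $N(e)=0$ for every $e$.

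Then (1) would follow by translating gr-maximality through this isomorphism: $N$ is gr-maximal in $M$ precisely when the tuple $(N(e))_e$ is maximal among tuples different from $(M(e))_e$. If two distinct idempotents $e,f$ satisfied $N(e)\subsetneq M(e)$ and $N(f)\subsetneq M(f)$, then replacing the $f$-coordinate by $M(f)$ would give a strictly larger tuple still different from $(M(e))_e$, a contradiction; so there is a unique $e$ with $N(e)\ne M(e)$, and $N(f)=M(f)$ for all $f\ne e$. For that $e$, any $P$ with $N(e)\subsetneq P\subsetneq M(e)$ would, inserted in the $e$-coordinate, again produce a strictly larger proper tuple, so $N(e)$ is gr-maximal in $M(e)$; and the converse is clear from the same description. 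Statement (2) is the formal dual: $N$ is gr-simple (minimal among nonzero graded submodules) iff $(N(e))_e$ is minimal among nonzero tuples, which by the identical two-line argument forces all coordinates but one to vanish and the surviving coordinate $N(e)$ to be a minimal nonzero graded submodule of $M(e)$, i.e.\ a gr-simple module contained in $M(e)$.

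Finally, (3) is immediate from \Cref{def:Jacobsonradicalwith1}: a sum of gr-simple graded submodules is gr-semisimple, so $\socgr(M)$ is gr-semisimple, while any gr-semisimple graded submodule of $M$ is by definition a sum of gr-simple submodules and hence is contained in $\socgr(M)$; thus $\socgr(M)$ is the largest gr-semisimple submodule of $M$. Statement (4) is then \cite[Proposition 56]{CLP} — every gr-semisimple graded module is a direct sum of gr-simple submodules — applied to $\socgr(M)$. I do not expect a genuine obstacle here; the only steps needing a little care are the bookkeeping check that $\bigoplus_e N_e$ is indeed a graded submodule and the short combinatorial argument isolating the unique non-full (resp.\ nonzero) coordinate in (1) and (2).
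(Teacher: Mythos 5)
Your argument is correct and is exactly the reasoning the paper leaves implicit behind this remark (the paper offers no proof, treating all four points as definitional observations or direct citations). The key step — that $N\mapsto\bigl(N(e)\bigr)_{e\in\Gamma_0}$ is a lattice isomorphism from graded submodules of $M$ onto $\prod_{e\in\Gamma_0}\mathcal{L}(M(e))$, which then makes (1) and (2) a one-line coordinate argument about maximal and minimal tuples — is precisely what makes the statement an ``obs'' rather than a proposition, and your handling of (3) and (4) matches the paper's intent.
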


For a gr-semisimple module, it is easy to compute the graded Jacobson radical and the gr-socle.

\begin{exems}\label{exms: gr-soc and gr-rad of gr-semisimple}
Let $R$ be a $\Gamma$-graded ring and $M$ a $\Gamma$-graded  right $R$-module.  
\begin{enumerate}[\rm (1)]
    \item $M$ is gr-semisimple if and only if $\socgr(M)=M$. Indeed, by definition, $M$ is gr-semisimple if and only if $M$ is a sum of gr-simple submodules. 
    \item If $M$ is gr-semisimple, then $\radgr(M)=0$. Indeed, suppose that $M=\bigoplus_{i\in I}S_i$, where each $S_i$, $i\in I$, is a gr-simple graded submodule of $M$. For each $i_0\in I$, $N_{i_0}:=\bigoplus_{i\in I\setminus\{i_0\}}S_i$ is a gr-maximal graded submodule of $M$ and $\bigcap_{i\in I}N_i=\{0\}$. 
\end{enumerate}
\end{exems}

Now we present a result that will be used throughout this section.

\begin{lema}
\label{lem: rad e soc via ker e im}
    Let $R$ be a $\Gamma$-graded ring and $M$ a $\Gamma$-graded right $R$-module. The following assertions hold:
    \begin{enumerate}[\rm (1)]
        \item $\radgr(M)=\bigcap\{\ker g: g\in\Homgr(M,S) \text{ and $S$ is a gr-simple $R$-module}\}$;
        \item $\socgr(M)=\sum\{\ima g: g\in\Homgr(S,M) \text{ and $S$ is a gr-simple $R$-module}\}$.
    \end{enumerate}
\end{lema}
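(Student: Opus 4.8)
The plan is to prove both equalities by double inclusion, using two standard facts recorded in the preliminaries: the order‑preserving bijection between graded submodules of $M/N$ and graded submodules of $M$ containing $N$, and the fact that the kernel and the image of a gr‑homomorphism are graded submodules, together with the induced gr‑isomorphism $M/\ker g\isogr\ima g$.

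First I would record the two elementary observations that drive everything. On the one hand, if $N\subgr M$ is gr‑maximal then $M/N$ is gr‑simple, and the canonical projection $M\to M/N$ is a gr‑homomorphism onto a gr‑simple module with kernel $N$; conversely, if $g\in\Homgr(M,S)$ with $S$ gr‑simple and $g\neq 0$, then $\ima g$ is a nonzero graded submodule of $S$, hence $\ima g=S$, so $M/\ker g\isogr S$ is gr‑simple and $\ker g$ is gr‑maximal in $M$. Symmetrically, if $g\in\Homgr(S,M)$ with $S$ gr‑simple and $g\neq 0$, then $\ker g$ is a proper graded submodule of $S$, hence $\ker g=0$, so $g$ is injective and $\ima g\isogr S$ is a gr‑simple graded submodule of $M$.

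For (1): one inclusion follows because every gr‑maximal $N\subgr M$ is the kernel of its canonical projection, so the intersection of all such kernels is contained in every gr‑maximal $N$, hence in $\radgr(M)$. For the reverse inclusion, given $g\in\Homgr(M,S)$ with $S$ gr‑simple, either $g=0$, in which case $\ker g=M\supseteq\radgr(M)$, or $g\neq 0$, in which case $\ker g$ is gr‑maximal by the observation above and hence contains $\radgr(M)$; intersecting over all such $g$ gives $\radgr(M)\subseteq\bigcap\ker g$. The degenerate case in which $M$ has no gr‑maximal graded submodule is handled by noting that then $\Homgr(M,S)=\{0\}$ for every gr‑simple $S$, so both sides equal $M$, matching the convention $\radgr(M):=M$. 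For (2): one inclusion is immediate since each gr‑simple graded submodule $N$ is the image of the inclusion $N\hookrightarrow M$, so $\socgr(M)$ is contained in the sum on the right; the reverse inclusion follows since for each $g\in\Homgr(S,M)$ with $S$ gr‑simple, $\ima g$ is either $0$ or a gr‑simple graded submodule of $M$, in both cases contained in $\socgr(M)$.

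I do not expect a genuine obstacle here: the argument is the graded transcription of the classical one. The only points requiring a little care are purely bookkeeping — checking that the conventions ($\radgr(M)=M$ when there are no gr‑maximal submodules, $\socgr(M)=0$ when there are no gr‑simple submodules, and the empty‑intersection and empty‑sum conventions on the right‑hand sides) make the two sides agree in the trivial cases — so I would dispatch these explicitly but briefly.
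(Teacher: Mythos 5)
Your proof is correct and follows essentially the same route as the paper's: both hinge on observing that kernels of nonzero gr-homomorphisms $M\to S$ with $S$ gr-simple are precisely the gr-maximal graded submodules, and that images of nonzero gr-homomorphisms $S\to M$ are precisely the gr-simple graded submodules. The only cosmetic difference is that the paper phrases part (1) via membership of homogeneous elements while you argue by double inclusion of submodules, and you make the degenerate conventions explicit, but the underlying argument is the same.
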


\begin{proof}
    (1) Let $m\in\h(M)$.
    If $m\notin\radgr(M)$, then there exists a gr-maximal graded submodule $N$ of $M$ such that $m\notin N$. 
    In this case, $M/N$ is gr-simple and $m\notin\ker g$, where $g:M\to M/N$ is the canonical projection.
    Conversely, if there exists a gr-simple $\Gamma$-graded right $R$-module $S$ and $g\in\Homgr(M,S)$ such that $m\notin\ker g$, then $g$ induces a gr-isomorphism $M/\ker g\to S$, and hence $\ker g$ is a gr-maximal graded submodule of $M$ that does not contain $m$.

    (2) $(\subseteq)$: This follows since $\socgr(M)$ is a sum of gr-simple $R$-modules.
    $(\supseteq)$: If $S$ is a gr-simple $\Gamma$-graded right $R$-module and $0\neq g\in\Homgr(S,M)$, then, by gr-simplicity, $\ima g\isogr S$ is gr-simple, and it follows that $\ima g\subseteq\socgr(M)$.
\end{proof}

By \Cref{obs:maximal and soc}(4), $\socgr(M)$ is a direct sum of gr-simple $R$-modules. 
For $\radgr(M)$, we have the following result.

\begin{coro}
\label{coro: M/rad contido num prod de gr-simp}
    Let $R$ be a $\Gamma$-graded ring and $M$ a $\Gamma$-graded right $R$-module. 
    Then there exist a family $\{S_i:i\in I\}$ of gr-simple $\Gamma$-graded right $R$-modules and an injective gr-homomorphism $M/\radgr(M)\to\prod^{\rm gr}_{i\in I}S_i$. 
\end{coro}

\begin{proof}
    Let $\mathcal{S}$ be a complete set of gr-simple $\Gamma$-graded right $R$-modules up to gr-isomorphism; that is, each gr-simple $\Gamma$-graded right $R$-module is gr-isomorphic to one and only one element of $\mathcal{S}$.
    For each $S\in\mathcal{S}$, we have a gr-homomorphism from $M$ to the graded direct product $S^{\Homgr(M,S)}$ that sends $m$ to the tuple $(g(m))_g$.
    Hence, we obtain a gr-homomorphism $M\to \prod_{S\in\mathcal{S}}^{\rm gr}S^{\Homgr(M,S)}$, whose kernel is $\radgr(M)$ by \Cref{lem: rad e soc via ker e im}(1).
\end{proof}

\begin{coro}
\label{coro: rad e soc sao prerad}
    Let $R$ be a $\Gamma$-graded ring and $M, N$ be $\Gamma$-graded right $R$-modules. The following assertions hold:
    \begin{enumerate}[\rm (1)]
        \item If either $g\in\Homgr(M,N)$ or $g\in\HOM(M,N)_\gamma$ for some $\gamma\in\Gamma$, then 
        \[g(\radgr(M))\subseteq\radgr(N)\quad\text{ and }\quad g(\socgr(M))\subseteq\socgr(N).\] 
        \item $\radgr(M)$ and $\socgr(M)$ are $\Gamma$-graded $(\END_R(M),R)$-bimodules.
        \item $M\cdot\radgr(R_R)\subseteq\radgr(M)$ and $M\cdot\socgr(R_R)\subseteq\socgr(M)$.
        \item $\radgr(R_R)$ and $\socgr(R_R)$ are graded ideals of $R$.
    \end{enumerate}
\end{coro}

\begin{proof}
    (1) Suppose that $g\in\HOM(M,N)_\gamma$ for some $\gamma\in\Gamma$.
    
    Let $S$ be a gr-simple $\Gamma$-graded right $R$-module and $g'\in\Homgr(N,S)$. 
    Then $g'g\in\Homgr(M,S(\gamma))$.
    Since $S$ is gr-simple, then $S(\gamma)$ is either zero or gr-simple \cite[Lemma 5.1]{Artigoarxiv}. It follows from \Cref{lem: rad e soc via ker e im}(1) that $\radgr(M)\subseteq \ker (g'g)$, and thus $g(\radgr(M))\subseteq \ker g'$.
    Again by \Cref{lem: rad e soc via ker e im}(1), we obtain $g(\radgr(M))\subseteq\radgr(N)$.

    Now, let $g'\in\Homgr(S,M)$, where $S$ is a gr-simple $\Gamma$-graded right $R$-module, so that $gg'|_{S(d(\gamma))}\in\Homgr(S(\gamma^{-1}),N)$.
    Since $S(\gamma^{-1})$ is either zero or gr-simple by \cite[Lemma 5.1]{Artigoarxiv}, we have from \Cref{lem: rad e soc via ker e im}(2) that $g(\ima g')= \ima (gg')\subseteq\socgr(N)$.
    Hence, by \Cref{lem: rad e soc via ker e im}(2) again, we conclude that $g(\socgr(M))\subseteq\socgr(N)$.

    The case when $g\in\Homgr(M,N)$ is similar, with the difference that we do not need to work with shifts of $S$. 

    (2) Follows from (1).

    (3) Apply (1) to each $m \in M$ and $g \in \HOM_R(R, M)$ given by $g(a) = ma$ for all $a \in R$.

    (4) Follows from (3).
\end{proof}

\begin{coro}
\label{coro: rad da soma direta}
    Let $R$ be a $\Gamma$-graded ring and $\{M_i:i\in I\}$ be a family of $\Gamma$-graded right $R$-modules. Then
    \[\radgr\left(\bigoplus_{i\in I}M_i\right)=\bigoplus_{i\in I}\radgr(M_i)\quad\text{ and }\quad \socgr\left(\bigoplus_{i\in I}M_i\right)=\bigoplus_{i\in I}\socgr(M_i).\]
\end{coro}

\begin{proof}
    Set $M := \bigoplus_{i \in I} M_i$, and let $\iota_i : M_i \to M$ and $p_i : M \to M_i$ be the canonical injection and projection maps, respectively, for each $i \in I$. 
    Since each $\iota_i$ and $p_i$ is a gr-homomorphism, it follows from \Cref{coro: rad e soc sao prerad}(1) that
    \[\radgr(M)=\sum_{i\in I}\iota_i\pi_i(\radgr(M))\subseteq \sum_{i\in I}\iota_i(\radgr(M_i))\subseteq\radgr(M),\]
    and thus $\radgr(M)=\sum_{i\in I}\iota_i(\radgr(M_i))=\bigoplus_{i\in I}\radgr(M_i)$.

    The same argument applies to the gr-socle.
\end{proof}

The next result is specific of the groupoid graded situation and it helps to simplify the proof of other results.

\begin{prop}
\label{prop: rad M(e)}
    Let $R$ be a $\Gamma$-graded ring, $M$ a $\Gamma$-graded right $R$-module, and $\sigma\in\Gamma$. The following assertions hold:
    \begin{enumerate}[\rm (1)]
        \item $\radgr(M)=\bigoplus_{e\in\Gamma_0}\radgr(M(e))$ and $(\radgr(M))(\sigma)=\radgr(M(\sigma))$.
        \item $\socgr(M)=\bigoplus_{e\in\Gamma_0}\socgr(M(e))$ and $(\socgr(M))(\sigma)=\socgr(M(\sigma))$.
    \end{enumerate}
\end{prop}

\begin{proof}
    (1) By \Cref{coro: rad da soma direta}, we have $\radgr(M)=\bigoplus_{e\in\Gamma_0}\radgr(M(e))$. 
    Thus, $(\radgr(M))(e)=\radgr(M(e))$ for each $e\in\Gamma_0$.
    By \cite[Lemma 2.6]{Artigoarxiv}, $M(\sigma)$ and $M(r(\sigma))$ are equal as $R$-modules and have the same homogeneous components, although labeled with different degrees. Hence, $\radgr(M(\sigma))=\radgr(M(r(\sigma)))$ as sets.
    By what we have just proved, $\radgr(M(r(\sigma)))=\radgr(M)(r(\sigma))$. Again, it follows  from \cite[Lemma 2.6]{Artigoarxiv} that $\radgr(M)(r(\sigma))=(\radgr(M))(\sigma)$ as sets. Therefore, 
$(\radgr(M))(\sigma)=\radgr(M(\sigma))$.

    (2) It is analogous to (1).
\end{proof}

\begin{lema}
\label{lem: rad(M/rad)=0}
    Let $R$ be a $\Gamma$-graded ring, $M$ a $\Gamma$-graded right  $R$-module and $N\subgr M$. The following assertions hold:
    \begin{enumerate}[\rm (1)]
        \item $\radgr(M)$ is the smallest graded submodule $M'$ of $M$ such that \mbox{$\radgr(M/M')=0$}.
        \item $\socgr(M)$ is the greatest graded submodule $M'$ of $M$ such that $\socgr(M')=M'$.
    \end{enumerate}
\end{lema}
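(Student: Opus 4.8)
The plan is to prove both statements by reducing to the classical/ungraded situation via \Cref{prop: rad M(e)}, but it is cleaner to argue directly with gr-maximal (resp. gr-simple) submodules, mimicking \cite[Proposition 9.14]{AndersonFuller}. For (1), I would first show $\radgr(M/\radgr(M)) = 0$. Let $\pi\colon M\to M/\radgr(M)$ be the canonical projection. By the bijective correspondence between graded submodules of $M/\radgr(M)$ and graded submodules of $M$ containing $\radgr(M)$, the gr-maximal graded submodules of $M/\radgr(M)$ are exactly the sets $N/\radgr(M)$ with $N$ gr-maximal in $M$; since $\bigcap\{N : N \text{ gr-maximal in } M\} = \radgr(M)$, their intersection in $M/\radgr(M)$ is $0$. (If $M/\radgr(M)$ has no gr-maximal graded submodule, then $M$ has none either, so $\radgr(M) = M$ and the quotient is $0$, which still has gr-radical $0$.) Then I would show minimality: if $M'\subgr M$ satisfies $\radgr(M/M') = 0$, I must show $\radgr(M)\subseteq M'$. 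Apply \Cref{coro: rad e soc sao prerad}(1) to the canonical projection $p\colon M\to M/M'$: $p(\radgr(M))\subseteq\radgr(M/M') = 0$, hence $\radgr(M)\subseteq\ker p = M'$.

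For (2), dually, I would first show $\socgr(\socgr(M)) = \socgr(M)$: by \Cref{obs:maximal and soc}(3), $\socgr(M)$ is the largest gr-semisimple submodule of $M$, so it is gr-semisimple, and by \Cref{exms: gr-soc and gr-rad of gr-semisimple}(1), $\socgr(\socgr(M)) = \socgr(M)$. Then for maximality, suppose $M'\subgr M$ satisfies $\socgr(M') = M'$. Then $M'$ is gr-semisimple by \Cref{exms: gr-soc and gr-rad of gr-semisimple}(1), so it is a sum of gr-simple graded submodules of $M'$; each such submodule is also a gr-simple graded submodule of $M$, hence contained in $\socgr(M)$ by definition. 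Therefore $M' = \socgr(M')\subseteq\socgr(M)$.

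I do not expect a serious obstacle here, since this is a direct dualization of the classical argument and all the needed machinery (the submodule correspondence, \Cref{coro: rad e soc sao prerad}(1), and the characterization of gr-semisimple modules via the gr-socle) is already available in the excerpt. The only point requiring a little care is the edge case where $M$ (or a relevant quotient) has no gr-maximal graded submodule, so that $\radgr$ equals the whole module by convention; I would handle this explicitly in the proof of (1) to make sure the statement "$\radgr(M)$ is the smallest graded submodule $M'$ with $\radgr(M/M') = 0$" still reads correctly (in that case $M' = M$ is forced and is indeed the smallest such). A symmetric remark applies to (2) when $M$ has no gr-simple graded submodule, where $\socgr(M) = 0$ and $M' = 0$ is the only candidate.
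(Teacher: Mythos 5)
Your proof is correct. For the minimality part of (1) and all of (2) you argue exactly as the paper does. The only genuine divergence is the step $\radgr(M/\radgr(M))=0$ in (1): you derive it directly from the bijective correspondence between graded submodules of $M/\radgr(M)$ and graded submodules of $M$ containing $\radgr(M)$, observing that the gr-maximal graded submodules of $M/\radgr(M)$ are precisely the $N/\radgr(M)$ for $N$ gr-maximal in $M$, so their intersection is zero. The paper instead invokes \Cref{coro: M/rad contido num prod de gr-simp} to embed $M/\radgr(M)$ into a graded product of gr-simple modules and then uses \Cref{coro: rad e soc sao prerad}(1) componentwise. Both routes are standard; yours is slightly more elementary and self-contained (it uses only the definition and the correspondence of submodules under quotients, which the paper sets up in the preliminaries), while the paper's is a one-line application of an already-proved corollary and echoes the dual flavor of \Cref{lem: rad e soc via ker e im}. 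Your explicit treatment of the edge case $\radgr(M)=M$ (no gr-maximal submodule) is a welcome extra precision, and your observation that a gr-simple submodule of $M'$ is gr-simple in $M$ is exactly what the paper's terse appeal to \Cref{obs:maximal and soc}(3) relies on.
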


\begin{proof}
    (1) Let $M'\subgr M$ be such that $\radgr(M/M')=0$. 
    If $\pi:M\to M/M'$ is the canonical projection, then by \Cref{coro: rad e soc sao prerad}(1), $\pi(\radgr(M))\subseteq\radgr(M/M')=0$, and it follows that $\radgr(M)\subseteq\ker\pi=M'$.
    To see that \mbox{$\radgr(M/\radgr(M))=0$}, let $\varphi:M/\radgr(M)\to\prod_{i\in I}^{\rm gr}S_i$ be the injective gr-homomorphism given by \Cref{coro: M/rad contido num prod de gr-simp}.
    If $p_j:\prod_{i\in I}^{\rm gr}S_i\to S_j$ is the canonical projection, then 
    $$p_j\varphi(\radgr(M/\radgr(M)))\subseteq \radgr(S_j)=0$$ 
    by \Cref{coro: rad e soc sao prerad}(1) and \Cref{exms: gr-soc and gr-rad of gr-semisimple}(2).
    Thus, $\radgr(M/\radgr(M))\subseteq\ker\varphi=0$.

    (2) Let $M'\subgr M$. Then $\socgr(M')=M'$ if and only if $M'$ is gr-semisimple. By \Cref{obs:maximal and soc}(3), $\socgr(M)$ is the greatest gr-semisimple graded submodule of $M$. 
\end{proof}

\begin{coro}
\label{coro: quando g(rad)=rad}
    Let $R$ be a $\Gamma$-graded ring and $M, N$ be $\Gamma$-graded right $R$-modules. 
    \begin{enumerate}[\rm (1)]
        \item Suppose that $g\in\Homgr(M,N)$ is surjective and $\ker g\subseteq\radgr(M)$. 
        Then $g(\radgr(M))=\radgr(N)$. 
        In particular, if $M'\subgr \radgr(M)$, then $$\radgr(M/M')=\radgr(M)/M'.$$
        \item Suppose that $g\in\Homgr(M,N)$ is injective and $\socgr(N)\subseteq\ima g$. 
        Then $g(\socgr(M))=\socgr(N)$.
        In particular, if $N'\subgr N$, then $$\socgr(N')=N'\cap\socgr(N).$$
    \end{enumerate}
\end{coro}

\begin{proof}
    (1) By \Cref{coro: rad e soc sao prerad}(1), we have $g(\radgr(M))\subseteq\radgr(N)$. 
    Now we proceed to show the reverse inclusion. Since $g$ is surjective and $\ker g\subseteq\radgr(M)$, it follows that $g$ induces a gr-isomorphism $M/\radgr(M)\to N/g(\radgr(M))$.
    Therefore, by \Cref{lem: rad(M/rad)=0}(1), we obtain that $\radgr(N/g(\radgr(M)))=0$ and $\radgr(N)\subseteq g(\radgr(M))$. 
    For the last part of the statement, it suffices to consider the canonical projection $g:M\to M/M'$.

    (2) By \Cref{coro: rad e soc sao prerad}(1), $g(\socgr(M))\subseteq \socgr(N)$. Now we proceed to show the reverse inclusion.
    Since $\socgr(N)\subseteq\ima g$ and $g$ is injective, it follows that $g$ induces a gr-isomorphism $g^{-1}(\socgr(N))\to \socgr(N)$.
    Then, by \Cref{lem: rad(M/rad)=0}(2), we obtain that $\socgr(g^{-1}(\socgr(N)))=g^{-1}(\socgr(N))$ and $g^{-1}(\socgr(N))\subseteq\socgr(M)$.
    Therefore, $\socgr(N)\subseteq g(\socgr(M))$. 
    To prove the last statement consider the inclusion $g:N'\cap\socgr(N)\to N'$. Observe that $\socgr(N')\subseteq \socgr(N)$ by definition. Hence $\socgr(N')\subseteq \ima g$.  Now $N'\cap\socgr(N)$ is gr-semisimple  because any graded submodule of a gr-semisimple module is gr-semisimple \cite[Corollary 54]{CLP}. Hence $N'\cap\socgr(N)=\socgr(N'\cap\socgr(N))=g(\socgr(N'\cap\socgr(N)))=\socgr(N')$.
\end{proof}

\subsection{Gr-superfluous and gr-essential submodules}

We begin by defining the dual concepts of gr-superfluous and gr-essential submodules.

\begin{defi}
    Let $R$ be a $\Gamma$-graded ring, $M$ a $\Gamma$-graded right  $R$-module, and $N\subgr M$. 
    \begin{enumerate}[\rm (1)]
       \item We say that $N$ is \emph{gr-superfluous} in $M$ if for every $X\subgr M$ 
       \[N+X=M\quad \textrm{implies}\quad X=M.\]
        \item We say that $N$ is \emph{gr-essential} in $M$ if for every $X\subgr M$ 
        \[N\cap X=0\quad \textrm{implies}\quad X=0.\]
    \end{enumerate} 
\end{defi}

Some elementary examples and properties of these submodules are as follows.

\begin{exems}\label{exems: gr-superfluous gr-essential}
Let $R$ be a $\Gamma$-graded ring and $M$ be a $\Gamma$-graded right  $R$-module.
\begin{enumerate}[\rm (1)]
    \item $\{0\}$ is gr-superfluous in $M$ and $M$ is gr-essential in $M$. 
    On the other hand, if $M\neq\{0\}$, then $\{0\}$ is not gr-essential in $M$ and $M$ is not gr-superfluous in $M$.
    \item If $N$ is gr-superfluous in $M$ and $L\subgr N$, then $L$ is gr-superfluous in $M$.
    \item If $N$ is is gr-essential in $M$ and $N\subseteq L\subgr M$, then $L$ is gr-essential in $M$.    
    \item Finite sums of gr-superfluous submodules of $M$ are gr-superfluous in $M$.
    \item Finite intersections of gr-essential submodules of $M$ are gr-essential in $M$.
\end{enumerate}    
\end{exems}

The next useful result shows that gr-essential submodules appear very often.

\begin{lema}\label{lem: direct summand of an essential module}
  Let $R$ be a $\Gamma$-graded ring and $M$ be a $\Gamma$-graded right  $R$-module. Any graded submodule of $M$ is a graded direct summand of a gr-essential submodule of $M$.  
\end{lema}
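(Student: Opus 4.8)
The statement is the graded analogue of the classical fact that every submodule of a module is a direct summand of an essential submodule, whose standard proof is a Zorn's lemma argument. So the plan is to adapt that argument, keeping careful track of gradedness and of the groupoid structure. Let $M$ be a $\Gamma$-graded right $R$-module and let $N \subgr M$. First I would consider the set
\[
\mathcal{F} := \{\, L \subgr M : N \cap L = 0 \,\},
\]
which is nonempty since $\{0\} \in \mathcal{F}$, and partially ordered by inclusion. Any chain in $\mathcal{F}$ has its union as an upper bound, and this union is again a graded submodule of $M$ (arbitrary sums of graded submodules are graded, as recalled in the preliminaries) and still meets $N$ trivially, because any homogeneous element in the intersection would already lie in one member of the chain. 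Hence by Zorn's lemma there is a graded submodule $X$ maximal in $\mathcal{F}$.

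\textbf{The main claim} is then that $N \oplus X$ is gr-essential in $M$. To see this, take any $Y \subgr M$ with $(N \oplus X) \cap Y = 0$. I want to conclude $Y = 0$. Consider the graded submodule $X + Y$. I claim $N \cap (X+Y) = 0$: if $n = x + y$ with $n \in N$, $x \in X$, $y \in Y$ (and one may take all terms homogeneous, since everything is graded), then $y = n - x \in (N \oplus X) \cap Y = 0$, so $y = 0$ and $n = x \in N \cap X = 0$. Therefore $X + Y \in \mathcal{F}$, and since $X \subseteq X + Y$, maximality of $X$ forces $X + Y = X$, i.e. $Y \subseteq X$. But then $Y \subseteq X \cap Y \subseteq (N \oplus X) \cap Y = 0$, so $Y = 0$. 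This shows $N \oplus X$ is gr-essential in $M$, and by construction $N$ is a graded direct summand of it (with complement $X$, which is graded).

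\textbf{On the obstacles.} There is no serious obstacle here; the proof is essentially identical to the ungraded one. The only points requiring a moment's care are: (i) that the union of a chain of graded submodules is graded and the argument that it stays in $\mathcal{F}$ (both follow from the fact that membership in a graded submodule can be tested on homogeneous components, together with the closure of the class of graded submodules under arbitrary sums, both recorded in \Cref{sec: preliminares}); and (ii) that when manipulating an equation $n = x + y$ one may reduce to homogeneous elements, which is routine since $N$, $X$, $Y$ are all graded. The groupoid grading plays no special role — in particular one does not need to pass to the components $M(e)$ — because the essentiality condition, like the submodule lattice, is insensitive to the decomposition $M = \bigoplus_{e \in \Gamma_0} M(e)$. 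I would therefore present this as a short, self-contained Zorn's lemma argument.
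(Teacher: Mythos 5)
Your proof is correct and follows essentially the same Zorn's lemma argument as the paper: take a graded complement $X$ maximal among graded submodules meeting $N$ trivially and verify that $N \oplus X$ is gr-essential. The only difference is presentational (the paper is slightly more terse and skips the explicit step $Y \subseteq X \Rightarrow Y = 0$), and your parenthetical remark about reducing to homogeneous elements is unnecessary since the computation $y = n - x \in (N \oplus X) \cap Y$ works verbatim for arbitrary elements.
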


\begin{proof}
    Let $N\subgr M$. By Zorn's Lemma, the set   $$\mathcal{X}=\{X\subgr M:N\cap X=0\}$$ has a maximal element $N'$.
    If $X\subgr M$ and $(N\oplus N')\cap X=0$, then $N\cap (N'\oplus X)=0$, and we get $X=0$ by the maximality of $N'$ in $\mathcal{X}$.
    Hence, $N\oplus N'$ is gr-essential in $M$.
\end{proof}

The next result is specific of the groupoid graded situation and it helps to simplify the proof of other results.

\begin{lema}
\label{lem: M superfluo sse todo M(e)}
    Let $R$ be a $\Gamma$-graded ring, $M$ be a $\Gamma$-graded right $R$-module, and $N\subgr M$. The following assertions hold:
    \begin{enumerate}[\rm (1)]
        \item $N$ is gr-superfluous in $M$ if and only if $N(e)$ is gr-superfluous in $M(e)$ for each $e\in\Gamma_0$.
        \item $N$ is gr-essential in $M$ if and only if $N(e)$ is gr-essential in $M(e)$ for each $e\in\Gamma_0$.
    \end{enumerate}    
\end{lema}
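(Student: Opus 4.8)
The plan is to bootstrap everything from the direct sum decomposition $M=\bigoplus_{e\in\Gamma_0}M(e)$ recorded just before the lemma, plus two elementary bookkeeping facts. First I would observe that every graded submodule $X\subgr M$ satisfies $X=\bigoplus_{e\in\Gamma_0}X(e)$ with $X(e):=X\cap M(e)$: since $X$ is generated by homogeneous elements and a nonzero homogeneous element of degree $\gamma$ lies in $M(r(\gamma))$, each such generator contributes only to the summand indexed by $r(\gamma)$. Conversely, for any family $\{Y_e\subgr M(e):e\in\Gamma_0\}$ the sum $\bigoplus_{e\in\Gamma_0}Y_e$ is a graded submodule of $M$; in particular each $M(e)$ is a graded submodule of $M$. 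Second, because $M=\bigoplus_e M(e)$ is a direct sum, applying the canonical projections $\mathds{1}_e$ shows that for graded submodules $N,X\subgr M$ we have $(N+X)(e)=N(e)+X(e)$ and $(N\cap X)(e)=N(e)\cap X(e)$ for every $e\in\Gamma_0$.

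Granting these, part (1) follows in both directions by a componentwise argument. For the ``if'' direction: assuming each $N(e)$ is gr-superfluous in $M(e)$, take $X\subgr M$ with $N+X=M$; then $N(e)+X(e)=M(e)$ for every $e$, hence $X(e)=M(e)$, and summing over $e$ gives $X=M$. For the ``only if'' direction: fix $e_0\in\Gamma_0$ and a graded submodule $Y\subgr M(e_0)$ with $N(e_0)+Y=M(e_0)$; the submodule to feed into gr-superfluousness of $N$ in $M$ is $X:=Y\oplus\bigoplus_{e\neq e_0}M(e)$, for which $N+X=M$ by the componentwise identity, so $X=M$ and therefore $Y=X(e_0)=M(e_0)$.

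Part (2) is handled dually. For ``if'': if each $N(e)$ is gr-essential in $M(e)$ and $X\subgr M$ satisfies $N\cap X=0$, then $N(e)\cap X(e)=0$ for all $e$, hence $X(e)=0$ and $X=0$. For ``only if'': given $e_0\in\Gamma_0$ and $Y\subgr M(e_0)$ with $N(e_0)\cap Y=0$, view $Y$ as a graded submodule of $M$; since $Y(e)=0$ for $e\neq e_0$ and $Y(e_0)=Y$, the componentwise identity gives $N\cap Y=N(e_0)\cap Y=0$, whence $Y=0$. I do not anticipate a genuine obstacle here: the content is confined to recording the two bookkeeping facts in the first step and choosing the right test submodule $X$ in the two converse directions, after which each equivalence is a direct unwinding of the definitions of gr-superfluous and gr-essential submodule.
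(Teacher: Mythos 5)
Your proof is correct and follows essentially the same componentwise argument as the paper: reduce to the decomposition $M=\bigoplus_{e\in\Gamma_0}M(e)$, use $(N+X)(e)=N(e)+X(e)$ and $(N\cap X)(e)=N(e)\cap X(e)$, and for the converse directions test against $X=Y\oplus\bigoplus_{e\neq e_0}M(e)$ (superfluous case) and $X=Y$ (essential case). The only difference is that you spell out the bookkeeping facts and part (2) explicitly, where the paper abbreviates.
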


\begin{proof}
    (1) If $N(e)$ is gr-superfluous in $M(e)$ for every $e\in\Gamma_0$, and $X\subgr M$ is such that $N+X=M$, then $N(e)+X(e)=M(e)$ implies $X(e)=M(e)$ for each $e\in\Gamma_0$, and therefore $X=M$, whence $N$ is gr-superfluous in $M$.
    Conversely, suppose that $N$ is gr-superfluous in $M$ and let $e\in\Gamma_0$.
    If $X\subgr M(e)$ is such that $N(e)+X=M(e)$, then $N+\left(X\oplus\left(\bigoplus_{f\in\Gamma_0\setminus\{e\}}M(f)\right)\right)=M$ implies that $X\oplus \left(\bigoplus_{f\in\Gamma_0\setminus\{e\}}M(f)\right)=M$, and thus $X=M(e)$.
    Therefore, $N(e)$ is gr-superfluous in $M(e)$.

    (2) The proof is similar to (1).
\end{proof}

An interesting reason for studying gr-superfluous and gr-essential submodules is the following.

\begin{prop}\label{prop: gr-ess  injective  gr-superfl surjective}
    Let $R$ be a $\Gamma$-graded ring, $M, N$ be $\Gamma$-graded right $R$-modules and $g\in\Homgr(M,N)$. Let $M'\subgr M$ and $N'\subgr N$ be such that $g(M')\subseteq N'$. The following statements hold.
    \begin{enumerate}[\rm (1)]
        \item Suppose that $M'$ is gr-essential in $M$. Then $g$ is injective if and only if the induced gr-homomorphism $g'\colon M'\rightarrow N'$, $x\mapsto g(x)$, is injective.
        \item  Suppose that $N'$ is gr-superfluous in $N$. Then $g$ is surjective if and only if the induced gr-homomorphism $\overline{g}\colon M/M'\rightarrow N/N'$, $x+M'\mapsto g(x)+N'$, is surjective.        
    \end{enumerate}
\end{prop}

\begin{proof}
(1) If $g$ is injective, it is straightforward to show that $g'$ is injective. 
Suppose now that  $g'$ is injective. It implies that $\ker (g)\cap M'=\{0\}$. Since $M'$ is gr-essential, we get $\ker (g)=\{0\}$.

(2) If $g$ is surjective, it is straightforward to show that $\overline{g}$ is surjective. 
Suppose now that  $\overline{g}$ is surjective.  It implies that  $N=\ima (g)+N'$. Since $N'$ is gr-superfluous, we obtain $N=\ima (g)$.
\end{proof}

Now we give equivalent definitions of $\radgr(M)$ and $\socgr(M)$ that do not depend on the existence of gr-maximal or gr-simple submodules of $M$.

\begin{prop}
\label{prop: (in)essential}
    Let $R$ be a $\Gamma$-graded ring and $M$ a $\Gamma$-graded right $R$-module. The following assertions hold.
    \begin{enumerate}[\rm (1)]
        \item $\radgr(M)=\sum\{N\subgr M: N \text{ is gr-superfluous in } M\}$;
        \item $\socgr(M)=\bigcap\{N\subgr M: N \text{ is gr-essential in } M\}$.
    \end{enumerate}
\end{prop}

\begin{proof}
    (1) $(\subseteq)$: Let $m\in\h(\radgr(M))$. 
    We will prove that $mR$ is gr-superfluous in $M$. 
    Suppose that 
    \begin{equation}\label{eq: rad is sum of superfluous}
        mR+X=M\quad \textrm{for some } X\subgr M.
    \end{equation}  
    Assume by contradiction that $m\notin X$. 
    By Zorn's Lemma, the set $$\{Y\subgr M:X\subseteq Y\text{ and }m\notin Y\}$$ has a maximal element $M'$. 
    From \eqref{eq: rad is sum of superfluous}, we get that $M'$ is a gr-maximal graded submodule of $M$. Since $m\in\radgr(M)$, then $m\in M'$, a contradiction.
    Therefore, $m\in X$, and it follows that $X=mR+X=M$.

    $(\supseteq)$: If $M'$ is a gr-maximal graded submodule of $M$ and $N$ is a gr-superfluous graded submodule of $M$, then $M'\neq M$ implies $N+M'\neq M$, and it follows that $M'\supseteq N$ by gr-maximality of $M'$. 
    Therefore, $\radgr(M)$ contains every gr-superfluous graded submodule of $M$.

    (2) $(\subseteq)$: If $S$ is a gr-simple graded submodule of $M$ and $N$ is a gr-essential graded submodule of $M$, then $S\neq 0$ implies $S\cap N\neq0$, whence $S\subseteq N$ by gr-simplicity.
    Thus, $\socgr(M)$ is contained in every gr-essential graded submodule of $M$.

    $(\supseteq)$: Suppose that $m\in\h(M)$ is contained in every gr-essential graded submodule of $M$. 
    We will show that $mR$ is a gr-semisimple module by showing that every graded submodule of $mR$ is a direct summand of $mR$ \cite[Proposition 56]{CLP}. 
    Let $N\subgr mR$.
    By Lemma~\ref{lem: direct summand of an essential module}, there exists $N'\subgr M$ such that 
    $N\oplus N'$ is gr-essential in $M$.  Thus,  $m\in N\oplus N'$ because of the way $m$ was chosen.
    Therefore, $mR=N\oplus (N'\cap mR)$.
\end{proof}

Our next result presents a description of the graded Jacobson radical and the gr-socle   in terms of homogeneous elements.

\begin{prop}
\label{prop: rad M_sigma}
    Let $R$ be a $\Gamma$-graded ring, $M$ a $\Gamma$-graded right $R$-module, and $\sigma\in\Gamma$. The following assertions hold:
    \begin{enumerate}[\rm (1)]
        \item $\radgr(M)_\sigma=\{m\in M_\sigma: mR \text{ is gr-superfluous in } M\}$.
        \item $\socgr(M)_\sigma=\{m\in M_\sigma: mR \text{ is gr-semisimple}\}$.
    \end{enumerate}
\end{prop}

\begin{proof}
    (1) Let $m\in \radgr(M)_\sigma$ for some $\sigma \in \Gamma$. By \Cref{prop: (in)essential}, there exist $N_1,\dotsc,N_r$ gr-superfluous submodules of $M$ such that $m\in N_1+\cdots+N_r$. By \Cref{exems: gr-superfluous gr-essential}(4), $N_1+\cdots+N_r$ is gr-superfluous and $mR\subgr N_1+\cdots+N_r$ is gr-superfluous too.  
    The inclusion $\supseteq$ follows from \Cref{prop: (in)essential}.

    (2) Let $m\in\socgr(M)_\sigma$ for some $\sigma \in \Gamma$. Now $mR$ is a graded submodule of $\socgr(M)$. The result now follows because any graded submodule of a gr-semisimple module is gr-semisimple \cite[Corollary 54]{CLP}.
    The other inclusion is a consequence of \Cref{obs:maximal and soc}(3).
\end{proof}

\begin{coro}
\label{coro: cond suf para rad ser superfluo}
    Let $R$ be a $\Gamma$-graded ring and $M$ be a $\Gamma$-graded right $R$-module. The following assertions hold:
    \begin{enumerate}[\rm (1)]
        \item Suppose that every proper graded submodule of $M$ is contained in a gr-maximal graded submodule of $M$. Then $\radgr(M)$ is gr-superfluous in $M$.
        \item $\socgr(M)$ is gr-essential in $M$ if and only if every nonzero graded submodule of $M$ contains a gr-simple $R$-module.
        \item If $N$ is a gr-maximal graded submodule of $M$, then $\radgr(M/N)=0$.
        \item If $N$ is a gr-essential graded submodule of $M$, then $\socgr(N)=\socgr(M)$.
        \item Suppose that $N$ is a gr-superfluous graded submodule of $M$  and \mbox{$\radgr(M/N)=0$.} 
        Then $N=\radgr(M)$. 
        \item Suppose that $N$ is a gr-essential graded submodule of $M$  and $\socgr(N)=N$.
        Then $N=\socgr(M)$. 
    \end{enumerate}
\end{coro}

\begin{proof}
    (1) If $X\subgr M$ and $X\neq M$, then taking $M'$ a gr-maximal graded submodule of $M$ such that $X\subseteq M'$, we have $\radgr(M)+X\subseteq M'\subsetneq M$. 

    (2) By \Cref{coro: quando g(rad)=rad}(2), we have $\socgr(X)=\socgr(M)\cap X$ for each graded submodule $X$ of $M$.
    Therefore, $\socgr(M)$ is gr-essential in $M$ if and only if $\socgr(X)\neq0$ for every nonzero graded submodule $X$ of $M$.

    (3) If $N$ is a gr-maximal graded submodule of $M$, then $M/N$ is gr-simple, and $\radgr(M/N)=0$ by \Cref{exms: gr-soc and gr-rad of gr-semisimple}(2).
    
    (4) If $N$ is a gr-essential graded submodule of $M$, then $N\cap S=S$ for every gr-simple submodule of $M.$ Therefore, $N$ contains any gr-simple submodule of $M$ and $\socgr(N)=\socgr(M)$.

    (5) By \Cref{prop: (in)essential}(1), $N\subseteq\radgr(M)$, and by \Cref{lem: rad(M/rad)=0}(1), \mbox{$\radgr(M)\subseteq N$.}

    (6) Follows from \Cref{prop: (in)essential}(2) and \Cref{lem: rad(M/rad)=0}(2).
\end{proof}

The converse of \Cref{coro: cond suf para rad ser superfluo}(1) does not hold even in the classical case, see \cite[Exercise 9.4]{AndersonFuller}.

Similarly to the ungraded case \cite[Theorem 10.4]{AndersonFuller}, we can characterize finitely (gr-co)generated modules via the graded Jacobson radical and the gr-socle. As a consequence, we can also characterize $\Gamma_0$-finitely (gr-co)generated modules.

\begin{prop}
\label{prop: carac fini (co)gerado via rad e soc}
    Let $R$ be a $\Gamma$-graded ring and $M$ be a $\Gamma$-graded right $R$-module. The following assertions hold:
    \begin{enumerate}[\rm (1)]
        \item $M$ is finitely generated if and only if $M/\radgr(M)$ is finitely generated and $\radgr(M)$ is gr-superfluous in $M$.
        \item $M$ is finitely gr-cogenerated if and only if $\socgr(M)$ is finitely gr-cogenerated and $\socgr(M)$ is gr-essential in $M$. 
        \item $M$ is $\Gamma_0$-finitely generated if and only if $M/\radgr(M)$ is $\Gamma_0$-finitely generated and $\radgr(M)$ is gr-superfluous in $M$.
        \item $M$ is $\Gamma_0$-finitely gr-cogenerated if and only if $\socgr(M)$ is $\Gamma_0$-finitely gr-cogenerated and $\socgr(M)$ is gr-essential in $M$. 
    \end{enumerate}
\end{prop}

\begin{proof}
    (1) If $M/\radgr(M)=\sum_{i=1}^n(x_i+\radgr(M))R$ for some $x_1,\dots,x_n\in\h(M)$, and $\radgr(M)$ is gr-superfluous in $M$, then $M=\radgr(M)+x_1R+\cdots+x_nR$, and it follows that $M=x_1R+\cdots+x_nR$.
    Now suppose that $M$ is finitely generated.
    Then it is clear that $M/\radgr(M)$ is finitely generated.
    Assume that $X\subgr M$ is such that $\radgr(M)+X=M$.
    Since $M$ is finitely generated,  there exist gr-superfluous graded submodules $N_1,\dots, N_s$ of $M$ such that $N_1+\cdots+N_s+X=M$ by \Cref{prop: (in)essential}(1).
    By \Cref{exems: gr-superfluous gr-essential}(4), $N_1+\cdots+N_s$ is gr-superfluous. Hence $X=M$ and $\radgr(M)$ is gr-superfluous in~$M$.

    (2) Suppose that $\socgr(M)$ is finitely gr-cogenerated, $\socgr(M)$ is gr-essential in $M$, and $\{M_i:i\in I\}$ is a family of graded submodules of $M$ such that \mbox{$\bigcap_{i\in I}M_i=0$}.
    Since $\socgr(M)$ is finitely gr-cogenerated, there exist $i_1,\dots,i_n\in I$ such that \linebreak $\bigcap_{t=1}^n(\socgr(M)\cap M_{i_t})=0$.
    The gr-essentiality of $\socgr(M)$ implies that \mbox{$\bigcap_{t=1}^n M_{i_t}=0$}.
    Conversely, suppose that $M$ is finitely gr-cogenerated.
    It is easy to see that every graded submodule of $M$ is finitely gr-cogenerated.
    Assume that $X\subgr M$ is such that $\socgr(M)\cap X=0$.
    Since $M$ is finitely gr-cogenerated, there exist gr-essential graded submodules $N_1,\dots, N_s$ of $M$ such that $N_1\cap\cdots\cap N_s\cap X=0$  by \Cref{prop: (in)essential}(2). By   \Cref{exems: gr-superfluous gr-essential}(5),  $N_1\cap\cdots\cap N_s$ is gr-essential. Hence  $X=0$ and $\socgr(M)$ is gr-essential in $M$.

    (3) By (1), $M(e)$ is finitely generated for all $e\in \Gamma_0$ if and only if $M(e)/\radgr(M(e))$ is finitely generated and $\radgr(M(e))$ is gr-superfluous in $M(e)$ for all $e\in \Gamma_0$. The result now follows from \Cref{prop: rad M(e)}(1) and \Cref{lem: M superfluo sse todo M(e)}(1).

    (4) By (2), $M(e)$ is finitely gr-cogenerated for all $e\in \Gamma_0$  if and only if $\socgr(M(e))$ is finitely gr-cogenerated and $\socgr(M(e))$ is gr-essential in $M(e)$ for each $e\in\Gamma_0$. The result now follows from \Cref{prop: rad M(e)}(2) and \Cref{lem: M superfluo sse todo M(e)}(2).
\end{proof}

\begin{coro}\label{coro: gr-noetherian radgr superfluous}
    Let $R$ be a $\Gamma$-graded ring and $M$ be a $\Gamma$-graded right $R$-module. The following assertions hold: 
    \begin{enumerate}[\rm (1)]
        \item If $M$ is $\Gamma_0$-noetherian, then $\radgr(M)$ is gr-superfluous in $M$.
        \item If $M$ is $\Gamma_0$-artinian, then $\socgr(M)$ is gr-essential in $M$. 
    \end{enumerate}
\end{coro}

\begin{proof}
   (1) and (2) follow from \Cref{prop: G0-noeth <-> todo submod eh G0-fg} and \Cref{prop: carac fini (co)gerado via rad e soc}.
\end{proof}

\begin{coro}
   Let $R$ be a $\Gamma$-graded ring, $M,N$ be  $\Gamma$-graded right $R$-modules and $g\in \Homgr(M,N)$. The following statements hold.
   \begin{enumerate}[\rm(1)]
       \item Suppose that $M$ is $\Gamma_0$-artinian. Then $g$ is injective if and only if the induced gr-homomorphism $g'\colon \socgr(M)\rightarrow \socgr(N)$, $x\mapsto g(x)$, is injective.
       \item Suppose that $N$ is $\Gamma_0$-noetherian. Then $g$ is surjective if and only if the induced gr-homomorphism $\overline{g}\colon M/\radgr(M)\rightarrow N/\radgr(N)$, $x+\radgr(M)\mapsto g(x)+\radgr(N)$, is surjective. 
   \end{enumerate}
\end{coro}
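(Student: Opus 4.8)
The plan is to deduce both statements from \Cref{prop: gr-ess  injective  gr-superfl surjective}, which is tailor-made for this situation: it says that for $f\in\Homgr(M,N)$ and graded submodules $M'\subgr M$, $N'\subgr N$ with $f(M')\subseteq N'$, injectivity of $f$ is equivalent to injectivity of the induced map $M'\to N'$ as soon as $M'$ is gr-essential in $M$, and surjectivity of $f$ is equivalent to surjectivity of the induced map $M/M'\to N/N'$ as soon as $N'$ is gr-superfluous in $N$. So the only work is to check the well-definedness of the induced maps and to produce the required gr-essentiality/gr-superfluousness from the chain conditions.

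For (1), I would first note that $f(\socgr(M))\subseteq\socgr(N)$ by \Cref{coro: rad e soc sao prerad}(1), so the induced gr-homomorphism $f'\colon\socgr(M)\to\socgr(N)$ is indeed well defined. Since $M$ is $\Gamma_0$-artinian, \Cref{coro: gr-noetherian radgr superfluous}(4) gives that $\socgr(M)$ is gr-essential in $M$. Applying \Cref{prop: gr-ess  injective  gr-superfl surjective}(1) with $M'=\socgr(M)$ and $N'=\socgr(N)$ yields that $f$ is injective if and only if $f'$ is injective.

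For (2), dually, $f(\radgr(M))\subseteq\radgr(N)$ by \Cref{coro: rad e soc sao prerad}(1), so $\overline{f}\colon M/\radgr(M)\to N/\radgr(N)$ is well defined. Since $N$ is $\Gamma_0$-noetherian, \Cref{coro: gr-noetherian radgr superfluous}(3) applied to $N$ shows that $\radgr(N)$ is gr-superfluous in $N$, and \Cref{prop: gr-ess  injective  gr-superfl surjective}(2) with $M'=\radgr(M)$, $N'=\radgr(N)$ gives the equivalence. There is essentially no obstacle here; the corollary is just a packaging of three previously proved facts. The only point to keep straight is the asymmetry of the hypotheses — in (1) the $\Gamma_0$-artinian assumption is on the \emph{source} $M$ (supplying gr-essentiality of $\socgr(M)$), while in (2) the $\Gamma_0$-noetherian assumption is on the \emph{target} $N$ (supplying gr-superfluousness of $\radgr(N)$) — which is exactly what the two halves of \Cref{prop: gr-ess  injective  gr-superfl surjective} require.
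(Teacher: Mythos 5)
Your proof is correct and follows exactly the same route as the paper: invoke \Cref{coro: rad e soc sao prerad}(1) for well-definedness of the induced maps, \Cref{coro: gr-noetherian radgr superfluous}(4) resp.\ (3) to get gr-essentiality of $\socgr(M)$ resp.\ gr-superfluousness of $\radgr(N)$, and then apply \Cref{prop: gr-ess  injective  gr-superfl surjective}. Nothing to add.
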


\begin{proof}
(1) By  \Cref{coro: gr-noetherian radgr superfluous}(2), $\socgr(M)$ is gr-essential in $M$.  
The result now follows from \Cref{coro: rad e soc sao prerad}(1), and \Cref{prop: gr-ess  injective  gr-superfl surjective}(1)  with $M'=\socgr(M)$ and $N'=\socgr(N)$.

(2) By  \Cref{coro: gr-noetherian radgr superfluous}(1), $\radgr(N)$ is gr-superfluous in $N$.  
The result now follows from  \Cref{coro: rad e soc sao prerad}(1), and \Cref{prop: gr-ess  injective  gr-superfl surjective}(2)  with $M'=\radgr(M)$ and $N'=\radgr(N)$.
\end{proof}

By \cite[Lemma 55]{CLP}, every nonzero finitely generated graded module has a gr-maximal graded submodule. By \Cref{obs:maximal and soc}(1), it also holds for $\Gamma_0$-finitely generated graded modules. We provide another proof of this fact together with its dual statement.

\begin{coro}
\label{coro: fin cogerado tem minimal}
    Let $R$ be a $\Gamma$-graded ring and $M$ be a nonzero $\Gamma$-graded right $R$-module. 
    The following assertions hold:
    \begin{enumerate}[\rm (1)]
        \item If $M$ is $\Gamma_0$-finitely generated, then $M$ has a gr-maximal graded submodule.
        \item If $M$ is $\Gamma_0$-finitely gr-cogenerated, then $M$ has a gr-simple graded submodule.
    \end{enumerate}
\end{coro}

\begin{proof}
    (1) By \Cref{prop: carac fini (co)gerado via rad e soc}(3), $\radgr(M)\neq M$ if $M$ is $\Gamma_0$-finitely generated.
    
    (2) By \Cref{prop: carac fini (co)gerado via rad e soc}(4), $\socgr(M)\neq0$ if $M$ is $\Gamma_0$-finitely gr-cogenerated.
\end{proof}

The following result characterizes finitely generated gr-semisimple modules using the graded Jacobson radical and gr-artinianity, see \cite[Proposition 10.15]{AndersonFuller} for the classical case. Moreover, it shows that we can write ``$\socgr(M)$ is finitely generated'' in  items (2) and (4) of \Cref{prop: carac fini (co)gerado via rad e soc}. 

\begin{prop}
\label{prop: carac gr-ss fg}
    Let $R$ be a $\Gamma$-graded ring and $M$ be a $\Gamma$-graded right $R$-module. 
    The following assertions are equivalent:
    \begin{enumerate}[\rm (1)]
        \item $M$ is gr-semisimple and finitely generated.
        \item $M$ is gr-semisimple and gr-noetherian.
        \item $M$ is gr-semisimple and gr-artinian.
        \item $M$ is gr-semisimple and finitely gr-cogenerated. 
        \item $M$ is a finite direct sum of gr-simple graded submodules.
        \item $\radgr(M)=0$ and $M$ is finitely gr-cogenerated.
        \item $\radgr(M)=0$ and $M$ is gr-artinian.
    \end{enumerate}
\end{prop}

\begin{proof}
    $(1)\Leftrightarrow(2)\Leftrightarrow(3)\Leftrightarrow(5)$: This is straightforward since gr-semisimple modules are direct sums of gr-simple modules by \cite[Proposition 52]{CLP}.
    
    $(3)\Rightarrow(4)$: This follows from \Cref{prop: gr-noeth <-> todo submod eh fg}(2).

    $(4)\Rightarrow (6)$: This follows from \Cref{exms: gr-soc and gr-rad of gr-semisimple}(2).

    $(6)\Rightarrow(5)$: By (6) and \Cref{coro: M/rad contido num prod de gr-simp}, there exist a family $\{S_i:i\in I\}$ of gr-simple $\Gamma$-graded right $R$-modules and an injective gr-homomorphism $\varphi:M\to\prod^{\rm gr}_{i\in I}S_i$. 
    Let $\pi_j:\prod^{\rm gr}_{i\in I}S_i\to S_j$ be the canonical projection for each $j\in I$.
    Since $\bigcap_{i\in I}\ker \pi_i=0$ and $M$ is finitely gr-cogenerated, it follows that $\bigcap_{t=1}^n\ker \pi_{i_t}=0$ for some $i_1,\dots i_n\in I$, and therefore we have an injective gr-homomorphism $M\to S_{i_1}\oplus\cdots\oplus S_{i_n}$. 
    Therefore, (5) holds by \cite[Proposition 53(ii)]{CLP}.

    $(3)\Rightarrow(7)$: This follows from \Cref{exms: gr-soc and gr-rad of gr-semisimple}(2).

    $(7)\Rightarrow(6)$: This follows from \Cref{prop: gr-noeth <-> todo submod eh fg}(2).
\end{proof}

Applying the foregoing result to $M(e)$ for each $e\in\Gamma_0$, we obtain the following corollary.

\begin{coro}
\label{coro: carac gr-ss fg}
    Let $R$ be a $\Gamma$-graded ring and $M$ be a $\Gamma$-graded right $R$-module. 
    The following assertions are equivalent:
    \begin{enumerate}[\rm (1)]
        \item $M$ is gr-semisimple and $\Gamma_0$-finitely generated.
        \item $M$ is gr-semisimple and $\Gamma_0$-noetherian.
        \item $M$ is gr-semisimple and $\Gamma_0$-artinian.
        \item $M$ is gr-semisimple and $\Gamma_0$-finitely gr-cogenerated. 
        \item $M(e)$ is a finite direct sum of gr-simple graded submodules for each $e\in\Gamma_0$.
        \item $\radgr(M)=0$ and $M$ is $\Gamma_0$-finitely gr-cogenerated.
        \item $\radgr(M)=0$ and $M$ is $\Gamma_0$-artinian.\qed
    \end{enumerate}
\end{coro}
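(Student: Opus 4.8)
The plan is to reduce the entire statement to \Cref{prop: carac gr-ss fg} applied, one object at a time, to the homogeneous shifts $M(e)$, $e\in\Gamma_0$, exploiting the decomposition $M=\bigoplus_{e\in\Gamma_0}M(e)$.

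First I would assemble the relevant ``componentwise'' dictionary. By the very definitions in \Cref{def: subsec:Gamma_0 conditions}, $M$ is $\Gamma_0$-noetherian (resp. $\Gamma_0$-artinian, $\Gamma_0$-finitely generated, $\Gamma_0$-finitely gr-cogenerated) if and only if $M(e)$ is gr-noetherian (resp. gr-artinian, finitely generated, finitely gr-cogenerated) for every $e\in\Gamma_0$. Next, $M$ is gr-semisimple if and only if $M(e)$ is gr-semisimple for each $e\in\Gamma_0$: the forward implication holds because a graded submodule of a gr-semisimple module is gr-semisimple by \cite[Corollary 54]{CLP}, and the converse because then $M=\bigoplus_{e\in\Gamma_0}M(e)$ is a sum of gr-simple graded submodules. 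Finally, $\radgr(M)=0$ if and only if $\radgr(M(e))=0$ for each $e\in\Gamma_0$, since $\radgr(M)=\bigoplus_{e\in\Gamma_0}\radgr(M(e))$ by \Cref{prop: rad M(e)}(1) and a direct sum vanishes exactly when each summand does.

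With this dictionary in hand, each of the seven assertions (1)--(7) of the corollary translates precisely into the assertion that the corresponding condition (1)--(7) of \Cref{prop: carac gr-ss fg} holds for $M(e)$ for every $e\in\Gamma_0$: for instance (1) becomes ``$M(e)$ is gr-semisimple and finitely generated for all $e$'', (5) is literally ``condition (5) of \Cref{prop: carac gr-ss fg} holds for each $M(e)$'', (6) becomes ``$\radgr(M(e))=0$ and $M(e)$ is finitely gr-cogenerated for all $e$'', and (7) becomes ``$\radgr(M(e))=0$ and $M(e)$ is gr-artinian for all $e$''. Since \Cref{prop: carac gr-ss fg} asserts that its conditions (1)--(7) are pairwise equivalent for \emph{any} $\Gamma$-graded right $R$-module, in particular for each $M(e)$, universally quantifying the equivalence over $e\in\Gamma_0$ yields the equivalence of (1)--(7) of the corollary.

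I do not anticipate a genuine obstacle: the only points requiring a word of justification are that the $\Gamma_0$-prefixed finiteness and chain conditions are genuinely componentwise (which is their definition) and that gr-semisimplicity and the vanishing of the graded Jacobson radical pass both ways between $M$ and its components $M(e)$, which is exactly what \cite[Corollary 54]{CLP} and \Cref{prop: rad M(e)}(1) provide. If desired, the write-up can be compressed to a single sentence once this dictionary is stated, invoking \Cref{prop: carac gr-ss fg} componentwise and reassembling via $M=\bigoplus_{e\in\Gamma_0}M(e)$.
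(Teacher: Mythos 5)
Your proposal is correct and follows exactly the approach the paper intends: the paper itself introduces the corollary with the remark that it is obtained by ``applying the foregoing result to $M(e)$ for each $e\in\Gamma_0$'' and leaves the corollary unproved, and your componentwise dictionary (using \cite[Corollary 54]{CLP} for gr-semisimplicity of submodules and \Cref{prop: rad M(e)}(1) for the graded Jacobson radical) is precisely the unpacking of that remark.
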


Before stating the next corollary, we recall the following definition from \cite[p. 49]{Artigoarxiv}:

\begin{defi}
    We say that a $\Gamma$-graded right $R$-module $S$ is $\Gamma_0$-simple if $S(e)$ is a gr-simple $R$-module for each $e \in \Gamma'_0(S)$.
\end{defi}

\begin{coro}
\label{coro: carac fort gr-ss}
    Let $R$ be a $\Gamma$-graded ring and $M$ be a $\Gamma$-graded right $R$-module. 
    The following assertions are equivalent:
    \begin{enumerate}[\rm (1)]
        \item $M$ is gr-semisimple and there exists $n\in\mathbb{N}$ such that $M(e)$ is generated by at most $n$ homogeneous elements for each $e\in\Gamma_0$.
        \item $M$ is gr-semisimple and strongly $\Gamma_0$-noetherian.
        \item $M$ is gr-semisimple and strongly $\Gamma_0$-artinian.
        \item $M$ is a finite direct sum of $\Gamma_0$-simple graded submodules.
        \item $\radgr(M)=0$ and $M$ is strongly $\Gamma_0$-artinian.
    \end{enumerate}
\end{coro}

\begin{proof}
    $(1)\Rightarrow(4)$: (1) implies that, for each $e\in\Gamma_0$, we have a decomposition in direct sum of gr-simple submodules $M(e)=\bigoplus_{i=1}^{n_e}S_{i,e}$ with $n_e\leq n$.
    For each $e\in\Gamma_0$ and $1\leq i\leq n$, set $S_i(e)=S_{i,e}$ if $i\leq n_e$, and $S_i(e)=0$ for $i>n_e$.
    Then $S_i:=\bigoplus_{e\in\Gamma_0}S_i(e)$ is a $\Gamma_0$-simple $R$-module for each $1\leq i \leq n$, and $M=S_1\oplus\cdots\oplus S_n$.

    $(4)\Rightarrow (2)$ and $(3)$: If $M=S_1\oplus \cdots\oplus S_n$, where each $S_i$ is $\Gamma_0$-simple, then $M$ is gr-semisimple and the sequence $c_{\Gamma_0-\rm gr}(M)$ is bounded by $n$. 
    It follows from \Cref{prop: comp fort finito <=> fort art e for noet} that $M$ is strongly $\Gamma_0$-noetherian and strongly $\Gamma_0$-artinian.

    $(2)$ or $(3)\Rightarrow(1)$: Suppose that $M$ is gr-semisimple and strongly $\Gamma_0$-noetherian (resp. strongly $\Gamma_0$-artinian). 
    By \Cref{prop: carac fort G0-art/noet}, $M$ is $\Gamma_0$-noetherian (resp. $\Gamma_0$-artinian) and there exist $\Delta_0\subseteq\Gamma_0$ and $n_0\in\mathbb{N}$ such that $\Gamma_0\setminus\Delta_0$ is finite and, for each $e\in\Delta_0$, $M(e)$ is a direct sum of at most $n_0$ gr-simple submodules.
    Therefore, from \Cref{coro: carac gr-ss fg}(5), there exists $n\in\mathbb{N}$ such that each $M(e)$, $e\in\Gamma_0$, is a direct sum of at most $n$ gr-simple submodules.

    $(3)\Leftrightarrow(5)$: By \Cref{lem: fort G0-art/noet => G0-art/noet}, both the items imply that $M$ is $\Gamma_0$-artinian. Therefore, this equivalence follows from \Cref{coro: carac gr-ss fg}.
\end{proof}

\begin{coro}
\label{coro: M gr-art => M/rad(M) gr-ss}
    Let $R$ be a $\Gamma$-graded ring and $M$ be a $\Gamma$-graded right $R$-module.
    \begin{enumerate}[\rm (1)]
        \item If $M$ is gr-artinian, then $M/\radgr(M)$ is a finite direct sum of gr-simple $R$-modules.
        \item If $M$ is   $\Gamma_0$-artinian, then $M/\radgr(M)$ is a $\Gamma_0$-finitely generated  gr-semisimple module.
        \item  If $M$ is strongly  $\Gamma_0$-artinian, then $M/\radgr(M)$ is a finite direct sum of $\Gamma_0$-simple $R$-modules.
    \end{enumerate}
\end{coro}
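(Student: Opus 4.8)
The plan is to deduce both statements formally from the structure results already proved, once we observe that passing to the quotient $M/\radgr(M)$ both preserves the relevant chain condition and annihilates the graded Jacobson radical.

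First I would isolate the two facts that do all the work. Since $\radgr(M)\subgr M$, \Cref{prop: M art ou noeth <--> N e M/N tbm} shows (for part (1)) that $M/\radgr(M)$ is again gr-artinian, and \Cref{prop: M G0-art ou G0-noeth <--> N e M/N tbm} shows (for part (2)) that $M/\radgr(M)$ is again $\Gamma_0$-artinian. On the other hand, \Cref{lem: rad(M/rad)=0}(1) gives $\radgr\bigl(M/\radgr(M)\bigr)=0$. With these two facts in hand, each statement is immediate from the appropriate characterization of gr-semisimplicity.

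For (1): $M/\radgr(M)$ is gr-artinian with zero graded Jacobson radical, so the implication $(7)\Rightarrow(5)$ of \Cref{prop: carac gr-ss fg} gives that $M/\radgr(M)$ is a finite direct sum of gr-simple $R$-modules. For (2): $M/\radgr(M)$ is $\Gamma_0$-artinian with zero graded Jacobson radical, so the implication $(7)\Rightarrow(1)$ of \Cref{coro: carac gr-ss fg} gives that $M/\radgr(M)$ is gr-semisimple and $\Gamma_0$-finitely generated.

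I do not anticipate any genuine obstacle: the result is a bookkeeping corollary of the preceding theory, and the only points to check are that $M/\radgr(M)$ meets the hypotheses of \Cref{prop: carac gr-ss fg} (resp. \Cref{coro: carac gr-ss fg}), which is exactly what the two cited hereditary properties above provide. The mildest subtlety is making sure one invokes the gr-artinian version of the quotient proposition in (1) and the $\Gamma_0$-artinian version in (2), and otherwise the argument is purely formal.
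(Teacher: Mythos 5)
Your proof is correct and follows essentially the same route as the paper: pass to $M/\radgr(M)$, use \Cref{prop: M art ou noeth <--> N e M/N tbm} (resp. its $\Gamma_0$-version) and \Cref{lem: rad(M/rad)=0}(1) to verify the hypotheses of \Cref{prop: carac gr-ss fg} (resp.\ \Cref{coro: carac gr-ss fg}), then read off the conclusion. For part (2) the paper reduces to the fibers $M(e)$ and invokes \Cref{prop: rad M(e)}(1), whereas you invoke the pre-packaged $\Gamma_0$-level statements directly; these are the same argument, since \Cref{coro: carac gr-ss fg} was itself obtained by the fiberwise reduction.
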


\begin{proof}
 (1)   If $M$ is gr-artinian, then $M/\radgr(M)$ is gr-artinian by \Cref{prop: M art ou noeth <--> N e M/N tbm}.
    Since $\radgr(M/\radgr(M))=0$ by \Cref{lem: rad(M/rad)=0}(1), the result follows from \Cref{prop: carac gr-ss fg}. 

(2) It suffices to apply (1) to $M(e)$ for each $e\in\Gamma_0$, noting that \[\frac{M}{\radgr(M)}(e)\isogr \frac{M(e)}{(\radgr(M))(e)}=\frac{M(e)}{\radgr(M(e))}\] by \Cref{prop: rad M(e)}(1).

(3) If $M$ is strongly  $\Gamma_0$-artinian, then $M/\radgr(M)$ is strongly  $\Gamma_0$-artinian by \Cref{prop: M fortemente G0-art ou G0-noeth <--> N e M/N tbm}.
    Since $\radgr(M/\radgr(M))=0$ by \Cref{lem: rad(M/rad)=0}(1), the result follows from \Cref{coro: carac fort gr-ss}.
\end{proof}

It is clear that a direct sum $M_1\oplus\cdots\oplus M_n$ of graded modules is finitely generated if and only if each $M_i$ is finitely generated. 
The dual statement  is also true.

\begin{coro}
    Let $R$ be a $\Gamma$-graded ring and $M$ be a $\Gamma$-graded right $R$-module.
    \begin{enumerate}[\rm (1)]
        \item Suppose that $M=M_1\oplus\cdots\oplus M_n$ is a direct sum of $\Gamma$-graded right $R$-modules.
    Then $M$ is finitely gr-cogenerated if and only if  $M_i$ is finitely gr-cogenerated for all $i=1,\dots,n$. 
    \item Suppose that $M=\bigoplus_{i\in I} M_i$ is a direct sum of a $\Gamma_0$-finite family $\{M_i\colon i\in I\}$ of $\Gamma$-graded right $R$-modules. Then $M$ is $\Gamma_0$-finitely gr-cogenerated if and only if  $M_i$ is  $\Gamma_0$-finitely gr-cogenerated for all $i\in I$. 
    \end{enumerate}
\end{coro}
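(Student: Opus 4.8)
The plan is to dualise the (trivial) argument for finitely generated modules. In part (1), the ``only if'' implication is immediate, since each $M_i$ is a graded submodule of $M=M_1\oplus\cdots\oplus M_n$ and a graded submodule of a finitely gr-cogenerated module is again finitely gr-cogenerated (observed right after \Cref{def: subsec:Gamma_0 conditions}). For the ``if'' implication I would induct on $n$: the case $n=1$ is trivial, and the inductive step reduces immediately to the case $n=2$ by writing $M=(M_1\oplus\cdots\oplus M_{n-1})\oplus M_n$ and applying the induction hypothesis to $M_1\oplus\cdots\oplus M_{n-1}$.

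So the crux is: if $M_1$ and $M_2$ are finitely gr-cogenerated, then so is $M_1\oplus M_2$. Here I would argue as follows. Let $\{N_\lambda:\lambda\in\Lambda\}$ be a family of graded submodules of $M_1\oplus M_2$ with $\bigcap_{\lambda}N_\lambda=0$. The family $\{N_\lambda\cap M_2:\lambda\in\Lambda\}$ of graded submodules of $M_2$ has zero intersection, so finite gr-cogeneration of $M_2$ yields $\lambda_1,\dots,\lambda_s$ with $N:=\bigcap_{t=1}^s N_{\lambda_t}$ satisfying $N\cap M_2=0$. Then the projection $\pi_1\colon M_1\oplus M_2\to M_1$ restricts to an injective gr-homomorphism on $N$, so $N\cong_{gr}\pi_1(N)\subgr M_1$; hence $N$ is finitely gr-cogenerated (again using that graded submodules of finitely gr-cogenerated modules are such). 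Applying finite gr-cogeneration of $N$ to the family $\{N\cap N_\lambda:\lambda\in\Lambda\}$ of graded submodules of $N$, which has zero intersection, we obtain $\mu_1,\dots,\mu_r$ with $\bigcap_{u=1}^r(N\cap N_{\mu_u})=0$, and therefore $\bigcap_{t=1}^s N_{\lambda_t}\cap\bigcap_{u=1}^r N_{\mu_u}=0$ is a finite subintersection equal to zero.

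For part (2) I would pass to the decomposition $M=\bigoplus_{e\in\Gamma_0}M(e)$: by definition $M$ is $\Gamma_0$-finitely gr-cogenerated if and only if $M(e)$ is finitely gr-cogenerated for every $e\in\Gamma_0$, and $M(e)=\bigoplus_{j\in J}M_j(e)$. Since the family $\{M_j:j\in J\}$ is $\Gamma_0$-finite, only finitely many $M_j(e)$ are nonzero, so $M(e)$ is a finite direct sum (the zero summands contributing nothing), and part (1) gives that $M(e)$ is finitely gr-cogenerated if and only if each $M_j(e)$ is. Running this over all $e\in\Gamma_0$ shows that $M$ is $\Gamma_0$-finitely gr-cogenerated if and only if every $M_j$ is.

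The only genuine obstacle is in the $n=2$ case of part (1): the submodules $N_\lambda$ need not be compatible with the decomposition $M_1\oplus M_2$, so one cannot work coordinatewise. Cutting down first by finitely many of the $N_\lambda$ to kill the intersection with $M_2$ — which embeds the cut-down module $N$ into $M_1$ — is precisely what repairs this; the remainder is bookkeeping with finite intersections.
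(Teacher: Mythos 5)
Your proof is correct, and for the crucial ``if'' direction of part (1) it is genuinely different from the paper's. The paper reduces the claim to its socle apparatus: it uses \Cref{prop: carac fini (co)gerado via rad e soc}(2) (a graded module is finitely gr-cogenerated iff its gr-socle is finitely gr-cogenerated and gr-essential), then \Cref{coro: rad da soma direta} to split $\socgr(M)=\socgr(M_1)\oplus\cdots\oplus\socgr(M_n)$, \Cref{prop: carac gr-ss fg} to pass back and forth between finitely gr-cogenerated and finitely generated gr-semisimple modules, and finally a hands-on element argument to show gr-essentiality of $\socgr(M_1)\oplus\socgr(M_2)$ in $M_1\oplus M_2$. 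Your argument bypasses all of this: you cut down the arbitrary family $\{N_\lambda\}$ by a finite subintersection $N$ that avoids $M_2$, observe that $N$ then embeds under the projection $\pi_1$ into $M_1$ (so $N$ inherits finite gr-cogeneration as a graded submodule of $M_1$), and apply finite gr-cogeneration once more inside $N$. This is the classical ``pull back through the projection'' proof, and it is shorter and entirely self-contained — it needs only the definition of finite gr-cogeneration and the remark that graded submodules inherit the property. What the socle route buys in exchange is that it proves en route the gr-essentiality of $\socgr(M_1)\oplus\socgr(M_2)$ in $M_1\oplus M_2$, which the paper presumably finds useful in its own right. For part (2) you and the paper run the identical reduction $M(e)=\bigoplus_{j\in J_e}M_j(e)$ and invoke part (1). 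One small notational point worth making explicit in a written-up version: in your $n=2$ step you silently use $\bigl(\bigcap_t N_{\lambda_t}\bigr)\cap M_2=\bigcap_t(N_{\lambda_t}\cap M_2)$, which is of course true but is the hinge on which the finite choice rests.
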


\begin{proof}
(1)    The ``only if'' part follows because submodules of finitely gr-cogenerated modules are finitely gr-cogenerated modules.

    Suppose that  $M_1,\dots,M_n$ are finitely gr-cogenerated. By \Cref{prop: carac fini (co)gerado via rad e soc}(2), we have to show that $\socgr(M)$ is finitely gr-cogenerated and $\socgr(M)$ is gr-essential in $M$. 

First observe that $\socgr(M),\socgr(M_1),\dotsc,\socgr(M_n)$ are gr-semissimple modules and, by \Cref{coro: rad da soma direta}, $\socgr(M)=\socgr(M_1)\oplus\cdots\oplus\socgr(M_n)$. Moreover, $\socgr(M_1),\dotsc,\socgr(M_n)$ are finitely gr-cogenerated because they are graded submodules of finitely gr-cogenerated graded modules. By \Cref{prop: carac gr-ss fg}, $\socgr(M_1)$, $\dotsc$, $\socgr(M_n)$ are finitely generated. Thus, $\socgr(M)$ is finitely generated. Again by \Cref{prop: carac gr-ss fg}, $\socgr(M)$ is finitely gr-cogenerated.

By \Cref{prop: carac fini (co)gerado via rad e soc}(2),  $\socgr(M_i)$ is gr-essential in $M_i$ for each $i=1,\dots,n$. It remains to prove that $\socgr(M)$ is gr-essential in $M$, and by induction, it suffices to show that $\socgr(M_1)\oplus\socgr(M_2)$ is gr-essential in $M_1\oplus M_2$.
    Let $X$ be a nonzero graded submodule of $M_1\oplus M_2$ and take $0\neq x=m_1+m_2\in X$, where $m_1\in\h(M_1)$ and $m_2\in\h(M_2)$.
    We can suppose that $m_1\neq0$.
    Since $\socgr(M_1)$ is gr-essential in $M_1$, we have $m_1R\cap\socgr(M_1)\neq0$, from which there exists $a\in\h(R)$ such that $0\neq m_1a\in\socgr(M_1)$.
    If $m_2a\in \socgr(M_2)$, then $0\neq xa\in X\cap(\socgr(M_1)\oplus\socgr(M_2))$.
    If $m_2a\notin\socgr(M_2)$, then, analogously, there exists $a'\in\h(R)$ such that $0\neq m_2aa'\in\socgr(M_2)$, and it follows that $0\neq xaa'=m_1aa'+m_2aa'\in X\cap (\socgr(M_1)\oplus\socgr(M_2))$.
    Therefore, $X\cap (\socgr(M_1)\oplus\socgr(M_2))\neq0$, as desired.

(2) Since the family $\{M_i\colon i\in I\}$ is $\Gamma_0$-finite, the set $I_e:=\{i\in I: M_i(e)\neq0\}$ is finite for all $e\in\Gamma_0$. Thus we can apply (1) to 
$M(e)$ for all $e\in\Gamma_0$. And we obtain that, for all $e\in\Gamma_0$, $M(e)$ is finitely gr-cogenerated if and only if  $M_i(e)$ is finitely gr-cogenerated for all $i\in I_e$. Therefore, $M$ is $\Gamma_0$-finitely gr-cogenerated if and only if  $M_i$ is $\Gamma_0$-finitely gr-cogenerated for all $i\in I$.  
\end{proof}

Our next aim is to characterize the different chain conditions concerning finite gr-length. For this purpose,  we need the following definition.

\begin{defi}
\label{def: socle}
    Let $R$ be a $\Gamma$-graded ring and $M$ be a $\Gamma$-graded right $R$-module. 
    Set $\soc_{\rm gr}^0(M)=\{0\}$. Inductively, for each $n\in\mathbb{N}$, we define $\soc_{\rm gr}^{n+1}(M)$ as the graded submodule of $M$ containing $\soc_{\rm gr}^n(M)$ such that 
    $$\frac{\soc_{\rm gr}^{n+1}(M)}{\soc_{\rm gr}^n(M)}=\soc_{\rm gr}\left(\frac{M}{\soc_{\rm gr}^n(M)}\right).$$
\end{defi}
We have the following chain  for each graded right $R$-module $M$:
\begin{equation}
\label{eq: socle series}    
    0=\soc_{\rm gr}^0(M)\subseteq \soc_{\rm gr}^1(M)\subseteq\soc_{\rm gr}^2(M)\subseteq\cdots\subseteq\soc_{\rm gr}^n(M)\subseteq\cdots\subseteq M
\end{equation}
By \Cref{prop: rad M(e)}(2), $\socgr(M(e))=\socgr(M)(e)$ for each $e\in\Gamma_0$. Thus, it can be shown by induction that 
\begin{equation}
\label{eq: socle series M(e)}    
\soc_{\rm gr}^n(M)(e)=\soc_{\rm gr}^n(M(e))  
\end{equation}
for each $e\in\Gamma_0$ and $n\in\mathbb{N}$. Thus, if $\soc_{\rm gr}^n(M)(e)=\soc_{\rm gr}^{n+1}(M)(e)$ for some $e\in\Gamma_0$ and $n\in\mathbb{N}$, then $\soc_{\rm gr}^n(M)(e)=\soc_{\rm gr}^{n+k}(M)(e)$ for all $k\geq 0$. Therefore the ascending chain \eqref{eq: socle series}  is tight.

\begin{lema}
\label{lem: M gr-art => socle chain is strict}
    Let $R$ be a $\Gamma$-graded ring and $M$ be a $\Gamma$-graded right $R$-module. If  $M$ is gr-artinian  and $\soc_{\rm gr}^n(M)\neq M$ for some $n\in\mathbb{N}$, then $\soc_{\rm gr}^n(M)\subsetneq\soc_{\rm gr}^{n+1}(M)$. As a consequence,  if  $M$ is $\Gamma_0$-artinian  and $\soc_{\rm gr}^n(M)\neq M$ for some $n\in\mathbb{N}$, then $\soc_{\rm gr}^n(M)\subsetneq\soc_{\rm gr}^{n+1}(M)$.
\end{lema}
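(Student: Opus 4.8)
The plan is to reduce the first assertion to a single well-known fact about gr-artinian modules, namely that a nonzero gr-artinian module contains a gr-simple graded submodule. Concretely, suppose $M$ is gr-artinian and $\soc_{\rm gr}^n(M)\neq M$ for some $n\in\mathbb{N}$. First I would pass to the quotient $\overline{M}:=M/\soc_{\rm gr}^n(M)$, which is nonzero by hypothesis and gr-artinian by \Cref{prop: M art ou noeth <--> N e M/N tbm}. Then I would observe that the nonempty family of nonzero graded submodules of $\overline{M}$ (it contains $\overline{M}$ itself) has a minimal element by the DCC for graded submodules, and such a minimal element is, by definition, gr-simple; alternatively one may invoke \Cref{coro: fin cogerado tem minimal}(2), since a gr-artinian module is finitely gr-cogenerated (take $N=0$ in \Cref{prop: gr-noeth <-> todo submod eh fg}(2)), hence $\Gamma_0$-finitely gr-cogenerated. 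Either way, $\socgr(\overline{M})\neq 0$.

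Next I would unwind \Cref{def: socle}: by the bijective correspondence between graded submodules of $\overline{M}$ and graded submodules of $M$ containing $\soc_{\rm gr}^n(M)$, the submodule $\soc_{\rm gr}^{n+1}(M)$ is the unique graded submodule of $M$ with $\soc_{\rm gr}^{n+1}(M)\supseteq\soc_{\rm gr}^n(M)$ and $\soc_{\rm gr}^{n+1}(M)/\soc_{\rm gr}^n(M)=\socgr(\overline{M})$. Since $\socgr(\overline{M})\neq 0$, this forces $\soc_{\rm gr}^n(M)\subsetneq\soc_{\rm gr}^{n+1}(M)$, which is the first assertion.

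For the consequence, suppose $M$ is $\Gamma_0$-artinian and $\soc_{\rm gr}^n(M)\neq M$. Then there is $e\in\Gamma_0$ with $\soc_{\rm gr}^n(M)(e)\neq M(e)$, and by \eqref{eq: socle series M(e)} this reads $\soc_{\rm gr}^n(M(e))\neq M(e)$. Since $M(e)$ is gr-artinian (this is exactly the definition of $\Gamma_0$-artinian), the first part applied to $M(e)$ gives $\soc_{\rm gr}^n(M(e))\subsetneq\soc_{\rm gr}^{n+1}(M(e))$; translating back via \eqref{eq: socle series M(e)} yields $\soc_{\rm gr}^n(M)(e)\subsetneq\soc_{\rm gr}^{n+1}(M)(e)$, and hence $\soc_{\rm gr}^n(M)\subsetneq\soc_{\rm gr}^{n+1}(M)$.

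There is no real obstacle here: the argument is a routine reduction, and the only substantive ingredient is the elementary fact that a nonzero gr-artinian (equivalently finitely gr-cogenerated) module has a gr-simple graded submodule, which is already available in the excerpt. The mildest point of care is keeping track of the identification \eqref{eq: socle series M(e)} between the iterated gr-socle of $M$ evaluated at $e$ and the iterated gr-socle of $M(e)$, which has already been established by induction just before the statement.
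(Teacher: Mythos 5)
Your proof is correct and takes essentially the same route as the paper: for the first assertion, pass to the nonzero gr-artinian quotient $M/\soc_{\rm gr}^n(M)$ and observe that it has a gr-simple graded submodule (the paper states this directly; you spell out that the DCC gives a minimal nonzero graded submodule), and for the consequence, both proofs locate an $e\in\Gamma_0$ with $\soc_{\rm gr}^n(M)(e)\neq M(e)$ and apply the first part to the gr-artinian module $M(e)$ via the identity \eqref{eq: socle series M(e)}. No gaps.
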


\begin{proof}
    Suppose that $n\in\mathbb{N}$ and $\soc_{\rm gr}^n(M)\neq M$.
    Then $M/\soc_{\rm gr}^n(M)$ is a nonzero gr-artinian $R$-module by \Cref{prop: M art ou noeth <--> N e M/N tbm}.
    Therefore, $M/\soc_{\rm gr}^n(M)$ contains a gr-simple graded submodule. 
    It follows that 
    $$\soc_{\rm gr}^{n+1}(M)/\soc_{\rm gr}^n(M)=\soc_{\rm gr}(M/\soc_{\rm gr}^n(M))\neq0,$$ 
    and thus $\soc_{\rm gr}^n(M)\subsetneq\soc_{\rm gr}^{n+1}(M)$.

    Suppose now that $M$ is $\Gamma_0$-artinian. If $\soc_{\rm gr}^n(M)\neq M$, then there exists $e\in \Gamma_0$ such that $\soc_{\rm gr}^n(M)(e)\neq M(e)$. By \eqref{eq: socle series M(e)}, $\soc_{\rm gr}(M(e))=\soc_{\rm gr}^n(M)(e)\neq M(e)$.
    By the gr-artinian case and \eqref{eq: socle series M(e)}, $$\soc_{\rm gr}^n(M)(e)=\soc_{\rm gr}(M(e))\subsetneq\soc_{\rm gr}^{n+1}(M(e))=\soc_{\rm gr}^{n+1}(M)(e).$$ Therefore, $\soc_{\rm gr}^n(M)\subsetneq\soc_{\rm gr}^{n+1}(M)$.    
    \end{proof}

Now we are ready to characterize finite gr-length.

\begin{prop}
\label{prop: carac comp finito via socle}
    Let $R$ be a $\Gamma$-graded ring and $M$ be a $\Gamma$-graded right $R$-module. 
    The following assertions are equivalent:
    \begin{enumerate}[\rm (1)]
        \item $M$ has finite gr-length.
        \item $M$ is gr-artinian and there exists $n\in\mathbb{N}$ such that $\soc_{\rm gr}^n(M)=M$.
        \item $M$ is gr-noetherian and there exists $n\in\mathbb{N}$ such that $\soc_{\rm gr}^n(M)=M$.
    \end{enumerate}
\end{prop}

\begin{proof}
    $(1) \Rightarrow (2)$ and (3): If $M$ has finite gr-length, then $M$ is both gr-artinian and gr-noetherian by \Cref{prop: comp finito = art + noet}. Moreover, the chain \eqref{eq: socle series} stabilizes at $M$ by \Cref{lem: M gr-art => socle chain is strict}.

    (2) or $(3)\Rightarrow(1)$: Suppose that $M$ is gr-artinian (resp. gr-noetherian) and $\soc_{\rm gr}^n(M)=M$ for some $n\in\mathbb{N}$.
    Fix $k\in\mathbb{N}$. Then $\soc_{\rm gr}^k(M)$, $M/\soc_{\rm gr}^k(M)$, and $\soc_{\rm gr}^{k+1}(M)/\soc_{\rm gr}^k(M)=\soc_{\rm gr}(M/\soc_{\rm gr}^k(M))$ are gr-artinian (resp. gr-noetherian) modules by \Cref{prop: M art ou noeth <--> N e M/N tbm}. 
    Therefore, the gr-semisimple module $\soc_{\rm gr}^{k+1}(M)/\soc_{\rm gr}^k(M)$ is gr-noetherian (resp. gr-artinian) by \Cref{prop: carac gr-ss fg}.
    Applying inductively \Cref{prop: M art ou noeth <--> N e M/N tbm}, we obtain that $\soc_{\rm gr}^k(M)$ is gr-noetherian (resp. gr-artinian) for each $k\in\mathbb{N}$.
    In particular, $M=\soc_{\rm gr}^n(M)$ is gr-noetherian (resp. gr-artinian), and it follows from \Cref{prop: comp finito = art + noet} that $M$ has finite gr-length.
\end{proof}

As a consequence of the last proposition, we can characterize $\Gamma_0$-finite length. 

\begin{coro}\label{coro: carac G0 comp finito via socle}
  Let $R$ be a $\Gamma$-graded ring and $M$ be a $\Gamma$-graded right $R$-module. 
    The following assertions are equivalent:
    \begin{enumerate}[\rm (1)]
        \item $M$ has $\Gamma_0$-finite gr-length.
        \item $M$ is $\Gamma_0$-artinian and, for each $e\in \Gamma_0$, there exists $n_e\in\mathbb{N}$ such that $\soc_{\rm gr}^{n_e}(M)(e)=M(e)$.
        \item $M$ is $\Gamma_0$-noetherian and, for each $e\in\Gamma_0$, there exists $n_e\in\mathbb{N}$ such that $\soc_{\rm gr}^{n_e}(M)(e)=M(e)$.
    \end{enumerate}
\end{coro}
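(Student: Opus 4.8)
The plan is to obtain this corollary by applying \Cref{prop: carac comp finito via socle} componentwise, i.e. to each shifted module $M(e)$ with $e\in\Gamma_0$, and then translating the resulting componentwise statements back into the $\Gamma_0$-vocabulary. First I would record the three relevant dictionaries: by definition $M$ has $\Gamma_0$-finite gr-length exactly when $M(e)$ has finite gr-length for each $e\in\Gamma_0$; $M$ is $\Gamma_0$-artinian (resp. $\Gamma_0$-noetherian) exactly when $M(e)$ is gr-artinian (resp. gr-noetherian) for each $e\in\Gamma_0$; and, by the identity \eqref{eq: socle series M(e)}, $\soc_{\rm gr}^{n}(M)(e)=\soc_{\rm gr}^{n}(M(e))$ for all $e\in\Gamma_0$ and $n\in\mathbb{N}$, so that the condition $\soc_{\rm gr}^{n_e}(M)(e)=M(e)$ is exactly the condition $\soc_{\rm gr}^{n_e}(M(e))=M(e)$.

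Granting these, I would fix $e\in\Gamma_0$ and apply \Cref{prop: carac comp finito via socle} to the $\Gamma$-graded right $R$-module $M(e)$: this yields that $M(e)$ has finite gr-length if and only if $M(e)$ is gr-artinian and $\soc_{\rm gr}^{n}(M(e))=M(e)$ for some $n\in\mathbb{N}$, if and only if $M(e)$ is gr-noetherian and $\soc_{\rm gr}^{n}(M(e))=M(e)$ for some $n\in\mathbb{N}$. Conjoining these equivalences over all $e\in\Gamma_0$ and substituting the dictionaries above gives $(1)\Leftrightarrow(2)\Leftrightarrow(3)$. It is worth stressing in the write-up that the integer delivered by \Cref{prop: carac comp finito via socle} depends on the chosen idempotent $e$, which is precisely why conditions (2) and (3) are phrased with a family $(n_e)_{e\in\Gamma_0}$ rather than a single integer: one may not collapse ``for each $e$ there is $n_e$'' into ``there is $n$ for all $e$'', since $M$ need not have strongly $\Gamma_0$-finite gr-length.

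Since every ingredient is already in place — \Cref{prop: carac comp finito via socle}, the definitions of the $\Gamma_0$-conditions, and the identity \eqref{eq: socle series M(e)} — I do not foresee a genuine obstacle here; the proof should amount to essentially a single paragraph invoking the componentwise characterization, the only point requiring care being the quantifier bookkeeping just mentioned.
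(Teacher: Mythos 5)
Your argument is precisely the paper's: you translate (1) into the componentwise statement ``$M(e)$ has finite gr-length for each $e$,'' invoke \Cref{prop: carac comp finito via socle} on each $M(e)$, and use \eqref{eq: socle series M(e)} to rewrite $\soc_{\rm gr}^{n}(M(e))$ as $\soc_{\rm gr}^{n}(M)(e)$. This matches the published proof, including the observation that the integers $n_e$ must be allowed to vary with $e$.
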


\begin{proof}
$(1)\Leftrightarrow (2):$
$M$ has $\Gamma_0$-finite gr-length if and only if $M(e)$ has finite gr-length for each $e\in \Gamma_0$. Equivalently, by \Cref{prop: carac comp finito via socle}, 
$M(e)$ is gr-artinian and there exists $n_e\in\mathbb{N}$ such that $\soc_{\rm gr}^{n_e}(M)(e)=\soc_{\rm gr}^{n_e}(M(e)) =M(e)$ for each $e\in \Gamma_0$.

$(1)\Leftrightarrow (3):$ 
The proof is analogous to that of $(1)\Leftrightarrow(2)$.
\end{proof}

We can also characterize strongly $\Gamma_0$-finite length. 

\begin{prop}
\label{prop: carac fort G0 comp finito via socle}
    Let $R$ be a $\Gamma$-graded ring and $M$ be a $\Gamma$-graded right $R$-module. 
    The following assertions are equivalent:
    \begin{enumerate}[\rm (1)]
        \item $M$ has strongly $\Gamma_0$-finite gr-length.
        \item $M$ is strongly $\Gamma_0$-artinian and there exists $n\in\mathbb{N}$ such that $\soc_{\rm gr}^n(M)=M$.
        \item $M$ is strongly $\Gamma_0$-noetherian and there exists $n\in\mathbb{N}$ such that $\soc_{\rm gr}^n(M)=M$.
    \end{enumerate}
\end{prop}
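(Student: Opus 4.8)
The plan is to obtain the proposition by assembling results already proved, treating $(1)\Rightarrow(2)$ and $(1)\Rightarrow(3)$ together, and $(2)\Rightarrow(1)$ and $(3)\Rightarrow(1)$ together, since inside each pair the gr-artinian and gr-noetherian arguments are completely symmetric. The tools I would use are: \Cref{prop: comp fort finito <=> fort art e for noet}, which already identifies \emph{strongly $\Gamma_0$-finite gr-length} with the conjunction of both strong chain conditions; \Cref{prop: carac fort G0-art/noet}, which describes the strong chain conditions componentwise together with a cofinite subset $\Delta_0\subseteq\Gamma_0$ on which the gr-lengths $c_{\rm gr}(M(e))$ are uniformly bounded; \Cref{prop: carac comp finito via socle}, the gr-socle characterization of finite gr-length; and the identity $\soc_{\rm gr}^n(M)(e)=\soc_{\rm gr}^n(M(e))$ recorded in \eqref{eq: socle series M(e)}, which turns the single condition $\soc_{\rm gr}^n(M)=M$ into the family of conditions $\{\soc_{\rm gr}^n(M(e))=M(e):e\in\Gamma_0\}$.

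For $(1)\Rightarrow(2)$ and $(1)\Rightarrow(3)$, I would assume $M$ has strongly $\Gamma_0$-finite gr-length and fix $n\in\mathbb{N}$ with $c_{\rm gr}(M(e))<n$ for all $e\in\Gamma_0$. By \Cref{prop: comp fort finito <=> fort art e for noet}, $M$ is both strongly $\Gamma_0$-artinian and strongly $\Gamma_0$-noetherian, so it only remains to check $\soc_{\rm gr}^n(M)=M$. Fixing $e\in\Gamma_0$, the module $M(e)$ is gr-artinian of finite gr-length, so \Cref{lem: M gr-art => socle chain is strict} makes the chain $\soc_{\rm gr}^0(M(e))\subseteq\soc_{\rm gr}^1(M(e))\subseteq\cdots$ strict until it reaches $M(e)$; as each strict inclusion raises the gr-length by at least one, the chain reaches $M(e)$ after at most $c_{\rm gr}(M(e))<n$ steps, whence $\soc_{\rm gr}^n(M(e))=M(e)$. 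By \eqref{eq: socle series M(e)} this gives $\soc_{\rm gr}^n(M)(e)=M(e)$ for every $e$, that is $\soc_{\rm gr}^n(M)=M$.

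For $(2)\Rightarrow(1)$ (and $(3)\Rightarrow(1)$ by the symmetric argument), I would assume $M$ is strongly $\Gamma_0$-artinian with $\soc_{\rm gr}^n(M)=M$ for some $n$. By \Cref{prop: carac fort G0-art/noet} there are $\Delta_0\subseteq\Gamma_0$ and $n_0\in\mathbb{N}$ with $\Gamma_0\setminus\Delta_0$ finite, $M$ is $\Gamma_0$-artinian, and $c_{\rm gr}(M(e))\le n_0$ for all $e\in\Delta_0$. For each $e\in\Gamma_0$ the module $M(e)$ is gr-artinian and, by \eqref{eq: socle series M(e)}, satisfies $\soc_{\rm gr}^n(M(e))=\soc_{\rm gr}^n(M)(e)=M(e)$, so $M(e)$ has finite gr-length by \Cref{prop: carac comp finito via socle}. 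Setting $N:=1+\max\bigl(\{n_0\}\cup\{\,c_{\rm gr}(M(f)):f\in\Gamma_0\setminus\Delta_0\,\}\bigr)$, which is a well-defined natural number because $\Gamma_0\setminus\Delta_0$ is finite, one gets $c_{\rm gr}(M(e))<N$ for every $e\in\Gamma_0$, so $M$ has strongly $\Gamma_0$-finite gr-length.

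The only delicate point is the uniformity of the bound in $(2)\Rightarrow(1)$: the hypothesis $\soc_{\rm gr}^n(M)=M$ by itself gives no control on the numbers $c_{\rm gr}(M(e))$, since a single gr-socle layer can be an arbitrarily large gr-semisimple module, so the uniform bound must come from the cofinite bound $n_0$ furnished by \Cref{prop: carac fort G0-art/noet} together with the finitely many exceptional components in $\Gamma_0\setminus\Delta_0$. All the remaining steps are routine passages between a condition on $M$ and the corresponding family of conditions on the shifts $M(e)$ through \eqref{eq: socle series M(e)}; I would also remark that \Cref{prop: carac comp finito via socle}, \Cref{coro: carac G0 comp finito via socle} and this proposition parallel \Cref{prop: comp finito = art + noet}, \Cref{prop: G0 comp finito = G0 art + G0 noet} and \Cref{prop: comp fort finito <=> fort art e for noet} respectively.
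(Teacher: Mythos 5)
Your proof is correct but follows a genuinely different route from the paper's. For $(2)/(3)\Rightarrow(1)$, the paper works with the Loewy filtration of $M$ at the level of strong $\Gamma_0$-chain conditions: it shows each layer $\soc_{\rm gr}^{k+1}(M)/\soc_{\rm gr}^{k}(M)$ inherits the strong $\Gamma_0$-artinian (resp.\ noetherian) condition via \Cref{prop: M fortemente G0-art ou G0-noeth <--> N e M/N tbm}, then---because the layers are gr-semisimple---deduces from \Cref{prop: carac fort G0-art/noet} and \Cref{coro: carac gr-ss fg} that they also possess the dual strong chain condition, and climbs the filtration inductively to reach $M=\soc_{\rm gr}^{n}(M)$ via \Cref{prop: comp fort finito <=> fort art e for noet}. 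You instead reduce everything componentwise: you apply \Cref{prop: carac comp finito via socle} to each $M(e)$ to get finite gr-length, and extract the required \emph{uniform} bound from the cofinite bound $n_0$ furnished by \Cref{prop: carac fort G0-art/noet} together with the finitely many exceptional components. Your version avoids the layer-by-layer induction and is shorter; the paper's version keeps the argument structurally parallel to the proofs of \Cref{prop: carac comp finito via socle} and \Cref{coro: carac G0 comp finito via socle}, and it makes explicit how the strong $\Gamma_0$-conditions propagate along a finite filtration with gr-semisimple quotients. For $(1)\Rightarrow(2)/(3)$ the difference is minor: you pick $n$ explicitly from the uniform length bound and verify $\soc_{\rm gr}^{n}(M(e))=M(e)$ componentwise via \eqref{eq: socle series M(e)}, whereas the paper reads the same conclusion off the stabilization of the tight socle chain using \Cref{lem: M gr-art => socle chain is strict}.
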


\begin{proof}
$(1) \Rightarrow (2)$ and (3): If $M$ has strongly $\Gamma_0$-finite gr-length, then $M$ is both strongly $\Gamma_0$-artinian and strongly $\Gamma_0$-noetherian by \Cref{prop: comp fort finito <=> fort art e for noet}. Moreover,  $M$ is $\Gamma_0$-artinian and $\Gamma_0$-noetherian by \Cref{lem: fort G0-art/noet => G0-art/noet}, and the chain \eqref{eq: socle series} stabilizes at $M$ because it is tight and by \Cref{lem: M gr-art => socle chain is strict}.

    (2) or $(3)\Rightarrow(1)$: Suppose that $M$ is strongly $\Gamma_0$-artinian (resp. strongly $\Gamma_0$-noetherian) and $\soc_{\rm gr}^n(M)=M$ for some $n\in\mathbb{N}$.
    Fix $k\in\mathbb{N}$. Then $\soc_{\rm gr}^k(M)$, $M/\soc_{\rm gr}^k(M)$, and $\soc_{\rm gr}^{k+1}(M)/\soc_{\rm gr}^k(M)=\soc_{\rm gr}(M/\soc_{\rm gr}^k(M))$ are strongly $\Gamma_0$-artinian (resp. strongly $\Gamma_0$-noetherian) modules by \Cref{prop: M fortemente G0-art ou G0-noeth <--> N e M/N tbm}. 
    Therefore, the gr-semisimple module $\soc_{\rm gr}^{k+1}(M)/\soc_{\rm gr}^k(M)$ is strongly $\Gamma_0$-noetherian (resp. strongly $\Gamma_0$-artinian) by \Cref{coro: carac fort gr-ss}.
    Applying inductively \Cref{prop: M fortemente G0-art ou G0-noeth <--> N e M/N tbm}, we obtain that $\soc_{\rm gr}^k(M)$ is strongly $\Gamma_0$-noetherian (resp. strongly $\Gamma_0$-artinian) for each $k\in\mathbb{N}$.
    In particular, $M=\soc_{\rm gr}^n(M)$ is strongly $\Gamma_0$-noetherian (resp. strongly $\Gamma_0$-artinian), and it follows from \Cref{prop: comp fort finito <=> fort art e for noet} that $M$ has strongly $\Gamma_0$-finite gr-length.    
\end{proof}

In Section~\ref{sec: gr-semilocal rings}, for gr-semilocal rings $R$, we will relate (strongly) ($\Gamma_0$-)finite gr-length of graded $R$-modules $M$ with the powers of the  graded Jacobson radical of $R_R$, instead of the terms of the Loewy series of $M$.

\section{Graded Jacobson radical of graded rings}
\label{sec: rad(R)}

In this section, we present  characterizations of the graded ideal $\radgr(R_R)$ together with graded versions of some classical results about the Jacobson radical of rings. Some characterizations of  $\radgr(R_R)$ are direct consequence of the results in Section~\ref{sec: gr-Jacobson gr-socle} about graded modules, but some others come from the graded ring structure.

\begin{defi}
    Let $R$ be a $\Gamma$-graded ring. 
\begin{enumerate}[\rm(1)]
    \item A graded right ideal $\mathfrak{m}$ of $R$ is said to be \emph{gr-maximal} if $\mathfrak{m}$ is a gr-maximal graded submodule of $R_R$.
    \item A $\Gamma$-graded right $R$-module $M$ is \emph{faithful} if \[
    \rann_R(M):=\{a\in R\colon xa=0 \textrm{ for all } x\in M\}=\{0\}.
    \]
    \item The graded ring $R$ is said to be \emph{right gr-primitive} if $R$ has a faithful gr-simple right module.
    \item A graded ideal $P$ of $R$ is \emph{right gr-primitive} if the quotient ring $R/P$ is right gr-primitive.
\end{enumerate}
\end{defi}

 \begin{lema}\label{lem: charact right gr-primitive ideal} 
     Let $R$ be a $\Gamma$-graded ring. A graded ideal $P$ of $R$ is right gr-primitive if and only if there exists a gr-simple right $R$-module $S$ such that $P=\rann(S)$.
 \end{lema}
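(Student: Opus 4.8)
\textbf{Proof plan for Lemma \ref{lem: charact right gr-primitive ideal}.} The plan is to unwind the definitions on both sides and use the correspondence between graded submodules of $R/P$ and graded submodules of $R$ containing $P$, together with the fact that a faithful module over $R/P$ pulls back to a module with right annihilator exactly $P$ over $R$.

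First I would prove the forward implication. Suppose $P$ is right gr-primitive, so $R/P$ has a faithful gr-simple right $R/P$-module, say $\overline{S}$. Regard $\overline{S}$ as a $\Gamma$-graded right $R$-module $S$ via the quotient gr-homomorphism $\pi\colon R\to R/P$; concretely $x\cdot a := x\cdot\pi(a)$ for $x\in S$, $a\in R$. I would check that $S$ is unital (this uses that $\pi(1_e^R)=1_e^{R/P}$) and that the graded $R$-submodules of $S$ coincide with its graded $R/P$-submodules, so $S$ is gr-simple as an $R$-module. Then $\rann_R(S)=\{a\in R\colon x\pi(a)=0 \text{ for all }x\in S\}=\pi^{-1}(\rann_{R/P}(\overline S))=\pi^{-1}(\{0\})=P$, where the middle equality is the faithfulness of $\overline S$.

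For the converse, suppose $S$ is a gr-simple right $R$-module with $P=\rann_R(S)$. Since $P=\rann_R(S)$ is a graded ideal of $R$ (it is readily checked to be a two-sided graded ideal — it is the kernel of the structure map and is generated by homogeneous elements because $S$ is graded), the quotient $R/P$ is a $\Gamma$-graded ring and $S$ becomes a $\Gamma$-graded right $R/P$-module in the natural way, with $S$ gr-simple as an $R/P$-module by the submodule correspondence. By construction $\rann_{R/P}(S)=\{\,\overline a\in R/P\colon S\overline a=0\,\}=\rann_R(S)/P=0$, so $S$ is a faithful gr-simple right $R/P$-module, i.e.\ $R/P$ is right gr-primitive, i.e.\ $P$ is a right gr-primitive graded ideal.

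The only mildly delicate points, rather than genuine obstacles, are verifying that the pulled-back or pushed-down module is \emph{unital} (so that it qualifies as a graded module in the sense fixed in the preliminaries) and that gr-simplicity transfers across the quotient — both follow from the standard submodule correspondence for $R/P$ recorded in Section~\ref{sec: preliminares} and from $\pi$ preserving the local units $1_e$. I do not expect any real difficulty; the statement is essentially a translation exercise between the module-theoretic and ring-theoretic formulations of primitivity, exactly paralleling the classical (ungraded) argument.
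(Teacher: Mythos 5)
Your proof is correct and follows essentially the same route as the paper's: pull back the faithful gr-simple $R/P$-module to an $R$-module with annihilator $P$, and conversely push a gr-simple $R$-module with $\operatorname{r.ann}(S)=P$ down to a faithful gr-simple $R/P$-module via the standard submodule correspondence. The extra checks you flag (unitality of the restricted module, preservation of local units by $\pi$) are sound and worth noting, but the paper treats them as part of the ``natural structure'' without further comment.
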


 \begin{proof}
     Suppose that $P$ is a right gr-primitive ideal of $R$. Let $S$ be a faithful gr-simple right $R/P$-module. When considered as a $\Gamma$-graded right $R$-module in the natural way, $S$ is a gr-simple right $R$-module with $\rann_R(S)=P$. 

    Conversely, suppose that there exists a gr-simple right $R$-module $S$ such that  $P=\rann_R(S)$. Then $S$ has a natural structure of $\Gamma$-graded right $R/P$-module. Moreover, $S$ is a gr-simple $R/P$-module with  $\rann_{R/P} (S)=\{\overline{a}\in R/P\colon x\overline{a}=0 \textrm{ for all } x\in S\}=\{\overline{0}\}$. 
 \end{proof}

\begin{teo}
\label{teo: a carac de radgr(R)}
    Let $R$ be a $\Gamma$-graded ring, $\gamma \in \supp(R)$, and $a \in R_{\gamma}$. 
    The following assertions are equivalent: 
    \begin{enumerate}[\rm (1)]
        \item $a \in \radgr(R_R)$.
        \item $a \in \radgr(R(r(\gamma)))$.
        \item $a\in\maxi$ for each gr-maximal graded ideal $\maxi$ of $R$.
        \item $a\in\maxi$ for each gr-maximal graded submodule $\maxi$ of $R(r(\gamma))$.
        \item $aR$ is gr-superfluous in $R_R$.
        \item $aR$ is gr-superfluous in $R(r(\gamma))$.
        \item $1_{r(\gamma)} - ax$ is right invertible in $R_{r(\gamma)}$ for each $x \in R_{\gamma^{-1}}$.
        \item $1_{r(\gamma)} - ax$ is invertible in $R_{r(\gamma)}$ for each $x \in R_{\gamma^{-1}}$.
        \item $1_{r(\gamma)}-yax$ is invertible in $R_{r(\gamma)}$ for all $x,y\in\h(R)$ satisfying $yax\in R_{r(\gamma)}$.
        \item $Sa = 0$ for each gr-simple $\Gamma$-graded right $R$-module $S$.
        \item $a\in P$ for each right gr-primitive graded ideal $P$ of $R$.
    \end{enumerate}
\end{teo}

\begin{proof}
    The equivalences $(1)\Leftrightarrow(3)$ and $(2)\Leftrightarrow(4)$ follow by definition.

    By \Cref{prop: rad M_sigma}(1), we have $(1)\Leftrightarrow(5)$ and $(2)\Leftrightarrow(6)$.

    From \Cref{prop: rad M(e)}(1), we obtain $(1)\Leftrightarrow(2)$.

    $(7)\Rightarrow(8)$: Let $x \in R_{\gamma^{-1}}$. 
    By (7), there exists $u\in R_{r(\gamma)}$ such that $(1_{r(\gamma)}-ax)u=1_{r(\gamma)}$. 
    Again, by (7), we get that $u=1_{r(\gamma)}+a(xu)$ is right invertible in $R_{r(\gamma)}$. 
    Hence, $u$ is invertible in $R_{r(\gamma)}$ and $1_{r(\gamma)}-ax=u^{-1}$.

    $(8)\Rightarrow(9)$: Let $\sigma,\tau\in\Gamma$ be such that $\tau\gamma\sigma=r(\gamma)$, and take $x\in R_\sigma$, and $y\in R_\tau$.
    Then $xy\in R_{\sigma\tau}=R_{\gamma^{-1}}$, and it follows from (8) that there exists an inverse $u$ of $1_{r(\gamma)}-axy$ in $R_{r(\gamma)}$.
    Hence, $(1_{r(\gamma)}-axy)u=1_{r(\gamma)}=u(1_{r(\gamma)}-axy)$ implies that $axyu=u-1_{r(\gamma)}=uaxy$, and it follows that 
    \begin{align*}
        (1_{r(\gamma)}-yax)(1_{r(\gamma)}+yuax)
        =1_{r(\gamma)}-yax+yuax-yaxyuax=1_{r(\gamma)};\\
        (1_{r(\gamma)}+yuax)(1_{r(\gamma)}-yax)
        =1_{r(\gamma)}-yax+yuax-yuaxyax=1_{r(\gamma)}.
    \end{align*}
    Therefore, $1_{r(\gamma)}-yax$ is invertible in $R_{r(\gamma)}$.

    $(9)\Rightarrow(7)$: Apply (9) with $y=1_{r(\gamma)}$. 

    $(6)\Rightarrow(7)$: Let $x \in R_{\gamma^{-1}}$. 
    Since $aR+(1_{r(\gamma)}-ax)R=R(r(\gamma))$, (6) implies that $(1_{r(\gamma)}-ax)R=R(r(\gamma))$, and it follows that $1_{r(\gamma)}\in (1_{r(\gamma)}-ax)R$.

    $(7)\Rightarrow(10)$: Suppose, by contradiction, that (7) holds but there exists a gr-simple right $R$-module $S$ such that $Sa \neq 0$. 
    Let $s \in \h(S)$ be such that $sa \neq 0$. 
    Since $S$ is gr-simple, we have $saR = S$. 
    Choosing $x \in R_{\gamma^{-1}}$ such that $s = sax$ and $u \in R_{r(\gamma)}$ such that $(1_{r(\gamma)} - ax)u = 1_{r(\gamma)}$, we get $s = s 1_{r(\gamma)} = s (1_{r(\gamma)} - ax) u = 0$,
    a contradiction.

    $(10)\Rightarrow(2)$: Let $S$ be a gr-simple right $R$-module and $g\in\Homgr(R(r(\gamma)),S)$. 
    Then $g(a)=g(1_{r(\gamma)}a)=g(1_{r(\gamma)})a\in Sa=0$ by (10), and thus $a\in\ker g$. 
    By \Cref{lem: rad e soc via ker e im}(1), $a\in\radgr(R(r(\gamma)))$.   

    $(10)\Leftrightarrow(11)$ This equivalence follows from \Cref{lem: charact right gr-primitive ideal} 
\end{proof}

\begin{coro}
\label{coro: radgr(R) como o maior ideal tq}
Let $R$ be a $\Gamma$-graded ring. The following statements hold:
\begin{enumerate}[\rm (1)]
    \item $\radgr(R_R)$ is the largest graded ideal $J$ of $R$ such that $1_e+J_e\subseteq \U(R_e)$ for every $e\in\Gamma_0'(R)$.
    \item $\radgr(R_R)=\radgr( _RR)$.
\end{enumerate}
\end{coro}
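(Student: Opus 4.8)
\textbf{Plan for the proof of \Cref{coro: radgr(R) como o maior ideal tq}.}

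For part (1), I would first argue that $\radgr(R_R)$ itself has the stated property. By \Cref{coro: rad e soc sao prerad}(4), $\radgr(R_R)$ is a graded ideal of $R$, so $\radgr(R_R)=\bigoplus_{\gamma\in\Gamma}\radgr(R_R)_\gamma$ and in particular $\radgr(R_R)_e=\radgr(R_R)\cap R_e$ for each $e\in\Gamma_0$. Take $e\in\Gamma_0$ and $a\in\radgr(R_R)_e$; applying \Cref{teo: a carac de radgr(R)} with $\gamma=e$ (so $r(\gamma)=e$ and $R_{\gamma^{-1}}=R_e$), the equivalence $(1)\Leftrightarrow(8)$ gives that $1_e-ax$ is invertible in $R_e$ for every $x\in R_e$; taking $x=-1_e$ yields $1_e+a=1_e-a(-1_e)\in\U(R_e)$. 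Hence $1_e+\radgr(R_R)_e\subseteq\U(R_e)$.

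Next I would show maximality: if $J$ is any graded ideal of $R$ with $1_e+J_e\subseteq\U(R_e)$ for all $e\in\Gamma_0$, then $J\subseteq\radgr(R_R)$. Since $J$ is graded it suffices to show $J_\gamma\subseteq\radgr(R_R)$ for each $\gamma\in\Gamma$. Let $a\in J_\gamma$ and set $e:=r(\gamma)$. For any $x\in R_{\gamma^{-1}}$ we have $ax\in R_{\gamma\gamma^{-1}}=R_e$, and $ax\in J$ since $J$ is an ideal, so $ax\in J_e$; by hypothesis $1_e+ax\in\U(R_e)$. Replacing $x$ by $-x$, we get $1_e-ax\in\U(R_e)$ for every $x\in R_{\gamma^{-1}}$, which is exactly condition $(8)$ of \Cref{teo: a carac de radgr(R)}, hence $a\in\radgr(R_R)$. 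This establishes that $\radgr(R_R)$ is the largest such graded ideal.

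For part (2), the cleanest route is to observe that the characterization just obtained in (1) is left–right symmetric. Indeed, by the left-handed version of \Cref{coro: rad e soc sao prerad}(4), $\radgr({}_RR)$ is also a graded ideal of $R$; and by the left-handed analogue of the argument above (using the obvious left-module version of \Cref{teo: a carac de radgr(R)}, whose equivalences $(1)\Leftrightarrow(7)\Leftrightarrow(8)$ are proved purely ring-theoretically and are symmetric — condition $(9)$, "$1_{r(\gamma)}-yax$ is invertible in $R_{r(\gamma)}$ for all homogeneous $x,y$ with $yax\in R_{r(\gamma)}$", is manifestly two-sided), $\radgr({}_RR)$ is also the largest graded ideal $J$ of $R$ with $1_e+J_e\subseteq\U(R_e)$ for all $e\in\Gamma_0$. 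Since a largest element of a poset is unique, $\radgr(R_R)=\radgr({}_RR)$. The main obstacle is making sure the cited theorem's conditions, especially $(8)$ and $(9)$, are genuinely symmetric under the left/right swap; since their proofs use only unital arithmetic in the rings $R_e$ and the grading relations $R_\sigma R_\tau\subseteq R_{\sigma\tau}$ (not the side of the module structure), this symmetry holds, so one may invoke the left analogue of \Cref{teo: a carac de radgr(R)} without reproving it.
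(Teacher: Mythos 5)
Your proposal is correct and follows essentially the same route as the paper's proof: establish that $\radgr(R_R)$ is a graded ideal via \Cref{coro: rad e soc sao prerad}(4), use the invertibility characterization in \Cref{teo: a carac de radgr(R)} to verify the extremal property (the paper cites item (9), you cite the equivalent item (8) — an inessential difference), and deduce (2) by observing that the left-handed version of the theorem yields the same left–right symmetric characterization, so the two radicals coincide by uniqueness of a largest element.
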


\begin{proof}
(1)   By \Cref{coro: rad e soc sao prerad}(4) (or \Cref{teo: a carac de radgr(R)}(11)) $\radgr(R_R)$ is a graded ideal. The statement now follows from \Cref{teo: a carac de radgr(R)}(9).

(2) The version of \Cref{teo: a carac de radgr(R)} for $\radgr(_RR)$  gives the corresponding statement of the previous item, which says that $\radgr(_RR)$ is the largest graded ideal $J$ of $R$ such that $1_e+J_e\subseteq \U(R_e)$ for every $e\in\Gamma_0'(R)$, as desired.
\end{proof}

By \Cref{coro: radgr(R) como o maior ideal tq}(2), we set the following definition.

\begin{defi}
    Let $R$ be a $\Gamma$-graded ring. The \emph{graded Jacobson radical} of the graded ring $R$, denoted by $\radgr(R)$, is the graded ideal $\radgr(R_R)=\radgr( _RR)$.
\end{defi}

We continue with some other characterizations of $\radgr(R)$. One of them deals with gr-superfluous right (left) ideals.  A graded right (left) ideal is gr-superfluous if it is a gr-superfluous submodule of $R_R$ ($_RR$). 

\begin{coro}
\label{coro: radgr(R)e=rad(Re)}
    The following assertions hold true for a $\Gamma$-graded ring $R$:
    \begin{enumerate}[\rm (1)]
        \item $\radgr(R)$ is the largest graded ideal $J$ of $R$ such that $J_e=\rad(R_e)$ for every $e\in\Gamma_0$.
        \item $\radgr(R)$ is the largest graded ideal $J$ of $R$ such that $1_eJ1_e=\radgr(1_eR1_e)$ for every $e\in\Gamma_0$.
        \item $\radgr(R)$ is the largest gr-superfluous right (left) ideal in $R$.
    \end{enumerate}
\end{coro}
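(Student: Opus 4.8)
The plan is to deduce all three characterizations from \Cref{teo: a carac de radgr(R)} (mainly the equivalences $(1)\Leftrightarrow(8)\Leftrightarrow(9)$), from \Cref{coro: radgr(R) como o maior ideal tq}, and from the module-theoretic results of \Cref{sec: gr-Jacobson gr-socle} applied to $R_R$, to $_RR$, and to the corner rings $1_eR1_e$.

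For (1), the first step is to show that $\radgr(R)\cap R_e=\rad(R_e)$ for every $e\in\Gamma_0$. If $a\in\radgr(R)\cap R_e$, then the implication $(1)\Rightarrow(8)$ of \Cref{teo: a carac de radgr(R)} with $\gamma=e$ gives $1_e-ax\in\U(R_e)$ for all $x\in R_e$, and by the classical characterization of the Jacobson radical (see e.g. \cite{Lam1}) this forces $a\in\rad(R_e)$; conversely, if $a\in\rad(R_e)$ then $1_e-ax\in\U(R_e)$ for all $x\in R_e=R_{e^{-1}}$, so the implication $(8)\Rightarrow(1)$ of \Cref{teo: a carac de radgr(R)} gives $a\in\radgr(R)$. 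Since $\radgr(R)$ is a graded ideal by \Cref{coro: rad e soc sao prerad}(4), it is a graded ideal with the required property. For maximality, let $J$ be a graded ideal with $J_e=\rad(R_e)$ for all $e$ and take a homogeneous $a\in J_\gamma$; for every $x\in R_{\gamma^{-1}}$ we have $ax\in J\cap R_{r(\gamma)}=\rad(R_{r(\gamma)})$, hence $1_{r(\gamma)}-ax\in\U(R_{r(\gamma)})$, so $a\in\radgr(R)$ by $(8)\Rightarrow(1)$ of \Cref{teo: a carac de radgr(R)}; as $J$ is spanned by its homogeneous elements, $J\subseteq\radgr(R)$.

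For (2), the heart of the matter is the identity $\radgr(1_eR1_e)=1_e\,\radgr(R)\,1_e$ for each $e\in\Gamma_0$, where $1_eR1_e$ is regarded as an object unital graded ring over the isotropy group $e\Gamma e$ (equivalently, as a $\Gamma$-graded ring concentrated in $e\Gamma e$). Both sides are graded ideals of $1_eR1_e$ concentrated in $e\Gamma e$, so it suffices to compare homogeneous components: for $\gamma\in e\Gamma e$ and $a\in R_\gamma$ one has $R_{\gamma^{-1}}=(1_eR1_e)_{\gamma^{-1}}$ and $R_{r(\gamma)}=R_e=(1_eR1_e)_e$, so the condition ``$1_e-ax\in\U(R_e)$ for all $x\in R_{\gamma^{-1}}$'' is, via the equivalence $(1)\Leftrightarrow(8)$ of \Cref{teo: a carac de radgr(R)} applied respectively to $R$ and to $1_eR1_e$, simultaneously equivalent to $a\in\radgr(R)$ and to $a\in\radgr(1_eR1_e)$. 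Hence $\radgr(R)$ has the stated property. For maximality, given a graded ideal $J$ with $1_eJ1_e=\radgr(1_eR1_e)$ for all $e$ and a homogeneous $a\in J_\gamma$, for $x\in R_{\gamma^{-1}}$ we obtain $ax\in J_{r(\gamma)}\subseteq 1_{r(\gamma)}J1_{r(\gamma)}=\radgr(1_{r(\gamma)}R1_{r(\gamma)})=1_{r(\gamma)}\,\radgr(R)\,1_{r(\gamma)}\subseteq\radgr(R)$, so $ax\in\radgr(R)_{r(\gamma)}$ and $1_{r(\gamma)}-ax\in\U(R_{r(\gamma)})$ by \Cref{coro: radgr(R) como o maior ideal tq}(1); then $a\in\radgr(R)$ by $(8)\Rightarrow(1)$ of \Cref{teo: a carac de radgr(R)}, whence $J\subseteq\radgr(R)$. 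The only point requiring care is this last bookkeeping: one must be comfortable treating $1_eR1_e$ as a groupoid-graded ring over the group $e\Gamma e$ and observing that the components $R_{\gamma^{-1}}$ and $R_{r(\gamma)}$ entering criterion (8) are literally the same whether computed in $R$ or in $1_eR1_e$ when $\gamma\in e\Gamma e$.

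Finally, for (3): to see that $\radgr(R)$ is gr-superfluous in $R_R$, I would invoke \Cref{lem: M superfluo sse todo M(e)}(1), so that it suffices to check that $\radgr(R)(e)=\radgr(R(e))$ (by \Cref{prop: rad M(e)}(1)) is gr-superfluous in $R(e)$ for every $e$; this holds because $R(e)=1_eR$ is cyclic, hence finitely generated, so $\radgr(R(e))$ is gr-superfluous in $R(e)$ by \Cref{prop: carac fini (co)gerado via rad e soc}(1). Maximality is immediate from \Cref{prop: (in)essential}(1): every gr-superfluous right ideal $N$ of $R$ is a gr-superfluous graded submodule of $R_R$, hence $N\subseteq\radgr(R_R)=\radgr(R)$. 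The statement for gr-superfluous left ideals follows symmetrically, using the left-module versions of these results together with $\radgr(R)=\radgr(_RR)$ from \Cref{coro: radgr(R) como o maior ideal tq}(2).
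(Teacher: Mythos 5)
Your proof is correct and takes essentially the same route as the paper: both rely on the invertibility criterion in \Cref{teo: a carac de radgr(R)} (you use condition~(8), the paper condition~(7), which are equivalent) for parts (1) and (2), and both obtain part (3) from \Cref{prop: (in)essential} together with \Cref{prop: carac fini (co)gerado via rad e soc} applied to the $\Gamma_0$-finitely generated module $R_R$. The only cosmetic difference is that the paper deduces (2) from (1) via the identity $\radgr(1_eR1_e)_e=\rad(R_e)$ rather than arguing componentwise as you do, but the underlying facts invoked are the same.
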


\begin{proof}
     (1) Let $e\in\Gamma_0$. By \cite[Corollary 4.5(A)]{Lam1}, $\rad(R_e)$ is the largest ideal $J_e$ of $R_e$ such that $1_e+J_e\subseteq \U(R_e)$.       
     The result now follows from \Cref{coro: radgr(R) como o maior ideal tq}(1).
    
    (2)     For each $e\in\Gamma_0$, we have $\radgr(R)_e = \rad(R_e)$ and $1_e\radgr(R)1_e = \radgr(1_eR1_e)$ by \Cref{teo: a carac de radgr(R)}(7).
    In particular, $\radgr(1_eR1_e)_e = \radgr(R)_e = \rad(R_e)$ for every $e\in\Gamma_0$.
    Thus, (2) is a consequence of (1).

    (3) By  \Cref{prop: (in)essential}(1), $\radgr(R)$ contains all gr-superfluous right $R$-submodules of $R$. Moreover, $R_R$ is $\Gamma_0$-finitely generated. Thus, Proposition~\ref{prop: carac fini (co)gerado via rad e soc}(3) implies that $\radgr(R)$ is gr-superfluous as a right ideal. The result for left ideals follows analogously using the left versions of \Cref{prop: (in)essential}(1) and Proposition~\ref{prop: carac fini (co)gerado via rad e soc}(3).
\end{proof}

We comment in passing the following fact.  Given a graded ideal $J$ of a graded ring $R$, it can be considered as a graded submodule of $R_R$ and as a graded submodule of $_RR$.  By \Cref{coro: radgr(R)e=rad(Re)}(3), a right (left) graded ideal is gr-superfluous if and only if it is contained in $\radgr(R)$. Hence $J$ is a gr-superfluous submodule of $R_R$ if and only if $J$ is a gr-superfluous submodule of $_RR$.

The following result is a graded version of \cite[Proposition 11.1]{Lam1}.

\begin{coro}
\label{coro: rad = 0 <=> exist faithful semisimple}
    Let $R$ be a $\Gamma$-graded ring.
    Then $\radgr(R)=0$ if and only if there exists a faithful gr-semisimple $\Gamma$-graded right $R$-module.
\end{coro}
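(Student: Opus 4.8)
The statement is the graded analogue of the classical fact that a ring has zero Jacobson radical exactly when it has a faithful semisimple module. My plan is to prove the two implications separately, using the characterization of $\radgr(R)$ in \Cref{teo: a carac de radgr(R)}(10), namely $a\in\radgr(R)$ if and only if $Sa=0$ for every gr-simple $\Gamma$-graded right $R$-module $S$, together with the basic properties of the gr-socle and of faithful modules.

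\textbf{First implication ($\radgr(R)=0\implies$ existence of a faithful gr-semisimple module).}
Let $\{S_i:i\in I\}$ be a complete set of representatives of the gr-isomorphism classes of gr-simple $\Gamma$-graded right $R$-modules, and consider the graded direct product $M:=\prodgr_{i\in I}S_i$. Each $S_i$ is gr-semisimple, hence $M$ is a graded direct product of gr-semisimple modules; one checks that $M$ is gr-semisimple as well (indeed $M(e)=\prod_{i\in I}S_i(e)$ and this is a product of gr-simple-or-zero components, which by \cite[Proposition 52]{CLP} and the description in \Cref{obs:maximal and soc} is gr-semisimple — alternatively, note $M$ embeds as $\socgr$ of itself). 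Then $\rann_R(M)=\bigcap_{i\in I}\rann_R(S_i)=\{a\in R:S_ia=0\text{ for all }i\in I\}$. Since every gr-simple module is gr-isomorphic to some $S_i$ and gr-isomorphisms preserve right annihilators, this intersection equals $\{a\in R:Sa=0\text{ for all gr-simple }S\}=\radgr(R)=0$ by \Cref{teo: a carac de radgr(R)}. Hence $M$ is a faithful gr-semisimple $\Gamma$-graded right $R$-module. (One should make sure the graded direct product of gr-semisimple modules is gr-semisimple; if there is any delicacy here, one can instead take the direct sum $\bigoplus_{i\in I}S_i^{(\h(R))}$ or argue degree-by-degree on the homogeneous components, which are honest products of modules over the unital ring $R_e$.)

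\textbf{Second implication (existence of a faithful gr-semisimple module $\implies\radgr(R)=0$).}
Suppose $M$ is a faithful gr-semisimple $\Gamma$-graded right $R$-module, so $M=\bigoplus_{j\in J}S_j$ with each $S_j$ gr-simple. If $a\in\radgr(R)$, then by \Cref{teo: a carac de radgr(R)}(10) we have $S_ja=0$ for every $j\in J$, hence $Ma=\bigoplus_{j\in J}S_ja=0$, so $a\in\rann_R(M)=0$. Thus $\radgr(R)=0$. This direction is essentially immediate from the cited characterization.

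\textbf{Main obstacle.}
The only genuinely nontrivial point is producing, in the first implication, a \emph{single} faithful gr-semisimple module; the subtlety is whether one may use a graded direct product (which lands in $\Gamma\text{-gr-}R$ by the definition of $\prodgr$) and whether that product is still gr-semisimple — in the group graded case an arbitrary direct product of simples need not be semisimple, so one must be slightly careful. The safe route is to work componentwise: for each $e\in\Gamma_0$, $M(e)=\prod_{i\in I}S_i(e)$ is a product of modules over the \emph{unital} ring $R_e$, each of which is either $0$ or gr-simple, and one wants $M(e)$ to be a (possibly infinite) direct sum of gr-simple submodules. If this fails, replace the product by the direct sum $\bigoplus_{i\in I}S_i$, which is automatically gr-semisimple; its right annihilator is again $\bigcap_i\rann_R(S_i)=\radgr(R)=0$, so it serves equally well as the faithful gr-semisimple module. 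Everything else is a routine translation of the ungraded argument \cite[Proposition 11.1]{Lam1}.
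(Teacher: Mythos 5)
Your final fix is exactly the paper's proof: take $M:=\bigoplus_{i\in I}S_i$ where $\{S_i\}_{i\in I}$ is a complete set of representatives of gr-isomorphism classes of gr-simple modules, compute $\rann(M)=\bigcap_{i\in I}\rann(S_i)=\radgr(R)$ via \Cref{teo: a carac de radgr(R)}(10), and use the same item for the converse. However, the argument as you actually wrote it first takes $M:=\prodgr_{i\in I}S_i$ and asserts that this graded direct product is gr-semisimple, and that assertion is false in general; the parenthetical justification you offer does not establish it. Already when $\Gamma$ is the trivial group one can take $R=\mathbb{Z}$, whose Jacobson radical is $0$, and $\prod_p \mathbb{Z}/p\mathbb{Z}$ is not a semisimple $\mathbb{Z}$-module, so the product route genuinely fails under the hypothesis $\radgr(R)=0$. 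In fact the statement ``every graded direct product of gr-simple modules is gr-semisimple'' is, by \Cref{teo: carac R/rad gr-ss}, \emph{equivalent} to the strictly stronger hypothesis that $R$ is gr-semilocal.

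Your ``main obstacle'' paragraph correctly diagnoses the problem and supplies the direct-sum replacement; the issue is that you present it as a fallback rather than as the argument, so the body of the proof as written contains a wrong step. Replace the product by the direct sum from the outset (gr-semisimplicity of a direct sum of gr-simples is immediate from the definition, and $\rann$ of a direct sum is the intersection of the $\rann$'s of the summands just as for the product), and the proof coincides with the paper's.
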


\begin{proof}
    Suppose that $M$ is a faithful gr-semisimple $\Gamma$-graded right $R$-module.
    By \Cref{teo: a carac de radgr(R)}(10), $M\cdot\radgr(R)=0$, and it follows that $\radgr(R)=0$.

    Conversely, assume that $\radgr(R)=0$. 
    Let $\{S_i:i\in I\}$ be a complete set of representatives of gr-isomorphism classes of gr-simple $\Gamma$-graded right $R$-modules.
    Then $M:=\bigoplus_{i\in I}S_i$ is a gr-semisimple $R$-module such that
    \[\rann(M)=\bigcap_{i\in I}\rann(S_i)=\radgr(R)\]
    by \Cref{teo: a carac de radgr(R)}(10).
\end{proof}

\begin{prop}
\label{prop: rad of R/I for I contained in rad}
    Let $R$ and $S$ be $\Gamma$-graded rings. The following assertions hold:
    \begin{enumerate}[\rm (1)]
        \item If $\varphi\colon R\rightarrow S$ is surjective gr-homomorphism of rings, then $\varphi(\radgr(R))\subseteq\radgr(S)$. 
        \item If $\varphi\colon R\rightarrow S$ is surjective gr-homomorphism of rings with $\ker\varphi\subseteq \radgr(R)$, then $\varphi(\radgr(R))=\radgr(S)$. 
        \item If $J$ is a graded ideal contained in $\radgr(R)$, then $\radgr\left(R/J\right)=\radgr(R)/J$.
        \item $\radgr\left(\frac{R}{\radgr(R)}\right)=\{0\}.$
    \end{enumerate}
    
        \end{prop}

\begin{proof}
    (1) Let $\gamma\in\supp(R)$ and $a\in\radgr(R)_\gamma$. We will show that $\varphi(a)\in\radgr(S)$.
    Take $y\in S_{\gamma^{-1}}$. 
    By the surjectivity of $\varphi$, we have $y=\varphi(x)$ for some $x\in R_{\gamma^{-1}}$, and by \Cref{teo: a carac de radgr(R)}(7), there exists $u\in R_{r(\gamma)}$ such that $(1_{r(\gamma)} - ax)u=1_{r(\gamma)}$.
    Then 
    \[(1_{r(\gamma)} - \varphi(a)y)\varphi(u)=(\varphi(1_{r(\gamma)}) - \varphi(a)\varphi(x))\varphi(u)=\varphi((1_{r(\gamma)} - ax)u)=\varphi(1_{r(\gamma)})=1_{r(\gamma)},\]
    and it follows that $1_{r(\gamma)} - \varphi(a)y$ is right invertible in $S_{r(\gamma)}$.
    By \Cref{teo: a carac de radgr(R)}(7), $\varphi(a)\in\radgr(S)_\gamma$.

    (2) Let $\gamma\in\supp(S)$ and $b\in\radgr(S)_\gamma$.
    Since $\varphi$ is surjective, we have $b=\varphi(a)$ for some $a\in R_\gamma$.
    We will show that $a\in\radgr(R)$.
    Take $x\in R_{\gamma^{-1}}$. 
    Then $\varphi(x)\in S_{\gamma^{-1}}$ and, by \Cref{teo: a carac de radgr(R)}(7), there exists $v\in S_{r(\gamma)}$ such that $(1_{r(\gamma)} - b\varphi(x))v=1_{r(\gamma)}$.
    Since $\varphi$ is surjective, we have $v=\varphi(u)$ for some $u\in R_{r(\gamma)}$.
    Then
    \[\varphi((1_{r(\gamma)} - ax)u)=(1_{r(\gamma)} - b\varphi(x))v=1_{r(\gamma)}=\varphi(1_{r(\gamma)}),\]
    and it follows that $1_{r(\gamma)}-(1_{r(\gamma)} - ax)u\in\ker\varphi\subseteq\radgr(R)$.
    Again by \Cref{teo: a carac de radgr(R)}(7), $(1_{r(\gamma)} - ax)u$ is right invertible in $R_{r(\gamma)}$, and thus $(1_{r(\gamma)} - ax)$ also is.
    Therefore, $a\in\radgr(R)_\gamma$ by \Cref{teo: a carac de radgr(R)}(7).

    (3) Apply (2) for the canonical map $\varphi:R\to R/J$.

    (4) This is immediate from (3)
\end{proof}

Note that \Cref{prop: rad of R/I for I contained in rad}(2) is similar to the first part of \Cref{coro: quando g(rad)=rad}(1); however, unlike the case of gr-homomorphism of modules (\Cref{coro: rad e soc sao prerad}(1)), the surjectivity is necessary in \Cref{prop: rad of R/I for I contained in rad}(1).
For example, in the ungraded setting, consider the  inclusion map from a local integral domain with nonzero Jacobson radical in its field of fractions.

The graded ring $R/\radgr(R)$ shares some common properties with the graded ring $R$, as shown in the following graded version of \cite[Proposition~4.8]{Lam1}.

\begin{prop} 
\label{prop: R e R/J tem os mesmos gr-simples}
Let $R$ be a $\Gamma$-graded ring and $\overline{R} = R/\radgr(R)$. The following assertions hold true:

\begin{enumerate}[\rm (1)] 
    \item $S$ is a gr-simple $\Gamma$-graded $R$-module if and only if $S$ is a gr-simple $\Gamma$-graded $\overline R$-module.
    \item  $\overline{R}$ is a gr-semisimple ring if and only if it is a gr-semisimple $R$-module.
    \item $a\in \h(R)$ is right (resp. left) gr-invertible in $R$ if and only if $a+\radgr(R)\in \h(\overline R)$ is right (resp. left) gr-invertible in $\overline R$.
    \item  $\radgr(R)$ is the largest graded right (resp. left) ideal $J$ of $R$ such that, for each $e\in\Gamma_0$ and $u\in R_e$, we have that $u$ is right (resp. left) invertible in $R_e$ if and only if $u+J$ is right (resp. left) invertible in $(R/J)_e$.
\end{enumerate}
\end{prop}

\begin{proof}

(1) If $S$ is a gr-simple $\Gamma$-graded $\overline R$-module, then it is clear that $S$ is a gr-simple $\Gamma$-graded $R$-module by restriction of scalars. 
Conversely, if $S$ is a gr-simple $\Gamma$-graded right $R$-module, then $S$ becomes a gr-simple $\Gamma$-graded right $\overline R$-module by defining the action $x(a+\radgr(R)):=xa$ for each $x\in S$ and $a\in R$. This is well-defined by \Cref{teo: a carac de radgr(R)}(10).

 (2) By (1), $\overline{R}$ is a sum of gr-simple $\Gamma$-graded $R$-modules if and only if $\overline{R}$ is a sum of gr-simple $\Gamma$-graded $\overline{R}$-modules.

(3) Let $\gamma\in\Gamma$ and $a\in R_\gamma$ be such that $a+\radgr(R)$ is right gr-invertible in $\overline R$. 
Then there exists $b\in R_{\gamma^{-1}}$ such that $ab+\radgr(R)=1_{r(\gamma)}+\radgr(R)$. 
Thus, $ab\in 1_{r(\gamma)}+\radgr(R)_{r(\gamma)}\subseteq \U(R_{r(\gamma)})$ by \Cref{coro: radgr(R) como o maior ideal tq}(1), and it follows that $a$ is right gr-invertible in $R$.  The converse is obvious. 
The left case is similar.

(4) Suppose that $J$ is a graded right ideal of $R$ such that, for each $e\in\Gamma_0'(R)$ and $u\in R_e$, we have that $u$ is right invertible in $R_e$ if and only if $u+J$ is right invertible in $(R/J)_e$.
Let $\gamma\in\supp(R)$ and $a\in J_\gamma$.
For each $x\in R_{\gamma^{-1}}$, $(1_{r(\gamma)}-ax)+J=1_{r(\gamma)}+J$ is invertible in $(R/J)_{r(\gamma)}$, which implies that $1_{r(\gamma)}-ax$ is right invertible in $R_{r(\gamma)}$.
Therefore, $a\in\radgr(R)$ by \Cref{teo: a carac de radgr(R)}(7).
The left case is analogous.
\end{proof}

We give a definition that will be needed in the next theorem.

\begin{defi}
  Let $R$ be a $\Gamma$-graded ring. A graded right ideal $I$ of $R$ will be called \emph{gr-principal} if there exists $a\in\h(R)$ such that $I=aR$.  
\end{defi}

We now present the graded version of an important classical result. It gives characterizations of gr-semisimple rings in terms of graded chain conditions and the graded Jacobson radical. The equivalence of (1) and (2) in \Cref{teo: R gr-ss <=> rad(R)=0 e R art} for the ring $R_\mathcal{C}$ of a small preadditive category $\mathcal{C}$ was given in \cite[Theorem~4.4]{Mit}.

\begin{teo}
\label{teo: R gr-ss <=> rad(R)=0 e R art}
	Let $R$ be a $\Gamma$-graded ring. The following assertions are equivalent:
	\begin{enumerate}[\rm (1)]
		\item $R$ is a gr-semisimple ring.
		\item $\radgr(R)=0$ and $R$ is a right  $\Gamma_0$-artinian ring.
		\item $\radgr(R)=0$ and $R$ satisfies the descending chain condition on gr-principal graded right  ideals.
	\end{enumerate}
\end{teo}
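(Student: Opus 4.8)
The implications $(1)\Rightarrow(2)\Rightarrow(3)$ are the easy directions. For $(1)\Rightarrow(2)$: if $R$ is gr-semisimple then $R_R$ is gr-semisimple, so $\radgr(R)=\radgr(R_R)=0$ by \Cref{exms: gr-soc and gr-rad of gr-semisimple}(2). Moreover, by \cite[Lemma 5.12(2)]{Artigoarxiv}, $R(e)$ is a finite direct sum of gr-simple modules for each $e\in\Gamma'_0(R)$, hence gr-artinian by \Cref{prop: soma finita de art/noeth}; and $R(e)=0$ for $e\notin\Gamma'_0(R)$. Thus $R$ is right $\Gamma_0$-artinian. The implication $(2)\Rightarrow(3)$ is immediate, since the DCC on gr-principal graded right ideals is a special case of the DCC on graded submodules of the $R(e)$'s (a gr-principal right ideal $aR$ with $a\in R_\gamma$ lives inside $R(r(\gamma))$, and a strictly descending chain of such would, by the decomposition $R=\bigoplus_{e\in\Gamma_0}R(e)$, force a strictly descending chain inside some single $R(e)$).

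\textbf{The main work is $(3)\Rightarrow(1)$.} The plan is to adapt the classical argument (see e.g. \cite[Theorem 4.14]{Lam1} or the corresponding result in \cite{AndersonFuller}): from $\radgr(R)=0$ plus the DCC on gr-principal graded right ideals, produce a decomposition of each $R(e)$ as a finite direct sum of gr-simple right ideals. Fix $e\in\Gamma_0$ with $1_e\neq 0$. First I would show that $R(e)=1_eR$ contains a gr-simple right ideal: among all nonzero gr-principal graded right ideals contained in $1_eR$, the DCC gives a minimal one, and minimality forces it to be gr-simple (if $aR$ is minimal gr-principal and $0\neq b\in aR$ is homogeneous, then $bR\subseteq aR$ is gr-principal, so $bR=aR$, whence $aR$ is gr-simple). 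Next, the key step: using $\radgr(R)=0$, show that every gr-simple right ideal $S\subseteq 1_eR$ is a graded direct summand of $R(e)$. Since $\radgr(R)=\radgr(R_R)=0$ and $S$ is gr-simple, there is a gr-maximal graded submodule $\maxi$ of $R_R$ with $S\not\subseteq\maxi$; then $S\cap\maxi=0$ by gr-simplicity and $S+\maxi=R$ by gr-maximality, so $R_R=S\oplus\maxi$, and intersecting with $R(e)$ gives $R(e)=S\oplus(\maxi\cap R(e))$ — here one uses that $S\subseteq R(e)$, so $S$ is an internal summand of the $R(e)$-component. (Alternatively one can phrase this using idempotents: $S=g R$ for a suitable homogeneous idempotent $g\in R_e$, so $1_eR=gR\oplus(1_e-g)R$.)

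\textbf{Completing the decomposition.} Having the summand property, I would build a chain: $R(e)=S_1\oplus C_1$ with $S_1$ gr-simple; if $C_1\neq 0$ it is a graded direct summand of $R(e)$, hence again has the DCC on gr-principal right ideals and $\radgr(C_1)=0$ (as $C_1\subgr R_R$ and $\radgr$ of a summand is the intersection with $\radgr$ of the whole, which is $0$), so $C_1$ contains a gr-simple right ideal $S_2$ which is a summand: $R(e)=S_1\oplus S_2\oplus C_2$. Iterating produces a strictly descending chain $R(e)\supsetneq C_1\supsetneq C_2\supsetneq\cdots$ where each $C_i=S_{i+1}\oplus C_{i+1}$. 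The subtle point — and I expect this to be the \emph{main obstacle} — is that the $C_i$ are not a priori gr-principal, so the DCC on gr-principal right ideals does not directly halt this chain. To fix this, I would instead work with the ascending chain $S_1\subseteq S_1\oplus S_2\subseteq\cdots$ of \emph{finitely generated} (hence gr-principal, since each is generated by a single homogeneous idempotent) graded right ideals, and observe that its "complements" $C_1\supsetneq C_2\supsetneq\cdots$ are related to gr-principal ideals via the complementary idempotents $1_e-g_i$; a strictly descending chain of the $C_i$ then yields a strictly descending chain of the gr-principal right ideals $(1_e-g_i)R$, contradicting (3). Hence the process terminates: $R(e)=S_1\oplus\cdots\oplus S_{n_e}$ is gr-semisimple. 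Since $\radgr(R)=0$ forces (via \Cref{obs: G0-conditions} or directly) that $\Gamma'_0(R)$ behaves well and $R=\bigoplus_{e\in\Gamma_0}R(e)$, each $R(e)$ being gr-semisimple gives that $R_R=\bigoplus_{e\in\Gamma_0}R(e)$ is gr-semisimple, i.e. $R$ is a gr-semisimple ring. I would double-check the idempotent bookkeeping in the object-unital (non-unital) setting, since that is where the groupoid-graded case genuinely departs from the classical one; everything takes place inside a fixed $R(e)=1_eR$ where $1_e$ acts as a unit, so the classical manipulations with idempotents go through locally, but the write-up must be careful to keep all idempotents homogeneous of degree $e$.
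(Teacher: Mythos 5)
Your proof is correct and follows essentially the same route as the paper's: extract a gr-simple right ideal inside $R(e)$ as a minimal element among the nonzero gr-principal graded right ideals contained in $R(e)$, use $\radgr(R)=0$ to obtain a gr-maximal graded right ideal $\mathfrak{m}$ avoiding it and conclude that the gr-simple ideal is a graded direct summand of $R(e)$, then iterate and halt the process by observing that each successive complement is generated by an idempotent of $R_e$ and is therefore gr-principal, so the DCC in (3) terminates the chain. The only presentational difference is that the paper states the summand claim once for an arbitrary nonzero graded submodule $I\subgr R(e)$ and argues by contradiction, whereas you carry the chain of complements $C_i$ explicitly; the mathematical content and the key use of idempotents to invoke (3) are identical.
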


\begin{proof}
		$(1)\Rightarrow(2)$: It follows from  \Cref{coro: carac gr-ss fg} since $R_R$ is $\Gamma_0$-finitely generated.
	
	$(2)\Rightarrow(3)$: Every descending chain of gr-principal graded right ideals is contained in some $R(e)$, with $e \in \Gamma_0$.
	
	$(3)\Rightarrow(1)$: Suppose (3) holds. 
    We claim that if $e \in \Gamma'_0(R)$ and $I$ is a nonzero graded $R$-submodule of $R(e)$, then there exist a gr-simple module $S$ and $X \subgr I$ such that $I = S \oplus X$. 
    In fact, there exists a minimal element $S$ in the set $\{aR : 0\neq a \in \h(I)\}$ because $R$ satisfies the descending chain condition on gr-principal graded right  ideals. 
    Then $S$ is gr-simple, and since $\radgr(R) = 0$, we obtain a gr-maximal graded right ideal $\mathfrak{m}$ of $R$ such that $S \cap \mathfrak{m} = 0$. 
    Hence, by the maximality of $\mathfrak{m}$, we get $R(e) = S \oplus \mathfrak{m}(e)$, and it follows that $I = S \oplus (I \cap \mathfrak{m}(e))$.
	
	Suppose, by contradiction, that $R$ is not a gr-semisimple ring. 
    Then there exists $e \in \Gamma'_0(R)$ such that $R(e)$ is not a gr-semisimple $R$-module. 
    By the claim above, there exist a gr-simple module $S_1$ and $X_1 \subgr R(e)$ such that $R(e) = S_1 \oplus X_1$. 
    Since $R(e)$ is not gr-simple, we have $X_1 \neq 0$, and thus there exist a gr-simple $R$-module $S_2$ and $X_2 \subgr X_1$ such that $X_1 = S_2 \oplus X_2$. Since $S_2 \neq 0$, we obtain $X_1 \supsetneq X_2$. Moreover,  $X_2\neq 0$ because $R(e)$ is not gr-semisimple.  Proceeding inductively, we obtain a strictly descending chain of graded direct summands of $R(e)$
	\[
	X_1 \supsetneq X_2 \supsetneq X_3 \supsetneq \cdots\,,
	\]
	contradicting the assumption in (3), since each $X_i$ is generated by an idempotent of $R_e$.
\end{proof}

We continue with a groupoid graded version of Nakayama's lemma.

\begin{prop} 
Let $R$ be a $\Gamma$-graded ring and $J$ be a graded right ideal of $R$. The following assertions are equivalent:
\begin{enumerate}[\rm (1)]

    \item $J \subseteq \radgr(R)$.
    
    \item For each finitely generated $\Gamma$-graded right $R$-module $M$, $MJ = M$ implies \mbox{$M = 0$}.
    
    \item For each $\Gamma$-graded right $R$-module $M$ and $N \subgr M$ such that ${M}/{N}$ is finitely generated,  $N + MJ = M$ implies $N = M$.
    
    \item For each $\Gamma_0$-finitely generated $\Gamma$-graded right $R$-module $M$, $MJ = M$ implies $M = 0$.
    
    \item For each $\Gamma$-graded right $R$-module $M$ and $N \subgr M$ such that ${M}/{N}$ is $\Gamma_0$-finitely generated,  $N + MJ = M$ implies $N = M$.
    
\end{enumerate}
\end{prop}

\begin{proof}

$(1) \Rightarrow (2)$: Let $M$ be a nonzero finitely generated $\Gamma$-graded right $R$-module. 
Since $M$ is finitely generated, there exists a gr-maximal graded submodule $M'$ of $M$ by \cite[Lemma 55]{CLP}. 
Thus, ${M}/{M'}$ is gr-simple and $ \left( {M}/{M'} \right)\cdot J = 0$ by (1) and \Cref{teo: a carac de radgr(R)}(10). 
Therefore, $ M J \subseteq M'$, which shows that $MJ \neq M$.

$(2) \Rightarrow (3)$: It suffices to apply (2) to the graded right $R$-module ${M}/{N}$.

$(3) \Rightarrow (1)$: Suppose that (3) holds, but that there exists a homogeneous element $a \in \h(J)$ such that $a \notin \radgr(R)$. 
Then there exists a gr-maximal graded right ideal $\mathfrak{m}$ of $R$ such that $a \notin \mathfrak{m}$. 
Hence, $\mathfrak{m} + J = R$, which implies $\mathfrak{m} + RJ = R$. 
Since $R/\mathfrak{m}$ is gr-simple, it follows from (3) that $\mathfrak{m} = R$, a contradiction.

$(2) \Rightarrow (4)$: If $M$ is a $\Gamma_0$-finitely generated graded right $R$-module such that $MJ = M$, then $M(e)$ is finitely generated and $M(e)\cdot J = (MJ)(e) = M(e)$ for each $e \in \Gamma_0$. Applying (2), we get that $M(e) = 0$ for all $e \in \Gamma_0$. Hence $M = 0$.

$(4) \Rightarrow (2)$: Every finitely generated graded right $R$-module is $\Gamma_0$-finitely generated.

$(3) \Leftrightarrow (5)$: The proof of this equivalence is analogous to that of $(2) \Leftrightarrow (4)$.
\end{proof}

If $R$ is a $\Gamma$-graded ring and $J$ is a graded ideal of $R$ such that $1_eJ1_e = \radgr(1_eR1_e)$ for each $e \in \Gamma_0$, then $J \subseteq \radgr(R)$ by \Cref{coro: radgr(R)e=rad(Re)}(2).
According to \cite[Lemma 5]{Kelly}, if $R = R_{\mathcal{C}}$ for some small additive category $\mathcal{C}$, then $1_eJ1_e = \radgr(1_eR1_e)$ for each $e \in \Gamma_0$ implies that $J = \radgr(R)$.   
We generalize this result for more general groupoid graded rings.
Before stating the result, we observe that if $\mathcal{C}$ is a small additive category, then the $\mathcal{C}_0\times \mathcal{C}_0$-graded ring $R_\mathcal{C}$ has the following property: for any $A_1,A_2\in\mathcal{C}_0$, the canonical projection $p_i\in\mathcal{C}(A_1\oplus A_2, A_i)$ is a right gr-invertible element belonging to $\h(1_{e_i}R_\mathcal{C}1_e)$ for each $i=1,2$, where $e_i=(A_i,A_i)$ and $e=(A_1\oplus A_2,A_1\oplus A_2)$.

\begin{prop}
    Let $R$ be a $\Gamma$-graded ring such that, for each $e_1,e_2\in\Gamma_0$, there exist $e\in\Gamma_0$ and right gr-invertible elements belonging to $\h(1_{e_1}R1_e)$ and $\h(1_{e_2}R1_e)$. Then the following assertions hold: 
    \begin{enumerate}[\rm (1)]
        \item If $J$ and $J'$ are graded ideals of $R$ such that $1_eJ1_e=1_eJ'1_e$ for every $e\in\Gamma_0$, then $J=J'$.
        \item If $J$ is a graded ideal of $R$ such that $1_eJ1_e = \radgr(1_eR1_e)$ for every $e \in \Gamma_0$, then $J = \radgr(R)$.
    \end{enumerate}
\end{prop}

\begin{proof}
  (1)  Let $J$ and $J'$ be graded ideals such that $1_eJ1_e=1_eJ'1_e$ for each $e\in\Gamma_0$.
    Let $e_1,e_2\in\Gamma_0$.
    Take $e\in\Gamma_0$, $p_1\in\h(1_{e_1}R1_e)$, $p_2\in\h(1_{e_2}R1_e)$, $q_1\in\h(1_eR1_{e_1})$ and $q_2\in\h(1_eR1_{e_2})$ such that $p_1q_1=1_{e_1}$ and $p_2q_2=1_{e_2}$. 
    Hence,
    \[1_{e_1}J1_{e_2}
    =p_1q_1Jp_2q_2
    \subseteq p_11_eRJR1_eq_2
    = p_11_eJ1_eq_2
    = p_11_eJ'1_eq_2
    \subseteq 1_{e_1}J'1_{e_2}.
    \]
    Interchanging $J$ and $J'$, we also get $1_{e_1}J'1_{e_2}\subseteq 1_{e_1}J1_{e_2}$.
    Therefore, $1_{e_1}J1_{e_2}=1_{e_1}J'1_{e_2}$ for each $e_1,e_2\in\Gamma_0$, and it follows that $J=J'$.

    (2) Set $J'=\radgr(R)$. By \Cref{coro: radgr(R)e=rad(Re)}(2), $1_eJ'1_e=\radgr(1_eR1_e)$ for every $e\in\Gamma_0$. The result now follows from (1).
\end{proof}

We end this section computing the graded Jacobson radical of some relevant graded rings.
We begin with the rings $\M_I(A)$ and $\UT_I(A)$ (see \Cref{exem: M_I(A)}) for a unital ring $A$.

\begin{prop}
\label{prop: rad(M_I(A))}
If $A$ is a unital ring and $I$ is a non-empty set, then  $\radgr\left( \M_I(A) \right) = \M_I \left( \rad (A) \right)$.
\end{prop}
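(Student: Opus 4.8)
The strategy is to use the characterization of the graded Jacobson radical provided by \Cref{coro: radgr(R)e=rad(Re)}(1), which says that $\radgr(R)$ is the largest graded ideal $J$ of $R$ such that $J_e = \rad(R_e)$ for every $e \in \Gamma_0$. Here $R = \M_I(A)$, graded by $\Gamma = I \times I$, so the idempotents are the pairs $(i,i)$ with $i \in I$, and the ring $R_{(i,i)} = A E_{ii}$ is isomorphic to $A$. Thus $\rad(R_{(i,i)}) \cong \rad(A)$, identified with $\rad(A)E_{ii}$. The candidate graded ideal is $J := \M_I(\rad(A))$, whose homogeneous component of degree $(i,j)$ is $\rad(A)E_{ij}$.

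First I would verify that $J = \M_I(\rad(A))$ is indeed a graded ideal of $R$: it is closed under sums, it is a two-sided ideal because $\rad(A)$ is a two-sided ideal of $A$ and matrix multiplication distributes the entries appropriately ($A E_{ij} \cdot \rad(A) E_{jk} = A\rad(A) E_{ik} \subseteq \rad(A) E_{ik}$ and similarly on the other side), and it is graded since $J = \bigoplus_{(i,j)} \rad(A)E_{ij}$ with $\rad(A)E_{ij} = J \cap R_{(i,j)}$. Its homogeneous idempotent-degree components satisfy $J_{(i,i)} = \rad(A)E_{ii} = \rad(R_{(i,i)})$, so by \Cref{coro: radgr(R)e=rad(Re)}(1) we immediately get $J \subseteq \radgr(R)$.

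For the reverse inclusion, I would again invoke \Cref{coro: radgr(R)e=rad(Re)}(1): $\radgr(R)$ is a graded ideal with $\radgr(R)_{(i,i)} = \rad(R_{(i,i)}) = \rad(A)E_{ii}$. So it suffices to show that any graded ideal $K$ of $R$ with $K_{(i,i)} = \rad(A)E_{ii}$ for all $i$ must be contained in $\M_I(\rad(A))$; equivalently, $K_{(i,j)} \subseteq \rad(A)E_{ij}$ for all $i,j \in I$. Take $a E_{ij} \in K_{(i,j)}$. Using that $K$ is a two-sided ideal, multiply on the left by $E_{ji} = 1_{(j,i)} \cdot$ (note $E_{ji} \in R_{(j,i)}$): $E_{ji} \cdot a E_{ij} = a E_{jj} \in K \cap R_{(j,j)} = K_{(j,j)} = \rad(A)E_{jj}$. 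Hence $a \in \rad(A)$, so $aE_{ij} \in \rad(A)E_{ij}$. This gives $K \subseteq \M_I(\rad(A)) = J$, and applying it to $K = \radgr(R)$ completes the argument, so $\radgr(R) = J = \M_I(\rad(A))$.

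\textbf{Main obstacle.} The argument is essentially routine given the earlier machinery; the only point requiring a little care is the bookkeeping with the grading and the identification $R_{(i,i)} \cong A$ via $a \mapsto aE_{ii}$, making sure that $\rad(R_{(i,i)})$ corresponds to $\rad(A)E_{ii}$ (this is clear since ring isomorphisms preserve the Jacobson radical). One should also note that $\M_I(A)$ is object unital but \emph{not} unital when $I$ is infinite, so one cannot write $1_R$; all computations must stay at the level of homogeneous components and use the local units $1_{(i,i)} = E_{ii}$, which is exactly what the cited characterization is designed to handle. No genuine difficulty is anticipated.
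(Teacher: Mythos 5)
Your proof is correct, but it takes a genuinely different route from the paper's. The paper verifies both inclusions directly from the quasi-invertibility criterion \Cref{teo: a carac de radgr(R)}(8): for $aE_{ij}$, one computes $1_AE_{ii}-(aE_{ij})(bE_{ji})=(1_A-ab)E_{ii}$ and reads off that $aE_{ij}\in\radgr(\M_I(A))$ if and only if $a\in\rad(A)$, so no separate verification that $\M_I(\rad(A))$ is a graded ideal is needed. You instead invoke \Cref{coro: radgr(R)e=rad(Re)}(1), which pins down the idempotent-degree components of $\radgr(R)$ and gives $\M_I(\rad(A))\subseteq\radgr(R)$ for free from maximality once you check that $\M_I(\rad(A))$ is a graded ideal, and then you use left multiplication by the unit $E_{ji}$ to transport information from a non-idempotent degree $(i,j)$ to the idempotent degree $(j,j)$ for the reverse inclusion. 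Both are short and sound; the paper's is a shade more self-contained (one citation, purely elementwise), while yours is a bit more structural and generalizes more readily to other $I\times I$-graded rings in which the $E_{ji}$ are replaced by arbitrary gr-invertible homogeneous elements.
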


\begin{proof}
    $(\subseteq)$: Let $i,j\in I$ and $a\in A$ be such that $aE_{ij}\in\radgr(\M_I(A))$. 
    For each $b\in A$, we have 
    $(1_A-ab)E_{ii}=1_AE_{ii}-(aE_{ij})(bE_{ji})\in\U(\M_I(A)_{(i,i)})$
    by \Cref{teo: a carac de radgr(R)}(8). It follows that $1_A-ab$ is invertible in $A$  for each $b\in A$. 
    Therefore, $a\in\rad(A)$.

    $(\supseteq)$: Take $a\in\rad(A)$ and $i,j\in I$.
    For each $b\in A$, we have that $1_AE_{ii}-(aE_{ij})(bE_{ji})=(1_A-ab)E_{ii}$ is invertible in $\M_I(A)_{(i,i)}$, since $1_A-ab$ is invertible in $A$.
    By \Cref{teo: a carac de radgr(R)}(8), $aE_{ij} \in\radgr(\M_I(A))$.
\end{proof}

\begin{prop}
\label{prop: radgr UT(A)}
    Let $I$ be a partially ordered set, $A$ be a unital ring, and let $R := \UT_I(A)$. Then, the graded Jacobson radical of $R$ is given by
    \[
    \radgr(R) = \{(a_{ij})_{ij} \in \UT_I(A) : \forall i\in I,\, a_{ii} \in\rad(A)\}.
    \]
\end{prop}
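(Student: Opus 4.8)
The plan is to proceed exactly as in the matrix case treated in \Cref{prop: rad(M_I(A))}, using the homogeneous characterization of the graded Jacobson radical from \Cref{teo: a carac de radgr(R)}, item (8). Recall that for $R=\UT_I(A)$ we have $R_{(i,j)}=AE_{ij}$ whenever $i\leq j$ and $R_{(i,j)}=0$ otherwise, and the idempotents of $\Gamma_0$ are the $e_i=(i,i)$ with $R_{e_i}=AE_{ii}\cong A$. Since every element of a $\Gamma$-graded ring is a finite sum of homogeneous components and $\radgr(R)$ is a graded ideal, it suffices to determine, for each pair $i\leq j$, which elements $aE_{ij}\in R_{(i,j)}$ lie in $\radgr(R)$; by \Cref{teo: a carac de radgr(R)}(8) this happens if and only if $1_{r(\gamma)}-(aE_{ij})x$ is invertible in $R_{r(\gamma)}$ for every $x\in R_{\gamma^{-1}}$, where $\gamma=(i,j)$, so $r(\gamma)=e_i$ and $\gamma^{-1}=(j,i)$. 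Note $R_{(j,i)}=0$ unless $j\leq i$; combined with $i\leq j$ this forces $i=j$ for the condition to be nonvacuous, i.e.\ the only constraint comes from the diagonal blocks, and off-diagonal homogeneous components are automatically in $\radgr(R)$.

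First I would handle the inclusion $\radgr(R)\subseteq\{(a_{ij}):a_{ii}\in\rad(A)\text{ for all }i\}$. Given a homogeneous $aE_{ii}\in\radgr(R)\cap R_{e_i}$, apply \Cref{teo: a carac de radgr(R)}(8) with $\gamma=e_i$, $\gamma^{-1}=e_i$: for every $bE_{ii}\in R_{e_i}$, the element $1_AE_{ii}-(aE_{ii})(bE_{ii})=(1_A-ab)E_{ii}$ must be invertible in $R_{e_i}\cong A$, hence $1_A-ab\in\U(A)$ for all $b\in A$, so $a\in\rad(A)$ by the classical characterization \cite[Corollary 4.5(A)]{Lam1}. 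Since a general element of $\radgr(R)$ is a finite sum of homogeneous components and taking the $(i,i)$-component is the map $x\mapsto 1_{e_i}x1_{e_i}$, which sends $\radgr(R)$ into itself (it is an $(\END_R(R),R)$-bimodule by \Cref{coro: rad e soc sao prerad}), each diagonal entry of an element of $\radgr(R)$ lies in $\rad(A)$.

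Next I would prove the reverse inclusion. It is enough to show that the graded ideal $J:=\{(a_{ij})\in\UT_I(A):a_{ii}\in\rad(A)\text{ for all }i\}$ is contained in $\radgr(R)$; I would do this by checking the criterion of \Cref{teo: a carac de radgr(R)}(8) on homogeneous elements of $J$. A homogeneous element of $J$ of degree $(i,j)$ is $aE_{ij}$ with $a\in A$ arbitrary if $i<j$, and $a\in\rad(A)$ if $i=j$. For $i<j$, $R_{(j,i)}=0$, so the condition is vacuously satisfied and $aE_{ij}\in\radgr(R)$. For $i=j$ and $a\in\rad(A)$, and any $bE_{ii}\in R_{e_i}$, we have $1_{e_i}-(aE_{ii})(bE_{ii})=(1_A-ab)E_{ii}$, which is invertible in $R_{e_i}\cong A$ because $1_A-ab\in\U(A)$; hence $aE_{ii}\in\radgr(R)$ by \Cref{teo: a carac de radgr(R)}(8). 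Since $J$ is generated as an additive group by such homogeneous elements and $\radgr(R)$ is a graded ideal, $J\subseteq\radgr(R)$, and combined with the first inclusion this gives $\radgr(R)=J$.

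I do not anticipate a genuine obstacle here: the proof is a direct transcription of \Cref{prop: rad(M_I(A))} once one observes that the ``upper triangular'' constraint $i\leq j$ together with needing $\gamma^{-1}$ to have nonzero homogeneous component forces the relevant test to live entirely on diagonal blocks. The only mild subtlety to state carefully is that off-diagonal homogeneous components automatically lie in $\radgr(R)$ — one should phrase this as ``the condition of \Cref{teo: a carac de radgr(R)}(8) is vacuous because $R_{\gamma^{-1}}=0$'' — and that reducing to homogeneous elements is legitimate because $\radgr(R)$ is graded. A small alternative, if one prefers to avoid invoking the homogeneous criterion twice, is to identify $R_{e_i}\cong A$ and note that \Cref{coro: radgr(R)e=rad(Re)}(1) already gives $\radgr(R)_{e_i}=\rad(R_{e_i})=\rad(A)E_{ii}$, which pins down the diagonal; one then only needs the easy half showing every off-diagonal homogeneous component is in $\radgr(R)$, again via \Cref{teo: a carac de radgr(R)}(8).
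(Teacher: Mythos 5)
Your proposal is correct and takes essentially the same approach as the paper: both reduce to homogeneous components via gradedness of $\radgr(R)$, invoke item (8) of the characterization theorem, and observe that for $i<j$ the condition is vacuous because $R_{(j,i)}=0$, while for $i=j$ it reduces to the classical criterion $a\in\rad(A)$. Your extra remarks (the $(\END_R(R),R)$-bimodule argument for extracting diagonal components, and the alternative via the corollary identifying $\radgr(R)_{e_i}$ with $\rad(R_{e_i})$) are correct but not needed beyond what the paper's terser proof already contains.
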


\begin{proof}
    Let $a\in A$ and $i,j\in I$ with $i\leq j$.

    If $i<j$, then $aE_{ij}\in\radgr(R)$ by \Cref{teo: a carac de radgr(R)}(8), since $R_{(j,i)}=0$.

    Suppose now that $i=j$.  For each $b\in A$, we have that $1_AE_{ii}-(aE_{ii})(bE_{ii})=(1_A-ab)E_{ii}$ is invertible in $AE_{ii}=R_{(i,i)}$ if and only if $1_A-ab$ is invertible in $A$.
    Therefore, $aE_{ii}\in\radgr(R)$ if and only if $a\in\rad(A)$.
\end{proof}

\begin{coro}
\label{coro: rad UT(D)}
    If $I$ is a partially ordered set, $A$ is a unital ring with $\rad(A)=0$, and $R := \UT_I(A)$, then $\radgr(R) = \{(a_{ij})_{ij} \in \M_I(A) : a_{ij} \neq 0 \implies i < j\}$.\qed
\end{coro}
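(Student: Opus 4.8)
\textbf{Proof plan for Corollary \ref{coro: rad UT(D)}.}

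The statement is an immediate special case of \Cref{prop: radgr UT(A)}, so the plan is simply to specialize that formula to the hypothesis $\rad(A)=0$. First I would recall that, by \Cref{prop: radgr UT(A)}, the graded Jacobson radical of $R=\UT_I(A)$ consists exactly of those matrices $(a_{ij})_{ij}\in\UT_I(A)$ whose diagonal entries $a_{ii}$ all lie in $\rad(A)$. Now invoke the hypothesis $\rad(A)=0$: the condition ``$a_{ii}\in\rad(A)$ for all $i\in I$'' becomes ``$a_{ii}=0$ for all $i\in I$''. Since an element of $\UT_I(A)$ already satisfies $a_{ij}=0$ whenever $i\not\leq j$, requiring in addition that $a_{ii}=0$ for all $i$ is exactly the same as requiring that $a_{ij}=0$ unless $i<j$ (strict inequality). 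Hence
\[
\radgr(R)=\{(a_{ij})_{ij}\in\M_I(A) : a_{ij}\neq 0 \implies i<j\},
\]
which is the claimed description. There is essentially no obstacle here: the only thing to be mildly careful about is bookkeeping with the partial order, namely that ``$i\leq j$ and $i\neq j$'' is the same as ``$i<j$'', and that writing the set inside $\M_I(A)$ rather than $\UT_I(A)$ is harmless because the condition ``$a_{ij}\neq0\implies i<j$'' already forces the matrix to be (strictly) upper triangular. So the proof is a one-line specialization, exactly as the \verb|\qed| at the end of the statement in the excerpt already signals.
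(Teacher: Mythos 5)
Your proposal is correct and matches the paper's intended argument: the corollary is stated with a \verb|\qed| precisely because it is the immediate specialization of \Cref{prop: radgr UT(A)} to $\rad(A)=0$, and your bookkeeping (``$a_{ii}=0$ for all $i$'' plus upper-triangularity $\Leftrightarrow$ ``$a_{ij}\neq0\implies i<j$'') is exactly right. Nothing further is needed.
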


	Observe that if $\mathcal{C}$ is a small preadditive category, then, for each $X,Y\in\mathcal{C}_0$, $\radgr(R[\mathcal{C}])_{(X,Y)}=J(Y,X)$, where $J$ denotes the Jacobson radical of $\mathcal{C}$, as in \cite[p. 21]{Mit} or \cite{Kelly}.

In \cite[p. 472]{Harada}, it was proved that if $A$ is a unital ring and $\mathcal{C}$ is a full sub-additive subcategory of $\mathrm{mod}$-$A$ consisting of projective modules, then the Jacobson radical of $\mathcal{C}$ consists of the morphisms whose images are superfluous in their codomains.
We generalize this result for categories of gr-projective modules over groupoid graded rings.

\begin{defi}
    Let $R$ be a $\Gamma$-graded ring.
    We say that a $\Gamma$-graded right $R$-module $P$ is \emph{gr-projective} if, for all $\Gamma$-graded right $R$-modules $M$ and $N$, every surjective $g \in \Homgr(M, N)$ and every $h \in \Homgr(P, N)$, there exists $g' \in \Homgr(P, M)$ such that $h = g\circ g'$ 
    \cite[Proposition 35]{CLP}.
\[
\begin{tikzcd}
& P \arrow[dashed]{dl}[swap]{g'} \arrow{d}{h} & \\
M \arrow{r}[swap]{g} & N \arrow{r} & 0
\end{tikzcd}
\]
\end{defi}

\begin{prop}
\label{prop: rad proj-R}
    Let $R$ be a $\Gamma$-graded ring.
    Suppose that $\mathcal{C}$ is a small full subcategory of  $\Gamma$-${\rm gr}$-$R$ such that every object of $\mathcal{C}$ is a gr-projective $R$-module.
    Then, for each $P,Q\in \mathcal{C}_0$, we have
    \[\radgr(R_\mathcal{C})_{(Q,P)}=\{g\in\Homgr(P,Q):\im (g) \text{ is gr-superfluous in }Q\}.\]
\end{prop}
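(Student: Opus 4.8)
The plan is to reduce the claim about the ring $R_{\mathcal{C}}$ to the characterization of $\radgr(R_{\mathcal{C}})$ from \Cref{teo: a carac de radgr(R)} together with a local version of the classical fact that, for projective modules $P$ and $Q$, a morphism $g\colon P\to Q$ lies in the radical of the category precisely when $\im g$ is gr-superfluous in $Q$. Concretely, fix $P,Q\in\mathcal{C}_0$ and set $e=(P,P)$, $f=(Q,Q)$ in $\Gamma=\mathcal{C}_0\times\mathcal{C}_0$, so that $(R_{\mathcal{C}})_{(Q,P)}=\Homgr(P,Q)$, and recall from \Cref{teo: a carac de radgr(R)}(8) that $g\in\radgr(R_{\mathcal{C}})$ if and only if $1_f-gx$ is invertible in $(R_{\mathcal{C}})_f=\Endgr(Q)$ for every $x\in(R_{\mathcal{C}})_{(P,Q)}=\Homgr(Q,P)$. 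Here $gx$ means the composition $Q\xrightarrow{x}P\xrightarrow{g}Q$, an element of $\Endgr(Q)$.

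First I would prove the inclusion $(\subseteq)$. Suppose $\im g$ is not gr-superfluous in $Q$; I want to produce an $x\in\Homgr(Q,P)$ with $1_f-gx\notin\Ugr(\Endgr(Q))$. Since $\im g$ is not gr-superfluous, there is a proper graded submodule $X\subgr Q$ with $\im g+X=Q$. Consider the canonical projection $\pi\colon Q\to Q/X$; since $P$ is gr-projective and $\pi g\colon P\to Q/X$ is surjective (because $\im g$ surjects onto $Q/X$ as $\im g+X=Q$), gr-projectivity of $P$... wait, more simply: $\pi\colon Q\to Q/X$ is surjective and $Q$ is gr-projective, but I actually want to lift along $g$. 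The cleaner route: as $\im g + X = Q$, the composite $Q \xrightarrow{g^{-1}?}$ does not make sense; instead use that $P$ is gr-projective to split off a map. Actually the standard argument: since $Q$ is gr-projective and $\pi g$ maps $P$ onto $Q/X$, there is $s\colon Q\to P$ with $g s$ inducing the identity on $Q/X$, i.e. $\pi\circ(gs)=\pi$. Then $1_f-gs$ maps $Q$ into $\ker\pi=X\subsetneq Q$, so $1_f-gs$ is not surjective, hence not invertible in $\Endgr(Q)$. Taking $x=s$ gives the contradiction, so $g\in\radgr(R_{\mathcal{C}})$ forces $\im g$ gr-superfluous in $Q$.

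Next, for $(\supseteq)$, assume $\im g$ is gr-superfluous in $Q$ and take any $x\in\Homgr(Q,P)$. Then $\im(gx)\subseteq\im g$ is gr-superfluous in $Q$ by \Cref{exems: gr-superfluous gr-essential}(2). Write $h=gx\in\Endgr(Q)$ and set $u=1_f-h$; then $\im u + \im h \supseteq \im u + \im(h) $, and since for any $y\in Q$ we have $y=u(y)+h(y)$, it follows $\im u+\im h=Q$, hence $\im u=Q$ because $\im h$ is gr-superfluous — so $u$ is surjective. To get that $u$ is a gr-isomorphism I would invoke a Fitting-type argument; but note $Q$ need not have any finiteness here, so instead I use gr-projectivity directly: since $u\colon Q\to Q$ is surjective and $Q$ is gr-projective, $u$ splits, i.e. there is $v\colon Q\to Q$ with $uv=1_f$; then $\ker u$ is a graded direct summand, $Q=\ker u\oplus\im v$, and $\ker u\subseteq$? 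We have $1_f-v u$ maps into $\ker u$... Alternatively, observe $v=v u v$ so $v$ is injective with $\im v$ a summand complementary to $\ker u$; and $1_f = u v$ shows that $\ker u \oplus \im v = Q$ with $u|_{\im v}$ an isomorphism onto $Q$, forcing $\ker u$ to be a summand on which... hmm, this still needs $\ker u$ gr-superfluous. The clean fix: $\ker u = \ker u$, and from $y=u y + h y$ applied to $y\in\ker u$ we get $y=h y\in\im h$, so $\ker u\subseteq\im h$, which is gr-superfluous, hence $\ker u$ is gr-superfluous, hence (being a direct summand) $\ker u=0$. Thus $u$ is bijective, i.e. gr-invertible in $\Endgr(Q)$, and by \Cref{teo: a carac de radgr(R)}(8) we conclude $g\in\radgr(R_{\mathcal{C}})$.

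The main obstacle I anticipate is the bookkeeping around \emph{graded} projectivity and the fact that $R_{\mathcal{C}}$ is object-unital rather than unital: one must be careful that the relevant identities $1_e,1_f$ are the identity gr-endomorphisms $\id_P,\id_Q$, that compositions land in the correct homogeneous components of $R_{\mathcal{C}}$ (the degree of $g\in\Homgr(P,Q)$ viewed in $R_{\mathcal{C}}$ is $(Q,P)$, so $gx$ has degree $(Q,Q)=f$), and that \Cref{teo: a carac de radgr(R)}(8) is being applied with $\gamma=(Q,P)$, $r(\gamma)=f$, $\gamma^{-1}=(P,Q)$. Once these identifications are set up, the two inclusions are exactly the classical Harada-style argument transported verbatim, using gr-projectivity of $Q$ (for $(\subseteq)$, to split $gs$ over $Q/X$) and gr-projectivity of $Q$ again (for $(\supseteq)$, to split the surjection $u$), together with \Cref{exems: gr-superfluous gr-essential}(2) and the fact that a gr-superfluous direct summand is zero. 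I would present this as a short proof citing \Cref{teo: a carac de radgr(R)}(8), \Cref{exems: gr-superfluous gr-essential}(1)--(2), and the gr-projectivity lifting property.

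\begin{proof}
    Fix $P,Q\in\mathcal{C}_0$ and write $e=(P,P)$, $f=(Q,Q)\in\Gamma=\mathcal{C}_0\times\mathcal{C}_0$, so that $(R_\mathcal{C})_{(Q,P)}=\Homgr(P,Q)$, $(R_\mathcal{C})_{(P,Q)}=\Homgr(Q,P)$, $1_f=\id_Q$, and composition in $R_\mathcal{C}$ agrees with composition of gr-homomorphisms. Let $g\in\Homgr(P,Q)$; regarded as an element of $R_\mathcal{C}$ it has degree $\gamma:=(Q,P)$, with $r(\gamma)=f$ and $\gamma^{-1}=(P,Q)$. By \Cref{teo: a carac de radgr(R)}(8) (applied to this $\gamma$), $g\in\radgr(R_\mathcal{C})$ if and only if $\id_Q-g\circ x$ is invertible in $\Endgr(Q)=(R_\mathcal{C})_f$ for every $x\in\Homgr(Q,P)$.

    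$(\subseteq)$: Assume $\im(g)$ is not gr-superfluous in $Q$. Then there exists $X\subgr Q$ with $X\neq Q$ and $\im(g)+X=Q$. Let $\pi\colon Q\to Q/X$ be the canonical projection. Since $\im(g)+X=Q$, the composite $\pi\circ g\colon P\to Q/X$ is surjective, and also $\pi\colon Q\to Q/X$ is surjective; as $Q$ is gr-projective, there is $s\in\Homgr(Q,P)$ with $\pi\circ(g\circ s)=\pi$. Hence $\id_Q-g\circ s$ maps $Q$ into $\ker\pi=X\subsetneq Q$, so $\id_Q-g\circ s$ is not surjective and therefore not invertible in $\Endgr(Q)$. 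By the criterion above, $g\notin\radgr(R_\mathcal{C})$.

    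$(\supseteq)$: Assume $\im(g)$ is gr-superfluous in $Q$ and let $x\in\Homgr(Q,P)$. Put $h:=g\circ x\in\Endgr(Q)$; then $\im(h)\subseteq\im(g)$ is gr-superfluous in $Q$ by \Cref{exems: gr-superfluous gr-essential}(2). Set $u:=\id_Q-h$. For every $y\in Q$ we have $y=u(y)+h(y)$, so $\im(u)+\im(h)=Q$, and since $\im(h)$ is gr-superfluous it follows that $\im(u)=Q$, i.e.\ $u$ is surjective. As $Q$ is gr-projective, $u$ splits: there is $v\in\Endgr(Q)$ with $u\circ v=\id_Q$, whence $Q=\ker(u)\oplus\im(v)$. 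If $y\in\ker(u)$, then $y=u(y)+h(y)=h(y)\in\im(h)$, so $\ker(u)\subseteq\im(h)$; thus $\ker(u)$ is gr-superfluous in $Q$ by \Cref{exems: gr-superfluous gr-essential}(2). Being at the same time a graded direct summand of $Q$, we get $\ker(u)=0$ by \Cref{exems: gr-superfluous gr-essential}(1). Hence $u=\id_Q-g\circ x$ is bijective, i.e.\ gr-invertible in $\Endgr(Q)$. Since $x\in\Homgr(Q,P)$ was arbitrary, \Cref{teo: a carac de radgr(R)}(8) yields $g\in\radgr(R_\mathcal{C})$.
\end{proof}
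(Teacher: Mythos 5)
Your proof is correct and follows essentially the same Harada/Mares/Ware-style argument as the paper's. The only cosmetic differences are that you argue $(\subseteq)$ in contrapositive form (same construction, same lifting $s$ with $\pi\circ(g\circ s)=\pi$), and in $(\supseteq)$ you push through to show $\id_Q - g\circ x$ is fully bijective so that \Cref{teo: a carac de radgr(R)}(8) applies, whereas the paper stops at right invertibility (splitting the surjection $u$ via gr-projectivity of $Q$) and invokes \Cref{teo: a carac de radgr(R)}(7); your extra steps ($\ker u\subseteq\im h$ gr-superfluous and a direct summand, hence zero) are sound but slightly more than needed.
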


\begin{proof}
    $(\supseteq)$: We  adapt the proof of \cite[Theorem 2.4]{Mares}. 
    Let $g \in \Homgr(P,Q)$ be such that $\im (g)$ is gr-superfluous in $Q$.  
Suppose that $g' \in \Homgr(Q,P)$.
Since $\im (g g') \subseteq \im (g)$, it follows that $\im(g g')$ is gr-superfluous in $Q$.  
Then $\im (g g') + \im (\id_Q - g g') = Q$ implies $\im (\id_Q - g g') = Q$.  
Since $Q$ is gr-projective, it follows that there exists $h \in \Homgr(Q,Q)$ such that $(\id_Q - g g') h = \id_Q$.  
By \Cref{teo: a carac de radgr(R)}(7), we have $g \in \radgr(R_\mathcal{C})$.

$(\subseteq)$: We adapt the proof of \cite[Proposition 1.1]{Ware}. 
Suppose that $g\in\radgr(R_\mathcal{C})_{(Q,P)}$ and $X\subgr Q$ is such that $\im(g) + X = Q$.  
Then $\pi g: P \to Q/X$ is surjective, where $\pi: Q \to Q/X$ is the canonical projection.  
Since $Q$ is gr-projective, there exists $g' \in \Homgr(Q,P)$ such that $\pi g g' = \pi$.  
\[
\begin{tikzcd}
& Q \arrow[dashed]{dl}[swap]{g'} \arrow{d}{\pi} & \\
P \arrow{r}[swap]{\pi g} & Q/X \arrow{r} & 0
\end{tikzcd}
\]
For each $q \in Q$, we have $\pi(q) = \pi g g'(q)$, and it follows that $q - g g'(q) \in X$.  
Hence, $\im(\id_Q - g g') \subseteq X$.  
But since $g \in \radgr(R_\mathcal{C})$, it follows from \Cref{teo: a carac de radgr(R)}(8) that $\id_Q - g g'$ is a gr-isomorphism, and therefore $\im(\id_Q - g g') = Q$.  
Thus, $X = Q$, and it follows that $\im (g)$ is gr-superfluous in $Q$.
\end{proof}

By \cite[Theorem 13.1(1)]{Lam2}, if $E$ is an injective module, then the Jacobson radical of the ring $\End(E)$ is the set of endomorphisms whose kernel is essential in $E$. We generalize this result with the following dual version of \Cref{prop: rad proj-R}.

\begin{prop}
\label{prop: rad inj-R}
    Let $R$ be a $\Gamma$-graded ring.
    Suppose that $\mathcal{C}$ is a small full subcategory of  $\Gamma$-${\rm gr}$-$R$ such that every object of $\mathcal{C}$ is a gr-injective $R$-module.
    Then, for each $E,E'\in \mathcal{C}_0$, we have
    \[\radgr(R_\mathcal{C})_{(E',E)}=\{g\in\Homgr(E,E'):\ker (g) \text{ is gr-essential in }E\}.\]
\end{prop}

\begin{proof}
    $(\supseteq)$:  
    Let $g \in \Homgr(E,E')$ be such that $\ker (g)$ is gr-essential in $E$.  
Suppose that $g' \in \Homgr(E',E)$.
Since $\ker ( g) \subseteq \ker (g'g)$, it follows that $\ker(g 'g)$ is gr-essential in $E$.  
Then $\ker (g' g) \cap \ker (\id_E - g' g) = 0$ implies $\ker (\id_E - g' g) = 0$.  
Since $E$ is gr-injective, it follows that there exists $h \in \Homgr(E,E)$ such that $ h \circ (\id_E - g' g) = \id_E$.  
By the left version of \Cref{teo: a carac de radgr(R)}(7), we have $g \in \radgr(R_\mathcal{C})$.

$(\subseteq)$:  
Suppose that $g\in\radgr(R_\mathcal{C})_{(E',E)}$ and $X\subgr E$ is such that $\ker(g) \cap X = 0$.  
Then $g|_{X}: X \to E'$ is injective.
Since $E$ is gr-injective, there exists $g' \in \Homgr(E',E)$ such that $ g' g|_X$ is the inclusion $X\to E$.  
\[
\begin{tikzcd}
0 \arrow{r} & X \arrow{r}{g|_X} \arrow[hook]{d} & E' \arrow[dashed]{dl}{g'}  & \\
& E &
\end{tikzcd}
\]
For each $x \in X$, we have $ g'g(x)=x$, and it follows that $X\subseteq\ker(\id_E - g' g)$.  
But since $g \in \radgr(R_\mathcal{C})$, it follows from the left version of \Cref{teo: a carac de radgr(R)}(8) that $\id_E - g' g$ is a gr-isomorphism, and therefore $\ker(\id_E - g' g)=0$.  
Thus, $X = 0$, and it follows that $\ker (g)$ is gr-essential in $E$.
\end{proof}

%%%%%%%%%%%%%%%%%%%%%%%%%%%%%%%%%%%%%%%%%%%%%%%%%%%%%%%%%%%%%%%%%%%%%%%%%%%%%%%%%%%%%%%%%%%%%%%%%%%%%%%%%%%%%%%%%%%%%%%%%%%%%%%%%%%%%%%%%%%%%%%%%%%%%%%%%%%%%%%%%%%%%%%%%%%%%%%%%%%%%%%%%%%%%%%%%%%%%%%%%%%%%%%%%%%%%%%%%%%%%%%%%%%%%%%%%%%%%%%%%%%%%%%%%%%%%%%%%%%%%%%%%%%%%%%%%%%%%%%%%%%%%%%%%%%%%%%%%%%%%%%%%%%%%%%%%%%%%%%%%%%%%%%%%%%%%%%%%%

%%%%%%%%%%%%%%%%%%%%%%%%%%%%%%%%%%%%%%%%%%%%%%%%%%%%%%%%%%%%%%%%%%%%%%%%%%%%%%%%%%%%%%%%%%%%%%%%%%%%%%%%%%%%%%%%%%%%%%%%%%%%%%%%%%%%%%%%%%%%%%%%%%%%%%%%%%%%%%%%%%%%%%%%%%%%%%%%%%%%%%%%%%%%%%%%%%%%%%%%%%%%%%%%%%%%%%%%%%%%%%%%%%%%%%%%%%%%%%%%%%%%%%%%%%%%%%%%%%%%%%%%%%%%%%%%%%%%%%%%%%%%%%%%%%%%%%%%%%%%%%%%%%%%%%%%%%%%%%%%%%%%%%%%%%%%%%%%%%%%%%%%%%%%%%%%%%%%%%%%%%%%%%%%%%%%%%%%%%%%%%%%%%%%%%%%%%%%%%%%%%%%%%%%%%%%%%%%%%%%%%%%%%%%%%%%%%%%%%%%%%%%%%%%%%%%%%%%%%%%%%%%%%%%%%%%%%%%%%%%%%%%%%%%%%%%%%%%%%%%%%%%%%%%%%%%%%%%%%%%%%%%%%%%%%%%%%%%%%%%%%%%%%%%%%%%%%%%%%%%%%%%%%%%%%%%%%%%%%%%%%%%%%%%%%%%%%%%%%%%%%%%%%%%%%%%%%%%%%%%%%%%%%%%%%%%%%%%%%%%%%%%%%%%%%%%%%%%%%%%%%%%%%%%%%%%%%%%%%%%%%%%%%%%%%%%%%%%%%%%%%%%%%%%%%%%%%%%%%%%%%%%%%%%%%%%%%%%
\section{Gr-semilocal rings}\label{sec: gr-semilocal rings}

In this section, we present  gr-semilocal rings. It is an interesting class of rings that relates the concepts we have studied thus far.

\begin{defi}
    Let $R$ be a $\Gamma$-graded ring. We say that $R$ is \emph{gr-semilocal} if the graded ring $R/\radgr(R)$ is gr-semisimple. By \Cref{prop: rad of R/I for I contained in rad}(4) and \Cref{teo: R gr-ss <=> rad(R)=0 e R art}, this is equivalent to say that $R/\radgr(R)$ is a right (or left) $\Gamma_0$-artinian ring.
\end{defi}

Important classes of gr-semilocal rings are given in the following result.

\begin{prop}
\label{prop: R gr-art => R/rad(R) gr-ss}
    Let $R$ be a $\Gamma$-graded ring. The following statements hold:
\begin{enumerate}[\rm (1)]
    \item If $R$ is  right  $\Gamma_0$-artinian, then $R$ is a  gr-semilocal ring. 
    \item If the number of maximal graded right  ideals $\mathfrak{m}$ such that $\mathfrak{m}(e)\subsetneq R(e)$ is finite for each $e\in\Gamma_0$, then $R$ is a gr-semilocal ring.
\end{enumerate}     
\end{prop}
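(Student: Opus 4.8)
\textbf{Proof proposal for \Cref{prop: R gr-art => R/rad(R) gr-ss}.}

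The plan is to reduce both statements to the characterization of gr-semisimple rings obtained in \Cref{teo: R gr-ss <=> rad(R)=0 e R art}, together with the behaviour of the graded Jacobson radical under quotients (\Cref{prop: rad of R/I for I contained in rad}) and the local description $\radgr(R)_e=\rad(R_e)$ from \Cref{coro: radgr(R)e=rad(Re)}(1). For part (1), suppose $R$ is right $\Gamma_0$-artinian. By \Cref{coro: R G_0 noeth/art => R/J G_0 noeth/art.}, the quotient ring $\overline R:=R/\radgr(R)$ is again right $\Gamma_0$-artinian. By \Cref{prop: rad of R/I for I contained in rad}, $\radgr(\overline R)=\{0\}$. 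Hence $\overline R$ is a right $\Gamma_0$-artinian ring with zero graded Jacobson radical, so by the equivalence $(2)\Leftrightarrow(1)$ in \Cref{teo: R gr-ss <=> rad(R)=0 e R art} it is gr-semisimple, i.e., $R$ is gr-semilocal.

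For part (2), I would again pass to $\overline R=R/\radgr(R)$ and use $\radgr(\overline R)=0$. By the bijective correspondence between graded right ideals of $\overline R$ and graded right ideals of $R$ containing $\radgr(R)$, and since $\radgr(R)(e)=\radgr(R(e))$ is the intersection of the gr-maximal submodules of $R(e)$ (by \Cref{prop: rad M(e)}(1) and \Cref{obs:maximal and soc}(1)), the hypothesis says that for each $e\in\Gamma_0$ the module $\overline R(e)$ has only finitely many gr-maximal graded submodules and their intersection is zero. Such a module has zero graded Jacobson radical and is a subdirect product of finitely many gr-simple modules: indeed, if $\maxi_1,\dots,\maxi_k$ are the gr-maximal graded submodules of $\overline R(e)$, then the natural map $\overline R(e)\to \bigoplus_{t=1}^k \overline R(e)/\maxi_t$ is injective, exhibiting $\overline R(e)$ as a graded submodule of a finite direct sum of gr-simple modules. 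By \cite[Proposition 53(ii)]{CLP} (used already in the proof of \Cref{prop: carac gr-ss fg}), $\overline R(e)$ is then gr-semisimple for each $e\in\Gamma_0$, hence $\overline R=\bigoplus_{e\in\Gamma_0}\overline R(e)$ is a gr-semisimple $\overline R$-module, so $\overline R$ is a gr-semisimple ring and $R$ is gr-semilocal.

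Alternatively, for part (2) one could observe directly that the hypothesis on $R$ is inherited by $\overline R$ with the additional feature that now every $\overline R(e)$ has \emph{trivial} graded Jacobson radical; combined with the fact that $R(e)=1_eR$ is $\Gamma_0$-finitely generated (it is even gr-principal), \Cref{coro: carac gr-ss fg}, applied with $\radgr=0$ and the finite-cogeneration consequence of finitely many maximal submodules, yields that each $\overline R(e)$ is a finite direct sum of gr-simple modules. Either route closes the argument.

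The only mildly delicate point is the translation in part (2) of ``finitely many gr-maximal graded right ideals $\maxi$ with $\maxi(e)\subsetneq R(e)$'' into ``finitely many gr-maximal graded submodules of each $R(e)$''; this is exactly \Cref{obs:maximal and soc}(1), which says a gr-maximal graded submodule of $R_R$ is gr-maximal in a single component $R(e)$ and equals $R(f)$ elsewhere, so no genuine obstacle arises — the proof is essentially a bookkeeping application of results already established.
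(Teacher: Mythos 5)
Your proposal is correct. For part (1) your argument and the paper's are essentially identical: both pass to $\overline R = R/\radgr(R)$, note that $\overline R$ is again right $\Gamma_0$-artinian by \Cref{coro: R G_0 noeth/art => R/J G_0 noeth/art.}, and conclude via the definition/characterization of gr-semilocal (you spell out the extra invocation of $\radgr(\overline R)=0$ and \Cref{teo: R gr-ss <=> rad(R)=0 e R art}, which the paper folds into the definition).

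For part (2), you and the paper use the same key embedding $\overline R(e)\hookrightarrow\bigoplus_{t=1}^{k}\overline R(e)/\maxi_t$ into a finite direct sum of gr-simple modules (coming from finitely many gr-maximal submodules with zero intersection), but then diverge at the last step. The paper observes that this finite direct sum of gr-simple modules is gr-artinian and hence its submodule $\overline R(e)$ is gr-artinian, so $\overline R$ is right $\Gamma_0$-artinian and therefore $R$ is gr-semilocal by the $\Gamma_0$-artinian characterization of gr-semilocality. You instead invoke \cite[Proposition 53(ii)]{CLP} (a graded module embedding in a finite direct sum of gr-simple modules is itself such a direct sum) to conclude directly that $\overline R(e)$ is gr-semisimple, hence $\overline R$ is gr-semisimple and $R$ is gr-semilocal by the primary definition. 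Both steps are valid and of equal difficulty given the standing lemmas; your route is marginally more direct in targeting the definition of gr-semisimple, whereas the paper's route works uniformly with the $\Gamma_0$-artinian formulation used throughout the section. The translation of the hypothesis via \Cref{obs:maximal and soc}(1) and the correspondence of graded ideals under the quotient map, which you flag as the delicate point, is indeed exactly what the paper does (implicitly) when it identifies the relevant gr-maximal right ideals with gr-maximal submodules of $R(e)$.
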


\begin{proof}
(1)   By \Cref{coro: R G_0 noeth/art => R/J G_0 noeth/art.}, $R/\radgr(R)$ is a  right  $\Gamma_0$-artinian  ring.

(2)  Suppose that $R(e)\neq \{0\}$, i.e. $1_e\neq 0$, for some $e\in\Gamma_0$. Note that there exists at least one maximal graded right ideal such that $\mathfrak{m}(e)\subsetneq R(e)$. Indeed, by Zorn's Lemma, the set $$\{J\colon J \textrm{ graded right ideal of } R\, \colon 1_e\notin J \}$$ contains a maximal element, which turns out to be a graded maximal right ideal.  Let  $\mathfrak{m}_1,\mathfrak{m}_2,\dotsc,\mathfrak{m}_{r_e}$ be the graded maximal right ideals such that  $\mathfrak{m}_i(e)\subsetneq R(e)$  for $i=1,\dotsc, r_e$. Then $\radgr(R)(e)=\bigcap_{i=1}^{r_e} \mathfrak{m}_i(e)$. Hence, \[\frac{R}{\radgr(R)}(e)\hookrightarrow \bigoplus_{i=1}^{r_e} \frac{R(e)}{\mathfrak{m}_i(e)}.\]    This implies that $\left(R/\radgr(R)\right) (e)$ is  a gr-artinian graded right $R$-module because 
$\left(R/\radgr(R)\right) (e)$ is an $R$-submodule of a gr-artinian graded right $R$-module. 
Thus, $\left(R/\radgr(R)\right) (e)$ is  a gr-artinian graded right $R/\radgr(R)$-module because the graded right $R$-submodules and the graded right $R/\radgr(R)$-submodules of $\left(R/\radgr(R)\right) (e)$ coincide. 

If $R(e)=\{0\}$, then clearly $\left(R/\radgr(R)\right) (e)$ is  a gr-artinian graded right $R/\radgr(R)$-module. 

Therefore $R/\radgr(R)$ is a right $\Gamma_0$-artinian ring.
\end{proof}

Now we prove a generalization of \Cref{prop: rad of R/I for I contained in rad}(3)  for gr-semilocal rings.

\begin{prop}
 Let $R$ be a gr-semilocal ring and $K$ be any graded ideal of $R$. Then $R/K$ is a semilocal ring and \[\radgr\left(\frac{R}{K}\right)=\frac{\radgr(R)+K}{K}.\]    
\end{prop}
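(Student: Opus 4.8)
The statement has two parts: that $R/K$ is gr-semilocal (``semilocal'' in the statement should read gr-semilocal, as there is no ambient group here), and the formula for its graded Jacobson radical. The natural approach is to work with the surjective gr-homomorphism of graded rings $\pi\colon R\to R/K$ and reduce everything to properties of $R/\radgr(R)$, which is gr-semisimple by hypothesis.

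\emph{Step 1: The radical formula.} First I would show $\radgr(R/K)=\dfrac{\radgr(R)+K}{K}$. For the inclusion ``$\supseteq$'', apply \Cref{lem: gr-radical and homomorphisms o rings} to $\pi$: we get $\pi(\radgr(R))\subseteq\radgr(R/K)$, i.e. $\dfrac{\radgr(R)+K}{K}\subseteq\radgr(R/K)$. For ``$\subseteq$'', I would pass to the quotient by $\radgr(R)+K$. Write $I:=\radgr(R)+K$, a graded ideal of $R$. Then $\dfrac{R/K}{I/K}\cong_{gr}R/I$, and $R/I$ is a gr-homomorphic image of the gr-semisimple ring $R/\radgr(R)$; by \Cref{prop: M G0-art ou G0-noeth <--> N e M/N tbm} (or rather its consequence for rings, \Cref{coro: R G_0 noeth/art => R/J G_0 noeth/art.}, together with \Cref{teo: R gr-ss <=> rad(R)=0 e R art}) applied to the surjection $R/\radgr(R)\to R/I$, the quotient $R/I$ is $\Gamma_0$-artinian with zero graded Jacobson radical, hence gr-semisimple. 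In particular $\radgr(R/I)=0$. Now $\radgr(R/K)/(I/K)\subseteq\radgr\big((R/K)/(I/K)\big)=\radgr(R/I)=0$ — here I am using that for any graded ideal contained in another, the radical maps into the radical of the quotient, which is \Cref{coro: rad e soc sao prerad}(1) applied to the canonical projection, or one can invoke \Cref{prop: rad of R/I for I contained in rad} once we know $I/K\subseteq\radgr(R/K)$, which is exactly the ``$\supseteq$'' inclusion just proved. Therefore $\radgr(R/K)\subseteq I/K=\dfrac{\radgr(R)+K}{K}$, giving equality.

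\emph{Step 2: $R/K$ is gr-semilocal.} By definition I must show $\dfrac{R/K}{\radgr(R/K)}$ is gr-semisimple. Using Step 1, $\radgr(R/K)=I/K$ with $I=\radgr(R)+K$, so $\dfrac{R/K}{\radgr(R/K)}\cong_{gr}R/I$, and we already observed in Step 1 that $R/I$ is gr-semisimple (being a gr-homomorphic image of the gr-semisimple ring $R/\radgr(R)$, using \Cref{teo: R gr-ss <=> rad(R)=0 e R art} and \Cref{coro: R G_0 noeth/art => R/J G_0 noeth/art.}, since a $\Gamma_0$-artinian ring with zero radical is gr-semisimple). This completes the proof.

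\emph{Main obstacle.} The only genuinely delicate point is knowing that a gr-homomorphic image of a gr-semisimple ring is gr-semisimple. In the classical ungraded setting this is immediate; in the groupoid-graded setting one cannot directly say ``quotient of a sum of gr-simple modules''. The clean route is via the characterization \Cref{teo: R gr-ss <=> rad(R)=0 e R art}: $R/\radgr(R)$ gr-semisimple $\iff$ it is right $\Gamma_0$-artinian with zero graded Jacobson radical; right $\Gamma_0$-artinianity passes to quotients by \Cref{coro: R G_0 noeth/art => R/J G_0 noeth/art.}, and the radical of the further quotient is zero because (in Step 1) we identify it as a subquotient inside the already-radical-zero ring. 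I would make sure to order the argument so that the ``$\supseteq$'' inclusion of Step 1 is established first and then feed it into \Cref{prop: rad of R/I for I contained in rad} to get the reverse inclusion, avoiding any circularity.
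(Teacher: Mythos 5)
Your overall structure mirrors the paper's argument: establish $\frac{\radgr(R)+K}{K}\subseteq\radgr(R/K)$ via \Cref{lem: gr-radical and homomorphisms o rings}, then use \Cref{prop: rad of R/I for I contained in rad} to reduce the reverse inclusion (and the gr-semilocality of $R/K$) to showing that $R/I$ is gr-semisimple, where $I=\radgr(R)+K$. The paper treats ``a gr-homomorphic image of a gr-semisimple ring is gr-semisimple'' as read; you correctly identified it as the delicate ingredient.

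However, your justification for that ingredient has a gap that borders on circularity. You want to apply \Cref{teo: R gr-ss <=> rad(R)=0 e R art}, for which you need both $\Gamma_0$-artinianity of $R/I$ and $\radgr(R/I)=0$. The $\Gamma_0$-artinianity is fine via \Cref{coro: R G_0 noeth/art => R/J G_0 noeth/art.}, but for $\radgr(R/I)=0$ you offer only ``the radical of the further quotient is zero because we identify it as a subquotient inside the already-radical-zero ring.'' That inference is false: a quotient of a ring with zero Jacobson radical can have nonzero Jacobson radical (e.g.\ $\mathbb{Z}\twoheadrightarrow\mathbb{Z}/4\mathbb{Z}$), and the references you bundle in that sentence speak only of chain conditions, not of radicals. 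You are implicitly presupposing the gr-semisimplicity of $R/I$ to obtain its radical, which is precisely what you set out to show.

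The patch is short and bypasses the characterization theorem entirely: $R/I$ is a gr-quotient of the gr-semisimple right $R/\radgr(R)$-module $R/\radgr(R)$, so by \cite[Proposition 59]{CLP} it is gr-semisimple as an $R/\radgr(R)$-module; since its graded $R/\radgr(R)$-submodules coincide with its graded $R/I$-submodules, $R/I$ is gr-semisimple as a module over itself, i.e.\ a gr-semisimple ring, and then $\radgr(R/I)=0$ by \Cref{exms: gr-soc and gr-rad of gr-semisimple}(2). With that fix the rest of your Step 1 and Step 2 go through as written and reproduce the paper's proof.
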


\begin{proof}
  Set $J=\radgr(R)$ and consider the canonical projection $\pi:R\rightarrow R/K$, $a\mapsto a+K$. Denote $\overline{R}:=\pi(R)=R/K$ and $\overline{J}:=\pi(J)=(J+K)/K$. 
  By \Cref{prop: rad of R/I for I contained in rad}(1), $$\overline{J}\subseteq  \radgr\left(\overline{R}\right).$$    
  Using \Cref{prop: rad of R/I for I contained in rad}(3), we get
  \begin{equation}\label{eq: rad of quotient rings}
      \frac{\radgr\left(\overline{R}\right)}{\overline{J}}=\radgr\left(\frac{\overline{R}}{\overline{J}}\right)\cong_{gr}\radgr\left(\frac{R}{J+K}\right).
  \end{equation}
  The ring $R/J$ is gr-semisimple, hence its quotient ring $R/(J+K)$ is gr-semisimple as well. By Examples~\ref{exms: gr-soc and gr-rad of gr-semisimple}(2), $\radgr(R/(J+K))=\{0\}.$ By \eqref{eq: rad of quotient rings}, $\radgr(\overline{R})/\overline{J}=\{0\}$. Therefore $\radgr(\overline{R})=\overline{J}$. Furthermore,
  \[\frac{\overline{R}}{\radgr\left(\overline{R}\right)}=\frac{\overline{R}}{\overline{J}}\cong_{gr}\frac{R}{J+K}.\]
  We have just showed that $R/(J+K)$ is gr-semisimple, thus $\overline{R}$ is gr-semilocal.
\end{proof}

\begin{prop}
    Let $R$ be $\Gamma$-graded ring and $M$ be a $\Gamma$-graded right $R$-module. 
    If $R$ is a gr-semilocal ring, then $M/\radgr(M)$ is a gr-semisimple $R$-module and $\radgr(M)=M\cdot\radgr(R)$.
\end{prop}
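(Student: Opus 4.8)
The plan is to reduce the statement to facts already available in the paper, namely the characterization of $\radgr(R)$ as the largest gr-superfluous right ideal (\Cref{coro: radgr(R)e=rad(Re)}(3)), the inclusion $M\cdot\radgr(R_R)\subseteq\radgr(M)$ (\Cref{coro: rad e soc sao prerad}(3)), and the fact that for a gr-semisimple ring every graded module is gr-semisimple with zero radical (\Cref{exms: gr-soc and gr-rad of gr-semisimple}). Since everything can be checked componentwise by \Cref{prop: rad M(e)}, it suffices to prove both assertions for $M=M(e)$, i.e. we may assume $M=M(e)$ for a fixed $e\in\Gamma_0$; but actually the cleanest path avoids even this reduction.

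First I would show $M\cdot\radgr(R)=\radgr(M)$. The inclusion $\subseteq$ is \Cref{coro: rad e soc sao prerad}(3). For $\supseteq$, set $J=\radgr(R)$ and consider the quotient $M/MJ$, which is a $\Gamma$-graded right module over $\overline R:=R/J$. Since $R$ is gr-semilocal, $\overline R$ is a gr-semisimple ring, so every $\overline R$-module is gr-semisimple; in particular $M/MJ$ is a gr-semisimple $\overline R$-module, hence a gr-semisimple $R$-module by restriction of scalars (\Cref{prop: R e R/J tem os mesmos gr-simples}(1)). By \Cref{exms: gr-soc and gr-rad of gr-semisimple}(2), $\radgr(M/MJ)=0$. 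Now $MJ\subgr M$, and \Cref{lem: rad(M/rad)=0}(1) says $\radgr(M)$ is the \emph{smallest} graded submodule $M'$ with $\radgr(M/M')=0$; therefore $\radgr(M)\subseteq MJ$. Combined with the reverse inclusion this gives $\radgr(M)=M\cdot\radgr(R)$.

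Then $M/\radgr(M)=M/MJ$ is gr-semisimple by exactly the argument just used. This completes the proof, and in fact the order is: (a) $MJ\subseteq\radgr(M)$ is immediate; (b) $M/MJ$ is a gr-semisimple $\overline R$-module because $\overline R$ is gr-semisimple; (c) restriction of scalars keeps it gr-semisimple over $R$, so its radical is $0$; (d) minimality of $\radgr(M)$ forces $\radgr(M)\subseteq MJ$; (e) conclude both equalities.

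I do not expect a serious obstacle here; the statement is essentially a bookkeeping consequence of the module-theoretic lemmas in \Cref{sec: gr-Jacobson gr-socle} together with the definition of gr-semilocal. The one point that requires a little care is step (c): one must be sure that a gr-semisimple $\overline R$-module really is gr-semisimple as an $R$-module, which is where \Cref{prop: R e R/J tem os mesmos gr-simples}(1) (gr-simple $\overline R$-modules are exactly gr-simple $R$-modules annihilated by $J$) is invoked — this is valid precisely because $J=\radgr(R)$ annihilates every gr-simple module by \Cref{teo: a carac de radgr(R)}(10), and $MJ$ annihilates $M/MJ$ by construction. No componentwise reduction via \Cref{prop: rad M(e)} is actually needed, though it could be used as an alternative if one preferred to cite \Cref{prop: carac gr-ss fg}-type statements.
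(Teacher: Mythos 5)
Your proposal is correct and follows essentially the same path as the paper's own proof: the inclusion $M\cdot\radgr(R)\subseteq\radgr(M)$ from \Cref{coro: rad e soc sao prerad}(3), then showing $M/M\cdot\radgr(R)$ is gr-semisimple as an $R/\radgr(R)$-module (using that every module over a gr-semisimple ring is gr-semisimple, \cite[Proposition 59]{CLP}), transferring gr-semisimplicity to $R$ via \Cref{prop: R e R/J tem os mesmos gr-simples}(1), deducing $\radgr(M/M\cdot\radgr(R))=0$ from \Cref{exms: gr-soc and gr-rad of gr-semisimple}(2), and finally invoking the minimality characterization in \Cref{lem: rad(M/rad)=0}(1) to get the reverse inclusion. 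Your observation that no componentwise reduction is needed is also how the paper does it.
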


\begin{proof}
    Since $R/\radgr(R)$ is a gr-semisimple ring, then every graded right $R/\radgr(R)$-module is gr-semisimple by \cite[Proposition 59]{CLP}. Thus $\frac{M}{M\cdot\radgr(R)}$ is a gr-semisimple $R/\radgr(R)$-module.
    Therefore, $\frac{M}{M\cdot\radgr(R)}$ is a gr-semisimple $R$-module by \Cref{prop: R e R/J tem os mesmos gr-simples}(1). It follows from \Cref{exms: gr-soc and gr-rad of gr-semisimple}(2) that $\radgr\left(\frac{M}{M\cdot\radgr(R)}\right)=0$.
    Thus, $\radgr(M)\subseteq M\cdot\radgr(R)$ by \Cref{lem: rad(M/rad)=0}(1).
    On the other hand,  $M\cdot\radgr(R)\subseteq \radgr(M)$ by \Cref{coro: rad e soc sao prerad}(3).
\end{proof}

For any $\Gamma$-graded ring $R$ with $J=\radgr(R)$, \Cref{teo: a carac de radgr(R)}(10) implies that $\soc_{\rm gr}(M)\subseteq\{m\in M:mJ=0\}$ for all $\Gamma$-graded $R$-modules $M$.  When the graded ring $R$ is gr-semilocal, this inclusion is an equality and we can obtain the terms in the Loewy series of graded right $R$-modules from the powers of $J$. More precisely, we have the following graded version of \cite[Proposition 4.14]{Goodearl}.

\begin{prop}
\label{prop: soc(M)=lann(J)}
    Let $R$ be a $\Gamma$-graded ring and set $J=\radgr(R)$. 
    If $R$ is  gr-semilocal, then 
     \[\soc_{\rm gr}^n(M)=\{m\in M:mJ^n=0\}\]
     for all $\Gamma$-graded right $R$-modules $M$ and $n\in\mathbb{N}$.   
\end{prop}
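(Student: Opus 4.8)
The plan is to prove the statement by induction on $n\in\mathbb{N}$, reducing everything to the structure of $R/\radgr(R)$ as a gr-semisimple ring. For $n=0$ the claim reads $\soc_{\rm gr}^0(M)=\{0\}=\{m\in M:mJ^0=0\}$, where I interpret $J^0$ as $R$ (so $mR=0$ means $m\in MR^\perp$, but since modules are unital, $mR=0\iff m=0$). One should perhaps instead start the induction at $n=1$; the base case $n=1$ asserts $\soc_{\rm gr}(M)=\{m\in M:mJ=0\}$. The inclusion $\subseteq$ is \Cref{teo: a carac de radgr(R)}(10), as already observed just before the statement. For $\supseteq$: if $m\in\h(M)$ with $mJ=0$, then $mR$ is a graded module annihilated by $J$, hence a graded $R/J$-module; since $R/J$ is gr-semisimple, every graded $R/J$-module is gr-semisimple by \cite[Proposition 59]{CLP}, and it stays gr-semisimple as an $R$-module by \Cref{prop: R e R/J tem os mesmos gr-simples}(1). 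Thus $mR\subseteq\socgr(M)$, using \Cref{obs:maximal and soc}(3). Since $\socgr(M)$ and $\{m:mJ=0\}$ are both graded submodules, it suffices to check the equality on homogeneous elements, which is what \Cref{prop: rad M_sigma}(2)-style reasoning licenses.

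For the inductive step, I would pass to the quotient $\overline M:=M/\soc_{\rm gr}^n(M)$. By definition of the Loewy series, $\soc_{\rm gr}^{n+1}(M)/\soc_{\rm gr}^n(M)=\socgr(\overline M)$, and by the $n=1$ case applied to $\overline M$ we get
\[
\socgr(\overline M)=\{\overline m\in\overline M:\overline m J=0\}=\{m+\soc_{\rm gr}^n(M):mJ\subseteq\soc_{\rm gr}^n(M)\}.
\]
Hence $m\in\soc_{\rm gr}^{n+1}(M)$ if and only if $mJ\subseteq\soc_{\rm gr}^n(M)$. Now apply the inductive hypothesis to the element $mj$ for each $j\in\h(J)$: $mj\in\soc_{\rm gr}^n(M)$ iff $(mj)J^n=0$. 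Therefore $mJ\subseteq\soc_{\rm gr}^n(M)$ iff $mJ^{n+1}=0$ (since $J^{n+1}=J\cdot J^n$ and it suffices to test on products of homogeneous elements). This closes the induction.

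There is a bookkeeping subtlety I would be careful about: the inductive hypothesis is stated for $n$ but must be invoked for arbitrary modules, so I should phrase the induction as ``for all $\Gamma$-graded right $R$-modules $M$, $\soc_{\rm gr}^n(M)=\{m:mJ^n=0\}$'' from the start, so that applying it to $\overline M=M/\soc_{\rm gr}^n(M)$ in the step is legitimate; actually in the step I only need the $n=1$ case for $\overline M$ plus the $n$ case for $M$ itself, so the statement being universally quantified in the module is enough. A second point is that I am freely reducing claims about equality of graded submodules to their homogeneous elements; this is justified because both $\soc_{\rm gr}^n(M)$ and $\{m\in M:mJ^n=0\}$ are genuinely graded submodules of $M$ — the latter because $J$ is a graded ideal, so $mJ^n=0$ can be tested degree-by-degree. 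The main obstacle, such as it is, is simply making the passage to $M/\soc_{\rm gr}^n(M)$ and the identity $J^{n+1}=JJ^n$ interact cleanly with homogeneity; there is no deep difficulty, the whole argument being a graded transcription of \cite[Proposition 4.14]{Goodearl} once the $n=1$ case is in hand, and the $n=1$ case is itself essentially \cite[Proposition 59]{CLP} combined with \Cref{prop: R e R/J tem os mesmos gr-simples}(1) and \Cref{teo: a carac de radgr(R)}(10).
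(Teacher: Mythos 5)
Your proof is correct and follows essentially the same approach as the paper: induction on $n$, with the nontrivial inclusion obtained from the observation that a graded module annihilated by $J$ is a graded $R/J$-module, hence gr-semisimple by \cite[Proposition 59]{CLP} and again gr-semisimple over $R$ by \Cref{prop: R e R/J tem os mesmos gr-simples}(1); the only organizational difference is that you establish $n=1$ as an explicit base case and then apply it to $\overline M=M/\soc_{\rm gr}^n(M)$ in the step, whereas the paper runs the same gr-semisimplicity argument directly on $N/\soc_{\rm gr}^n(M)$ inside the induction. Your convention $J^0=R$ (so that $\{m\in M: mJ^0=0\}=0$ for unital $M$) is in fact the one needed for the $n=0$ base; the paper's ``set $J^0=0$'' is a slip, since that reading would make the right-hand side all of $M$.
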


\begin{proof}
The proof is by induction on $n\in\mathbb{N}$. Setting $J^0=R$ and $\soc_{\rm gr}^0(M)=0$, the case $n=0$ is clear.    Suppose that the equality of the statement is true for some $n\in\mathbb{N}$. 
    By \Cref{teo: a carac de radgr(R)}(10),  $\soc_{\rm gr}\left(\frac{M}{\soc_{\rm gr}^n(M)}\right)J=0$. Thus,  
    $\soc_{\rm gr}^{n+1}(M)J\subseteq\soc_{\rm gr}^n(M)$ and $\soc_{\rm gr}^{n+1}(M)J^{n+1}\subseteq\soc_{\rm gr}^n(M)J^n=0$ by the induction hypothesis.
    It remains to show that $N:=\{m\in M:mJ^{n+1}=0\}\subseteq \soc_{\rm gr}^{n+1}(M)$.
    Again, by induction hypothesis, we have $\soc_{\rm gr}^n(M)=\{m\in M:mJ^n=0\}\subseteq N$ and $NJ\subseteq\{m\in M:mJ^n=0\}=\soc_{\rm gr}^n(M)$.
    Thus, $\left(\frac{N}{\soc_{\rm gr}^n(M)}\right)J=0$, and it follows that $N/\soc_{\rm gr}^n(M)$ is a $\Gamma$-graded right module over the gr-semisimple ring $R/J$.
    By \cite[Proposition 59]{CLP}, $N/\soc_{\rm gr}^n(M)$ is a gr-semisimple $R/J$-module. Since $R$ and $R/J$ have the same gr-simple modules by \Cref{prop: R e R/J tem os mesmos gr-simples}(1), we get that $N/\soc_{\rm gr}^n(M)$ is gr-semisimple as an $R$-module. Hence $\frac{N}{\soc_{\rm gr}^n(M)}\subseteq\soc_{\rm gr}\left(\frac{M}{\soc_{\rm gr}^n(M)}\right)$ and $N\subseteq \soc_{\rm gr}^{n+1}(M)$, as desired.
\end{proof}

The next result is just \Cref{prop: soc(M)=lann(J)} applied to the regular right $R$-module $M=R_R$. 

\begin{coro}
    Let $R$ be a $\Gamma$-graded ring. 
    If $R$ is gr-semilocal, then   $\soc_{\rm gr}^n(R_R)$ is the left annihilator of $\radgr(R)^n$ in $R$ for each $n\in\mathbb{N}$. \qed
\end{coro}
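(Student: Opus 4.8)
The statement to prove is the corollary asserting that, for a gr-semilocal $\Gamma$-graded ring $R$, the graded submodule $\soc_{\rm gr}^n(R_R)$ equals the left annihilator of $\radgr(R)^n$ in $R$, for every $n\in\mathbb{N}$. The plan is simply to specialize the immediately preceding \Cref{prop: soc(M)=lann(J)} to the regular module $M=R_R$, and then identify the resulting set with the left annihilator.

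First I would recall that, by \Cref{prop: soc(M)=lann(J)}, since $R$ is gr-semilocal and $J=\radgr(R)$, we have $\soc_{\rm gr}^n(M)=\{m\in M:mJ^n=0\}$ for every $\Gamma$-graded right $R$-module $M$ and every $n\in\mathbb{N}$. Taking $M=R_R$ (which is indeed a $\Gamma$-graded right $R$-module, as noted right after the definition of graded submodules in the preliminaries), this yields $\soc_{\rm gr}^n(R_R)=\{a\in R:aJ^n=0\}$. The second step is to observe that $\{a\in R:aJ^n=0\}$ is by definition the left annihilator $\lann_R(J^n)=\lann_R(\radgr(R)^n)$ of the graded ideal $\radgr(R)^n$ in $R$; here $\radgr(R)^n$ makes sense as the $n$-th power of the graded ideal $\radgr(R)$, which is itself a graded ideal by \Cref{coro: rad e soc sao prerad}(4) (products of graded ideals are graded ideals). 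That completes the argument.

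There is essentially no obstacle: the content is entirely contained in \Cref{prop: soc(M)=lann(J)}, and the corollary is a one-line specialization plus a rewriting of notation. The only minor point worth mentioning is the convention $\radgr(R)^0=R$ so that $\lann_R(R)=\soc_{\rm gr}^0(R_R)=\{0\}$, matching the convention $J^0=0$ (acting as the zero ideal on the module side) used in the proof of \Cref{prop: soc(M)=lann(J)}; with that convention the case $n=0$ is consistent. Since the excerpt already presents this corollary with a \qed and no written proof, my proposal is precisely to supply the short justification:

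\begin{proof}
    This is \Cref{prop: soc(M)=lann(J)} applied to the $\Gamma$-graded right $R$-module $M=R_R$. Indeed, with $J=\radgr(R)$, \Cref{prop: soc(M)=lann(J)} gives
    \[\soc_{\rm gr}^n(R_R)=\{a\in R:aJ^n=0\}=\lann_R\bigl(\radgr(R)^n\bigr)\]
    for each $n\in\mathbb{N}$, where $\radgr(R)^n$ is a graded ideal of $R$ by \Cref{coro: rad e soc sao prerad}(4).
\end{proof}
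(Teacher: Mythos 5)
Your proof is correct and is exactly what the paper has in mind: the sentence immediately preceding the corollary says it is \Cref{prop: soc(M)=lann(J)} applied to the regular module $M=R_R$, and identifying $\{a\in R:a\radgr(R)^n=0\}$ with the left annihilator of $\radgr(R)^n$ is purely a matter of notation. Your aside on the $n=0$ case is slightly garbled---the convention you want is $\radgr(R)^0=R$ (not the ``$J^0=0$'' that the paper's proof of \Cref{prop: soc(M)=lann(J)} writes, which looks like a typo), and then $\lann_R(R)=\{0\}=\soc_{\rm gr}^0(R_R)$ because $R_R$ is unital.
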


As a consequence of \Cref{prop: soc(M)=lann(J)}, we have  characterizations of (strongly) ($\Gamma_0$-)finite gr-length when $R$ is a gr-semilocal ring.

\begin{coro}
\label{coro: comp finito <=> MJn=0}
    Let $R$ be a $\Gamma$-graded gr-semilocal ring and set $J=\radgr(R)$.
    For a $\Gamma$-graded right $R$-module $M$, the following assertions are equivalent:
    \begin{enumerate}[\rm (1)]
        \item $M$ has finite gr-length.
        \item $M$ is gr-artinian and there exists $n\in\mathbb{N}$ such that $MJ^n=0$.
        \item $M$ is gr-noetherian and there exists $n\in\mathbb{N}$ such that $MJ^n=0$.
    \end{enumerate}
\end{coro}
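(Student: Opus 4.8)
The plan is to reduce everything to the two earlier results \Cref{prop: soc(M)=lann(J)} and \Cref{prop: carac comp finito via socle}. The bridge is the observation that, for a gr-semilocal ring $R$ with $J=\radgr(R)$ and any $\Gamma$-graded right $R$-module $M$, the condition ``$MJ^n=0$'' is exactly the condition ``$\soc_{\rm gr}^n(M)=M$''. Indeed, by \Cref{prop: soc(M)=lann(J)} we have $\soc_{\rm gr}^n(M)=\{m\in M:mJ^n=0\}$; hence $MJ^n=0$ holds if and only if every $m\in M$ lies in $\soc_{\rm gr}^n(M)$, i.e. $\soc_{\rm gr}^n(M)=M$.

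With this translation in hand, the equivalences follow immediately. For $(1)\Leftrightarrow(2)$: by \Cref{prop: carac comp finito via socle}, $M$ has finite gr-length if and only if $M$ is gr-artinian and $\soc_{\rm gr}^n(M)=M$ for some $n\in\mathbb{N}$, which by the previous paragraph is the same as saying $M$ is gr-artinian and $MJ^n=0$ for some $n$. The equivalence $(1)\Leftrightarrow(3)$ is obtained in the same way, using the gr-noetherian clause of \Cref{prop: carac comp finito via socle} instead.

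Since each step is a direct invocation of an already-established proposition together with the elementary reformulation of $MJ^n=0$, I do not expect any genuine obstacle here; the only point that needs a word of care is that \Cref{prop: soc(M)=lann(J)} requires $R$ to be gr-semilocal, which is exactly our standing hypothesis, so the reformulation is legitimate. The proof can therefore be written in a couple of lines, essentially: ``By \Cref{prop: soc(M)=lann(J)}, $MJ^n=0$ if and only if $\soc_{\rm gr}^n(M)=M$; now apply \Cref{prop: carac comp finito via socle}.''
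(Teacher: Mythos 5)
Your proof is correct and is exactly the paper's argument: translate $MJ^n=0$ into $\soc_{\rm gr}^n(M)=M$ via \Cref{prop: soc(M)=lann(J)} (valid under the gr-semilocal hypothesis) and then invoke \Cref{prop: carac comp finito via socle}.
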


\begin{proof}
    This follows from \Cref{prop: carac comp finito via socle} and \Cref{prop: soc(M)=lann(J)}.
\end{proof}

\begin{coro}
\label{coro: G0 comp finito <=> MJn=0}
    Let $R$ be a $\Gamma$-graded gr-semilocal ring and set $J=\radgr(R)$.
    For a $\Gamma$-graded right $R$-module $M$, the following assertions are equivalent:
    \begin{enumerate}[\rm (1)]
        \item $M$ has  $\Gamma_0$-finite gr-length.
        \item $M$ is $\Gamma_0$-artinian and, for each $e\in\Gamma_0$, there exists $n_e\in\mathbb{N}$ such that $M(e)J^{n_e}=0$.
        \item $M$ is $\Gamma_0$-noetherian and, for each $e\in\Gamma_0$, there exists $n_e\in\mathbb{N}$ such that $M(e)J^{n_e}=0$.
    \end{enumerate}
\end{coro}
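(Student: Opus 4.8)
The plan is to reduce everything to the preceding corollary \Cref{coro: comp finito <=> MJn=0}, applied separately to each homogeneous shift $M(e)$, $e\in\Gamma_0$. First I would recall the three relevant ``$\Gamma_0$-decompositions'': by definition $M$ has $\Gamma_0$-finite gr-length if and only if $M(e)$ has finite gr-length for every $e\in\Gamma_0$; by \Cref{def: subsec:Gamma_0 conditions}, $M$ is $\Gamma_0$-artinian (resp. $\Gamma_0$-noetherian) if and only if $M(e)$ is gr-artinian (resp. gr-noetherian) for every $e\in\Gamma_0$. So both sides of the asserted equivalences are ``componentwise'' conditions indexed by $\Gamma_0$.

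Next I would check that the annihilation condition also decomposes correctly. Since $M=\bigoplus_{f\in\Gamma_0}M(f)$ as $\Gamma$-graded right $R$-modules and each $M(f)$ is an $R$-submodule of $M$, we have $MJ^{n}=\bigoplus_{f\in\Gamma_0}M(f)J^{n}$ with $M(f)J^{n}\subseteq M(f)$, whence $(MJ^{n})(e)=M(e)J^{n}$ for all $e$ and $n$. Thus ``$M(e)J^{n_e}=0$'' is exactly the $e$-component of an annihilation statement about $M$, and no reindexing issue arises. I would also note that $R$ is gr-semilocal with the same $J=\radgr(R)$ regardless of which module we look at, so \Cref{coro: comp finito <=> MJn=0} is applicable to the $\Gamma$-graded right $R$-module $M(e)$ for each $e\in\Gamma_0$.

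With these observations in place the argument is immediate. For the equivalence $(1)\Leftrightarrow(2)$: $M$ has $\Gamma_0$-finite gr-length $\iff$ $M(e)$ has finite gr-length for each $e$ $\iff$ (by \Cref{coro: comp finito <=> MJn=0} applied to $M(e)$) $M(e)$ is gr-artinian and there exists $n_e\in\mathbb{N}$ with $M(e)J^{n_e}=0$, for each $e$ $\iff$ $M$ is $\Gamma_0$-artinian and, for each $e\in\Gamma_0$, there is $n_e$ with $M(e)J^{n_e}=0$. The equivalence $(1)\Leftrightarrow(3)$ is obtained in the same way, replacing ``gr-artinian'' by ``gr-noetherian'' throughout and using the noetherian clause of \Cref{coro: comp finito <=> MJn=0}.

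I do not expect a genuine obstacle here: the statement is a routine $\Gamma_0$-componentwise packaging of \Cref{coro: comp finito <=> MJn=0}. The only point to be careful about is purely bookkeeping — the quantifier is ``for each $e$ there exists $n_e$'' and no uniform bound on $(n_e)_{e\in\Gamma_0}$ is claimed (a uniform bound would correspond to the strong $\Gamma_0$-version, which is treated via \Cref{prop: carac fort G0 comp finito via socle} rather than here). Accordingly the proof can be written in one or two lines: ``This follows from \Cref{coro: comp finito <=> MJn=0} applied to $M(e)$ for each $e\in\Gamma_0$, using that $M(e)J^{n}=(MJ^{n})(e)$.''
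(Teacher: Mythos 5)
Your proof is correct and follows essentially the same path as the paper's: both ultimately rest on \Cref{prop: soc(M)=lann(J)} together with the socle characterization of finite gr-length, applied componentwise over $\Gamma_0$. The paper simply cites the already-packaged $\Gamma_0$-version \Cref{coro: carac G0 comp finito via socle} and then Proposition~\ref{prop: soc(M)=lann(J)}, whereas you cite \Cref{coro: comp finito <=> MJn=0} and perform the componentwise reduction yourself; the underlying mathematics is identical.
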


\begin{proof}
    This follows from \Cref{coro: carac G0 comp finito via socle} and \Cref{prop: soc(M)=lann(J)}.
\end{proof}

\begin{coro}
\label{coro: for G0 comp finito <=> MJn=0}
    Let $R$ be a $\Gamma$-graded gr-semilocal ring and set $J=\radgr(R)$.
    For a $\Gamma$-graded right $R$-module $M$, the following assertions are equivalent:
    \begin{enumerate}[\rm (1)]
        \item $M$ has strongly $\Gamma_0$-finite gr-length.
        \item $M$ is strongly $\Gamma_0$-artinian and there exists $n\in\mathbb{N}$ such that $MJ^n=0$.
        \item $M$ is strongly $\Gamma_0$-noetherian and there exists $n\in\mathbb{N}$ such that $MJ^n=0$.
    \end{enumerate}
\end{coro}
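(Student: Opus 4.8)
The plan is to deduce this corollary from the socle-series characterization of strongly $\Gamma_0$-finite gr-length together with the identification of the Loewy series with annihilators of powers of $J$, exactly in the spirit of \Cref{coro: comp finito <=> MJn=0} and \Cref{coro: G0 comp finito <=> MJn=0}. The two main ingredients are \Cref{prop: carac fort G0 comp finito via socle}, which says that $M$ has strongly $\Gamma_0$-finite gr-length if and only if $M$ is strongly $\Gamma_0$-artinian (resp. strongly $\Gamma_0$-noetherian) together with the existence of some $n\in\mathbb{N}$ with $\soc_{\rm gr}^n(M)=M$, and \Cref{prop: soc(M)=lann(J)}, which (since $R$ is gr-semilocal) gives $\soc_{\rm gr}^n(M)=\{m\in M:mJ^n=0\}$ for all $n\in\mathbb{N}$.

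First I would observe the translation step: for any $n\in\mathbb{N}$, the equality $\soc_{\rm gr}^n(M)=M$ holds if and only if $MJ^n=0$. Indeed, by \Cref{prop: soc(M)=lann(J)} the submodule $\soc_{\rm gr}^n(M)$ is precisely the set of $m\in M$ with $mJ^n=0$, so $\soc_{\rm gr}^n(M)=M$ says exactly that every element of $M$ is killed by $J^n$, i.e. $MJ^n=0$. This makes conditions $(2)$ and $(3)$ of the present statement literally the same as conditions $(2)$ and $(3)$ of \Cref{prop: carac fort G0 comp finito via socle} once the hypothesis of gr-semilocality is invoked.

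Then the proof is immediate: combining the previous observation with \Cref{prop: carac fort G0 comp finito via socle} yields $(1)\Leftrightarrow(2)$ and $(1)\Leftrightarrow(3)$. Concretely, one writes: by \Cref{prop: carac fort G0 comp finito via socle}, $M$ has strongly $\Gamma_0$-finite gr-length iff $M$ is strongly $\Gamma_0$-artinian and $\soc_{\rm gr}^n(M)=M$ for some $n$, iff (by \Cref{prop: soc(M)=lann(J)}, using that $R$ is gr-semilocal) $M$ is strongly $\Gamma_0$-artinian and $MJ^n=0$ for some $n$; the noetherian version is identical. I do not anticipate any real obstacle here — the work has all been done in the quoted propositions, and the only thing to be careful about is that \Cref{prop: soc(M)=lann(J)} genuinely requires $R$ gr-semilocal, which is part of the hypothesis. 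So the proof is the one-line:

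\begin{proof}
    This follows from \Cref{prop: carac fort G0 comp finito via socle} and \Cref{prop: soc(M)=lann(J)}.
\end{proof}
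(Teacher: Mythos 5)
Your proof is correct and is essentially identical to the paper's own argument: the paper's proof is the same one-line citation of \Cref{prop: carac fort G0 comp finito via socle} and \Cref{prop: soc(M)=lann(J)}, and your preliminary discussion of the translation between $\soc_{\rm gr}^n(M)=M$ and $MJ^n=0$ is exactly the (implicit) glue the paper also relies on.
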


\begin{proof}
    This follows from \Cref{prop: carac fort G0 comp finito via socle} and \Cref{prop: soc(M)=lann(J)}.
\end{proof}

Now we are able to give a characterization of gr-semilocal rings. It
 is a graded version of \cite[Proposition 15.17]{AndersonFuller}  and \cite[Exercise 20.13]{LamExercises}.

\begin{teo}
\label{teo: carac R/rad gr-ss}
    The following assertions are equivalent for a $\Gamma$-graded ring $R$:
    \begin{enumerate}[\rm (1)]
        \item $R$ is a gr-semilocal ring.
        \item For each $\Gamma$-graded right $R$-module $M$, we have 
        $$\socgr(M)=\{m\in M:m\cdot\radgr(R)=0\}.$$
        \item Every graded direct product of gr-semisimple $\Gamma$-graded right $R$-modules is gr-semisimple.
        \item  Every graded direct product of shifts of a gr-semisimple $\Gamma$-graded right $R$-module is gr-semisimple.
        \item Every graded direct product of gr-simple $\Gamma$-graded right $R$-modules is gr-semisimple.
    \end{enumerate}
\end{teo}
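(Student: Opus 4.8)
The strategy is to show the cycle of implications $(1)\Rightarrow(3)\Rightarrow(4)\Rightarrow(5)\Rightarrow(1)$, together with $(1)\Leftrightarrow(2)$, which was essentially already established earlier. The equivalence $(1)\Leftrightarrow(2)$ follows from \Cref{prop: soc(M)=lann(J)}: if $R$ is gr-semilocal, then the case $n=1$ of that proposition gives $\socgr(M)=\soc_{\rm gr}^1(M)=\{m\in M: mJ=0\}$ where $J=\radgr(R)$; conversely, applying the displayed equality to $M=(R/J)_R$ we get $\socgr(R/J)=R/J$, so $R/J$ is gr-semisimple by \Cref{exems: gr-soc and gr-rad of gr-semisimple}(1) (using that $(R/J)\cdot\radgr(R)=0$, hence also its radical, by \Cref{prop: rad of R/I for I contained in rad}).

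\textbf{The implication $(1)\Rightarrow(3)$.} Assume $R$ is gr-semilocal, and let $\{M_j:j\in J\}$ be a family of gr-semisimple $\Gamma$-graded right $R$-modules. Each $M_j$ is annihilated by $\radgr(R)$ by \Cref{teo: a carac de radgr(R)}(10), hence so is $P:=\prod_{j\in J}^{\rm gr}M_j$ (the action on $P$ is computed componentwise). Therefore $P$ is a $\Gamma$-graded right $R/\radgr(R)$-module, and since $R/\radgr(R)$ is a gr-semisimple ring, $P$ is a gr-semisimple $R/\radgr(R)$-module by \cite[Proposition 59]{CLP}. By \Cref{prop: R e R/J tem os mesmos gr-simples}(1), $P$ is then gr-semisimple as an $R$-module. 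The implications $(3)\Rightarrow(4)$ and $(4)\Rightarrow(5)$ are immediate (a gr-simple module is gr-semisimple, and a shift of a gr-simple module is zero or gr-simple by \cite[Lemma 5.1]{Artigoarxiv}, so in particular gr-semisimple; hence $(4)$ applied with a gr-semisimple module that is a sum of gr-simple modules specializes, and $(5)$ is just the special case of $(3)$ where all factors are gr-simple — one must be slightly careful that $(4)\Rightarrow(5)$ follows because every gr-simple module is a shift of itself).

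\textbf{The implication $(5)\Rightarrow(1)$ — the main obstacle.} This is the nontrivial direction. Set $\overline R:=R/\radgr(R)$; by \Cref{prop: rad of R/I for I contained in rad}, $\radgr(\overline R)=0$, and by \Cref{prop: R e R/J tem os mesmos gr-simples}(1) the gr-simple $\overline R$-modules coincide with the gr-simple $R$-modules. One checks that condition $(5)$ passes to $\overline R$: any graded direct product of gr-simple $\overline R$-modules is, viewed over $R$, a graded direct product of gr-simple $R$-modules, hence gr-semisimple over $R$, hence gr-semisimple over $\overline R$ (being annihilated by $\radgr(R)$). So it suffices to show: if $\radgr(\overline R)=0$ and every graded direct product of gr-simple $\overline R$-modules is gr-semisimple, then $\overline R$ is gr-semisimple. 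Fix $e\in\Gamma'_0(\overline R)$. By \Cref{coro: M/rad contido num prod de gr-simp} applied to $\overline R(e)$ (noting $\radgr(\overline R(e))=0$ by \Cref{prop: rad M(e)}(1)), there is an injective gr-homomorphism $\overline R(e)\hookrightarrow\prod_{i\in I}^{\rm gr}S_i$ for some family of gr-simple modules $S_i$. By $(5)$ the codomain is gr-semisimple, and a graded submodule of a gr-semisimple module is gr-semisimple by \cite[Corollary 54]{CLP}; hence $\overline R(e)$ is gr-semisimple. Since this holds for every $e\in\Gamma_0$, $\overline R=\bigoplus_{e\in\Gamma_0}\overline R(e)$ is gr-semisimple, i.e.\ $R$ is gr-semilocal. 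The delicate point to get right is that the gr-semisimplicity statements transfer cleanly between $R$ and $\overline R$ and that \Cref{coro: M/rad contido num prod de gr-simp} indeed produces a product of \emph{gr-simple} modules so that hypothesis $(5)$ (not merely $(3)$) can be invoked; once those bookkeeping issues are handled, the argument is short.
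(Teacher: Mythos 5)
Your overall plan (proving the cycle $(1)\Rightarrow(3)\Rightarrow(4)\Rightarrow(5)\Rightarrow(1)$ together with $(1)\Leftrightarrow(2)$) is organized differently from the paper's, which goes $(1)\Rightarrow(2)\Rightarrow(3)\Rightarrow(4),(5)$ and then closes the loop with \emph{both} $(4)\Rightarrow(1)$ and $(5)\Rightarrow(1)$. That organization is deliberate: the paper's $(4)\Rightarrow(1)$ takes a faithful gr-semisimple $\overline R$-module $M$ and embeds $\overline R$ into $\prod^{\rm gr}_{m\in\h(M)}M(\deg m)$, a product of shifts of a single fixed $M$, so it never needs to deduce $(5)$ from $(4)$ — which is precisely where your argument has a hole.

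The gap is in your claim that $(4)\Rightarrow(5)$ is ``immediate . . . because every gr-simple module is a shift of itself.'' Assertion $(4)$ is about products of shifts of a \emph{single fixed} gr-semisimple module, i.e.\ products $\prod^{\rm gr}_i M(\sigma_i)$ with $M$ fixed and $\sigma_i$ varying. If $\{S_i\}_{i\in I}$ is a family of pairwise non-gr-isomorphic gr-simple modules, the product $\prod^{\rm gr}_i S_i$ is not of that form for any choice of $M$; observing $S_i=S_i(e_i)$ (where $\{e_i\}=\Gamma'_0(S_i)$) exhibits each $S_i$ as a shift of \emph{itself}, not of a module common to all the factors. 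The implication is in fact true, but it needs an argument you have not supplied: take $M=\bigoplus_{T\in\mathcal{T}}T$ over a complete set of gr-isomorphism representatives $\mathcal{T}$; then $M(e_i)=\bigoplus_{T\in\mathcal{T},\,\Gamma'_0(T)=\{e_i\}}T$ contains the representative of $S_i$ as a graded direct summand, so $\prod^{\rm gr}_i S_i$ is gr-isomorphic to a graded submodule of $\prod^{\rm gr}_i M(e_i)$, which is gr-semisimple by $(4)$; conclude by \cite[Corollary 54]{CLP}. Without this embedding, your cycle does not close. (Your $(1)\Leftrightarrow(2)$, $(1)\Rightarrow(3)$, and $(5)\Rightarrow(1)$ are essentially sound; the parenthetical appeal to \Cref{prop: rad of R/I for I contained in rad} inside $(2)\Rightarrow(1)$ is superfluous, and the missing citation there is \Cref{prop: R e R/J tem os mesmos gr-simples}(2) to pass from ``$R/\radgr(R)$ gr-semisimple as an $R$-module'' to ``gr-semisimple ring.'')
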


\begin{proof}
    $(1)\Rightarrow(2)$: Follows from \Cref{prop: soc(M)=lann(J)}.
    
    $(2)\Rightarrow(3)$: Let $M=\prod_{i\in I}^{\rm gr}M_i$ where each $M_i$ is a gr-semisimple $\Gamma$-graded right $R$-module.
    By \Cref{teo: a carac de radgr(R)}(10), $M_i\cdot\radgr(R)=0$ for each $i\in I$.
    Therefore, $M\cdot\radgr(R)=0$, and it follows from (2) that $M=\socgr(M)$ is gr-semisimple.

    $(3)\Rightarrow(4),(5)$: This is clear because gr-simple modules and shifts of gr-semisimple modules are gr-semisimple modules.

    $(4)\Rightarrow(1)$: Denote $\overline{R}:=R/\radgr(R)$.
    By \Cref{prop: rad of R/I for I contained in rad}(4) and \Cref{coro: rad = 0 <=> exist faithful semisimple}, there exists a faithful gr-semisimple $\Gamma$-graded right $\overline{R}$-module $M$.
    We define a gr-homomorphism $\varphi:\overline{R}_{\overline{R}}\to \prod^{gr}_{m\in\h(M)}M(\deg m)$ by $\varphi(\overline{a})=(m\overline{a})_{m\in\h(M)}$ for each $\overline{a}\in\overline{R}$.
    Since $M$ is faithful, it follows that $\varphi$ is injective, and therefore $\overline{R}_{\overline{R}}$ is gr-isomorphic to a graded submodule of $\prod^{gr}_{m\in\h(M)}M(\deg m)$, which is gr-semisimple by (4).
    Hence, $\overline{R}_{\overline{R}}$  is gr-semisimple because graded submodules of gr-semisimple modules are gr-semisimple by \cite[Corollary 54]{CLP}.

    $(5)\Rightarrow(1)$: By  \Cref{coro: M/rad contido num prod de gr-simp}, the graded right $R$-module $R/\radgr(R)$ embeds in a graded direct product of gr-simple right $R$-modules.  Thus,  by (5), $R/\radgr(R)$ is a graded submodule of a gr-semisimple right $R$-module. Then \cite[Corollary 54]{CLP} asserts that $R/\radgr(R)$ is a gr-semisimple right $R$-module.   Since $R$ and $R/\radgr(R)$ have the same gr-simple modules by \Cref{prop: R e R/J tem os mesmos gr-simples}(1),  we get that $R/\radgr(R)$ is a gr-semisimple ring.
\end{proof}

Our next aim is to give other characterizations of gr-semilocal rings $R$ in terms of the subrings $1_eR1_e$ and $R_e$ for $e\in\Gamma_0$. This allows us to connect our concept of gr-semilocal rings with that of semilocal category given in \cite{Facchini}. Moreover, this will provide many examples of gr-semilocal rings graded by groupoids of the form $I\times I$. 
Keeping our main objective in mind, we recall the following definition from \cite[Section~5.4]{Artigoarxiv}.

\begin{defi}
    Let $R$ be a $\Gamma$-graded ring and $e\in\Gamma_0$.
    We say that $R$ is \emph{right $e$-faithful} if, for each $\gamma\in e\Gamma$ and $0\neq a\in R_\gamma$, there exists $r\in R_{\gamma^{-1}}$ such that $0\neq ar\in R_e$.
\end{defi}

\begin{lema}
\label{prop: rad R=0 --> R e-fiel}
    Let $R$ be a  $\Gamma$-graded ring.
	If $\radgr(R)=0$, then $R$ is right $e$-faithful for every $e\in\Gamma_0$.
\end{lema}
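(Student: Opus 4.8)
The plan is to deduce this immediately from the characterization of $\radgr(R_R)$ in \Cref{teo: a carac de radgr(R)}, using the clause that says $a\in\radgr(R_R)$ if and only if $1_{r(\gamma)}-ax$ is right invertible (equivalently invertible) in $R_{r(\gamma)}$ for every $x\in R_{\gamma^{-1}}$.

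First I would fix $e\in\Gamma_0$, take $\gamma\in e\Gamma$ (so that $r(\gamma)=e$) and a nonzero $a\in R_\gamma$, and make the elementary degree bookkeeping explicit: since $d(\gamma^{-1})=r(\gamma)=e$ and $r(\gamma^{-1})=d(\gamma)$, we have $\gamma\gamma^{-1}=r(\gamma)=e$, hence $R_\gamma R_{\gamma^{-1}}\subseteq R_{\gamma\gamma^{-1}}=R_e$. Consequently, for any $r\in R_{\gamma^{-1}}$ the element $ar$ already lies in $R_e$, so proving right $e$-faithfulness for this $a$ amounts exactly to showing $aR_{\gamma^{-1}}\neq 0$.

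Then I would argue by contradiction. Suppose $aR_{\gamma^{-1}}=0$; that is, $ax=0$ for all $x\in R_{\gamma^{-1}}$. Then $1_{r(\gamma)}-ax=1_{r(\gamma)}$ for every $x\in R_{\gamma^{-1}}$, and $1_{r(\gamma)}$ is (right) invertible in the unital ring $R_{r(\gamma)}$. By \Cref{teo: a carac de radgr(R)} (e.g.\ the implication from condition (7), or equivalently (8), to condition (1)), this forces $a\in\radgr(R_R)=\radgr(R)$. But $\radgr(R)=0$ by hypothesis, so $a=0$, contradicting $a\neq 0$. Hence $aR_{\gamma^{-1}}\neq 0$, i.e.\ there exists $r\in R_{\gamma^{-1}}$ with $0\neq ar\in R_e$, which is precisely right $e$-faithfulness. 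Since $e\in\Gamma_0$ and $\gamma\in e\Gamma$ and $a$ were arbitrary, $R$ is right $e$-faithful for every $e\in\Gamma_0$.

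There is essentially no serious obstacle here: the statement is a short corollary of the ring-theoretic characterization of $\radgr(R)$ already established. The only points requiring minor care are (i) correctly identifying which clause of \Cref{teo: a carac de radgr(R)} to invoke and checking that $1_{r(\gamma)}$ trivially satisfies it, and (ii) the degree computation $\gamma\gamma^{-1}=e$ that guarantees $aR_{\gamma^{-1}}\subseteq R_e$, so that the nonvanishing of $aR_{\gamma^{-1}}$ gives exactly the required element of $R_e$.
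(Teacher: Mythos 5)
Your proof is correct and takes essentially the same approach as the paper: both invoke item (7)/(8) of \Cref{teo: a carac de radgr(R)}. The paper argues directly ($a\neq 0=\radgr(R)$ forces some $x\in R_{\gamma^{-1}}$ with $1_e-ax$ not invertible, whence $ax\neq 0$), while you run the contrapositive (if $aR_{\gamma^{-1}}=0$ then $1_e-ax=1_e$ is always invertible, forcing $a\in\radgr(R)=0$); these are the same argument.
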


\begin{proof}
    Let $e\in\Gamma_0$, $\gamma\in e \Gamma$, and $0\neq a\in R_\gamma$.
    Since $a\notin\radgr(R)$, it follows from \Cref{teo: a carac de radgr(R)}(8) that there exists $x\in R_{\gamma^{-1}}$ such that $1_e - ax$ is not invertible in $R_e$.
    In particular, $0\neq ax \in R_e$.
	Therefore, $R$ is right $e$-faithful.
\end{proof}

\begin{prop}
\label{prop: R gr-semilocal <--> Re semilocal}
	Let $R$ be a $\Gamma$-graded ring. 
    The following assertions are equivalent:
    \begin{enumerate}[\rm (1)]
        \item $R$ is a gr-semilocal ring.
        \item $1_eR1_e$ is a gr-semilocal ring for every $e\in\Gamma_0$. 
        \item $R_e$ is a semilocal ring for every $e\in\Gamma_0$. 
    \end{enumerate}
\end{prop}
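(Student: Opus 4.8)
The plan is to prove the three equivalences in a cycle $(1)\Rightarrow(2)\Rightarrow(3)\Rightarrow(1)$, relying heavily on the local structure of groupoid graded rings and on the characterization of $\radgr(R)$ obtained in \Cref{coro: radgr(R)e=rad(Re)}. The key facts I would invoke are: $\radgr(R)_e=\rad(R_e)$ and $1_e\radgr(R)1_e=\radgr(1_eR1_e)$ for every $e\in\Gamma_0$ (\Cref{coro: radgr(R)e=rad(Re)}(1)--(2)); that a quotient of a gr-semilocal ring has the expected radical; the fact that $(R/\radgr(R))_e\cong R_e/\rad(R_e)$ and $1_e(R/\radgr(R))1_e\cong (1_eR1_e)/\radgr(1_eR1_e)$; and the criterion that a $\Gamma$-graded ring $S$ is gr-semisimple if and only if $S$ is right $\Gamma_0$-artinian with $\radgr(S)=0$ (\Cref{teo: R gr-ss <=> rad(R)=0 e R art}), together with \cite[Lemma 5.12]{Artigoarxiv}-type results on the structure of gr-semisimple rings.

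\textbf{Step $(1)\Rightarrow(2)$.} Assume $R$ is gr-semilocal, so $\overline R:=R/\radgr(R)$ is gr-semisimple. Fix $e\in\Gamma_0$; I would show $1_eR1_e$ is gr-semilocal by identifying its Jacobson radical. Using \Cref{coro: radgr(R)e=rad(Re)}(2), $\radgr(1_eR1_e)=1_e\radgr(R)1_e$, so $(1_eR1_e)/\radgr(1_eR1_e)\cong 1_e\overline R\,1_e$ as $\{e\}\times\cdots$-graded rings (or rather as graded rings over the full subgroupoid on the objects reachable from $e$ inside $\supp\overline R$). Since $\overline R$ is gr-semisimple, I need that the ``corner'' $1_e\overline R\,1_e$ is again gr-semisimple; this should follow from the structure theory of gr-semisimple rings in \cite[\S5]{Artigoarxiv} — a gr-semisimple ring is a graded product of matrix rings over gr-division rings, and each corner $1_e(-)1_e$ is a finite direct sum of such blocks, hence gr-semisimple. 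Therefore $1_eR1_e$ is gr-semilocal.

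\textbf{Step $(2)\Rightarrow(3)$.} Assume $1_eR1_e$ is gr-semilocal for every $e\in\Gamma_0$. Note that $R_e=1_eR_e1_e=(1_eR1_e)_e$ is exactly the degree-$e$ homogeneous component of the graded ring $1_eR1_e$, and this component is concentrated at an idempotent of the groupoid. Now a gr-semilocal ring $T$ graded by a groupoid has $T/\radgr(T)$ gr-semisimple, hence (by the fact used above together with \Cref{coro: radgr(R)e=rad(Re)}(1) applied to $T$) $T_e/\rad(T_e)=(T/\radgr(T))_e$ is a homogeneous component of a gr-semisimple ring at an idempotent, and such a component is a finite product of matrix rings over division rings, i.e.\ a semisimple ring. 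Applying this with $T=1_eR1_e$ gives that $R_e/\rad(R_e)$ is semisimple, so $R_e$ is semilocal.

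\textbf{Step $(3)\Rightarrow(1)$.} Assume $R_e$ is semilocal for every $e\in\Gamma_0$. Consider $\overline R=R/\radgr(R)$; by \Cref{prop: rad of R/I for I contained in rad}, $\radgr(\overline R)=0$, and by \Cref{coro: radgr(R)e=rad(Re)}(1), $\overline R_e=R_e/\rad(R_e)$ is semisimple for every $e\in\Gamma_0$. By \Cref{teo: R gr-ss <=> rad(R)=0 e R art} it suffices to show $\overline R$ is right $\Gamma_0$-artinian, i.e.\ $\overline R(e)=1_e\overline R$ is gr-artinian for each $e$. Here is where I expect the main obstacle: passing from ``each $\overline R_e$ is semisimple (hence artinian)'' to ``$1_e\overline R$ is gr-artinian'' requires controlling all the homogeneous components $1_e\overline R\,1_f$ and not just the diagonal ones. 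The tool should be \Cref{prop: rad R=0 --> R e-fiel}: since $\radgr(\overline R)=0$, the ring $\overline R$ is right $e$-faithful for every $e$, so every nonzero homogeneous element $a\in 1_e\overline R$ of degree $\gamma\in e\Gamma$ has some $r$ with $0\neq ar\in \overline R_e$; this lets one detect strictly descending chains of graded submodules of $1_e\overline R$ inside the semisimple (hence artinian) ring $\overline R_e$, by intersecting with $\overline R_e$ and using $e$-faithfulness to see the intersections stay strict. Concretely I would argue: a strictly descending chain $X_1\supsetneq X_2\supsetneq\cdots$ of graded submodules of $1_e\overline R$ yields $X_i\cap\overline R_e=X_i1_e$, and $X_i1_e\supsetneq X_{i+1}1_e$ (if $X_i1_e=X_{i+1}1_e$ then $e$-faithfulness forces $X_i=X_{i+1}$, because any homogeneous $a\in X_i\setminus X_{i+1}$ of degree in $e\Gamma$ would produce $0\neq ar\in X_i1_e\setminus X_{i+1}1_e$ after checking $ar\notin X_{i+1}$; one has to be a little careful to route $ar$ back, but the semisimplicity of $\overline R_e$, equivalently its being a finite product of matrix rings over division rings, gives enough idempotents to do this). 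Hence the chain must stabilize since $\overline R_e$ is artinian, so $1_e\overline R$ is gr-artinian, $\overline R$ is right $\Gamma_0$-artinian, and $R$ is gr-semilocal. The delicate point to get right in the final writeup is exactly this $e$-faithfulness bookkeeping, i.e.\ showing the correspondence ``graded submodules of $1_e\overline R$'' $\leftrightsquigarrow$ ``right ideals of $\overline R_e$ via $X\mapsto X1_e$'' is injective and order-preserving enough to transport the DCC; this is implicitly the content of the localization argument at \cite[p.~40]{Artigoarxiv} / \Cref{lem: M gr-art <= M1_f gr-art}, which I would cite or adapt rather than redo.
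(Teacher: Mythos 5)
Your overall architecture coincides with the paper's: the cycle $(1)\Rightarrow(2)\Rightarrow(3)\Rightarrow(1)$, the identifications $\radgr(1_eR1_e)=1_e\radgr(R)1_e$ and $\radgr(R)_e=\rad(R_e)$ from \Cref{coro: radgr(R)e=rad(Re)}, the corner/component structure of gr-semisimple rings from \cite[\S 5]{Artigoarxiv}, and the use of $e$-faithfulness via \Cref{prop: rad R=0 --> R e-fiel} in the last step. Steps $(1)\Rightarrow(2)$ and $(2)\Rightarrow(3)$ are carried out exactly as in the paper (the paper cites \cite[Proposition~5.32(2),(3)]{Artigoarxiv}, which is precisely the structure-theoretic fact you invoke informally).

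Where you diverge — and where the gap is — is in $(3)\Rightarrow(1)$. The paper's proof is short: it invokes \cite[Proposition~5.34(1)]{Artigoarxiv} directly, namely that if $T_e$ is semisimple and $T$ is right $e$-faithful for all $e\in\Gamma_0$, then $T$ is gr-semisimple, applied to $T=\overline R=R/\radgr(R)$. You instead try to re-derive this via \Cref{teo: R gr-ss <=> rad(R)=0 e R art}, showing $\overline R$ is right $\Gamma_0$-artinian by a DCC argument on graded submodules of $1_e\overline R$ routed through $\overline R_e$. The pivot claim — that for a strictly descending chain $X_i\supsetneq X_{i+1}$ of graded submodules of $1_e\overline R$, the equality $X_i 1_e = X_{i+1}1_e$ contradicts $e$-faithfulness — is not established. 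The $e$-faithfulness of $\overline R$ guarantees, for $a\in X_i\setminus X_{i+1}$ homogeneous of degree $\gamma\in e\Gamma$, the existence of $r\in\overline R_{\gamma^{-1}}$ with $ar\neq 0$ in $\overline R_e$, but it gives no control ensuring $ar\notin X_{i+1}1_e$: indeed $ar\in X_{i+1}1_e$ and $a\notin X_{i+1}$ can happen simultaneously, and ``enough idempotents in $\overline R_e$'' does not by itself close this. You flag this as delicate and propose to fall back on \Cref{lem: M gr-art <= M1_f gr-art} or the localization argument at \cite[p.~40]{Artigoarxiv}, but that lemma requires the finite-generation hypothesis $R=\sum_{f\in\Delta_0}R1_fR$ for a \emph{finite} $\Delta_0\subseteq\Gamma_0$, which holds neither for $R$ nor for $\overline R$ in the present generality. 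So the fallback does not apply as stated, and the correct tool here is precisely the cited \cite[Proposition~5.34(1)]{Artigoarxiv}, whose proof handles the $e$-faithfulness bookkeeping internally; you should cite it as the paper does rather than attempt to reconstruct it through a $\Gamma_0$-artinian detour.
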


\begin{proof}
    $(1)\Rightarrow(2)$: By \cite[Proposition 5.32(2)]{Artigoarxiv}, if $T$ is a $\Gamma$-graded gr-semisimple ring, then $1_eT1_e$ is gr-semisimple for all $e\in\Gamma_0$.  Now,  
    \[\frac{1_eR1_e}{\radgr(1_eR1_e)}=\frac{1_eR1_e}    {1_e(\radgr(R))1_e}\isogr 1_e\left(\frac{R}{\radgr(R)}\right)1_e\,\]
holds for all $e\in \Gamma_0$ by \Cref{coro: radgr(R)e=rad(Re)}(2). 
    
    Thus, since $R/\radgr(R)$ is a gr-semisimple ring, the graded ring $\frac{1_eR1_e}{\radgr(1_eR1_e)}$ 
    is gr-semisimple for all $e\in\Gamma_0$. That is, $1_eR1_e$ is gr-semilocal for all $e\in\Gamma_0$.

    $(2)\Rightarrow(3)$:
 By \cite[Proposition 5.32(3)]{Artigoarxiv}, if $T$ is a $\Gamma$-graded gr-semisimple ring, then $T_e$ is semisimple for all $e\in\Gamma_0$.
     Let $e\in\Gamma_0$. 
    Using \Cref{coro: radgr(R)e=rad(Re)}(1), we have
	\[\frac{R_e}{\rad(R_e)}=\frac{R_e}{\radgr(1_eR1_e)_e}\cong\left(\frac{1_eR1_e}{\radgr(1_eR1_e)}\right)_e\,.\]
Thus, since $\frac{1_eR1_e}{\radgr(1_eR1_e)}$ is gr-semisimple, the ring $\frac{R_e}{\rad(R_e)}$	is  semisimple.  Therefore, $R_e$ is a semilocal ring.

    $(3)\Rightarrow(1)$: By  \cite[Proposition 5.34(1)]{Artigoarxiv}, if $T$ is a $\Gamma$-graded ring such that $T_e$ is semisimple and $T$ is $e$-faithful for all $e\in\Gamma_0$, then $T$ is a gr-semisimple ring.    
    
    Again by \Cref{coro: radgr(R)e=rad(Re)}(1), we have
	\[\frac{R_e}{\rad(R_e)}=\frac{R_e}{\radgr(R)_e}\cong\left(\frac{R}{\radgr(R)}\right)_e\,.\]
    Hence, if (3) holds, then $\left(\frac{R}{\radgr(R)}\right)_e$ is semisimple for all $e\in \Gamma_0$. Moreover, the fact that $\radgr\left(\frac{R}{\radgr(R)}\right)=\{0\}$ by \Cref{prop: rad of R/I for I contained in rad}(4) implies that $\frac{R}{\radgr(R)}$ is right $e$-faithful for all $e\in\Gamma_0$ by \Cref{prop: rad R=0 --> R e-fiel}. Therefore,  $R/\radgr(R)$ is a gr-semisimple ring, as desired.    
    \end{proof}

\begin{defi}
    Following \cite[Definition~4.61]{Facchini}, a \emph{semilocal category} is a preadditive category with a nonzero object such that the endomorphism ring of every nonzero object is a semilocal ring.
\end{defi}

\begin{coro}
	Let $\mathcal{C}$ be small  preadditive category with a nonzero object. 
    The category $\mathcal{C}$ is semilocal if and only if the $\mathcal{C}_0\times \mathcal{C}_0$-graded ring $R_\mathcal{C}$ is gr-semilocal.
\end{coro}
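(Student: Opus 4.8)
The plan is to reduce the statement to \Cref{prop: R gr-semilocal <--> Re semilocal}, which characterizes gr-semilocal rings $R$ in terms of the semilocality of the homogeneous components $R_e$, $e\in\Gamma_0$. First I would recall from \Cref{ex: anel de categoria} that, setting $\Gamma:=\mathcal{C}_0\times\mathcal{C}_0$, the ring $R_\mathcal{C}$ is object unital and, for each $e=(A,A)\in\Gamma_0$, one has $(R_\mathcal{C})_e=\mathcal{C}(A,A)=\End_{\mathcal{C}}(A)$ as rings (the product in $R_\mathcal{C}$ restricted to $\mathcal{C}(A,A)$ being composition, with identity $I_A$). By the equivalence $(1)\Leftrightarrow(3)$ of \Cref{prop: R gr-semilocal <--> Re semilocal}, $R_\mathcal{C}$ is gr-semilocal if and only if $\End_{\mathcal{C}}(A)$ is a semilocal ring for every $A\in\mathcal{C}_0$.

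Next I would match this condition with the definition of a semilocal category. Since $\mathcal{C}$ has a nonzero object by hypothesis, $R_\mathcal{C}\neq\{0\}$ (it contains $I_A\neq 0$ for such an $A$), and by definition "$\mathcal{C}$ is semilocal" means exactly that $\End_{\mathcal{C}}(A)$ is semilocal for every \emph{nonzero} object $A\in\mathcal{C}_0$. Hence the only possible discrepancy between the two conditions is the behaviour at zero objects. If $A$ is a zero object of $\mathcal{C}$, equivalently $\mathcal{C}(A,A)=\{0\}$, then $\End_{\mathcal{C}}(A)$ is the zero ring, and $\End_{\mathcal{C}}(A)/\radgr(\End_{\mathcal{C}}(A))=\{0\}$ is semisimple (consistently with the paper's convention that the zero module is a sum of an empty family of gr-simple submodules, hence gr-semisimple); thus the zero ring is semilocal and the condition is vacuously satisfied at zero objects. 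Therefore "$\End_{\mathcal{C}}(A)$ semilocal for all $A\in\mathcal{C}_0$" is equivalent to "$\End_{\mathcal{C}}(A)$ semilocal for all nonzero $A\in\mathcal{C}_0$", i.e. to $\mathcal{C}$ being semilocal. Combining this with the previous paragraph finishes the proof in both directions simultaneously.

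I do not expect a genuine obstacle here: the substantive work is already contained in \Cref{prop: R gr-semilocal <--> Re semilocal}, and the argument above is essentially a translation between the groupoid-graded language and the categorical language. The single point deserving explicit attention is the convention at zero objects / the zero ring; I would state it clearly so that the reader sees why no separate case analysis is needed for the "$\Rightarrow$" and "$\Leftarrow$" implications. If one preferred to avoid invoking the convention, an alternative would be to note that if $A$ is a zero object then $(A,A)\notin\Gamma'_0(R_\mathcal{C})$, so that its contribution to $R_\mathcal{C}$ (and to $R_\mathcal{C}/\radgr(R_\mathcal{C})$) is zero and irrelevant to gr-semisimplicity; but the cleanest presentation is the one above.
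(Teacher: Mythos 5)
Your proof is correct and follows essentially the same route as the paper: reduce to \Cref{prop: R gr-semilocal <--> Re semilocal} and identify $(R_\mathcal{C})_{(A,A)}$ with $\End_{\mathcal{C}}(A)$. The paper's proof is a one-liner that leaves the zero-object case implicit; your explicit note that the zero ring is semilocal (so zero objects impose no extra condition) is a harmless and reasonable clarification, not a different argument.
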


\begin{proof}
By 	\Cref{prop: R gr-semilocal <--> Re semilocal}, $R_\mathcal{C}$ is gr-semilocal if and only if $(R_\mathcal{C})_{(X,X)}=\mathcal{C}(X,X)$ is a semilocal ring for all  $X\in\mathcal{C}_0$.
\end{proof}

Examples of (small) semilocal categories can be obtained from \Cref{prop: proj-A art e noet} and \Cref{prop: mod-A art e noet}, since right (or left) $\Gamma_0$-artinian rings are gr-semilocal by \Cref{prop: R gr-art => R/rad(R) gr-ss}(1).
In the following result, we present some additional ones.
Further examples can be found using the results of \cite{HerberaShamsuddin1995}.

\begin{prop}
    Let $A$ be a unital ring and $\mathcal{C}$ be a small full subcategory of the category of right $A$-modules.
    If one of the following assertions hold, then the $\mathcal{C}_0\times\mathcal{C}_0$-graded ring $R_\mathcal{C}$ is gr-semilocal.
    \begin{enumerate}[\rm (1)]
        \item Every object of $\mathcal{C}$ is an artinian $A$-module.
        \item $A$ is a finite dimensional algebra over a field $k$ and every object of $\mathcal{C}$ is a finitely generated $A$-module.
        \item $A$ is a finite dimensional algebra over a field $k$ and $\mathcal{C}$ is a skeleton of $\fgmod$-$A$.
        \item $A$ is a commutative semilocal ring and every object of $\mathcal{C}$ is a finitely generated $A$-module.
        \item $A$ is a semilocal ring and every object of $\mathcal{C}$ is a finitely presented $A$-module.
        \item $A$ is a semilocal ring and every object of $\mathcal{C}$ is a finitely generated projective $A$-module.
    \end{enumerate}
\end{prop}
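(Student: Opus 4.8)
The plan is to invoke the characterization of gr-semilocal rings via the endomorphism rings $\mathcal{C}(X,X)$ obtained in the preceding corollary: $R_\mathcal{C}$ is gr-semilocal if and only if $\End_A(X)=\mathcal{C}(X,X)$ is a semilocal ring for every nonzero $X\in\mathcal{C}_0$. Thus the whole proposition reduces to verifying, case by case, that every object of $\mathcal{C}$ has semilocal endomorphism ring as an $A$-module. First I would record this reduction explicitly, so that each of the six items becomes a purely module-theoretic statement about endomorphism rings.

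For item (1), an artinian module has artinian endomorphism ring (its endomorphism ring embeds into a finite product of endomorphism rings of composition factors, or more directly one cites that $\End_A(X)$ is semiprimary when $X$ is artinian, e.g.\ via Fitting's lemma and \cite[\S11]{AndersonFuller}); an artinian ring is semilocal. For items (2) and (3), a finitely generated module over a finite dimensional $k$-algebra is finite dimensional, hence its endomorphism ring is a finite dimensional $k$-algebra, which is artinian and therefore semilocal; item (3) is the special case where $\mathcal{C}$ is a skeleton of $\fgmod$-$A$, so it follows from (2). For item (4), over a commutative semilocal ring a finitely generated module has semilocal endomorphism ring — this is a standard fact (see \cite{Facchini}, \cite{HerberaShamsuddin1995}); I would just cite it. For item (5), over a (not necessarily commutative) semilocal ring, finitely presented modules have semilocal endomorphism rings, which is a theorem of Herbera–Shamsuddin; again a citation suffices. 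Item (6) is the special case of (5) in which the modules are finitely generated projective, so it follows immediately, or alternatively one uses that $\End_A(P)$ for $P$ a direct summand of $A^n$ is a corner of $\M_n(A)$, and corners and matrix rings of semilocal rings are semilocal.

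So the skeleton of the proof is: (a) reduce via the corollary to showing $\End_A(X)$ semilocal for all nonzero $X\in\mathcal{C}_0$; (b) dispatch (1) using ``artinian $\Rightarrow$ semiprimary endomorphism ring $\Rightarrow$ semilocal''; (c) dispatch (2) and (3) using ``finite dimensional algebra $\Rightarrow$ artinian''; (d) dispatch (4), (5), (6) by citing the Camps–Dicks / Herbera–Shamsuddin circle of results on semilocal endomorphism rings, noting that (3) follows from (2) and (6) from (5). The main obstacle is not really mathematical depth but making sure the cited black boxes are stated in the generality needed — in particular the Herbera–Shamsuddin result for finitely presented modules over an arbitrary semilocal ring — and confirming that ``semilocal'' is closed under the operations (corners, finite matrix rings, finite products) that appear; these are all in \cite{Camps_Dicks}, \cite{Facchini}, \cite{HerberaShamsuddin1995}, so the write-up is essentially a matter of assembling citations rather than proving anything new.

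\begin{proof}
    By the previous corollary, $R_\mathcal{C}$ is gr-semilocal if and only if $\End_A(X)=\mathcal{C}(X,X)$ is a semilocal ring for every nonzero $X\in\mathcal{C}_0$. We check this in each case.

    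(1) If $X$ is an artinian $A$-module, then $\End_A(X)$ is a semiprimary ring (see \cite[\S11]{AndersonFuller}), hence semilocal.

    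(2) If $A$ is a finite dimensional $k$-algebra and $X$ is a finitely generated $A$-module, then $X$ is a finite dimensional $k$-vector space, so $\End_A(X)$ is a finite dimensional $k$-algebra and therefore artinian, hence semilocal.

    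(3) This is the particular case of (2) in which $\mathcal{C}$ is a skeleton of $\fgmod$-$A$.

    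(4) If $A$ is a commutative semilocal ring and $X$ is a finitely generated $A$-module, then $\End_A(X)$ is semilocal by \cite{Facchini}, \cite{HerberaShamsuddin1995}.

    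(5) If $A$ is a semilocal ring and $X$ is a finitely presented $A$-module, then $\End_A(X)$ is semilocal by \cite{Camps_Dicks}, \cite{HerberaShamsuddin1995}.

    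(6) This is the particular case of (5) in which each object of $\mathcal{C}$ is finitely generated projective; alternatively, if $X$ is a direct summand of $A^n$, then $\End_A(X)$ is a corner of $\M_n(A)$, and corners and finite matrix rings over semilocal rings are semilocal \cite{Camps_Dicks}.

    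In all cases $\End_A(X)$ is semilocal for every nonzero $X\in\mathcal{C}_0$, so $R_\mathcal{C}$ is gr-semilocal.
\end{proof}
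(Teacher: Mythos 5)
Your reduction to the statement ``$\End_A(X)$ is semilocal for every $X\in\mathcal{C}_0$'' is exactly the paper's strategy, and items (2)--(6) are essentially the same as the paper's arguments (with slightly looser citations for (4) and (5); the paper points to \cite[Theorem~4.58]{Facchini} and to \cite[Proposition~3.1]{Facchini_Herbera} or \cite[Theorem~5.48]{Facchini} respectively). However, your justification of item (1) contains a genuine error. You claim that an artinian module $X$ has artinian endomorphism ring, and alternatively that $\End_A(X)$ is semiprimary. Both claims are false for artinian modules that are not of finite length. The standard counterexample is the Pr\"ufer group $\mathbb{Z}(p^\infty)$ over $\mathbb{Z}$: it is artinian but not noetherian, and $\End_{\mathbb{Z}}(\mathbb{Z}(p^\infty))\cong\mathbb{Z}_p$, the ring of $p$-adic integers. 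This ring is local (hence semilocal) but not artinian (the chain $p\mathbb{Z}_p\supsetneq p^2\mathbb{Z}_p\supsetneq\cdots$ does not terminate) and not semiprimary (its Jacobson radical $p\mathbb{Z}_p$ is not nilpotent, since $\bigcap_n p^n\mathbb{Z}_p=0$ while each $p^n\mathbb{Z}_p\neq0$). The Fitting-lemma argument in \cite[\S11]{AndersonFuller} that you invoke requires $X$ to have finite length, not merely to be artinian; it simply does not apply here.

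The correct fact underlying (1) --- that an artinian module has \emph{semilocal} endomorphism ring --- is a nontrivial theorem of Camps and Dicks, and that is what the paper cites (\cite[Corollary~6]{Camps_Dicks} or \cite[Theorem~3.19]{Facchini}). Your conclusion for (1) is right, but the intermediate claim is not: ``artinian endomorphism ring'' and ``semiprimary endomorphism ring'' are strictly stronger than what actually holds, so the argument needs to be replaced by a direct appeal to Camps--Dicks. Once you do that, the rest of the proof goes through.
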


\begin{proof}
    By \Cref{prop: R gr-semilocal <--> Re semilocal}, it suffices to show that, in each case, $\mathcal{C}(M,M) = (R_\mathcal{C})_{(M,M)}$ is a semilocal ring for every $M \in \mathcal{C}_0$.

    (1) If $M$ is an artinian $A$-module, then $\End_A(M)$ is a semilocal ring by \cite[Corollary~6]{Camps_Dicks} or \cite[Theorem 3.19]{Facchini}.

    (2) Follows from (1), since every finite dimensional algebra over a field is an artinian ring and every finitely generated module over an artinian ring is artinian. 
    
    (3) Immediate from (2). For another approach, see \cite[Corollary II.1.12]{Coelho}.

    (4) If $M$ is a finitely generated module over a commutative semilocal ring, then $\End(M)$ is a semilocal ring by \cite[Theorem 4.58]{Facchini}.

    (5) If $M$ is a finitely presented module over a semilocal ring, then $\End_A(M)$ is a semilocal ring by \cite[Proposition~3.1]{Facchini_Herbera} or \cite[Theorem 5.48]{Facchini}.

    (6) Follows from (5), since every finitely generated projective module is finitely presented by \cite[Exercise 4.1(a)]{RespostasLam2}.
\end{proof}

All  properties and results about gr-semilocal rings presented in this section are natural generalizations of well-known properties and results of semilocal rings. We conclude the section with a property that holds for semilocal rings but does not, in general, generalize in the natural way to gr-semilocal rings.

Recall that a ring $R$ is Dedekind finite if $ab=1 \Rightarrow ba=1$ for $a,b\in R$. Semilocal rings are known to be Dedekind finite \cite[Proposition~(20.8)]{Lam1}. If $R$ is a $\Gamma$-graded ring, we could say that $R$ is gr-Dedekind finite if for all $\gamma\in\Gamma$ and homogeneous elements $a\in R_\gamma$, $b\in R_{\gamma^{-1}}$ the implication $ab=1_{r(\gamma)}\Rightarrow ba=1_{d(\gamma)}$ holds. 
We next present an example of a groupoid graded gr-semilocal ring that is not gr-Dedekind finite according to this definition.

\begin{exem}
    Let $\Gamma$ be the groupoid $\{1,2\}\times\{1,2\}$. Let $D$ be a division ring.  Let $R=\M_3(R)$ endowed with the $\Gamma$-grading
     \[R_{(1,1)}=\begin{bmatrix}
    D & D & 0 \\
    D & D & 0 \\
    0 & 0 & 0 \\
    \end{bmatrix},\quad  R_{(1,2)}=\begin{bmatrix}
    0 & 0 & D \\
    0 & 0 & D \\
    0 & 0 & 0 \\
    \end{bmatrix}, \]
\[    R_{(2,1)}=\begin{bmatrix}
    0 & 0 & 0 \\
    0 & 0 & 0 \\
    D & D & 0 \\
    \end{bmatrix},\quad  R_{(2,2)}=\begin{bmatrix}
    0 & 0 & 0 \\
    0 & 0 & 0 \\
    0 & 0 & D \\
    \end{bmatrix}.\]    
The identity elements of $R$ are
\[ 
\mathbb{I}_{(1,1)}=\begin{bmatrix}
    1 & 0 & 0 \\
    0 & 1 & 0 \\
    0 & 0 & 0 \\
    \end{bmatrix}=E_{11}+E_{22},\quad \mathbb{I}_{(2,2)}=\begin{bmatrix}
    0 & 0 & 0 \\
    0 & 0 & 0 \\
    0 & 0 & 1 \\
    \end{bmatrix}=E_{33}.
\]
The graded ring $R$ is gr-semisimple as shown in \cite[Example~7.2(2)]{Artigoarxiv}. Thus, $R$ is a gr-semilocal ring.

Consider the elements $A=\left(\begin{smallmatrix}
    0 & 0 & 0 \\
    0 & 0 & 0 \\
    1 & 0 & 0 \\
    \end{smallmatrix}\right)\in R_{(2,1)}$ and $B=\left(\begin{smallmatrix}
    0 & 0 & 1 \\
    0 & 0 & 0 \\
    0 & 0 & 0 \\
    \end{smallmatrix}\right)\in R_{(1,2)}$. Then $AB=\left(\begin{smallmatrix}
    0 & 0 & 0 \\
    0 & 0 & 0 \\
    0 & 0 & 1 \\
    \end{smallmatrix}\right)=\mathbb{I}_{(2,2)}$, but $BA=\left(\begin{smallmatrix}
    1 & 0 & 0 \\
    0 & 0 & 0 \\
    0 & 0 & 0 \\
    \end{smallmatrix}\right)\neq \mathbb{I}_{(1,1)}$. Hence $R$ is not a gr-Dedekind-finite ring in the sense explained above. 
\end{exem}

 Item (2) of the following result provides an alternative (weaker) definition of gr-Dedekind finite for groupoid graded rings and shows that if $\Gamma$ is a group, then every gr-semilocal ring is gr-Dedekind finite according to this definition.

\begin{lema}
    Let $R$ be a $\Gamma$-graded ring. The following assertions hold.
    \begin{enumerate}[\rm (1)]
        \item Suppose that $e\in\Gamma_0$ is such that $R(e)$ is a gr-artinian or gr-noetherian $R$-module.
        Then, for all $\gamma\in e\Gamma e$, $a\in R_\gamma$, and $b\in R_{\gamma^{-1}}$, the implication $ab=1_e\Rightarrow ba=1_e$ holds.
        \item If $R$ is a gr-semilocal ring, then for all $\gamma\in\Gamma$ with $d(\gamma)=r(\gamma)$ and homogeneous elements $a\in R_\gamma$, $b\in R_{\gamma^{-1}}$, the implication $ab=1_{r(\gamma)}\Rightarrow ba=1_{d(\gamma)}$ holds.
    \end{enumerate}
\end{lema}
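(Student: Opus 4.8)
The plan is to reduce both items to statements about the unital corner ring $1_eR1_e$. In both cases the relevant degree lies in the isotropy group $e\Gamma e$ at $e$: this is the hypothesis in (1), and in (2) it is exactly the condition $d(\gamma)=r(\gamma)=:e$. Hence $a\in R_\gamma$ and $b\in R_{\gamma^{-1}}$ are homogeneous elements of $1_eR1_e$ (in particular $1_ea=a1_e=a$ and $1_eb=b1_e=b$), while $ab$ and $ba$ lie in $R_e$, with $ab=1_e$ by assumption; the goal is $ba=1_e$.

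For (1), I would pass to the graded endomorphism ring $\END_R(R_R)$, which is gr-isomorphic to $R$. Left multiplications give $\lambda_a\in\END_R(R_R)_\gamma$ and $\lambda_b\in\END_R(R_R)_{\gamma^{-1}}$, and $\lambda_a\lambda_b=\lambda_{ab}=\lambda_{1_e}=\mathds{1}_e$. When $R(e)=(R_R)(e)$ is gr-noetherian, I would use that $\im\lambda_a\subseteq(R_R)(r(\gamma))=(R_R)(e)=\im(\lambda_a\lambda_b)$, so $\im\lambda_a=(R_R)(e)$, and invoke \Cref{lem: Fitting}(1) to conclude that $\lambda_a$ is gr-invertible in $\END_R(R_R)$; a short computation with the local identities then forces its gr-inverse to be $\lambda_b$, hence $\lambda_{ba}=\lambda_b\lambda_a=\mathds{1}_e$, which translates back to $ba=1_e$. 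When $R(e)$ is gr-artinian, I would argue symmetrically with $\lambda_b$: if $bx=0$ with $x\in(R_R)(e)$ then $x=1_ex=abx=0$, so $\ker\lambda_b\cap(R_R)(e)=0$, and \Cref{lem: Fitting}(2) makes $\lambda_b$ gr-invertible, again yielding $ba=1_e$.

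For (2), I would first apply \Cref{prop: R gr-semilocal <--> Re semilocal} to obtain that $R_e$ is a semilocal ring, and then run the classical matrix-unit argument for Dedekind finiteness inside $1_eR1_e$. Assuming $u:=1_e-ba\neq0$ (an idempotent of $R_e$), from $ab=1_e$ one checks $au=0$, $ub=0$ and $a^ib^i=1_e$ for all $i\ge1$. These identities imply that the elements $E_{ij}:=b^{\,i}ua^{\,j}$ ($i,j\ge0$) satisfy $E_{ij}E_{kl}=\delta_{jk}E_{il}$; in particular the $E_{ii}$ form an infinite family of nonzero, pairwise orthogonal idempotents (nonzero because $a^{\,i}E_{ii}b^{\,i}=u$), and each $E_{ii}$ lies in $R_{\gamma^{-i}}R_eR_{\gamma^{i}}\subseteq R_e$ since $\gamma^{-i}e\gamma^{i}=e$ in $e\Gamma e$. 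This contradicts the fact that a semilocal ring has no infinite family of nonzero orthogonal idempotents (their nonzero images in the semisimple artinian ring $R_e/\rad(R_e)$ would span an infinite direct sum of right ideals). Hence $u=0$, i.e.\ $ba=1_e$.

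I expect the delicate points to be: in (1), the degree and local-identity bookkeeping needed to identify the gr-inverse of $\lambda_a$ (resp.\ $\lambda_b$) with $\lambda_b$ (resp.\ $\lambda_a$), so that $\lambda_{ba}=\mathds{1}_e$ can be read off; and in (2), the case analysis ($j=k$, $j>k$, $j<k$) establishing $E_{ij}E_{kl}=\delta_{jk}E_{il}$ from $au=0$, $ub=0$ and $a^jb^j=1_e$, together with the verification that every $E_{ii}$ is homogeneous of degree $e$.
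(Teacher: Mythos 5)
Your argument for item (1) is essentially the paper's: pass to $\END_R(R_R)\cong_{gr}R$, view $a$ and $b$ as homogeneous gr-endomorphisms via left multiplication, apply \Cref{lem: Fitting} to get gr-invertibility, and deduce $ba=1_e$. The only cosmetic difference is that the paper, once it has a two-sided gr-inverse $a^{-1}$ (resp.\ $b^{-1}$), concludes directly via $ba=a^{-1}(ab)a$ (resp.\ $ba=b(ab)b^{-1}$), without any further ``identification of the inverse'' step, so the bookkeeping you were worried about is lighter than you feared.

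For item (2) you take a genuinely different route. The paper stays at the graded level: from gr-semilocality it knows $R/\radgr(R)$ is right $\Gamma_0$-artinian, applies item (1) there to get $\overline{b}\,\overline{a}=\overline{1_{d(\gamma)}}$, so $\overline{a}$ is gr-invertible, and then lifts gr-invertibility back to $R$ via \Cref{prop: R e R/J tem os mesmos gr-simples}(3), from which $ba=a^{-1}(ab)a=1_{d(\gamma)}$. You instead descend to the ungraded corner ring $R_e$ (legitimate, since $1_e-ba\in R_e$ and $R_e$ is semilocal by \Cref{prop: R gr-semilocal <--> Re semilocal}) and run the classical matrix-unit construction: $u=1_e-ba$, $E_{ij}=b^iua^j$, with $au=0$, $ub=0$, $a^ib^i=1_e$ giving $E_{ij}E_{kl}=\delta_{jk}E_{il}$, and the family $\{E_{ii}\}$ giving infinitely many nonzero orthogonal idempotents in $R_e$, contradicting semilocality (every nonzero idempotent of a semilocal ring has nonzero image modulo the radical, since idempotents in the radical are trivial). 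This is correct, and the degree check $\gamma^{-i}e\gamma^i=e$ guaranteeing $E_{ii}\in R_e$ is the right thing to verify. The trade-off: your route is more elementary and self-contained once $R_e$ is known semilocal, while the paper's route is shorter because it recycles item (1) and the already-established lifting of gr-units across $\radgr(R)$, avoiding the idempotent combinatorics entirely. Neither route is objectively better, but the paper's is more in the spirit of the surrounding development.
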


\begin{proof}
    (1) Let $\gamma\in e\Gamma e$, $a\in R_\gamma$, and $b\in R_{\gamma^{-1}}$ such that $ab=1_e$.
    
    Suppose that $R(e)$ is gr-noetherian and consider the map $g\in\END(R_R)_\gamma$ given by left multiplication by $a$. 
    Since $ab=1_e$, we have $\im g = R(e)$.
    Then $g$ is gr-invertible in $\END(R_R)$ by \Cref{lem: Fitting}(1), and it follows that $a$ is gr-invertible in $R$ because $R\cong_{gr} \END(R_R)$ by \cite[Lemma 3.4]{Artigoarxiv}. 
    Thus, $ba=a^{-1}aba=a^{-1}1_ea=1_e$.
    
    Now suppose that $R(e)$ is gr-artinian and set $g'\in\END(R_R)_{\gamma^{-1}}$ given by left multiplication by $b$. 
    Since $ab=1_e$, we get $R(e)\cap \ker g' = \{0\}$.
    Then $g'$ is gr-invertible in $\END(R_R)$ by \Cref{lem: Fitting}(2), and $b$ is gr-invertible in $R$ because $R\cong_{gr} \END(R_R)$ by \cite[Lemma 3.4]{Artigoarxiv}. 
    Thus, $ba=babb^{-1}=b1_eb^{-1}=1_e$.

    (2) Suppose that $R$ is a gr-semilocal ring.
    Then $R/\radgr(R)$ is a right $\Gamma_0$-artinian ring.
    It follows from (1) that for all $\gamma\in\Gamma$ with $d(\gamma)=r(\gamma)$ and homogeneous elements $a\in R_\gamma$, $b\in R_{\gamma^{-1}}$ the implication $\overline{a}\overline{b}=\overline{1_{r(\gamma)}}\Rightarrow \overline{b}\overline{a}=\overline{1_{d(\gamma)}}$ holds in $R/\radgr(R)$.
    The result now follows from \Cref{prop: R e R/J tem os mesmos gr-simples}(3).
\end{proof}


\begin{thebibliography}{99}

\bibitem{Abrams_Ara_SilesMolina}
G.~Abrams, P.~Ara and M.~Siles Molina, 
\emph{Leavitt Path Algebras}, 
Lect. Notes Math., vol.~2191, Springer, London, 2017.

\bibitem{Abrams_ArandaPino_Perera_SilesMolina}
G.~Abrams, G.~Aranda Pino, F.~Perera and M.~Siles Molina, 
\emph{Chain conditions for Leavitt path algebras}, 
Forum Math. \textbf{22} (2010), no.~1, 95--114.


\bibitem{AndersonFuller}
F.~W. Anderson and K.~R. Fuller, 
\emph{Rings and Categories of Modules}, 
Graduate Texts in Mathematics, vol.~13, Springer, New York, 1992.

\bibitem{Coelho}
I.~Assem and F.~U. Coelho, 
\emph{Basic Representation Theory of Algebras}, 
Graduate Texts in Mathematics, vol.~283, Springer, Cham, 2020.

\bibitem{BFP} 
D. Bagio, D. Flores and A. Paques, \emph{Partial actions of ordered groupoids on rings}, 
J. Algebra Appl. {\bf 9} (2010), no. 3, 501--517.

\bibitem{BP}
D. Bagio and A. Paques, \emph{Partial groupoid actions: globalization, {Morita} theory, and {Galois} theory}, {Commun. Algebra} {\bf 40} (2012), no. 10, 3658--3678.

\bibitem{Balaba}
I.~N. Balaba, A.~L. Kanunnikov and A.~V. Mikhal\"ev, 
\emph{Quotient rings of graded associative rings. I}, 
Fundamentalnaya i Prikladnaya Matematika \textbf{17} (2012), no.~2, 3--74.  
(Trad. em J. Math. Sci. (N.Y.) \textbf{186} (2012), no.~4, 531--577.)


\bibitem{CLP2}
J. Cala, P. Lundstr\"{o}m and H. Pinedo,
\emph{Object-unital groupoid graded rings, crossed products and separability}, Comm. Algebra {\bf 49} (2021), no. 4, 1676--1696.

\bibitem{CLP}
J.~Cala, P.~Lundström and H.~Pinedo, 
\emph{Graded modules over object-unital groupoid graded rings}, 
Commun. Algebra \textbf{50} (2022), no.~2, 444--462.

\bibitem{Camps_Dicks}
R.~Camps and W.~Dicks, 
\emph{On semilocal rings}, 
Isr. J. Math. \textbf{81} (1993), no.~1, 203--211.

\bibitem{Artigoarxiv}
Z.~Cristiano, W.~Marques de Souza and J.~Sánchez, 
\emph{Groupoid graded semisimple rings}, 
J. Algebra \textbf{687} (2026), 1--116.



\bibitem{Facchini}
A.~Facchini, 
\emph{Semilocal Categories and Modules with Semilocal Endomorphism Rings}, 
Progress in Mathematics, vol.~331, Birkhäuser, 2019.

\bibitem{Facchini_Herbera}
A.~Facchini and D.~Herbera, 
\emph{Local Morphisms and Modules with a Semilocal Endomorphism Ring}, 
Algebr. Represent. Theory \textbf{9} (2006), 403--422.


\bibitem{Goncalves_Yoneda}
D. Gon\c{c}alves  and G. Yoneda,
\emph{Free path groupoid grading on {L}eavitt path algebras},
Internat. J. Algebra Comput. {\bf 26} (2016), no. 6, 1217--1235.

\bibitem{Goodearl}
K.~R. Goodearl and R.~B. Warfield, Jr., 
\emph{An Introduction to Noncommutative Noetherian Rings}, 
London Mathematical Society Student Texts, vol.~61, 2nd ed., Cambridge University Press, 2004.


\bibitem{Harada}
M.~Harada and H.~Kanbara, 
\emph{On categories of projective modules}, 
Osaka J. Math. \textbf{8} (1971), no.~3, 471--483.

\bibitem{Hazrat}
R.~Hazrat, 
\emph{Graded Rings and Graded Grothendieck Groups}, 
London Mathematical Society Lecture Note Series, vol.~435, Cambridge University Press, 2016.

\bibitem{HerberaShamsuddin1995}
D.~Herbera and A.~Shamsuddin, 
\emph{Modules with Semi-Local Endomorphism Ring}, 
Proc. Amer. Math. Soc. \textbf{123} (1995), no.~12, 3593--3600.

\bibitem{Higgins}
P.~J. Higgins, 
\emph{Categories and Groupoids}, 
Repr. Theory Appl. Categ. \textbf{7} (2005), 1--178.  
(Reprint of the 1971 original with new preface.)


\bibitem{IR}
A.~Ibort and M.~A. Rodríguez, 
\emph{An Introduction to Groups, Groupoids and Their Representations}, 
CRC Press, Boca Raton, FL, 2020.

\bibitem{georgijevic}
E.~Ilić-Georgijević, 
\emph{On the Jacobson radical of a groupoid graded ring}, 
J. Algebra \textbf{573} (2021), 561--575.


\bibitem{Kelly}
G.~M. Kelly, 
\emph{On the Radical of a Category}, 
J. Aust. Math. Soc. \textbf{4} (1964), no.~3, 299--307.

\bibitem{Lam1}
T.~Y. Lam, 
\emph{A First Course in Noncommutative Rings}, 
Graduate Texts in Mathematics, vol.~131, 2nd ed., Springer Verlag, 2001.

\bibitem{LamExercises}
T.~Y. Lam, 
\emph{Exercises in Classical Ring Theory}, 
2nd ed., Problem Books in Mathematics, Springer, New York, 2003.

\bibitem{Lam2}
T.~Y. Lam, 
\emph{Lectures on Modules and Rings}, 
Graduate Texts in Mathematics, vol.~189, Springer Verlag, 1999.

\bibitem{RespostasLam2}
T.~Y. Lam, 
\emph{Exercises in Modules and Rings}, 
Problem Books in Mathematics, Springer Verlag, 2007.

\bibitem{Liu_Li}
G. Liu and F. Li,
\emph{On strongly groupoid graded rings and the corresponding {C}lifford theorem}, Algebra Colloq. {\bf 13} (2006), no.~2, 181--196.

\bibitem{lundstrom2024chain}
P.~Lundström, 
\emph{Chain conditions for rings with enough idempotents with applications to category graded rings}, 
Commun. Algebra \textbf{52} (2024), no.~8, 3319--3327.

\bibitem{Lundstrom_strongly_cohomology}
P. Lundstr\"{o}m, 
\emph{Strongly groupoid graded rings and cohomology}, 
Colloq. Math. {\bf 106} (2006), no.~1, 1--13.

\bibitem{Lund}
P.~Lundström, 
\emph{The category of groupoid graded modules}, 
Colloq. Math. \textbf{100} (2004), no.~2, 195--211.

\bibitem{Mares}
E.~A. Mares, 
\emph{Semi-perfect modules}, 
Math. Z. \textbf{82} (1963), 347--360.


\bibitem{Mit}
B.~Mitchell, 
\emph{Rings with Several Objects}, 
Adv. Math. \textbf{8} (1972), 1--161.

\bibitem{MoreiraOinert}
P. S. E. Moreira  and J. \"{O}inert,
\emph{Prime groupoid graded rings with applications to partial skew groupoid rings}, Comm. Algebra {\bf 52} (2024), no.~7, 3134--3153.


\bibitem{dissertacao}
Z.~C. Moreira, 
\emph{Anéis de quocientes graduados de anéis graduados por grupoide}, 
Dissertação de Mestrado, Instituto de Matemática e Estatística, Universidade de São Paulo, São Paulo, 2024. 
Link: \url{https://www.teses.usp.br/teses/disponiveis/45/45131/tde-15042024-125401/pt-br.php}

\bibitem{NastasescuVanOystaeyen}
C.~N\u ast\u asescu and F.~Van Oystaeyen, 
\emph{Methods of Graded Rings}, 
Lecture Notes in Mathematics, vol.~1836, Springer-Verlag, 2004.

\bibitem{Oinert_Lundstrom_Miyashita}
J. \"{O}inert and P. Lundstr\"{o}m, 
\emph{Miyashita action in strongly groupoid graded rings}, 
Int. Electron. J. Algebra {\bf 11} (2012), 46--63.


\bibitem{Stenstrom}
B.~Stenström, 
\emph{Rings of Quotients: An Introduction to Methods of Ring Theory}, 
Grundlehren der mathematischen Wissenschaften, vol.~217, Springer-Verlag, 1975.


\bibitem{Verhulst}
N. D. Verhulst, \emph{{$G$}-valuations and {$G$}-valuation rings}, Bull. Belg. Math. Soc. Simon Stevin {\bf 27} (2020), 281--298.

\bibitem{Ware}
R.~Ware, 
\emph{Endomorphism Rings of Projective Modules}, 
Trans. Amer. Math. Soc. \textbf{155} (1971), no.~1, 233--256.

\end{thebibliography}
\end{document}